\newcommand{\abs}[1]{\left\vert#1\right\vert}
\newcommand{\norm}[1]{\left\Vert#1\right\Vert}  
\newcommand{\R}{\ensuremath{{\mathbb R}}}
\newcommand{\eps}{\varepsilon}
\newcommand{\dd}{\, {\rm d}}
\newcommand{\dv}{\, \mathrm{d} v}
\newcommand{\dx}{\, \mathrm{d} x}
\newcommand{\dt}{\, \mathrm{d} t}
\newcommand{\dsigma}{\, \mathrm{d} \sigma}
\newcommand{\un}{\mathbf{1}}
\newcommand{\fin}{f_{\mathrm{in}}}
\newcommand{\fb}{f_b}
\newcommand{\phizk}{{\varphi_{0,\kappa}}}
\newcommand{\dphizk}{{\dot{\phizk}}}
\newcommand{\ddphizk}{{\ddot{\phizk}}}
\newcommand{\Phizk}{{\Phi_{0,\kappa}}}
\newcommand{\Etrois}{{\mathcal{E}_4}}
\newcommand{\Equatre}{\mathcal{E}_3}
\newcommand{\bast}{\mathfrak{b}_\ast}
\newcommand{\ma}{\mathfrak{a}}
\newcommand{\fraka}{\mathfrak{a}}
\newcommand{\Cb}{C_b}
\newcommand{\Ccor}{C_{\mathrm{cor\,}}}
\newcommand{\Ccorbis}{C_{\mathrm{cor'\,}}}
\newcommand{\Cget}{C_{\mathrm{get\,}}}
\newcommand{\Cun}{C_1}
\newcommand{\Chard}{C_{\mathrm{hard\,}}}
\newcommand{\Cconv}{C_{\mathrm{conv\,}}}
\newcommand{\aast}{a_\ast}
\newcommand{\aget}{a_{\mathrm{get\,}}}
\newcommand{\qnsl}{q_{\mathrm{nsl\,}}}
\newcommand{\Cqu}{C_{q,1}}
\newcommand{\Cqd}{C_{q,2}}
\newcommand{\Cqt}{C_{q,3}}
\newcounter{num} \numberwithin{num}{section}
\numberwithin{equation}{section}
\newtheorem{theorem}[num]{Theorem}
\newtheorem{proposition}[num]{Proposition}
\newtheorem{lemma}[num]{Lemma}
\theoremstyle{definition}
\newtheorem{definition}[num]{Definition}
\theoremstyle{remark}
\newtheorem{remark}[num]{Remark}
\title[Conditional appearance of decay for non-cutoff Boltzmann]{Conditional appearance of decay for the non-cutoff Boltzmann equation in a domain}
\author{Cyril Imbert \& Amélie Loher}
\date{\today}
\address{Département de Mathématiques et Applications, ENS Rue d'Ulm, Paris (FRANCE)}
\email{cyril.imbert@ens.fr}
\address{Department of Pure Mathematics and Mathematical Statistics, University of Cambridge (UK)} 
\email{ajl221@cam.ac.uk}
\thanks{The authors would like to thank C.~Mouhot for several useful comments on a preliminary version of this work. They also thank all referees for the numerous comments and the careful reading of a previous version of this article.}
\thanks{AL acknowledges funding from the Cambridge Trust.}
\begin{document}

\begin{abstract}
This work is concerned with the generation of decay estimates in the velocity variable for solutions of the space-inhomogeneous Boltzmann equation without cutoff on a bounded spatial domain for hard and moderately soft potentials. We work with  suitable weak solutions, provided that mass, energy and entropy density functions are under control. The following boundary conditions are treated: in-flow, bounce-back, specular reflection, diffuse reflection and Maxwell reflection. The notion of weak solutions relies on a family of Truncated Convex Inequalities that is inspired by the one recently introduced through F.~Golse, L.~Silvestre and the first author (2023) in the spatially homogeneous case. 
We show that the solutions generate some amount (up to $d+1$) of pointwise polynomial velocity decay. In case of moderately soft potentials, we show that it is not possible to generate a decay higher than $d+2$ if the energy is bounded.  
\end{abstract}

\keywords{Boltzmann equation without cutoff, bounded domains, generation of pointwise moment bounds, a-priori estimates}

\subjclass[2000]{35Q20, 35H10, 35R09, 35B45}

\maketitle

\tableofcontents

\section{Introduction}

\subsection{The Boltzmann equation}

We consider the Boltzmann equation posed in a bounded $C^{1}$ domain $\Omega \subset \R^d$ (with $d \ge 2$),
\begin{equation}
  \label{eq:boltzmann}
	(\partial_t  + v \cdot \nabla_x) f = Q(f, f), \qquad t \in (0,T), ~ x \in \Omega, ~ v \in \R^d
\end{equation}
where the unknown function $f=f(t,x,v)$ is non-negative; $Q(f,f)$ denotes the Boltzmann collision operator, 
\begin{equation}
  \label{eq:Q}
  Q(f, f)(v) = \iint_{{\mathbb S}^{d-1} \times \R^d}   \big( f(v_*')f(v') - f(v_*)f(v)\big) B(|v - v_*|, \sigma) \dd \sigma \dd v_*
\end{equation}
where $v'$ and $v'_*$ are given by
\begin{equation*}
	v' = \frac{v + v_*}{2} + \frac{\abs{v- v_*}}{2}\sigma, \qquad v'_* =  \frac{v + v_*}{2} - \frac{\abs{v- v_*}}{2}\sigma
\end{equation*} 
and the collision kernel $B$ satisfies
\begin{equation*}
	B\big(\abs{v-v_*}, \sigma \big) = \abs{v-v_*}^\gamma b(\cos \theta) \quad \text{ with }\quad \cos\theta = \frac{v-v_*}{\abs{v-v_*}} \cdot \sigma
\end{equation*} 
for $\gamma \in (-d, 1]$, and 
\[
	b(\cos \theta) \approx \abs{\sin \theta}^{-d+1 -2s} 
\]
for $s \in (0, 1)$.
This means that there exist  constants $C_\pm$ such that $0 < C_- \abs{\sin \theta}^{-d+1-2s} \le b(\cos \theta) \le C_+ \abs{\sin\theta}^{-d+1-2s}$. Without loss of generality,
we can also assume that for $\cos \theta <0$, we have
\[
	b(\cos \theta)= \abs{v-v'}^{-(d-1) -2s} \abs{v - v_*}^{d-2 - \gamma} \abs{v-v_*' }^{\gamma + 2s + 1} \tilde b(\cos \theta),
\]
where $\tilde b(\cos \theta)$ is such that $0 < \tilde b_- < \tilde b(\cos \theta) < \tilde b_+$ for constants $\tilde b_\pm > 0$, see \cite{MR3551261}.
We are concerned with the case when $\gamma+2s > 0$ corresponding to so-called moderately soft potentials.

The equation is supplemented with
an initial condition $f(0,x,v) = \fin (x,v)$ for $(x,v) \in \Omega \times \R^d$ and a boundary condition. In order to describe the latter, we let $n$ denote the outward unit normal vector on the boundary and we denote by $\Gamma$ the domain that considers boundary points in space, that is $\Gamma :=  \partial \Omega \times \R^d$, by $\Gamma_-$ the incoming part of the boundary, and by $\Gamma_+$ the outgoing part of the boundary, that is
\[\Gamma_- := \{( x, v) \in \Gamma :  v \cdot n(x) < 0\} \quad \text{ and }  \Gamma_+ := \{( x, v) \in \Gamma :  v \cdot n(x) > 0\}.\]
We then supplement the Boltzmann equation \eqref{eq:boltzmann} with boundary conditions that are commonly considered in the literature,
\begin{enumerate}[(i).]
\item {\sc In-flow}: $f(t, x, v) \vert_{\Gamma_-} = \fb(t, x, v)$ for a given function $\fb$.
\item {\sc Bounce-back}: $f(t, x, v) \vert_{\Gamma_-} = f(t, x, -v)$. 
\item {\sc Specular reflection}: $f(t, x, v) \vert_{\Gamma_-} = f(t, x, \mathcal R_x v)$  with $\mathcal R_x v = v - 2 (v \cdot n) n$.
\item {\sc Diffuse reflection}: $$f(t, x, v)\vert_{\Gamma_-} = c_\mu (x) \mu(v) \int_{\Gamma_+} f(t, x, v')(v' \cdot n (x))\dd v',$$ where $\mu(v) = e^{-\abs{v}^2}$  and $c_\mu=c_\mu (x)$ is a normalisation constant, such that $c_\mu \int_{\Gamma_+} \mu(v')   (v' \cdot n) \dd v'=1$.
\item {\sc Maxwell reflection}: $$f(t, x, v)\vert_{\Gamma_-} = (1-\iota(x)) f(t, x, \mathcal R_x v) + \iota (x) c_\mu \mu(v) \int_{\Gamma_+} f(t, x, v')(v' \cdot n )\dd v',$$ where $\iota: \partial \Omega \to [0,1]$ is the accomodation coefficient. 
\end{enumerate}
The Maxwell reflection is a convex combination of specular and diffuse reflections. It is particularly relevant from the physical point of view: a part of the particles is reflected and another part is ``absorbed by the wall and re-emitted according to a Gaussian distribution [\dots], corresponding to a thermodynamical equilibrium between particles and the wall'', \cite{MR1942465}.

\subsection{Hydrodynamical quantities}

L.~Silvestre \cite{MR3551261} showed that when some hydrodynamical quantities are under control, an a-priori solution of the Boltzmann equation is $L^\infty((0, T)\times \Omega\times\R^d)$, and thus the non-linear equation can be viewed as a linear equation with \textit{rough} coefficients. The roughness of the coefficients can be quantified, and therefore, there is a notion of ellipticity in \eqref{eq:boltzmann}. Throughout this work, we assume that there exist positive constants $m_0,M_0,E_0,H_0$ such that for almost every $(t,x) \in (0,T) \times \Omega$, the function $f(v) = f(t,x,v)$ satisfies,
\begin{equation}\label{e:hydro}
  m_0 \le \int_{\R^d} f(v) \dv \le M_0, \quad \int_{\R^d} f(v) |v|^2 \dv \le E_0, \quad \int_{\R^d} f(v) \ln f (v) \dv \le H_0.
\end{equation}
In particular, the entropy production estimate yields some integrability in $(t,x,v)$ with a negative weight in velocity \cite[Theorem~0.1]{chaker2022entropy}. In case of the spatially homogeneous equation \eqref{eq:boltzmann}, when $f = f(t, v)$ does not depend on $x$, the conditions in \eqref{e:hydro} reduce to assumptions on the initial datum, since the quantities in \eqref{e:hydro} are conserved by the equation \eqref{eq:boltzmann}. 

\subsection{The collision operator}

In order to present the Truncated Convex Inequalities satisfied by our weak solutions, it is necessary to recall some facts about the collision operator and its kernel representation. 

Using Carleman coordinates, the collision operator can be written as follows \cite{MR1942465} -- see also \cite{MR3551261}, 
\[
	Q(f, f) (v) = \int_{\R^d} \left[ f(v') K_f(v, v') - f(v) K_f(v', v)\right] \dd v',
\]
where the kernel $K_f(v, v')$ is given by 
\begin{equation*}
\begin{aligned}
	K_f(v, v') &= 2^{d-1} \abs{v'-v}^{-1} \int_{w \perp v' -v} f(v+w) B(r, \cos \theta) r^{-d+2}\dd w \\
	&=2^{d-1} \abs{v'-v}^{-(d+2s)} \int_{w \perp v' -v} f(v+w)  \abs{w}^{\gamma +2s +1}\tilde b(\cos \theta) \dd w,
\end{aligned}
\end{equation*} 
for $r^2 = \abs{v'-v}^2 + \abs{w}^2$ and $\cos \theta = \frac{|w|^2 - |v-v'|^2}{|w|^2+|v-v'|^2}$. The function $\tilde b$ is bounded from above and below by positive constants. 

\subsection{Truncated Convex Inequalities}

We derive decay estimates by studying the evolution along time of some $L^{q_0}$-Lebesgue norm of the function $(f-A)_+$. To this end, we consider a general function $\varphi = \varphi (t, v,r)$ that is convex in $r$ instead of $\varphi_0((f-A)_+) = (f-A)_+^{q_0}$. We emphasise that $\varphi$ does not depend on $x$: our result is global. 
For such a $\varphi$, a formal computation yields,
\begin{align*}
  \frac{\dd}{\dd t} \iint_{\R^d \times \Omega} \varphi(t,v,f) \dv \dx
  &= \iint_{\R^d \times \Omega} \partial_r \varphi(t, v,f) Q(f, f) \dv \dx + \iint_{\R^d \times \Omega} \partial_t \varphi(t, v,f) \dv \dx\\
  &\quad  +  \iint_{\Gamma} \varphi(t, x, v, f) (v \cdot \nabla_x f) (t,x,v,f) \dv \dx \\
  &= \iint_{\R^d \times \Omega} \partial_r \varphi(t, v,f) \left\{ \int_{\R^d} \left[ f(v') K_f(v, v') - f(v) K_f(v', v)\right] \dd v'\right\} \dv \dx \\
  &\quad+ \iint_{\R^d \times \Omega} \partial_t \varphi(t, v,f) \dv \dx  -  \iint_{\Gamma} \varphi(t, x, v, f) (v \cdot n) \dv \dd S(x),
\end{align*} 
We can then add and subtract 
\[
	\mathcal D_\varphi(f, f') := \varphi(t, v', f') -\varphi(t, v,f) - \partial_r \varphi(t, v,f) \left(f' - f\right), 
\]
where we denote here and in the sequel $f' = f(v')$ and $f = f(v)$,
\begin{align*}
  \frac{\dd}{\dd t} \iint_{\R^d \times \Omega} &\varphi(t, v,f) \dv \dx \\
  =& - \iiint_{\R^d \times \R^d \times \Omega}\mathcal D_\varphi(f, f') K_f(v, v') \dd v' \dd v \dd x \\
	&+  \iiint_{\R^d \times \R^d \times \Omega}\left[ \partial_r \varphi(t, v,f) f - \varphi(t, v,f) - \partial_r \varphi(v', f') f' + \varphi(v', f')\right] K_f(v, v') \dd v' \dd v \dd x \\
	&+ \iint_{\R^d \times \Omega} \partial_t \varphi(t, v,f) \dv \dx -  \iint_{\Gamma} \varphi(f) (v \cdot n) \dv \dd S(x).
\end{align*}
If we now denote by
\begin{equation*}
  \Phi(t,v,r) := r \partial_r \varphi(t,v,r) - \varphi(t, v,r),
\end{equation*}
we get,
\begin{align*}
   \frac{\dd}{\dd t} \iint_{\R^d \times \Omega} &\varphi(t, v,f) \dv \dx \\
  =& - \iiint_{\R^d \times \R^d \times \Omega}\mathcal D_\varphi(f, f') K_f(v, v') \dd v' \dd v \dd x \\
                                               &+  \iiint_{\R^d \times \R^d \times \Omega}\left[ \Phi(f) - \Phi(f')\right] K_f(v, v') \dd v' \dd v \dd x \\
  &+ \iint_{\R^d \times \Omega} \partial_t \varphi(t, v,f) \dv \dx -  \iint_{\Gamma} \varphi( f) (v \cdot n) \dv \dd S(x).
 \end{align*}
Due to the cancellation lemma \cite{ADVW} (recalled below -- see \eqref{eq:cancellation}), we are thus led to,
\begin{align*}
  \frac{\dd}{\dd t} \iint_{\R^d \times \Omega} \varphi(t, v,f) \dv \dx
  =& - \iiint_{\R^d \times \R^d \times \Omega}\mathcal D_\varphi(f, f') K_f(v, v') \dd v' \dd v \dd x \\
  & + c_b  \iint_{\R^d \times \Omega} \Phi(f) \left(f \ast \abs{\cdot}^\gamma\right) \dd v \dd x \\
	&+ \iint_{\R^d \times \Omega} [\partial_t \varphi](t, v, f) \dv \dx -  \iint_{\Gamma} \varphi (f) (v \cdot n) \dv \dd S(x).
\end{align*}
We can now use the special structure of the function $\varphi$ that we are going to use. More precisely,
\[ \varphi (t,v,r) = \bar \varphi ((r-A(t,v))_+) \]
for some convex function $\bar \varphi$ vanishing at $0$ together with its derivative $\dot {\bar \varphi}$. 
Then we compute,
\[ \Phi (f) = \bar \Phi ((f-A)_+) \]
with $\bar \Phi (r) = r \dot {\bar \varphi} - \bar \varphi$ and
\begin{equation}\label{eq:D}
\mathcal{D}_\varphi (f,f') = d_{\bar \varphi} ((f-A)_+,(f'-A')_+) + \dot {\bar \varphi} ((f-A)_+) (A'-f')_+ - \dot {\bar \varphi} ((f-A)_+) (A'-A),
\end{equation}
where
\begin{multline}\label{eq:d}
	d_{\bar \varphi} ((f-A)_+,(f'-A')_+) = \bar\varphi((f'-A')_+) - \bar \varphi((f-A)_+) - \dot{ \bar \varphi}((f-A)_+) \big((f'-A')_+-(f-A)_+\big).
\end{multline}
We remark that the first two terms in $\mathcal D_{\varphi}$ are non-negative, while the third one is an error that we will have to handle. 

\subsection{Suitable weak sub-solutions}

In order to define suitable weak solutions, we consider a family of convex functions $ \varphi_a (r) = (r-a)_+$ associated with $a >0$. These functions are commonly used in the theory of
entropy solutions for scalar conservation laws and are known as \emph{Kruzhkov's semi-entropies}, see for instance \cite{MR1425004,MR2083859}.
In this case,
\begin{equation}\label{e:dphia}
  d_{\varphi_a} (r,s) = \begin{cases} (s-a)_+ & \text{ if } r \le a \\ (a-s)_+ & \text{ if } r > a \end{cases}
\end{equation}
and $\Phi_a (r) = a \un_{\{ f > a \}}$.

The upper bound on the mass density -- see condition~\eqref{e:hydro} -- and the bound on the time-integrated entropy production (see Theorem~\ref{t:entropy})
suggest to consider solutions that are $L^{q_0}$ in all variables with $q_0 = 1 +\frac{2s}d$, see Lemma~\ref{l:integrability}. 

In accordance with the formal computation that we performed above, we introduce the following notion of weak sub-solutions for the spatially inhomogeneous Boltzmann equation.
\begin{definition}[Suitable weak sub-solutions] \label{defi:suitable}
  Let $T \in (0,+\infty]$ and let $\Omega$ be a $C^{1}$ domain of $\R^d$.
  A non-negative function $f \in L^1 ((0,T) \times \Omega \times \R^d)$ is a \emph{suitable weak sub-solution} of \eqref{eq:boltzmann} if 
\begin{enumerate}[(i).]
\item {\sc in-flow}: given $f_b \colon (0,T) \times \Gamma_- \to [0,+\infty)$,
  for any real number $a>0$ and any  function $A \colon (0,T)  \times \R^d \to (0,+\infty)$ such that $\partial_t A$ and $\partial^2_{v_i,v_j} A$ exist and are bounded continuous in $(0,T) \times \R^d$,
  there holds in $\mathcal{D}'((0,T))$, 
  \begin{equation}
\label{e:convex-family}
\left\{
  \begin{aligned}
  \frac{\dd}{\dd t}\iint_{\R^d \times \Omega} &\varphi_a (f -A)  \dv \dx  \\
  &+ \iiint_{\R^d \times \R^d \times \Omega} d_{\varphi_a} ((f-A)_+,(f'-A')_+) K_f (v,v') \dv \dv' \dx \\
  &+ \iiint_{\R^d \times \R^d \times \Omega} \dot \varphi_a (f-A) (A'-f')_+ K_f (v,v') \dv \dv' \dx \\
  &\le  \iint_{\R^d \times \Omega} \bigg\{ c_b \bigg( \Phi_a (f-A) + A \dot \varphi_a (f-A)\bigg)(f \ast_v |\cdot|^\gamma) - \dot \varphi_a (f-A) \partial_t A  \bigg\} \dv \dx \\
  & + \iiint_{\R^d \times \R^d \times \Omega}\dot \varphi_a (f-A) (A'-A) K_f (v,v') \dv \dv' \dx -  \iint_{\Gamma_-} \varphi_a (\fb-A) (v \cdot n) \dv \dd S(x) 
\end{aligned}
\right.
\end{equation}
with $\varphi_a (r) = (r-a)_+$, $\Phi_a (r) = a \un_{\{ f > a \}}$ and $d_{\varphi_a}$ given by \eqref{e:dphia}.
\item  {\sc bounce-back}: it satisfies  \eqref{e:convex-family} with $\fb=0$.  
\item  {\sc specular reflection}: it satisfies \eqref{e:convex-family} with $\fb=0$. 
\item  {\sc diffuse reflection}: it satisfies \eqref{e:convex-family} with $\fb(t, x, v) := c_\mu (x)\mu(v) \int_{\Gamma_+} f(t, x, v') (v'\cdot n) \dd v'$.
\item {\sc Maxwell reflection}:  it satisfies \eqref{e:convex-family} with $\fb(t, x, v) := \iota (x)c_\mu(x) \mu(v) \int_{\Gamma_+} f(t, x, v') (v'\cdot n) \dd v'$. 
     \end{enumerate}
\end{definition}
\begin{remark}[General convex functions]\label{r:more-convex}
  The family of inequalities \eqref{e:convex-family} is only imposed for elementary (non-decreasing non-negative) convex functions $\varphi_a (r) = (r-a)_+$. But this implies that such inequalities hold true for general Lipschitz convex functions $\varphi$ such that $\varphi (0) = \dot \varphi (0) = 0$  by simply integrating in the parameter $a$. As already observed in \cite{gis}, a general $C^2$ convex function $\varphi$ satisfies,
  \[ \varphi (r) = \varphi (0) + \dot \varphi(0)r + \int_0^{+\infty} \ddot\varphi(a) (r-a)_+ \dd a, \]
  and we can easily check that
  \[ \Phi (r) = \int_0^{+\infty} \ddot \varphi(a) \Phi_a (r) \dd a \quad \text{   and } \quad 
    d_\varphi (r,\rho) = \int_0^{+\infty} \ddot \varphi (a) d_{\varphi_a} (r,\rho) \dd a.\]
  We will follow this idea through a truncation procedure (see Section~\ref{s:propagation}). 
\end{remark}
\begin{remark}[Link with renormalised solutions]\label{r:renormalized}
  R.~J.~DiPerna and P.-L.~Lions \cite{MR1014927} constructed weak solutions of the Boltzmann equation in the cutoff case
  by considering the (Lipschitz) concave non-linear change of variables $\varphi (f)$ with $\varphi (r) = \ln (1+r)$.
  We consider here a different type of generalized solutions, in particular we compose solutions with (Lipschitz) convex functions $\varphi$.  
\end{remark}
\begin{remark}[Positive terms]
  The first positive term in the left hand side of \eqref{e:convex-family} corresponds to the classical entropy production term.
  The second one is reminiscent of the ``good extra term'' first exhibited by L.~Caffarelli, C.~H.~Chan and A.~F.~Vasseur in \cite{MR2784330} for parabolic equations, and by the second author in \cite{L2024} for kinetic equations. It also plays a crucial role in the recent work by Z.~Ouyang and L.~Silvestre \cite{ouyang-silvestre}.
\end{remark}
\begin{remark}[Error terms]
  \label{r:integrability-rhs}
  Right-hand sides of time differential inequalities are well-defined and integrable in time.
  Indeed, $\dot \varphi_a$ and $\Phi_a$ are bounded and $f \ast_v |\cdot|^\gamma$ is integrable in $(t,x,v)$. The term involving the difference $A'-A$ comes from the dependence of the barrier function $A$ on the velocity variable. It did not appear in the work of Q.~Ouyang and L.~Silvestre \cite{ouyang-silvestre}. It is reminiscent of the ``bad terms'' that were treated by C.~Mouhot, L.~Silvestre and the first author in \cite{MR4033752}. 
\end{remark}

\subsection{Main result}

\begin{theorem}[Conditional decay estimates]\label{thm:decroissance}
Let the parameters $\gamma \in (-2s, 1]$ and $s \in (0, 1)$ of the non-cutoff collision kernel $B$ satisfy $\gamma + 2s \in (0,1]$.

Let $T \in (0,+\infty]$ and $f$ be a suitable weak sub-solution of the Boltzmann equation \eqref{eq:boltzmann} in $(0, T) \times \Omega \times \R^{d}$ with either in-flow, bounce-back, specular / diffuse / Maxwell reflection boundary conditions. We assume that  the entropy production $D(f)$ lies in $L^1 ((0,T) \times \Omega)$ and its mass, energy and entropy density functions satisfy Condition~\eqref{e:hydro}.
Let
  \[
    \qnsl = \begin{cases} d +1 +  \frac{ d }{2s}\min (\gamma,0),\qquad &\textrm{ if } -\frac{2sd}{d+2s} \leq \gamma \leq 1,\\
      d+1 -\frac{d}{d+2s}(2-\gamma) ,\qquad\qquad &\textrm{ if } -2s < \gamma < -\frac{2sd}{d+2s}.
    \end{cases}
  \]
Then, for any $q \in [0, \qnsl]$ there exist $C_q >0$, depending on $m_0,M_0, E_0, s, d, \gamma$ and $q$,  such that
\[
  f(t, x, v) \leq C_q \left(1 + t^{-\frac{d}{2s}}\right) \langle v \rangle^{-q} \quad  \text{a.e. in } (0, T) \times \Omega \times \R^{d}.
\]
In case of the in-flow boundary condition, we need to additionally assume that there exists $C_b > 0$ such that the boundary data $f_b$ satisfies
\[ \fb (t,x,v) \le \Cb \left(1 + t^{-\frac{d}{2s}}\right) \langle v \rangle^{-q} \quad \text{a.e. in } (0,T) \times \Gamma_-.\]
In this case, $C_q$ depends also on $C_b$. 
\end{theorem}
\begin{remark}[Boundary condition]
  We draw the attention of the reader towards the fact that a condition on the boundary data $f_b$ is necessary only in the in-flow case. For  bounced-back and specular reflection, we just have $f_b =0$. In the case of diffuse and Maxwell reflections, the function $f_b$ decays exponentially
  fast in the velocity variable and the constant in front of the Maxwellian is bounded due to the bounds on mass, energy and entropy density functions. 
\end{remark}
\begin{remark}[Assumptions]
    The assumptions on the entropy production (whose definition is recalled in \eqref{eq:entropy-prod} below) is natural in view of Boltzmann's H-theorem. It will imply some integrability on solutions (see next remark).
  The assumption on density functions was first considered in \cite{MR3551261}, it ensures that Boltzmann's collision operator is (weakly) elliptic. As far as this work is concerned, it will ensure that some coercivity estimates hold. 
\end{remark}
\begin{remark}[Integrability of solutions]\label{rmk:integrability}
The integrability of these suitable weak sub-solutions to \eqref{eq:boltzmann} stems from the entropy production estimate {\cite[Theorem~0.1]{chaker2022entropy}}: it implies that $f\in L^1((0, T)\times \Omega; L^{p_0}_{k_0}(\R^d))$ for $\frac{1}{p_0} = 1 -\frac{2s}{d}$ and $k_0 = \gamma + 2s - \frac{2s}{d}$. The definition of weighted Lebesgue spaces is recalled below in the paragraph dedicated  to notation. Consequently, when combined with the hydrodynamical bounds \eqref{e:hydro}, we obtain that the functions $f$ we work with are such that  $f\in L^{q_0}_{k_0^*}(0, T)\times \Omega\times\R^d)$ with $q_0 = \frac{d+2s}{d}$ and some $k_0^* \in (0, 2)$. 
\end{remark}
\begin{remark}[Large times]
  We emphasise that the estimates hold true uniformly in time, even in the case  $T=+\infty$. This is important in the conditional regularity
  programme. Obtaining bounds that are uniform in time is expected to imply that the large time behaviour can be studied in the class of regular solutions,
  see in particular the work by L.~Desvillettes and C.~Villani \cite{MR2116276} for bounce-back and specular reflection boundary conditions. 
\end{remark}

Whereas Theorem \ref{thm:decroissance} shows that independent of the initial data (or the boundary data), solutions to the Boltzmann equation generate some amount of decay, we can also show that there is no hope of generating an arbitrary amount of polynomial velocity decay in case of soft potentials ($\gamma < 0$).
\begin{theorem}[Propagation of a polynomial lower bound]\label{thm:no-gen}
Let the parameters $\gamma \in (-d, 0]$ and $s \in (0, 1)$ of the non-cutoff collision kernel $B$ satisfy $\gamma + 2s \in (0,1]$.

Let $T \in (0,+\infty)$ and $f$ be a non-negative classical  solution of the Boltzmann equation \eqref{eq:boltzmann} in $[0, T] \times \Omega \times \R^{d}$ with in-flow boundary condition. Let $f$ satisfy condition~\eqref{e:hydro} about mass, energy and entropy density functions.

Let $\hat q > d+2$. Let $\fin \geq 0$ and $f_b \geq 0$ denote the initial data and the boundary data. Assume there exist $a_0,a_b > 0$ such that 
\begin{equation}\label{eq:f0-lb}
	\fin(x, v) \geq a_0\langle v \rangle^{- \hat q}, \qquad \forall (x, v) \in \Omega \times \R^d, 
\end{equation}
and such that
\begin{equation}\label{eq:fb-lb}
	f_b(t, x, v) \geq a_b\langle v \rangle^{- \hat q}, \qquad \forall (t, x, v) \in [0, T]\times \partial \Omega \times \R^d.
\end{equation}

Then there exists $a_1 > 0$ and $\beta > 0$ such that
\[
	f(t, x, v) \geq a_1 e^{-\beta t} \langle v \rangle^{-\hat q}, \qquad \forall (t, x, v) \in [0, T]\times  \Omega \times \R^d.
\]
\end{theorem}
\begin{remark}[No generation for soft potentials]
  In particular, Theorem \ref{thm:no-gen} excludes the possibility of generating moments in case of soft potentials, that is $\gamma < 0$. The fact that $\hat q > d+2$ comes from the conditional regime \eqref{e:hydro}: finite energy cannot be guaranteed if $\fin$ satisfies \eqref{eq:f0-lb} for $\hat q \leq d+2$. If $f$ has $\ell \geq 2$ bounded higher velocity moments for almost every $(t, x) \in [0, T]\times \Omega$, then $\hat q$ in Theorem \ref{thm:no-gen} would need to be larger than $d + \ell$, that is $\hat q > d + \ell$. In fact, our work shows that if more velocity moments are bounded, then we can also generate more pointwise velocity decay. This is particularly interesting in the spatially homogeneous case where $L^1$ moments are propagated, so that higher moments is merely an assumption on the initial datum. For the sake of brevity, we do not elaborate further on higher moments in this work.
\end{remark}

\subsection{Comments}

This work forms part of a large body of literature dealing with velocity decay of solutions to kinetic equations. Before reviewing the literature, we make several comments about our main theorem and its proof. 
Deriving decay estimates, such as pointwise or moment bounds, is a classical theme in kinetic theory; it is key if the conditional regularity programme by L.~Silvestre and the first author (see \cite{MR4433077} and also \cite{MR4195746}) is to be extended to the case of domains. 

\medskip

\paragraph{\sc Conditional decay estimates.}
The conditional decay estimates
that we obtain for the space-inhomo\-geneous Boltzmann equation in the non-cutoff case are to be compared with the ones obtained by C.~Mouhot, L.~Silvestre and the first author \cite{MR4033752}. These results apply to classical solutions: using a barrier argument and a maximum principle they show that $f$ decays at a certain algebraic rate in velocity.  

We extend this result in several directions. First, the Boltzmann equation is posed in a domain supplemented with physically meaningful boundary conditions. Second, we consider weak (sub-)solutions instead of classical ones. We generate the same decay as they do, apart for very negative $\gamma < -\frac{2sd}{d+2s}$: in \cite{MR4033752}, their $\qnsl = d+1+\frac{d}{2s} \min(\gamma, 0)$ for $\gamma \in (-2s, 1)$, whereas we have a slightly weaker $\qnsl$ in case that $\gamma < -\frac{2sd}{d+2s}$.

\medskip

\paragraph{\sc Continuous De Giorgi's method.}
Our methods of proofs are developed after the article  by Z.~Ouyang and L.~Silvestre \cite{ouyang-silvestre} about conditional pointwise bounds of weak solutions. They  show conditional boundedness of the solutions by studying the evolution of the $L^2$-norm of the positive part of $(f-a)$ for some time-dependent function $a = a(t)$ along the flow of the equation. We point out that, in contrast with seminal works by De Giorgi and Moser, their proof does not involve any iteration procedure on the truncation parameter $a$. Instead, the truncation $a$ depends on time and the iteration is replaced with a differential inequality. In view of the article by B.~Perthame and A.~F.~Vasseur  \cite{MR2901315}, we can still look at this procedure as a legacy of De Giorgi's  ideas and call it a continuous De Giorgi's method. 

\medskip

\paragraph{\sc Truncated Convex Inequalities.}
In this work, we consider solutions with an integrability that stems from the control of the mass density and the entropy production (see Remark \ref{rmk:integrability}). We also choose the framework of weak solutions satisfying a family of inequalities \eqref{e:convex-family} associated with convex functions. A similar family of inequalities were introduced by F.~Golse, L.~Silvestre and the first author in the homogeneous case for very soft potentials \cite{gis}. It turns out that the classical notion of weak solutions constructed in \cite{MR1942465} termed $H$-solutions fit into our framework. We thus call them suitable weak sub-solutions. We emphasise that we do not use the equation but only the family of inequalities described above. It is reminiscent of the notion of De Giorgi classes from classical elliptic regularity; we refer the interested reader to \cite{zbMATH03184239}, or also to \cite{MR1465184} for a modern presentation.

\medskip

\paragraph{\sc Coercive terms.}
In the inequalities \eqref{e:convex-family}, there are coercive terms, that is to say positive terms on the left hand side; and error terms, corresponding to all terms appearing on the right hand side (some of which are positive, others have \textit{a priori} no sign). Amongst the coercive terms, we distinguish two types. 

Some are counterparts to the  $\dot{H}^1$-norm in De Giorgi's original article. They ensure that the sub-solution enjoys higher order integrability. These terms were exploited in \cite{gis} with techniques developed in \cite{MR4049224,chaker2022entropy}. They are non-local in nature, but also linear. They  rely on properties of the kernel $K$ that are derived from the hydrodynamical bounds -- see Subsection~\ref{sub:coll}. 

The second coercive term is what we call the ``good extra term'', in reference to \cite{MR2784330}, where an additional ``coercive'' term was exhibited in the non-local setting. This term was exploited for kinetic equations for the first time in \cite{L2024}. This additional term is the most important in the proof. It is a purely non-local effect.

\medskip

\paragraph{\sc Error terms.}
There are various error terms, appearing for different reasons.

There is the error coming from the dependence of the barrier $A$ on the velocity variable. Even if it resembles certain error terms in \cite{MR4033752}, we use here a truncation argument in contrast to the pointwise contact barrier argument employed by \cite{MR4033752}, and we split the collision operator differently.

Several other terms appear after using the cancellation lemma. They are non-linear in nature: they make appear a product with the convolution of $f$ with $|\cdot|^\gamma$.

Then there are error terms coming from the time dependence of the barrier and the boundary condition.
\medskip

\paragraph{\sc Generation of pointwise decay.}
Our main result asserts that, as long as mass, energy and entropy densities are under control, suitable weak sub-solutions satisfy a polynomial decay in the velocity variable up to order $q \leq \qnsl \leq d+1$. This result holds for hard ($\gamma > 0$) and moderately soft ($-2s<\gamma < 0$) potentials. 

In case of soft potentials ($\gamma < 0$), we show that we cannot generate decay higher than $d+2$. A similar result is true for the Landau equation \cite[Theorem 4.3]{cameron-snelson-silvestre}.

\medskip
\paragraph{\sc Main ideas.}
The strategy is as follows. We start with exploiting the coercivity of the signed terms that stems from the boundedness of the macroscopic quantities in the conditional regime \eqref{e:hydro}. As explained above, this yields two different terms: the first one is linear and can be associated to the energy from the equation; the second one is the ``good extra term'', which occurs from the non-locality of the collision operator \eqref{eq:Q}. We then bound the error terms, that is all the terms on the right hand side of \eqref{e:convex-family}, by these two  coercive terms. This is done mainly through interpolation, weighted Hölder estimates, and a careful study of the non-linearity of \eqref{eq:Q}. This results in the monotonicity of the $L^{q_0}$-norm of $(f-A)_+$, where $q_0$ is the integrability of our weak sub-solutions, and where $A = A(t, v) = a(t) \langle v \rangle^{-q}$ encodes the temporal and velocity decay. We have to impose the restriction $q \in [0, \qnsl]$ with $\qnsl$ given in \S\ref{constants} - \eqref{itm:qnsl}, and therefore obtain Theorem \ref{thm:decroissance}. 

We then construct a sub-solution to \eqref{eq:boltzmann} when $\gamma \leq 0$: we show that $\Psi(t, v) := e^{-\beta t} \langle v \rangle^{\hat q}$ is a sub-solution to \eqref{eq:boltzmann} for $\beta > 0$ sufficiently large, provided that $\gamma \leq 0$. Because of the maximum principle, we see that if $f$ initially does not decay, then $f$ will not decay at later times. The argument works whenever $\hat q > d+2$, since we assume the energy to be bounded at all times. If we initially have $m$ bounded moments, then the argument works for $\hat q > d+m$.

\subsection{Review of literature}

The study of velocity moments of solutions to kinetic equations plays a central role since Boltzmann's (and Landau's) collision operator(s) integrates the velocity variable in the open space $\R^d$. It has a long history, mostly in the space-homogeneous setting, that goes back to the article by T.~Carleman \cite{zbMATH02543771}. It is precisely described in the introduction of \cite{MR4033752}.
Let us give references, summarise the review of literature from \cite{MR4033752} and review the literature written since then. The reader is also referred to  references contained in the articles that are quoted in this subsection. 
\medskip

\paragraph{\sc Velocity moments.}
We first review contributions to the study of moments of solutions in the velocity variable. 
Maxwell potentials are treated in \cite{MR0075725,MR0075726}. In the case of hard potentials and
angular cutoff, Povzner's inequalities \cite{MR0142362} are commonly used: see works by T.~Elmroth \cite{MR684411}, L.~Desvillettes \cite{zbMATH00474497}, S.~Mischler and B.~Wennberg \cite{MR1697562}, X.~Lu \cite{MR1716814} and B.~Wennberg (in the non-cutoff case) \cite{MR1461113}. A.~Bobylev considered exponential moments in \cite{MR1478067}, see also
\cite{gamba-panferov-villani-2009}, and in particular the work \cite{MR3005550} in this direction, where they establish the creation and the propagation of exponential moments to the spatially homogeneous Boltzmann equation for hard potentials.   The case of moderately soft potentials and angular cutoff is addressed by L.~Desvillettes \cite{zbMATH00474497}, see also \cite{MR1942465,MR2359877} in this direction. We finally mention the work by M.~Gualdani, S.~Mischler and C.~Mouhot \cite{zbMATH06889665} that focusses on hard spheres and makes
assumptions on hydrodynamical quantities that are similar to what is assumed in this work. More recently, \cite{MR3759871} studied the generation and propagation of Mittag-Leffler moments  for the space hommogeneous Boltzmann equation with hard potentials without cutoff. N.~Fournier continued this study (hard potentials, non-cutoff) in \cite{MR4315665} and showed that exponential moments are generated and discussed their optimality. C.~Cao, L.~B.~He and J.~Ji \cite{MR4704643} studied the propagation of exponential moments in $L^2$ for (very) soft potentials in a perturbative regime.

\medskip

\paragraph{\sc Pointwise bounds and decay.}
There are fewer results about pointwise decay. It starts with works by T.~Carleman \cite{zbMATH02543771,zbMATH03126493}, later extended by L.~Arkeryd \cite{MR711482}. Exponential pointwise upper bounds were obtained in \cite{gamba-panferov-villani-2009}, see also \cite{gamba2017pointwise}. More recently, the work by C.~Mouhot, L.~Silvestre and the first author \cite{MR4033752} addressed the question of appearance and propagation of polynomial decay in the velocity variable under condition~\eqref{e:hydro} on hydrodynamical quantities. They generate a fixed number of moments for classical solutions on the torus. In this vein, S.~Cameron and S.~Snelson \cite{MR4105382} established similar results in the case $\gamma +2s >2$. The study of polynomial decay is also central in \cite{MR4112183} in which the authors are able to deal with very soft potentials ($\gamma+2s<0$). See also \cite{MR4168919} for results dealing with the Landau equation.
In \cite{MR4416998}, C.~Henderson and W.~Wang are interested in very soft potentials and short time existence. They work in the class of solutions with polynomial decay.

We would finally like to mention some results making use of iterative De Giorgi techniques: \cite{cao, deng, AMSY}. These papers derive pointwise bounds or decay of classical weak solutions in the perturbative regime (close to Maxwellians).

\subsection{Open questions}

Solutions of the spatially inhomogeneous Boltzmann equation converge to Maxwel\-lians for large times \cite{MR2116276}. We recall that under the condition~\eqref{e:hydro}, C.~Mouhot, L.~Silvestre and the first author \cite{MR4112729} proved that solutions stay above a Maxwellian. 
It is thus natural to ask ourselves if they can be bounded from above by another Maxwellian. Unfortunately, our proof does not yield neither such an (optimal) upper bound nor any exponential decay (in $v$). The first open question is thus to show the propagation of pointwise Gaussian bounds. 

Another natural open question is to show or disprove the generation of arbitrary decay for hard potentials. We emphasize that in \cite{MR4033752}  solutions  are
assumed to decay at any polynomial rate $q$ and an a priori estimate is given on the related constant $C_q$. Without this assumption on solutions, methods from \cite{MR4033752}
yield the decay rate $d+1$ for hard potentials. 

The critical case $\gamma + 2s=0$ is left open. The case of very soft potentials is also another natural open question.

\subsection{Notation}
For $p \in [1, \infty]$ and $k \in \R$,  the weighted Lebesgue space  $L^p_k$ is given by
\[
	L^p_k(\R^d):= \left\{f  \colon\R^d \to \R \;\text{ s.t. }  \int_{\R^d} f^p(v) \langle v \rangle^{kp} \dd v < +\infty\right\},
\]
where $\langle v \rangle := (1 + \abs{v}^2)^{\frac{1}{2}}$. When $k=0$, we simply write $L^p$. 

For $a \in \R$ we define $a_+ := \max(a, 0)$.

The volume of the unit sphere of $\R^m$ is denoted by $\omega_{m-1}$. 

\subsection{Constants}
\label{constants}

We gather here parameters and constants that are used repeatedly in statements and proofs. 
\begin{enumerate}[(i).]
\item The dimension of the $x$ and $v$ variables is denoted by $d$. It is always larger than or equal to $2$. 
\item
  The parameters $\gamma$ and $s$ from the kernel satisfy: $\gamma \in \left(-\frac{2sd}{d+2s},1\right]$ and $s \in (0,1)$ and $0 < \gamma +2s \leq 1$. 
\item\label{itm:p0}
The Lebesgue exponent $p_0>1$,  $$\frac1{p_0}  = 1 - \frac{2s}d$$  comes from the entropy production estimate. 
\item\label{itm:q0}
The Lebesgue exponent $q_0>1$, $$q_0=1+\frac{2s}d$$ is related to integrability of solutions. 
\item\label{itm:k0}
The moment exponent $$k_0 = (\gamma+2s) - \frac{2s}d$$  appears in the entropy production estimate. 
\item\label{itm:lq}
  To any decay exponent $q \ge 1$ is associated a  moment exponent $$l_q = (\gamma+2s) +  \frac{2s}d  - q \left(1 + \frac{2s}d\right).$$
\item\label{itm:qnsl}
  The decay exponent $\qnsl$  is given by
  \[
    \qnsl = \begin{cases} d +1 +  \frac{ d }{2s}\min (\gamma,0),\qquad &\textrm{ if } -\frac{2sd}{d+2s} \leq \gamma \leq 1,\\
    d+1 -\frac{d}{d+2s}(2-\gamma) ,\qquad\qquad &\textrm{ if } -2s < \gamma < -\frac{2sd}{d+2s}.
    \end{cases}
  \]
\end{enumerate}

\section{Preliminaries}
\label{s:prelim}

We gather in this section known results and technical lemmas that will be used afterwards. 

\subsection{The collision operator}
\label{sub:coll}

We recall that the use of Carleman coordinates allows us to write the collision operator $Q(f,f)$ and  the kernel $K_f$ as follows,
\[
	Q(f, f) (v) = \int_{\R^d} \left[ f(v') K_f(v, v') - f(v) K_f(v', v)\right] \dd v'
\]
and
\begin{equation}\label{eq:K-f}
	K_f(v, v') = 2^{d-1} \abs{v'-v}^{-1} \int_{w \perp v' -v} f(v+w) B(r, \cos \theta)r^{-d+2} \dd w,
\end{equation}
where $r^2 = \abs{v'-v}^2 + \abs{w}^2$ and $\cos \theta = \frac{|w|^2 - |v-v'|^2}{|w|^2 + |v-v'|^2}$.

\subsubsection{Coercivity}

Under the condition~\eqref{e:hydro}, we know that for any fixed  $(t, x) \in [0,T] \times \Omega$ there exists a set $\Xi(v) \subset \R^d$ for every $v \in \R^d$ such that $\Xi(v)$ is a symmetric cone, and such that for $v' - v\in \Xi(v)$ there holds
\begin{equation}\label{eq:coercivity}
	K_f(v, v') \geq  \lambda_0 \langle v \rangle^{\gamma + 2s + 1}\abs{v-v'}^{-d -2s},
\end{equation}
with $\lambda_0 = \lambda_0(d, m_0, M_0, E_0, H_0)$. Moreover, there holds
\begin{equation}\label{eq:cone-meas}	
	\abs{\Xi(v) \cap  \mathbb S^{d-1}} \geq c_0 \langle v \rangle^{-1},
\end{equation}
for some constant $c_0= c_0(d, m_0, M_0, E_0, H_0)$. The set $\Xi(v)$ is the \textit{cone of non-degeneracy} of $K_f(v, v')$.
We can also ensure that the following estimate holds,
\begin{equation}\label{eq:cone-meas-bis}	
	\abs{\Xi(v) \cap  B_3 \setminus B_2} \geq c_0 \langle v \rangle^{-1}.
\end{equation}
We refer to \cite[Lemma 4.8]{MR3551261} and \cite[Lemma 2.2]{MR4033752}.

\subsubsection{Upper Bound}
For $(t, x) \in [0,T] \times \Omega$ there exists a postive constant $\Lambda_0=\Lambda_0(m_0, M_0, E_0, H_0, d)$ such that
\begin{equation}\label{eq:upperbound}
	\forall v \in \R^d, ~ \forall r > 0, \qquad \int_{B_r(v)} K_f(v, v') \abs{v' - v}^2 \dd v' \leq \Lambda_0 \langle v \rangle^{\gamma +2s} r^{2-2s}. 
\end{equation}
We refer to \cite[Corollary 4.4]{MR3551261}.

\subsubsection{Cancellation}
For $(t, x) \in [0,T] \times \Omega$  there holds
\begin{equation}\label{eq:cancellation}
	\forall v \in \R^d: \int_{\R^d} \left(K_f(v, v') - K_f(v', v)\right) \dd v' = c_b \int_{\R^d} f(v_*)  \abs{v-v_*}^\gamma \dd v_*, 
\end{equation}
with 
\begin{equation*}
	c_b = \int_{\mathbb S^{d-1}} \left\{ \frac{2^{\frac{d+\gamma}{2}}}{(1+\sigma \cdot \textbf e)^{\frac{d+\gamma}{2}}} - 1\right\} b(\sigma \cdot\textbf e) \dd \sigma > 0,
\end{equation*}
for any $\textbf e \in \mathbb S^{d-1}$. See \cite{ADVW}.

\subsubsection{Symmetry}
For any $(t, x) \in [0,T]\times \Omega$ and $v,w \in \R^d$ there holds
\begin{equation}\label{eq:symmetry}
	K_f(v, v+w) = K_f(v, v-w).
\end{equation}

\subsection{Integrability of solutions}

The following lemma is a consequence of Hölder's inequality and a proof is given in \cite{gis}.
\begin{lemma}[H\"older's inequality with weights]\label{l:holder-weight}
  Let $p,q,r \in [1,+\infty]$ and $p \le r \le q$. Then,
  \[ \| f  \|_{L^r_{k_r}} \le \| f \|_{L^p_{k_p}}^\alpha
    \| f  \|_{L^q_{k_q}}^{1-\alpha} \]
  with $\alpha \in (0,1)$ such that
  \( \frac1{r} = \frac{\alpha}p + \frac{1-\alpha}q\)  and \( k_r = \alpha k_p + (1-\alpha)k_q.\)
\end{lemma}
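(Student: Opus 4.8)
The statement is a weighted interpolation inequality for Lebesgue norms, and the natural route is to reduce it to the classical (unweighted) three-exponent Hölder/interpolation inequality by absorbing the weights into the function. First I would set $g(v) := f(v)\langle v\rangle^{k_r}$, so that $\|f\|_{L^r_{k_r}} = \|g\|_{L^r}$. The goal is then to bound $\|g\|_{L^r}$ by $\|f\|_{L^p_{k_p}}^\alpha \|f\|_{L^q_{k_q}}^{1-\alpha}$. The key algebraic observation is that, with $\alpha$ determined by $\frac1r = \frac\alpha p + \frac{1-\alpha}q$ and $k_r = \alpha k_p + (1-\alpha)k_q$, one can write pointwise
\[
  |g(v)|^r = |f(v)|^r \langle v\rangle^{k_r r}
  = \Big(|f(v)|^{p}\langle v\rangle^{k_p p}\Big)^{\frac{\alpha r}{p}}
    \Big(|f(v)|^{q}\langle v\rangle^{k_q q}\Big)^{\frac{(1-\alpha) r}{q}},
\]
since the exponents on $|f|$ add up to $\frac{\alpha r}{p}\cdot p + \frac{(1-\alpha)r}{q}\cdot q = \alpha r + (1-\alpha)r = r$, and likewise the exponents on $\langle v\rangle$ add up to $\frac{\alpha r}{p}\cdot k_p p + \frac{(1-\alpha)r}{q}\cdot k_q q = r(\alpha k_p + (1-\alpha)k_q) = k_r r$. (This is the step to carry out only when all of $p,q,r$ are finite; the cases where one or more equals $+\infty$ are handled separately below.)

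Next I would apply Hölder's inequality to the product on the right-hand side. Set $P := \frac{p}{\alpha r}$ and $P' := \frac{q}{(1-\alpha)r}$; the defining relation $\frac1r = \frac\alpha p + \frac{1-\alpha}q$ is exactly $\frac1P + \frac1{P'} = 1$, and $P, P' \ge 1$ because $p \le r \le q$ forces $\alpha r \le p$ and $(1-\alpha) r \le q$. Hölder then gives
\[
  \int_{\R^d} |g|^r \dd v
  \le \Big(\int_{\R^d} |f|^{p}\langle v\rangle^{k_p p}\dd v\Big)^{\frac{\alpha r}{p}}
      \Big(\int_{\R^d} |f|^{q}\langle v\rangle^{k_q q}\dd v\Big)^{\frac{(1-\alpha) r}{q}}
  = \|f\|_{L^p_{k_p}}^{\alpha r}\,\|f\|_{L^q_{k_q}}^{(1-\alpha) r},
\]
and taking $r$-th roots yields the claim. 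For the endpoint cases: if $q = +\infty$, then necessarily $k_r = \alpha k_p + (1-\alpha) k_q$ with the $L^\infty_{k_q}$ norm controlling $|f(v)| \le \|f\|_{L^\infty_{k_q}}\langle v\rangle^{-k_q}$ pointwise a.e., which one substitutes directly and then applies ordinary Hölder to the remaining two-factor integral; the subcase $p=r$ (so $\alpha=1$) or $r=q$ (so $\alpha=0$) is trivial; and if $p=+\infty$ one argues symmetrically.

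The only subtlety, and the one point worth flagging, is bookkeeping the exponent constraints: one must check that $\alpha \in (0,1)$ and that the Hölder conjugates $P,P'$ are genuinely $\ge 1$, which both follow from the hypothesis $p \le r \le q$ together with the two defining identities for $\alpha$ and $k_r$. There is no real analytic obstacle here — the proof is a direct application of the classical interpolation inequality after the weight has been folded into the function — so the ``hard part'' is merely ensuring the degenerate and infinite-exponent cases are stated cleanly. I would remark that this is exactly the argument given in \cite{gis}, so in the present paper it suffices to cite it; I reproduce the short computation above only for completeness.
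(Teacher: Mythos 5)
Your proof is correct and is exactly the standard argument: fold the weight into the function, factor $|f|^r\langle v\rangle^{k_r r}$ as a product of two powers, and apply Hölder with conjugate exponents $p/(\alpha r)$ and $q/((1-\alpha)r)$. The paper itself does not reproduce a proof (it simply cites \cite{gis}), but your argument is the one that reference gives, so you are in full agreement with the intended route.
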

We can use the previous lemma to derive the following one. 
\begin{lemma}[Integrability of solutions] \label{l:integrability}
  If $f \in L^\infty((0, T) \times \bar \Omega; L^1_2(\R^d))$ and $f\in L^1((0, T) \times \bar \Omega; L^{p_0}_{k_0}(\R^d))$ with $k_0$ given by \eqref{itm:k0}, then
\(f \in L^{q_0}_{k_0^\ast}((0, T) \times \bar \Omega \times \R^d),\) where $q_0$ and $k_0^*$ are given by
\[
	q_0 = 2 - \frac{1}{p_0} = 1 + \frac{2s}{d} \in (1, 2), \qquad k_0^\ast = \frac{2s + d(\gamma + 2s)}{d+2s} > 0.
\]
\end{lemma}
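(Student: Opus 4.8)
The plan is to interpolate between the two given bounds using the weighted Hölder inequality from Lemma~\ref{l:holder-weight}. We are given $f \in L^\infty_t L^1_{2,v}$ (uniformly in $x$), which is an integrability-$1$ estimate with moment weight $2$, and $f \in L^1_{t,x} L^{p_0}_{k_0,v}$, which is an integrability-$p_0$ estimate with moment weight $k_0 = (\gamma+2s) - \frac{2s}{d}$. We want to land on a Lebesgue exponent $q_0$ strictly between $1$ and $p_0$ so that, after applying Hölder \emph{in the velocity variable} at each fixed $(t,x)$, the resulting exponents in $(t,x)$ come out to be \emph{the same}, namely $q_0$, so that a final Hölder (or just Fubini, if the exponents match exactly) in $(t,x)$ closes the estimate. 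The key algebraic constraint is to pick the interpolation parameter $\alpha \in (0,1)$ so that the $L^p_t$ exponent obtained from the $L^1$ factor and the $L^{p_0}_{t,x}$ factor agree with the target $L^{q_0}$; this is exactly the choice that makes $q_0 = 1 + \frac{2s}{d}$ (equivalently $q_0 = 2 - \frac{1}{p_0}$) appear.

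The steps, in order. First, fix $(t,x)$ and apply Lemma~\ref{l:holder-weight} in $v$ with $p=1$, $q=p_0$, $r=q_0$, moment exponents $k_p = 2$ and $k_q = k_0$, and the interpolation parameter $\alpha$ determined by $\frac{1}{q_0} = \alpha + \frac{1-\alpha}{p_0}$. Solving gives $\alpha = \frac{p_0 - q_0 p_0 + q_0 - 1}{\cdots}$; more cleanly, since $q_0 = 2 - \frac1{p_0}$ one checks $\alpha = \tfrac12$ works — indeed $\frac12 \cdot 1 + \frac12 \cdot \frac1{p_0} = \frac12(1 + \frac1{p_0}) = \frac12(3 - q_0)$, hmm, let me instead just assert the value of $\alpha$ comes out so that the bookkeeping below is consistent and record that the resulting moment weight is $k_{q_0} = \alpha \cdot 2 + (1-\alpha) k_0$, which one verifies equals $k_0^\ast = \frac{2s + d(\gamma+2s)}{d+2s}$ after substituting $\alpha$ and $k_0$. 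Second, raise the pointwise-in-$(t,x)$ inequality $\|f(t,x,\cdot)\|_{L^{q_0}_{k_0^\ast}} \le \|f(t,x,\cdot)\|_{L^1_2}^{\alpha} \|f(t,x,\cdot)\|_{L^{p_0}_{k_0}}^{1-\alpha}$ to the power $q_0$, and integrate in $(t,x)$ over $(0,T)\times\Omega$. Third, on the right-hand side use that $\|f(t,x,\cdot)\|_{L^1_2} \le \|f\|_{L^\infty_{t,x} L^1_2}$ is a constant, pulling out $\|f\|_{L^\infty_{t,x}L^1_2}^{\alpha q_0}$, and then observe that the remaining integral $\iint \|f(t,x,\cdot)\|_{L^{p_0}_{k_0}}^{(1-\alpha)q_0}\,dx\,dt$ is exactly $\|f\|_{L^1_{t,x}L^{p_0}_{k_0}}$ provided $(1-\alpha)q_0 = 1$ — which is precisely the defining relation for $q_0$ given $\alpha$ (equivalently, this is the condition that forces $q_0 = 1 + \frac{2s}{d}$, using $1-\alpha$ in terms of $p_0$). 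Conclude that $\|f\|_{L^{q_0}_{k_0^\ast}((0,T)\times\Omega\times\R^d)}^{q_0} \le \|f\|_{L^\infty_{t,x}L^1_2}^{\alpha q_0}\,\|f\|_{L^1_{t,x}L^{p_0}_{k_0}} < +\infty$.

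There is essentially no deep obstacle here; the only thing to be careful about is the arithmetic that simultaneously reconciles three identities — that $(1-\alpha)q_0 = 1$, that $\frac1{q_0} = \alpha + \frac{1-\alpha}{p_0}$, and that the interpolated weight $\alpha\cdot 2 + (1-\alpha)k_0$ equals the claimed $k_0^\ast$. I would simply compute $\alpha$ from the first relation, namely $\alpha = 1 - \frac1{q_0} = \frac{2s}{d+2s}$ (using $q_0 = \frac{d+2s}{d}$), then verify the second relation holds with $\frac1{p_0} = 1 - \frac{2s}{d}$, and finally substitute into $\alpha\cdot 2 + (1-\alpha)k_0$ with $k_0 = (\gamma+2s) - \frac{2s}{d}$ to get $\frac{2s}{d+2s}\cdot 2 + \frac{d}{d+2s}\big((\gamma+2s) - \frac{2s}{d}\big) = \frac{4s + d(\gamma+2s) - 2s}{d+2s} = \frac{2s + d(\gamma+2s)}{d+2s} = k_0^\ast$, and note $k_0^\ast > 0$ since $\gamma + 2s > 0$. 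This is the entire proof; the ``hard part'' is merely making sure the exponent bookkeeping is displayed cleanly rather than any conceptual difficulty.
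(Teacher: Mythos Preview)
Your approach is correct and matches the paper's: the paper simply says ``We can use the previous lemma to derive the following one'' without writing out the details, so your proof is in fact more explicit. The key computation --- taking $\alpha = 1 - \tfrac{1}{q_0} = \tfrac{2s}{d+2s}$ so that $(1-\alpha)q_0 = 1$, verifying $\tfrac{1}{q_0} = \alpha + \tfrac{1-\alpha}{p_0}$, and checking the weight interpolates to $k_0^\ast$ --- is exactly right; you should just delete the false start about $\alpha = \tfrac12$ and present the clean computation you give at the end.
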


\subsubsection{Entropy production estimate}
The entropy production of a function $f$ is given by the following formula,
\begin{equation}\label{eq:entropy-prod}
\begin{aligned}
  D (f) &= - \langle Q(f,f) , \ln f \rangle \\
        &= \frac12 \iiint_{\R^d \times \R^d \times \mathbb{S}^{d-1}} [f(v_*)f(v)-f(v'_*)f(v')] \ln \frac{f(v_*) f(v)}{f(v'_*)f(v')} B(|v-v_*|,\cos \theta) \dsigma \dv \dv'.
\end{aligned}
\end{equation} 
\begin{theorem}[Entropy production estimate -- {\cite[Theorem~0.1]{chaker2022entropy}}] \label{t:entropy}
  Let $\gamma \in (-d,1]$ and $s \in (0,1)$. Then, for any non-negative function $f$  verifying the bounds \eqref{e:hydro}, the entropy production $D(f)$ satisfies
  \[ \|f \|_{L^{p_0}_{k_0} (\R^d)} - c_0 M_0^2 \le C_0 D (f),  \]
  where $p_0$ is such that $\frac{1}{p_0} = 1 - \frac{2s}d$ and $k_0 = \gamma +2s - \frac{2s}{d}$
  and $C_0 = C_0 (d,b,\gamma,s,m_0,M_0,E_0,H_0)$ and $c_0 = c_0 (d,b,\gamma,s)$.
\end{theorem}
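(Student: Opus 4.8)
The statement to prove is the entropy production estimate of Theorem~\ref{t:entropy}: that $\|f\|_{L^{p_0}_{k_0}(\R^d)} \le C_0 D(f)$ for non-negative $f$ satisfying the hydrodynamic bounds~\eqref{e:hydro}, with $\tfrac1{p_0}=1-\tfrac{2s}{d}$ and $k_0=\gamma+2s-\tfrac{2s}{d}$. Since this is quoted from \cite[Theorem~0.1]{chaker2022entropy}, the proof I would write is a recollection of the argument there. The starting point is the well-known lower bound for the entropy production functional à la Alexandre–Desvillettes–Villani–Wennberg: one compares the full entropy dissipation $D(f)$ to a Fisher-information-type quantity by exploiting the convexity inequality $(a-b)\ln(a/b)\ge (\sqrt a-\sqrt b)^2$ applied with $a=f(v_*)f(v)$, $b=f(v_*')f(v')$. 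This converts $D(f)$ into control of $\iint |\sqrt{f'f_*'}-\sqrt{f f_*}|^2 B \dd\sigma$, which (after the classical manipulations separating the "gain of regularity in $v$" from the convolution structure) gives a coercive bound of the form $\langle \sqrt f, (-\Delta_v)^s(\text{weight}) \sqrt f\rangle$ modulo lower-order terms absorbed using the mass, energy and entropy bounds~\eqref{e:hydro}.

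\textbf{Key steps.} First, I would reduce to a commutator/coercivity estimate: using the cancellation lemma and the Carleman representation, rewrite $D(f)$ so that the leading term is comparable to $\iint_{\R^d\times\R^d} (\sqrt{f(v')}-\sqrt{f(v)})^2 K_f(v,v')\dd v'\dd v$ up to errors controlled by $\|f\|_{L^1_2}$ and $\|f\sqrt{\ln f}\|$. Second, invoke the coercivity bound~\eqref{eq:coercivity} for $K_f$ on the cone of non-degeneracy $\Xi(v)$ together with the measure estimate~\eqref{eq:cone-meas}: this yields a genuine fractional-Sobolev bound, $\iint (\sqrt{f(v')}-\sqrt{f(v)})^2 K_f \gtrsim \int \langle v\rangle^{\gamma+2s+1}\,|(-\Delta)^{s/2}\sqrt f|^2\langle v\rangle^{-1}\dd v$ in a suitable localized sense, i.e. a weighted $\dot H^s$ norm of $\sqrt f$ with weight $\langle v\rangle^{\gamma+2s}$. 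Third, apply the fractional Sobolev embedding $\dot H^s(\R^d)\hookrightarrow L^{\frac{2d}{d-2s}}(\R^d)$ to $\langle v\rangle^{(\gamma+2s)/2}\sqrt f$: since $\frac{2d}{d-2s}\cdot\tfrac12 = \tfrac{d}{d-2s}=p_0$ and the weight contributes exponent $\tfrac{\gamma+2s}{2}\cdot 2 = \gamma+2s$, interpolated against the $\langle v\rangle^{-1}$ and $\langle v\rangle^{-2}$ losses and the $L^1$/energy bounds, one recovers precisely $\|f\|_{L^{p_0}_{k_0}}$ with $k_0=\gamma+2s-\tfrac{2s}{d}$ (the $-\tfrac{2s}{d}$ being exactly the weight discount from distributing $\langle v\rangle^{\gamma+2s}$ across the $L^{p_0}$ norm via Hölder against the $L^1_2$ bound). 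Fourth, collect the lower-order error terms — those involving the convolution $f\ast|\cdot|^\gamma$ arising from the cancellation lemma, and the localization errors — and absorb them into $D(f)$ plus a constant depending only on $m_0,M_0,E_0,H_0$, using that $\gamma+2s>0$ and the entropy bound to control $\{f\le 1\}$ versus $\{f> 1\}$ regions (this is where the entropy $H_0$, not just mass and energy, genuinely enters, controlling $\int_{\{f\le 1\}} f|\ln f|$).

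\textbf{Main obstacle.} The delicate point is step two–three: converting the bilinear dissipation into a \emph{weighted} fractional Sobolev norm with the sharp weight $\langle v\rangle^{\gamma+2s}$ and then tracking exactly how much weight survives after Sobolev embedding and Hölder interpolation, so that the final exponent is $k_0=\gamma+2s-\tfrac{2s}{d}$ and not something weaker. The difficulty is compounded by the non-cutoff singularity $b(\cos\theta)\approx|\theta|^{-d+1-2s}$, which forces one to work with the genuinely non-local operator and its cone of non-degeneracy rather than a clean $(-\Delta_v)^s$; one cannot simply quote a standard Sobolev inequality but must use the kernel lower bound~\eqref{eq:coercivity}–\eqref{eq:cone-meas} to build an ad hoc embedding, which is the technical heart of \cite{chaker2022entropy}. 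Handling the sign-indefinite error terms from the cancellation lemma uniformly in $v$ (they grow like $\langle v\rangle^\gamma$, with $\gamma$ possibly negative, so they are in fact favorable at large $v$ but require care near $v=0$) is a secondary but nontrivial bookkeeping task. I would therefore present steps one and four in detail and refer to \cite{chaker2022entropy} for the precise constants in the weighted embedding, since reproducing that in full is beyond a self-contained recollection.
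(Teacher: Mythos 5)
The paper itself does not prove Theorem~\ref{t:entropy}: the statement is imported verbatim from \cite[Theorem~0.1]{chaker2022entropy} and used as a black box, so deferring the argument to that reference, as you ultimately do, matches the paper's own treatment. With that caveat, two remarks on your sketch. First, it does not describe the route actually taken in \cite{chaker2022entropy}. That proof does not pass through a weighted $\dot H^{s}$ bound for $\sqrt f$ followed by a fractional Sobolev embedding (the strategy going back to \cite{ADVW}); it is a direct level-set/Chebyshev argument: one starts from an elementary pointwise lower bound for $(a-b)\ln(a/b)$ on the region where $a$ dominates $b$, rewrites the dissipation \eqref{eq:entropy-prod} in Carleman form, restricts to the cone of non-degeneracy provided by \eqref{eq:coercivity}--\eqref{eq:cone-meas}, and for each $v$ chooses a radius $R(v)$, via Chebyshev's inequality against the $L^{p_0}_{k_0}$ norm itself, so that the relevant sublevel set fills at least half of the cone; optimizing $R(v)$ then yields a self-improving inequality that closes to the stated bound. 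This is precisely the scheme the present paper reproduces (for the Bregman-type quantities $d_{\phizk}$) in Proposition~\ref{p:coercivity-1}, which it explicitly attributes to \cite{chaker2022entropy}.

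Second, viewed as a proof rather than a citation, your outline has a gap exactly where you flag the ``main obstacle'': the conversion of the cone lower bound into the $L^{p_0}_{k_0}$ norm with the sharp weight $k_0=\gamma+2s-\tfrac{2s}{d}$ is asserted, not carried out, and this is the entire content of the cited theorem; the Hölder-against-$L^1_2$ bookkeeping you invoke does not by itself produce that exponent. Also, your account of where the entropy bound enters is off: $H_0$ is used (together with $m_0,M_0,E_0$) to establish the non-degeneracy of the kernel, i.e.\ \eqref{eq:coercivity}--\eqref{eq:cone-meas}, not to control $\int_{\{f\le 1\}} f|\ln f|$, and the cancellation-lemma error terms you propose to absorb do not arise in the cited argument, since it bounds $D(f)$ from below directly rather than manipulating the collision operator against a test function. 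So: as a citation your treatment is exactly the paper's; as a sketch of the cited proof it both deviates from that proof's method and leaves its decisive step unproved.
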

In particular, as a consequence of the entropy production estimate and Lemma \ref{l:integrability}, we note that for any solution of \eqref{eq:boltzmann} satisfying the hydrodynamical condition \eqref{e:hydro} there holds $f \in L^{q_0}_{k_0^\ast}((0, T) \times \bar \Omega \times \R^d)$ (Lemma~\ref{l:integrability}).

\subsection{Interpolation estimates}

This subsection is devoted to the proofs of interpolation estimates that will be used in the proof of the main theorem.

H\"older's inequality with weights (Lemma~\ref{l:holder-weight}) applied to $p=r=q=1$ yields the following result.
\begin{lemma}[First weighted $L^1$ estimate]\label{lem:interpolation-q0-0}
Let $f : \R^d \to [0, \infty)$ have a finite $2$-moment. Then there holds,
\[
\forall k \in \R, ~\forall \alpha_0 \in (0,1), \qquad  \int_{\R^d}  f (v) \langle v \rangle^{k}  \dd v
  \leq E_0^{\alpha_0} \left( \int_{\R^d}  f(v) \langle v \rangle^{\frac{k - 2 \alpha_0}{1-\alpha_0}} \dd v \right)^{1-\alpha_0}.
\]
\end{lemma}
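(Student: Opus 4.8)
The claim is a direct application of H\"older's inequality with weights (Lemma~\ref{l:holder-weight}) in the degenerate case $p = r = q = 1$, so the plan is essentially to unwind what that choice of exponents forces and to pick the weights correctly. First I would set $r = 1$ and write $\frac1r = \frac{\alpha_0}{p} + \frac{1-\alpha_0}{q}$; with $p = q = 1$ this is automatically satisfied for \emph{any} $\alpha_0 \in (0,1)$, which is why the exponent $\alpha_0$ is a free parameter in the statement. The genuine constraint is the one on the weights: $k_r = \alpha_0 k_p + (1-\alpha_0) k_q$, i.e. I must solve $k = \alpha_0 k_p + (1-\alpha_0) k_q$. The natural choice, dictated by the right-hand side of the claimed inequality, is to take $k_p = 2$ (so that $\|f\|_{L^1_{k_p}} = \int f \langle v\rangle^2 \dd v \le E_0$ by the finite second moment / energy bound \eqref{e:hydro}) and then to let $k_q$ be whatever is needed to make the weight identity hold: $k_q = \frac{k - 2\alpha_0}{1-\alpha_0}$.

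With these choices Lemma~\ref{l:holder-weight} gives immediately
\[
  \|f\|_{L^1_k} \le \|f\|_{L^1_2}^{\alpha_0}\, \|f\|_{L^1_{k_q}}^{1-\alpha_0}
  = \left(\int_{\R^d} f(v)\langle v\rangle^2 \dd v\right)^{\alpha_0}
    \left(\int_{\R^d} f(v)\langle v\rangle^{\frac{k-2\alpha_0}{1-\alpha_0}}\dd v\right)^{1-\alpha_0},
\]
and then bounding the first factor by $E_0^{\alpha_0}$ using the energy control in \eqref{e:hydro} yields exactly the stated inequality. Since $f \ge 0$, all the integrals are well-defined in $[0,+\infty]$ and the inequality is meaningful even when the right-hand factor diverges; the only hypothesis actually used on the left is nonnegativity, and the energy bound is only needed to replace $\|f\|_{L^1_2}$ by $E_0$ — one could equally state it with $\|f\|_{L^1_2}^{\alpha_0}$ in place of $E_0^{\alpha_0}$.

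If one prefers a self-contained argument not invoking Lemma~\ref{l:holder-weight}, the same bound follows from the elementary H\"older inequality applied with conjugate exponents $\frac1{\alpha_0} $ and $\frac1{1-\alpha_0}$ after the splitting $f \langle v\rangle^k = \big(f \langle v\rangle^2\big)^{\alpha_0}\cdot\big(f\langle v\rangle^{(k-2\alpha_0)/(1-\alpha_0)}\big)^{1-\alpha_0}$, which is a valid pointwise factorisation of $f\langle v\rangle^k$ precisely because $2\alpha_0 + \frac{k-2\alpha_0}{1-\alpha_0}(1-\alpha_0) = k$ and $\alpha_0 + (1-\alpha_0) = 1$ (so the powers of $f$ also match). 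There is no real obstacle here: the only point requiring the slightest care is checking that the weight exponent on the right, $\frac{k-2\alpha_0}{1-\alpha_0}$, is the unique solution of the weight-matching identity, and that the product of the two factors reproduces $f\langle v\rangle^k$ and not some other power of $f$ — both are immediate once the exponents $p=q=1$ are plugged in.
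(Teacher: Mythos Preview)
Your proposal is correct and matches the paper's approach exactly: the paper simply states that the lemma follows from Lemma~\ref{l:holder-weight} applied with $p=r=q=1$, and your unwinding of the weight constraint $k = 2\alpha_0 + (1-\alpha_0)k_q$ together with the energy bound is precisely what is intended. Your alternative self-contained H\"older argument is also fine and amounts to the same computation.
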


\begin{lemma}[Another weighted $L^1$ estimate]\label{lem:interpolation-q0-1-2}
Let $f : \R^d \to [0, \infty)$ satisfy
\[
	f(v) \leq \bast \langle v \rangle^{-q_\ast}
\]	 
for some $q_\ast > 0$, and some $\bast \geq 0$. Then for all $k \in \R$ such that $k  < \alpha_0q_\ast-d$ and all $\alpha_0 \in (0,1)$,
\[
 \int_{\R^d}  f (v) \langle v \rangle^{k}  \dd v
  \leq \bast^{\alpha_0}\left(\int_{\R^d}  \langle v \rangle^{k-\alpha_0q_\ast}  \dd v\right)^{\alpha_0}\left( \int_{\R^d}  f(v) \langle v \rangle^{k-\alpha_0q_\ast} \dd v\right)^{1-\alpha_0}.
\]
\end{lemma}
\begin{proof}
We consider the probability measure
\[
	\dd\mu(v) := \frac{ \langle v \rangle^{k-\alpha_0q_\ast}  \dd v}{\int_{\R^d}  \langle v \rangle^{k-\alpha_0q_\ast}  \dd v} \,.
\]
This is well-defined since $k-\alpha_0q_\ast < -d$. Because $x \to x^{\frac{1}{1-\alpha_0}}$ is convex,  we find with Jensen's inequality 
\begin{align*}
	 \int_{\R^d}  f (v) \langle v \rangle^{k}  \dd v &\leq  \bast^{\alpha_0}\int_{\R^d}  f^{1-\alpha_0} (v) \langle v \rangle^{k-\alpha_0q_\ast}  \dd v \\
	 &= \bast^{\alpha_0} \left(\int_{\R^d}  \langle v \rangle^{k-\alpha_0q_\ast}  \dd v\right) \left( \int_{\R^d}  f^{1-\alpha_0} (v) \dd \mu (v) \right) \\
	 &\leq \bast^{\alpha_0}\left(\int_{\R^d}  \langle v \rangle^{k-\alpha_0q_\ast}  \dd v\right) \left( \int_{\R^d}  f(v) \dd \mu\right)^{1-\alpha_0} \\
	 &= \bast^{\alpha_0}\left(\int_{\R^d}  \langle v \rangle^{k-\alpha_0q_\ast}  \dd v\right)^{\alpha_0}\left( \int_{\R^d}  f(v) \langle v \rangle^{k-\alpha_0q_\ast} \dd v\right)^{1-\alpha_0}. \qedhere
\end{align*}
\end{proof}


Second, we  use that $q_0 \in (1,2)$ to get $1 \in [q_0-1,q_0]$ and  interpolate $f \in L^1$ between $f \in L^{q_0}$ and $f \in L^{q_0-1}$ (with weights). Since $q_0-1 <1$, the interpolation is applied to $f^{(q_0-1)^{-1}}$. 
\begin{lemma}[Second weighted $L^1$ estimate]\label{lem:interpolation-q0-2}
  Let $f \in L^1_2 (\R^d)$ with $f \ge 0$. Let $q_0 = \frac{d+2s}{d} \in (1,2)$, $\frac{1}{p_0} =1 - \frac{2s}{d}$, $k_0 = \gamma + 2s - \frac{2s}{d}$. Then there holds,
\[
\forall k_2 \in \R, \qquad	 \int_{\R^d} f(v) \langle v \rangle^{k_2}  \dd v \leq  \norm{f^{q_0}}_{L^{p_0}_{k_0}}^{\frac{d(d - 2s)}{4s^2 + d^2}}\left( \int_{\R^d}  f^{q_0 -1}(v) \langle v \rangle^{m_2}  \dd v \right)^{ \frac{4sd}{4s^2 + d^2}},
\]
with \[m_2 =  \frac{4s^2 + d^2}{4sd} \left[k_2 - \frac{d(d-2s)}{4s^2 + d^2} k_0\right].\]
\end{lemma}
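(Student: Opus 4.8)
The plan is to obtain the estimate as a direct application of the weighted Hölder inequality of Lemma~\ref{l:holder-weight}, applied to the function $f^{q_0-1}$ rather than $f$ itself, in order to linearise the exponent $q_0$ appearing in the $L^{p_0}_{k_0}$-term. First I would set $g := f^{q_0-1}$, so that $g^{q_0/(q_0-1)} = f^{q_0}$, and note that the target quantity $\int f \langle v\rangle^{k_2}\dd v = \int g^{1/(q_0-1)} \langle v\rangle^{k_2}\dd v$ is the $L^r_{k_r}$-norm (to the power $r$) of $g$ with $r = \tfrac1{q_0-1}$ and $k_r = k_2(q_0-1)$. The three Lebesgue exponents for $g$ are then $r = \tfrac1{q_0-1}$, the "high" exponent $q = \tfrac{q_0}{q_0-1}\,p_0$ coming from $\|f^{q_0}\|_{L^{p_0}_{k_0}} = \|g\|_{L^{q}_{k_q}}^{q_0/(q_0-1)}$ with $k_q$ chosen so that $q\,k_q = p_0\,k_0$, i.e. $k_q = \tfrac{p_0 k_0 (q_0-1)}{q_0 p_0} = \tfrac{k_0(q_0-1)}{q_0}$, and the "low" exponent $p = 1$ corresponding to $\int f^{q_0-1}\langle v\rangle^{m_2}\dd v = \|g\|_{L^1_{m_2}}$.

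Next I would check the ordering $p \le r \le q$ required by Lemma~\ref{l:holder-weight}. With $q_0 = 1 + \tfrac{2s}{d} \in (1,2)$ we have $q_0 - 1 \in (0,1)$, hence $r = \tfrac1{q_0-1} > 1 = p$; and since $p_0 > 1$ and $\tfrac{q_0}{q_0-1} > \tfrac1{q_0-1} = r$, we get $q = \tfrac{q_0 p_0}{q_0-1} > r$, so indeed $1 = p \le r \le q$. The interpolation parameter $\alpha \in (0,1)$ is then fixed by $\tfrac1r = \alpha + \tfrac{1-\alpha}{q}$, which after substituting $r$, $q$ and $q_0 = \tfrac{d+2s}{d}$, $\tfrac1{p_0} = 1 - \tfrac{2s}{d}$ should be a routine algebraic simplification yielding $\alpha = \tfrac{4sd}{4s^2+d^2}$ and $1-\alpha = \tfrac{d(d-2s)}{4s^2+d^2}$ (one can cross-check that these sum to $1$ and that the exponent of $\|f^{q_0}\|_{L^{p_0}_{k_0}}$, namely $(1-\alpha)\cdot\tfrac{q_0-1}{q_0}$, equals $\tfrac{d(d-2s)}{4s^2+d^2}$ as claimed, using $\tfrac{q_0-1}{q_0} = \tfrac{2s}{d+2s}$ and a short computation). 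Then Lemma~\ref{l:holder-weight} gives $\|g\|_{L^r_{k_r}} \le \|g\|_{L^1_{m_2}}^{\alpha}\,\|g\|_{L^q_{k_q}}^{1-\alpha}$; raising to the power $r = \tfrac1{q_0-1}$ and rewriting $\|g\|_{L^q_{k_q}}^{r(1-\alpha)} = \|f^{q_0}\|_{L^{p_0}_{k_0}}^{r(1-\alpha)(q_0-1)/q_0} = \|f^{q_0}\|_{L^{p_0}_{k_0}}^{(1-\alpha)/q_0}$ produces the stated inequality, provided the exponent $\tfrac{r\alpha}{1} $ on the $L^1$-term matches $\tfrac{4sd}{4s^2+d^2}$ — again a direct check.

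The last step is to identify the weight $m_2$. The weight-compatibility condition of Lemma~\ref{l:holder-weight} reads $k_r = \alpha\, m_2 + (1-\alpha)\, k_q$; solving for $m_2$ and substituting $k_r = k_2(q_0-1)$, $k_q = \tfrac{k_0(q_0-1)}{q_0}$, together with the values of $\alpha$, $1-\alpha$ above, gives after simplification $m_2 = \tfrac{4s^2+d^2}{4sd}\big[k_2 - \tfrac{d(d-2s)}{4s^2+d^2}k_0\big]$, which is exactly the claimed formula. I expect no genuine obstacle here: the only thing to be careful about is the bookkeeping of which power of $q_0-1$ sits on each factor when passing between norms of $g = f^{q_0-1}$ and the quantities $\int f \langle v\rangle^{k_2}$, $\|f^{q_0}\|_{L^{p_0}_{k_0}}$, $\int f^{q_0-1}\langle v\rangle^{m_2}$, and the verification that all exponents collapse to the stated rational functions of $s$ and $d$ — this is elementary but must be done cleanly, since the role of $2$-moment finiteness is only to guarantee the finiteness of the right-hand side through the earlier interpolation lemmas.
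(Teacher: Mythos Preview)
Your approach is essentially the same as the paper's: both are a single application of weighted H\"older interpolation, with you phrasing it via $g=f^{q_0-1}$ and Lemma~\ref{l:holder-weight}, while the paper writes the H\"older splitting directly on $f$. The outline is sound and produces the stated inequality.

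There are, however, bookkeeping slips in your intermediate algebra. The values $\alpha = \tfrac{4sd}{4s^2+d^2}$ and $1-\alpha = \tfrac{d(d-2s)}{4s^2+d^2}$ that you write do \emph{not} sum to $1$ (check: their sum is $\tfrac{d(d+2s)}{4s^2+d^2}\neq 1$); the correct value from $\tfrac1r = \alpha + \tfrac{1-\alpha}{q}$ is $\alpha = \tfrac{8s^2}{4s^2+d^2}$, hence $1-\alpha = \tfrac{d^2-4s^2}{4s^2+d^2}$. What you have written are in fact $r\alpha$ and $(1-\alpha)/q_0$, i.e.\ the \emph{final} exponents on the two factors after raising to the power $r$, not $\alpha$ and $1-\alpha$ themselves. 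Relatedly, you correctly compute the exponent of $\|f^{q_0}\|_{L^{p_0}_{k_0}}$ as $(1-\alpha)/q_0$ in one sentence, but then contradict yourself by calling it ``namely $(1-\alpha)\cdot\tfrac{q_0-1}{q_0}$'' in the next parenthesis. Once $\alpha$ is corrected, your weight computation for $m_2$ goes through exactly as written, since it only uses the combinations $r\alpha$ and $(1-\alpha)/q_0$. Finally, note that the finite $2$-moment hypothesis is not actually used in this lemma; it is a harmless leftover, so your remark about its role is inaccurate but inconsequential.
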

\begin{proof}
  Because $q_0-1 < 1 < p_0q_0$, we would like interpolate $L^1$ between $L^{q_0-1}$ and $L^{q_0 p_0}$. Because $q_0-1 < 1$, we interpolate $L^{\frac1{q_0-1}}$ between $L^1$ and $L^{\frac{q_0 p_0}{q_0-1}}$ and get,
\[
 \int_{\R^d} f(v) \langle v \rangle^{k_2}  \dd v \leq  \left(\int_{\R^d}  f^{q_0 p_0} (v) \langle v \rangle^{k_0 p_0}  \dd v \right)^{\frac{1-\alpha_4}{p_0q_0}}\left( \int_{\R^d}  f^{q_0-1}(v)  \langle v \rangle^{m_2} \dv \right)^{\frac{\alpha_4}{q_0-1}},
\]
where we need $\alpha_4$ and $m_2$ to satisfy,
\[
\begin{aligned}
	\frac{1-\alpha_4}{p_0 q_0} + \frac{\alpha_4}{q_0-1} = 1, \qquad k_2 = (1-\alpha_4) \frac{k_0}{q_0} + \alpha_4 \frac{m_2}{q_0-1} .
\end{aligned}
\]
This yields
\[
  \begin{cases}
	\alpha_4 = \frac{(q_0 p_0 -  1)(q_0-1)}{q_0p_0 +1 - q_0} = \frac{8s^2}{4s^2 + d^2},\\
	m_2 = \frac{q_0-1}{\alpha_4} \left( k_2 - (1-\alpha_4) \frac{k_0}{q_0} \right).
   \end{cases}
\]
\end{proof}

We can then apply Lemma~\ref{l:holder-weight} with $p = 1$ and $q = p_0 q_0$. 
\begin{lemma}[General weighted $L^{p_1}$ estimate]\label{lem:interpolation-q0-1}
Let $f : \R^d \to [0, \infty)$. Then for all $p_1 \in [1,p_0 q_0]$ and all $k_1 \in \R$,
\[
\| f \|_{L^{p_1}_{k_1/p_1}}\leq  \norm{ f^{q_0} }_{L^{p_0}_{k_0}}^{\frac{d(p_1 -1)}{p_1 4s}}\left( \int_{\R^d}  f(v) \langle v \rangle^{m_1}  \dd v \right)^{ \frac{d+2s - p_1(d-2s)}{p_1 4s}},
\]
with
\[
  m_1 =  \frac{4s}{d+2s - p_1(d-2s)} \left[k_1 - \frac{d(p_1 - 1)}{4s} k_0\right].
\]
\end{lemma}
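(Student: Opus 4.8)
The plan is to mimic the proof of Lemma~\ref{lem:interpolation-q0-2}: interpolate the weighted $L^1$-type quantity $\int f \langle v\rangle^{m_1}\dv$ between the space $L^{p_0 q_0}_{k_0}$ (which is where the entropy production estimate lives, rewritten as $\|f^{q_0}\|_{L^{p_0}_{k_0}} = \|f\|_{L^{p_0 q_0}_{k_0/q_0}}^{q_0}$) and $L^{p_1}_{k_1/p_1}$, then solve for $\|f\|_{L^{p_1}_{k_1/p_1}}$. Concretely, I would apply Lemma~\ref{l:holder-weight} with the three exponents $p=1$, $r=p_1$, $q=p_0q_0$, so that $f\in L^{p_1}$ sits between $f\in L^1$ and $f\in L^{p_0q_0}$ (this requires $1\le p_1\le p_0 q_0$, which is exactly the hypothesis).

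The key algebraic step is to match up the exponents. Writing $\| f \|_{L^{p_1}_{k_1/p_1}} \le \| f\|_{L^1_{m_1}}^{\alpha}\, \| f\|_{L^{p_0 q_0}_{k_0/q_0}}^{1-\alpha}$, Lemma~\ref{l:holder-weight} forces
\[
  \frac1{p_1} = \frac{\alpha}{1} + \frac{1-\alpha}{p_0 q_0}, \qquad \frac{k_1}{p_1} = \alpha\, m_1 + (1-\alpha)\frac{k_0}{q_0}.
\]
From the first relation, using $p_0 q_0 = \frac{d}{d-2s}\cdot\frac{d+2s}{d} = \frac{d+2s}{d-2s}$, one solves $\alpha = \frac{d+2s-p_1(d-2s)}{p_1 4s}$ and hence $1-\alpha = \frac{d(p_1-1)}{p_1 4s}\cdot\frac{d-2s}{\cdots}$ — more precisely $(1-\alpha)\cdot\frac{1}{p_0q_0}$ combines to give the stated exponent $\frac{d(p_1-1)}{p_1 4s}$ on $\|f^{q_0}\|_{L^{p_0}_{k_0}}$ after accounting for the $q_0$-th power. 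Then the second relation is solved for $m_1$, giving
\[
  m_1 = \frac1{\alpha}\left(\frac{k_1}{p_1} - (1-\alpha)\frac{k_0}{q_0}\right) = \frac{4s}{d+2s-p_1(d-2s)}\left[k_1 - \frac{d(p_1-1)}{4s}k_0\right],
\]
matching the claim. Finally one rewrites $\|f\|_{L^{p_0 q_0}_{k_0/q_0}}^{(1-\alpha)} = \|f^{q_0}\|_{L^{p_0}_{k_0}}^{(1-\alpha)/q_0}$ and checks $(1-\alpha)/q_0 = \frac{d(p_1-1)}{p_1 4s}$ using $q_0 = \frac{d+2s}{d}$.

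I do not expect a genuine obstacle here: the lemma is a direct corollary of Lemma~\ref{l:holder-weight}, and the only real work is bookkeeping with the weight exponents and the exponent $q_0$ (since the ``reference'' norm is $\|f^{q_0}\|_{L^{p_0}_{k_0}}$ rather than a norm of $f$ itself, one has to be careful that raising to the power $q_0$ inside the $L^{p_0}$ norm is the same as multiplying the outer exponent and the weight by $q_0$). The mildly delicate point is the sign/positivity of the denominator $d+2s-p_1(d-2s)$: when $p_1$ is at the upper end $p_0 q_0 = \frac{d+2s}{d-2s}$ this quantity vanishes (that is the endpoint $\alpha=0$, i.e.\ pure $L^{p_0 q_0}$), so the formula for $m_1$ is only meaningful for $p_1 < p_0 q_0$, and one should note that case $p_1 = p_0 q_0$ is trivial. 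Otherwise the proof is a one-line application of weighted H\"older followed by solving a $2\times 2$ linear system for $(\alpha, m_1)$, exactly as in the proof of Lemma~\ref{lem:interpolation-q0-2}.
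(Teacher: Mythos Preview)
Your proposal is correct and is exactly the approach the paper takes: the sentence preceding the lemma in the paper reads ``We can then apply Lemma~\ref{l:holder-weight} with $p = 1$ and $q = p_0 q_0$,'' and the proof then records the same linear system for the interpolation parameters and the weight $m_1$ that you set up. Your observation about the degenerate endpoint $p_1 = p_0 q_0$ (where the denominator in $m_1$ vanishes and the inequality reduces to the trivial identity) is a nice addition not made explicit in the paper.
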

\begin{proof}
We interpolate
\[
	 \int_{\R^d} f^{p_1} (v) \langle v \rangle^{k_1}  \dd v \leq  \left(\int_{\R^d}  f^{q_0 p_0} (v) \langle v \rangle^{k_0p_0}  \dd v \right)^{\frac{\alpha_1}{p_0}}\left( \int_{\R^d}  f(v) \langle v \rangle^{m_1} \dd v \right)^{\alpha_2},
\]
where we need $\alpha_1, \alpha_2$ and $m_1$ to satisfy,
\[
	\frac{\alpha_1}{p_0} + \alpha_2 = 1, \qquad \alpha_1 q_0 + \alpha_2 = p_1, \qquad\alpha_1 k_0 + \alpha_2 m_1 = k_1.
\]
This yields
\begin{align*}
	&\alpha_1 = \frac{(p_1 -1)p_0}{q_0p_0 -1} = \frac{d(p_1 - 1)}{4s}, \qquad \alpha_2 = \frac{q_0 p_0 -  p_1}{q_0p_0 -1} = \frac{d+2s - p_1(d-2s)}{4s},\\
	&m_1 = \frac{1}{\alpha_2}\left(k_1 - \alpha_1 k_0\right) =  \frac{4s}{d+2s - p_1(d-2s)} \left[k_1 - \frac{d(p_1 - 1)}{4s} k_0\right]. \qedhere
\end{align*}
\end{proof}

Finally, as a consequence of Lemmas~\ref{lem:interpolation-q0-0}, \ref{lem:interpolation-q0-2} and \ref{lem:interpolation-q0-1}, there holds the following estimate.
\begin{lemma}[Weighted $L^{q_0}$ estimate]\label{lem:interpolation-q0-3}
Let $f : \R^d \to [0, \infty)$ have a finite $2$-moment. Then there holds for any $k_3 \in \R$ and any $\varepsilon \in (0, 1)$
\[
   \int_{\R^d}   f^{q_0} (v) \langle v \rangle^{k_3} \dd v \leq \varepsilon \norm{   f^{q_0} }_{L^{p_0}_{k_0}} + C(\varepsilon, E_0) \left( \int_{\R^d}  f^{q_0 -1} (v) \langle v \rangle^{m_3}  \dd v \right),
\]
with \[m_3 =  \left(1+\frac{d}{2s} \right)k_3 - \frac{d}{2s} k_0 - 2,\]
and $C(\eps,E_0)$ only depends on $E_0$, $\eps$, $s$ and $d$.
\end{lemma}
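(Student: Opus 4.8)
The plan is to chain the three preceding interpolation lemmas and then split the resulting product by a weighted Young inequality, the whole point being to choose one free exponent so that the final estimate is \emph{linear} in $\int f^{q_0-1}\langle v\rangle^{m_3}\dd v$.

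\emph{Step 1 (reduce to an $L^1$ quantity).} First I would apply Lemma~\ref{lem:interpolation-q0-1} with $p_1 = q_0$ and $k_1 = k_3$; this is admissible since $q_0 \in [1,p_0 q_0]$ and $d+2s-q_0(d-2s) = \tfrac{2s(d+2s)}{d}>0$. Raising the resulting estimate to the power $q_0$ and using $q_0-1 = \tfrac{2s}{d}$ gives
\[
\int_{\R^d} f^{q_0}\langle v\rangle^{k_3}\dd v \le \norm{f^{q_0}}_{L^{p_0}_{k_0}}^{1/2}\left(\int_{\R^d} f\,\langle v\rangle^{m_1}\dd v\right)^{q_0/2}, \qquad m_1 = \frac{2d}{d+2s}k_3 - \frac{d}{d+2s}k_0 .
\]

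\emph{Step 2 (spend the $2$-moment bound with a tuned exponent, then return to $f^{q_0-1}$).} Next I would apply Lemma~\ref{lem:interpolation-q0-0} to the $L^1$ factor with the specific choice
\[
\alpha_0 = \frac{4sd}{(d+2s)^2}\in(0,1), \qquad 1-\alpha_0 = \frac{d^2+4s^2}{(d+2s)^2},
\]
obtaining $\int f\langle v\rangle^{m_1}\dd v \le E_0^{\alpha_0}\big(\int f\langle v\rangle^{\tilde m}\dd v\big)^{1-\alpha_0}$ with $\tilde m = \tfrac{m_1-2\alpha_0}{1-\alpha_0}$. Then I would apply Lemma~\ref{lem:interpolation-q0-2} with $k_2 = \tilde m$ to get $\int f\langle v\rangle^{\tilde m}\dd v \le \norm{f^{q_0}}_{L^{p_0}_{k_0}}^{\frac{d(d-2s)}{d^2+4s^2}}\big(\int f^{q_0-1}\langle v\rangle^{m_2}\dd v\big)^{\frac{4sd}{d^2+4s^2}}$. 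A direct, if tedious, substitution then shows $m_2 = \big(1+\tfrac{d}{2s}\big)k_3 - \tfrac{d}{2s}k_0 - 2 = m_3$; this is exactly how the claimed value of $m_3$ is produced.

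\emph{Step 3 (collect and conclude by weighted AM--GM).} Combining Steps~1--2 would yield
\[
\int_{\R^d} f^{q_0}\langle v\rangle^{k_3}\dd v \le E_0^{\frac{2s}{d+2s}}\,\norm{f^{q_0}}_{L^{p_0}_{k_0}}^{\,\Theta}\left(\int_{\R^d} f^{q_0-1}\langle v\rangle^{m_3}\dd v\right)^{\!M}, \qquad \Theta = \frac{d}{d+2s},\quad M = \frac{2s}{d+2s}.
\]
The choice of $\alpha_0$ is made precisely so that $\Theta + M = 1$; hence I would finish with the weighted arithmetic--geometric inequality $a^{\Theta}b^{M}\le \Theta\lambda\,a + M\lambda^{-\Theta/M}b$, used with $\lambda$ chosen so that the coefficient of $\norm{f^{q_0}}_{L^{p_0}_{k_0}}$ equals $\eps$. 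This produces $C(\eps,E_0) = \tfrac{2s}{d+2s}\big(\tfrac{d}{d+2s}\big)^{d/(2s)}E_0\,\eps^{-d/(2s)}$, which indeed depends only on $E_0,\eps,s,d$.

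The one genuinely nontrivial point is Step~2: recognising that the free exponent $\alpha_0$ in the $2$-moment lemma must be taken equal to $\tfrac{4sd}{(d+2s)^2}$, the unique value forcing the two exponents coming out of the chain of interpolations to add up to $1$, so that the last step is a clean weighted AM--GM and the dependence on $\int f^{q_0-1}\langle v\rangle^{m_3}\dd v$ comes out with power $1$ rather than a larger power. Everything else is bookkeeping of the velocity weights through the three lemmas, together with the verification that the accumulated weight is exactly $m_3$.
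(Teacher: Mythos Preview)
Your proof is correct and follows essentially the same route as the paper: chain Lemma~\ref{lem:interpolation-q0-1} (with $p_1=q_0$), then Lemma~\ref{lem:interpolation-q0-0}, then Lemma~\ref{lem:interpolation-q0-2}, and finish with Young's inequality. The only cosmetic difference is that the paper leaves the exponent $\alpha_0$ in Lemma~\ref{lem:interpolation-q0-0} free and determines it simultaneously with the Young exponent at the end, whereas you fix $\alpha_0=\tfrac{4sd}{(d+2s)^2}$ from the start and observe directly that the resulting powers $\Theta=\tfrac{d}{d+2s}$ and $M=\tfrac{2s}{d+2s}$ sum to~$1$; the computed value of $m_3$ and the final constant match.
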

\begin{proof}
We first interpolate with Lemma \ref{lem:interpolation-q0-1} for $p_1 = q_0$ and $k_1 = k_3$,
\[
	\int_{\R^d}  f^{q_0}(v)  \langle v \rangle^{k_3} \dd v \leq \norm{   f^{q_0} }_{L^{p_0}_{k_0}}^{\alpha_1} \left( \int_{\R^d}   f(v) \langle v \rangle^{m_1} \dd v \right)^{\alpha_2},
\]
where $\alpha_1,\alpha_2, m_1$ are given by
\[
	\alpha_1 = \frac{1}{2},\qquad \alpha_2 = \frac{d+2s}{2d}, \qquad m_1 = \frac{2d}{d+2s} \left[k_3 - \frac{1}{2} \left(\gamma + 2s - \frac{2s}{d}\right)\right].
\]
Then apply Lemma \ref{lem:interpolation-q0-0} for any $\alpha_0 \in (0, 1)$ and with $k = m_1$.
\[
	\int_{\R^d}  f(v) \langle v \rangle^{m_1} \dd v  \leq E_0^{\alpha_0} \left(\int_{\R^d}   f(v) \langle v \rangle^{\frac{ m_1 - 2 \alpha_0 }{1-\alpha_0}} \dd v\right)^{1-\alpha_0}.
\]
Next we interpolate with Lemma \ref{lem:interpolation-q0-2} for $k_2 = \frac{m_1 - 2\alpha_0}{1-\alpha_0}$,
\[
  \int_{\R^d}   f(v) \langle v \rangle^{\frac{ m_1 - 2 \alpha_0 }{1-\alpha_0}} \dd v \leq \norm{f^{q_0}}_{L^{p_0}_{k_0}}^{\alpha_3} \left( \int_{\R^d}  f^{q_0-1}(v) \langle v \rangle^{m_2}  \dd v \right)^{\alpha_4},
\]
where 
\[
  \alpha_3 = \frac{d(d - 2s)}{4s^2 + d^2}, \qquad \alpha_4 = \frac{4sd}{4s^2 + d^2},\qquad m_2 = \frac{4s^2 + d^2}{4sd} \left[\frac{m_1 - 2\alpha_0 }{1-\alpha_0} - \frac{d(d-2s)}{4s^2 + d^2} \left(\gamma + 2s - \frac{2s}{d}\right)\right].
\]

Finally we combine the three previous inequalities 
\begin{align*}
  \int   f^{q_0}(v) \langle v \rangle^{k_3} \dd v &\leq  E_0^{\alpha_2 \alpha_0} \norm{ f^{q_0 }}_{L^{p_0}_{k_0}}^{\alpha_1 +(1-\alpha_0)\alpha_2\alpha_3 }\left(\int  f^{q_0-1}\langle v \rangle^{m_2}\dd v \right)^{(1-\alpha_0)\alpha_2\alpha_4}\\
  \intertext{we use Young's inequality for some $\theta >1$ and for any $\varepsilon \in (0, 1)$,}
	&\leq \varepsilon \norm{f^{q_0 }}_{L^{p_0}_{k_0}}^{\theta(\alpha_1 +(1-\alpha_0)\alpha_2\alpha_3 )} + C(\varepsilon, E_0) \left( \int_{\R^d}   f^{q_0 -1} (v) \langle v \rangle^{m_2} \dd v \right)^{\frac{\theta(1-\alpha_0)\alpha_2\alpha_4}{\theta-1}}
\end{align*}
with $ C(\varepsilon, E_0) = E_0^{\frac{\alpha_2 \alpha_0\theta}{\theta-1}} \eps^{-\frac{1}{\theta-1}}$.
We choose $\theta$ and $\alpha_0$ such that
\begin{equation*}
\begin{aligned}
	\theta(\alpha_1 + (1-\alpha_0)\alpha_2\alpha_3) = 1, \qquad \frac{\theta}{\theta-1}(1-\alpha_0)\alpha_2\alpha_4 = 1.
\end{aligned}
\end{equation*}
Then we showed
\[
  \int_{\R^d} f^{q_0} (v) \langle v \rangle^{k_3} \dd v \leq \varepsilon \norm{f^{q_0 }}_{L^{p_0}_{k_0}} + C(\varepsilon, E_0) \left( \int f^{q_0 -1} \langle v \rangle^{m_2}  \dd v \right).
\]
Note that we find
\[
	\theta = \frac{\alpha_3 + \alpha_4}{\alpha_3 +\alpha_1 \alpha_4} =\frac{d+2s}{d}, \qquad \alpha_0= \frac{\alpha_1 - 1 + \alpha_2(\alpha_3 + \alpha_4)}{\alpha_2(\alpha_3 + \alpha_4)}  = \frac{4sd}{(d+2s)^2},
\]
(in particular $\theta >1$), so that
\[
	m_3 = m_2 =  \frac{(d+2s)k_0 - d\left(\gamma + 2s - \frac{2s}{d}\right)}{2s} - 2.
\]
\end{proof}

\section{Truncated Lebesgue norms}
\label{s:propagation}

In order to get the decay estimates from Theorem~\ref{thm:decroissance}, we aim at proving that the following quantity equals zero,
\[  \iint_{\R^d \times \Omega} (f(t,x,v) - A (t,v))_+^{q_0} \dv \dx  \]
with \[A (t,v) = \ma (t) \langle v \rangle^{-q} \] for some well chosen $C^1$ function $\ma >0$ and some exponent $q>0$.
We recall that $q_0 =1 + \frac{2s}d$ (see Lemma~\ref{l:integrability}) and that $r_+$ denotes $\max(0,r)$ for any real number $r$. 
In order to prove that this functional vanishes when applied to a solution of the space-inhomogeneous Boltzmann equation, we investigate how it evolves with time.

\subsection{Truncated Convex Inequalities}
Keeping in mind that $q_0 >1$, we thus consider the convex function $\varphi_0(r) = r_+^{q_0}$ and we would like to apply
the definition of suitable weak sub-solutions  -- see \eqref{e:convex-family} -- and integrate against $\ddot \varphi_0$
(see Remark~\ref{r:more-convex}). Unfortunately, we are not sure that the right hand side of \eqref{e:convex-family}
is integrable with respect to $a$ against $\ddot \varphi_0(a)$. For this reason, we follow \cite{gis} and approximate
$\varphi_0(r)$ with $\phizk(r)$  defined for $\kappa  >0$ by 
\begin{equation}
  \label{eq:varphi-k}
  \phizk(r) = \big(r_+ \wedge \kappa \big)^{q_0} + q_0 \kappa^{q_0-1} (r-\kappa)_+.
\end{equation}
In particular \(\phizk(0) = \dot{(\phizk)}(0) = 0\).
We can now apply the definition of suitable weak sub-solutions and integrate with respect to $\ddphizk(a)$ for $a \in [0,\kappa]$.
What we obtain is expressed in the following statement.       
\begin{lemma}[Truncated Convex Inequalities] \label{l:weak-sols-q0}
Let $f$ be a suitable weak sub-solution of the Boltzmann equation with either in-flow, bounce-back, specular/ diffuse / Maxwell reflection boundary condition. Then, 
\begin{equation}\label{eq:truncated-convex}
\begin{aligned}
\frac{\dd}{\dd t}&\iint_{\Omega \times \R^d}\phizk\big(f-A\big)\dv \dx 	+\iiint_{\Omega \times \R^{2d}} d_{\phizk} ((f-A)_+,(f'-A')_+) K_f(v, v') \dd v' \dd v \dd x \\
                 & + \iiiint_{\Omega \times \R^{2d}} \dphizk(f-A) \left( A' - f'\right)_+ K_f(v, v')  \dd v' \dd v \dd x \\
	&\le  c_b \iint_{\Omega \times \R^d} \Phi\big(f\big) \left(f \ast \abs{\cdot}^\gamma\right) \dd v \dd x +\iiint_{\Omega \times \R^{2d}}  \dphizk(f-A)\left(A' -A\right) K_f(v, v') \dd v' \dd v \dd x \\
	&- \iint_{\Omega \times\R^d} \dphizk(f-A)\partial_t A   \dd v \dd x  - \iint_{\partial \Omega \times \R^d}\phizk(f_b-A)(v \cdot n(x)) \dd v \dd S(x) ,
\end{aligned}
\end{equation}
with  $\Phi$ given by
\begin{equation}\label{eq:Phi}
	\Phi(f) := \dphizk(f-A) f - \phizk(f-A)=\Phizk(f-A) + A \dphizk(f-A).
\end{equation}
\end{lemma}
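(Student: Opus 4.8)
The plan is to carry out, for the cut-off function $\phizk$, the superposition procedure sketched in Remark~\ref{r:more-convex}: start from the family of inequalities \eqref{e:convex-family} characterising a suitable weak subsolution, which is imposed for every elementary convex function $\varphi_a(r)=(r-a)_+$, and integrate it against $\ddphizk(a)\,\dd a$ over $a\in(0,\kappa)$.

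\emph{The measure $\ddphizk$ and the superposition identities.} From \eqref{eq:varphi-k}, $\phizk$ is convex, of class $C^1$, with $\phizk(0)=\dphizk(0)=0$, equal to $r_+^{q_0}$ on $\{r\le\kappa\}$ and affine on $\{r\ge\kappa\}$; hence its second distributional derivative is the \emph{finite} nonnegative measure
\[\ddphizk(a)\,\dd a=q_0(q_0-1)\,a^{q_0-2}\,\un_{(0,\kappa)}(a)\,\dd a,\qquad \int_0^{+\infty}\ddphizk(a)\,\dd a=q_0\kappa^{q_0-1},\]
where integrability at $a=0$ uses $q_0>1$ and finiteness of the total mass uses the truncation at $\kappa$. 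From this I would derive, for all $r,\rho\in\R$, the elementary identities
\[\phizk(r)=\int_0^{+\infty}\!\ddphizk(a)\,\varphi_a(r)\,\dd a,\qquad \dphizk(r)=\int_0^{+\infty}\!\ddphizk(a)\,\dot\varphi_a(r)\,\dd a,\]
\[\Phizk(r)=\int_0^{+\infty}\!\ddphizk(a)\,\Phi_a(r)\,\dd a,\qquad d_{\phizk}(r,\rho)=\int_0^{+\infty}\!\ddphizk(a)\,d_{\varphi_a}(r,\rho)\,\dd a,\]
with $\varphi_a(r)=(r-a)_+$, $\dot\varphi_a(r)=\un_{\{r>a\}}$, $\Phi_a(r)=a\un_{\{r>a\}}$ and $d_{\varphi_a}$ given by \eqref{e:dphia}: the first is a direct computation, the others follow from it by differentiating in $r$ and by linearity of the second-order difference operator in $\bar\varphi$. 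Evaluating these at $r=(f-A)_+$ and $\rho=(f'-A')_+$ turns every building block of \eqref{eq:truncated-convex} into the $\ddphizk$-average of the corresponding building block of \eqref{e:convex-family}; in particular $\Phi_a(f-A)+A\dot\varphi_a(f-A)$ averages to $\Phizk(f-A)+A\dphizk(f-A)=\Phi(f)$, which is \eqref{eq:Phi}.

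\emph{Integration in the parameter.} Fix a nonnegative $\psi\in C^\infty_c((0,T))$. Pairing \eqref{e:convex-family} with $\psi$ yields, for a.e.\ $a>0$, a numerical inequality between time integrals; I would multiply it by $\ddphizk(a)\ge0$ and integrate over $a\in(0,\kappa)$. The coercive terms on the left (the $d_{\varphi_a}$ and $\dot\varphi_a(A'-f')_+$ kernel integrals) are nonnegative, so Tonelli applies to them directly; for the right-hand side I would use Fubini, which is legitimate because on $(0,\kappa)$ one has $\dot\varphi_a\le1$ and $0\le\Phi_a\le\kappa$, the measure $\ddphizk(a)\,\dd a$ has finite mass, and each error term of \eqref{e:convex-family} is integrable in $(t,x,v)$ uniformly for $a\in(0,\kappa)$ — by Remark~\ref{r:integrability-rhs}, the extra integrability of $f$ from Remark~\ref{rmk:integrability}, and the bounds \eqref{e:hydro}, which together control the non-linear term $\iint(\Phi_a+A\dot\varphi_a)(f\ast|\cdot|^\gamma)$. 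After the interchange, the identities of the previous step identify the result, paired with $\psi$, with \eqref{eq:truncated-convex}; since $\psi\ge0$ was arbitrary, \eqref{eq:truncated-convex} holds in $\mathcal{D}'((0,T))$. This is precisely where the truncation is needed: for the bare $\varphi_0(r)=r_+^{q_0}$ one would integrate $q_0(q_0-1)a^{q_0-2}$ over all of $(0,+\infty)$, against which the error terms of \eqref{e:convex-family} are not uniformly integrable as $a\to+\infty$.

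\emph{Boundary term, and the main obstacle.} It remains to rewrite the boundary contribution. Splitting $\Gamma=\Gamma_+\cup\Gamma_-$, the incoming part is rewritten via the boundary condition of Definition~\ref{defi:suitable} — substituting the prescribed datum $\fb$ (which in the diffuse and Maxwell cases is itself an average of $f$ over $\Gamma_+$), thereby reproducing the $\Gamma_-$ boundary term of \eqref{e:convex-family} — while the outgoing part carries a favourable sign; in the bounce-back and specular reflection cases the full boundary integral moreover vanishes, using that $A(t,v)=\ma(t)\langle v\rangle^{-q}$ is radial in $v$ and that $v\mapsto-v$, resp.\ $v\mapsto\mathcal R_x v$, exchanges $\Gamma_\pm$ and reverses the sign of $v\cdot n$. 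Collecting the three steps yields \eqref{eq:truncated-convex}. I expect the only genuine difficulty to lie in the second step: making the exchange of the distributional time derivative with the $\ddphizk$-averaging fully rigorous, i.e.\ the Fubini argument for the terms of \eqref{e:convex-family} that have no sign; everything else is bookkeeping, and it is exactly this integrability issue that forces one to work with the truncated family $\phizk$ rather than with $r_+^{q_0}$ directly.
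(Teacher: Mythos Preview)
Your approach is correct and is exactly the paper's, which offers only the one-sentence justification preceding the lemma: integrate \eqref{e:convex-family} against the finite nonnegative measure $\ddphizk(a)\,\dd a$ on $(0,\kappa)$ and invoke the superposition identities of Remark~\ref{r:more-convex}. One small point: your boundary-term paragraph overcomplicates matters, since \eqref{e:convex-family} already carries only the $\Gamma_-$ contribution with $\fb$, so the $a$-integration produces that term directly with no need to split $\Gamma$ or reinvoke the boundary condition; the full-$\partial\Omega$ boundary term with $f$ appearing in the lemma's display is a minor notational looseness in the paper (harmless, since in all later uses $\fb\le A$ makes the term vanish) rather than something you must derive.
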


\subsection{Useful properties associated with $\phizk$}
The derivatives of the approximate convex function $\phizk$ are given by the following formulas,
\[
  \dot{\phizk}(r) = q_0\big(r_+ \wedge \kappa\big)^{q_0-1} \quad \text{and} \quad \ddphizk(r) = q_0(q_0-1) r^{q_0-2}\un_{\{0 \le r \leq \kappa\}}.
\]	
Moreover, we compute for $r \ge 0$,
\[
  \Phizk(r) = \dot{(\phizk )}(r) r - \phizk(r) = (q_0-1) \big(r \wedge \kappa\big)^{q_0},
\]
and for $r,s \ge 0$,
\begin{equation}
  \label{eq:d-phi-k}
	d_{\phizk}(r, s) = \phizk(s) - \phizk(r) - \dphizk(r)(s-r),
\end{equation}
We first remark that 
  \begin{equation}\label{e:monotone-d}
    \frac{\partial d_\phizk}{\partial r} \ge 0 \text{ in } \{ r \ge s \} \qquad \text{ and } \quad \frac{\partial d_\phizk}{\partial r} \le 0 \text{ in } \{ r \ge s \}.
  \end{equation}
If $0 \le r, s \leq \kappa$,
\[
	d_{\phizk}(r, s) = s^{q_0} - r^{q_0} - q_0 r^{q_0-1}(s-r). 
\]
Note that if $r \leq \kappa$, then for $q_0 > 1$
\[
	d_{\phizk}\Big(r, \frac{r}{2} \Big) = c_{ q_0} r^{q_0} \quad \text{ with } \quad  c_{q_0} = 2^{-q_0} -1 + \frac{q_0}{2} > 0.
\]
In particular,
\begin{equation}
  \label{e:breg}
d_{\varphi_0} (r,s) \ge c_{q_0} r^{q_0} \quad \text{ in } \quad \{  0 \le s \le r/2 \} .
\end{equation}
We also have for $r \ge 0$,
\begin{equation}\label{eq:d-phi-k-special}
	d_{\phizk}\left(r, \frac{r \wedge \kappa}{2} \right) = c_{q_0} \Big(r \wedge \kappa\Big)^{q_0}.
\end{equation}

We conclude with
\begin{equation}\label{e:c2p}
d_{\phizk} (r,s) \ge C_{2,q_0} (\phizk (s) + \kappa^{q_0}) \quad \text{ for } \quad 2r < \kappa < s 
\end{equation}
where $C_{2,q_0} = \min (1-2^{1-q_0}, (q_0-1) 2^{-q_0} - q_0 2^{1-q_0} +1)>0$, see \cite[Eq.~(15)]{gis}. 

\section{Coercivity estimates}
\label{s:coercivity}

This section is devoted to the study of positive terms appearing in the left hand side of the family of convex inequalities associated with the approximate $L^{q_0}$-norm, see \eqref{eq:truncated-convex}.
The term in which $(A'-f')_+$ appears is referred to as the ``good extra term'' (see the introduction).

\subsection{First coercivity estimate}

In order to get our first coercivity estimate, we follow the method introduced in \cite{chaker2022entropy} and used in \cite{ouyang-silvestre} and \cite{gis}.
Precisely, the proof of the following proposition is adapted from the proof of \cite[Lemma~3.1]{gis} in which $k_0 <0$. We recall that $k_0 = (\gamma+2s) -\frac{2s}d$. 
\begin{proposition}[Coercivity estimate for $k_0 \ge 0$]\label{p:coercivity-1}
Assume $f$ satisfies \eqref{e:hydro}. If $k_0 \ge 0$, then there exists a constant $\Ccor > 0$, depending on $d, \gamma, s$ and constants in \eqref{e:hydro}, such that
for all $f,g \colon \R^{d} \to [0,+\infty)$,
\begin{equation}\label{eq:i-large-q-gamma-pos}
 \iint_{\R^{2d}} d_{\phizk}  (g,g') K_f(v, v') \dv \dv' \ge \Ccor \norm{\big(g\wedge \kappa\big)^{q_0}}_{L^{p_0}_{k_0}(\R^d)}.
\end{equation}
\end{proposition}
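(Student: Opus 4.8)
The plan is to exploit the lower bound \eqref{eq:d-phi-k-special} for the Bregman-type quantity $d_{\phizk}$ combined with the coercivity of the kernel $K_f$ on its cone of non-degeneracy $\Xi(v)$. First I would drop a large part of the domain of integration and restrict to the region where the truncation helps: for fixed $v$, I would integrate only over those $v'$ such that $(g(v')\wedge\kappa) < \tfrac12 (g(v)\wedge\kappa)$ and simultaneously $v'-v \in \Xi(v)$ with $|v'-v|$ of unit order. On this region, convexity of $\phizk$ together with the identity $d_{\phizk}\big(r,\tfrac{r\wedge\kappa}2\big) = c_{q_0}(r\wedge\kappa)^{q_0}$ from \eqref{eq:d-phi-k-special}, and monotonicity of $s \mapsto d_{\phizk}(r,s)$ in the relevant range, give the pointwise lower bound $d_{\phizk}(g(v),g(v')) \ge c_{q_0}\,(g(v)\wedge\kappa)^{q_0}$.

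Next I would insert the coercivity bound \eqref{eq:coercivity}, namely $K_f(v,v') \ge \lambda_0 \langle v\rangle^{\gamma+2s+1} |v-v'|^{-d-2s}$ valid for $v'-v \in \Xi(v)$, and restrict further to $v'-v \in \Xi(v) \cap (B_3\setminus B_2)$, on which $|v-v'|^{-d-2s}$ is bounded below by a dimensional constant. Using the measure estimate \eqref{eq:cone-meas-bis}, $|\Xi(v)\cap B_3\setminus B_2| \ge c_0 \langle v\rangle^{-1}$, the $v'$-integration produces a factor $\gtrsim \lambda_0 c_0 \langle v\rangle^{\gamma+2s+1}\langle v\rangle^{-1} = \lambda_0 c_0 \langle v\rangle^{\gamma+2s} = \lambda_0 c_0 \langle v\rangle^{k_0} \langle v\rangle^{2s/d}$. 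The slight subtlety is that the set where $(g(v')\wedge\kappa)$ is small need not cover all of $\Xi(v)\cap B_3\setminus B_2$; but since we are integrating a non-negative integrand, one can either argue that either a fixed fraction of the cone lies in $\{(g'\wedge\kappa)< \tfrac12(g\wedge\kappa)\}$ — in which case the above works — or else $(g'\wedge\kappa) \ge \tfrac12 (g\wedge\kappa)$ on a fixed fraction, and one handles that alternative by a direct convexity estimate; here, however, since $k_0\ge 0$ the weight $\langle v\rangle^{k_0}$ is harmless and the cleanest route is the standard one from \cite[Lemma~3.1]{gis}, splitting according to whether $g(v)\wedge\kappa$ is realized, and simply keeping the contribution of $\{g' \le \tfrac12 g\}$ which already yields the claim after integrating over the cone. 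Integrating in $v$ then gives
\[
\iint_{\R^{2d}} d_{\phizk}(g,g') K_f(v,v')\dv\dv' \gtrsim \int_{\R^d} (g(v)\wedge\kappa)^{q_0} \langle v\rangle^{k_0}\dv,
\]
which is stronger than, hence implies, the asserted bound with the $L^{p_0}_{k_0}$ norm once one recalls that for $k_0 \ge 0$ and $q_0>1$ one has $\|(g\wedge\kappa)^{q_0}\|_{L^{p_0}_{k_0}} \le \|(g\wedge\kappa)^{q_0}\|_{L^1_{k_0}}$ is \emph{false} in general — so in fact the point is the reverse: the true inequality to prove gives an $L^{p_0}_{k_0}$ norm on the right, which is \emph{weaker} than an $L^1$ bound only after using that $(g\wedge\kappa)^{q_0} \le \kappa^{q_0}$ is bounded; thus I would instead keep the gain $\langle v\rangle^{2s/d}$ from the cone measure and absorb it via Hölder exactly as in \cite{gis}, interpolating the $L^1_{k_0+2s/d}$ bound against the trivial $L^\infty$ bound $(g\wedge\kappa)^{q_0}\le \kappa^{q_0}$ to land in $L^{p_0}_{k_0}$, since $\tfrac1{p_0} = 1-\tfrac{2s}{d}$ and the weight exponent $k_0 + \tfrac{2s}{d} = \tfrac{k_0}{p_0} \cdot$(appropriate factor) matches after the computation.

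\textbf{Main obstacle.} The genuine difficulty is the bookkeeping in the case distinction: deducing the pointwise lower bound $d_{\phizk}(g(v),g(v'))\gtrsim (g(v)\wedge\kappa)^{q_0}$ requires that $g(v')\wedge\kappa$ be comparably smaller than $g(v)\wedge\kappa$ on enough of the cone, and when this fails the naive bound is lost. In \cite[Lemma~3.1]{gis} this is handled for $k_0<0$; here with $k_0\ge 0$ the weight is actually in our favour, so the obstacle is purely to reorganize that argument — carefully tracking the exponent $\tfrac{2s}{d}$ coming from $|\Xi(v)\cap\mathbb S^{d-1}|\gtrsim \langle v\rangle^{-1}$ against $\langle v\rangle^{\gamma+2s+1}$ — and then to close with the Hölder interpolation (using $\langle v\rangle$-moments controlled by the $2$-moment bound in \eqref{e:hydro}, or rather by the triviality $g\wedge\kappa \le \kappa$) that converts the weighted $L^1$ control into the weighted $L^{p_0}$ control claimed. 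I expect no essentially new idea beyond \cite{gis} is needed, only the sign-flipped arithmetic.
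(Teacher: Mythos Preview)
Your proposal has a genuine gap: restricting to the fixed annulus $\Xi(v)\cap(B_3\setminus B_2)$ loses the mechanism that produces the $L^{p_0}_{k_0}$ norm. The paper's proof (following \cite[Lemma~3.1]{gis}) does \emph{not} work with a fixed annulus; it chooses a $v$-dependent radius $R = R(v)$ that also depends on the target norm itself,
\[
R^d \;\propto\; \norm{(g\wedge\kappa)^{q_0}}_{L^{p_0}_{k_0}}^{p_0}\,\langle v\rangle^{-k_0 p_0+1}\,(g(v)\wedge\kappa)^{-q_0 p_0},
\]
precisely so that the Chebyshev estimate on the super-level set $\{(g'\wedge\kappa)\ge\tfrac12(g\wedge\kappa)\}$ eats up at most half of $|\Xi(v)\cap T_R|\gtrsim R^d\langle v\rangle^{-1}$. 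Plugging this $R$ back into $R^{-2s}\langle v\rangle^{\gamma+2s}$ and using the algebraic identities $p_0=1+\tfrac{2sp_0}{d}$ and $k_0 p_0=\gamma+2s+\tfrac{2s(k_0p_0-1)}{d}$ delivers $\norm{(g\wedge\kappa)^{q_0}}_{L^{p_0}_{k_0}}$ directly, with a constant independent of $\kappa$. The condition $k_0\ge0$ enters only in the Chebyshev step, to transfer the weight $\langle v\rangle^{k_0 p_0}$ from $v$ to $v'$ via $\langle v\rangle\le 2\langle v'\rangle$ on $T_R$.

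Your fallback — interpolating an $L^1$-type bound against the trivial $L^\infty$ bound $(g\wedge\kappa)^{q_0}\le\kappa^{q_0}$ — would yield a constant blowing up like $\kappa^{q_0(1-1/p_0)}$, which is useless since $\kappa\to\infty$ in the application. There is also no ``direct convexity estimate'' that handles the alternative where $g'\wedge\kappa \ge \tfrac12(g\wedge\kappa)$ on most of the cone: in that case $d_{\phizk}(g,g')$ can vanish, and the contribution is genuinely lost. The self-referential choice of $R$ is the essential idea you are missing, not mere bookkeeping.
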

\begin{proof}
  For $R > 0$ (to be chosen), we define
  \[ T_R = \begin{cases} B_{R/2} & \text{ if } |v| \ge R, \\ B_{3R}  \setminus B_{2R} & \text{ if } |v| \le R. \end{cases}\]
In particular, for $v' -v \in T_R$, we have $|v| \le 2 |v'|$, which implies $\langle v \rangle \le 2 \langle v' \rangle$.  

We consider $g' < \frac{g \wedge \kappa}2$ and use the monotonicity properties of $d_\phizk$ (see \eqref{e:monotone-d}), we have
\[ g' < \frac{g\wedge \kappa}2 \qquad \Rightarrow \qquad d_{\phizk} (g,g') \ge d_{\phizk} (g, \frac{g \wedge \kappa}2 )
  \ge d_{\phizk} (g \wedge \kappa, \frac{g \wedge \kappa}2 ) \ge c_{q_0} \big(g\wedge \kappa\big)^{q_0}\]
where we used \eqref{e:breg} to get the last line. 

Then we use the non-degeneracy cone $\Xi (v)$ associated with the kernel $K_f$ and get from \eqref{eq:coercivity},
\begin{align*}
 \iint_{\R^{2d}} &d_{\phizk} (g,g') K_f(v, v') \dv \dv' \\
	 &\geq c_{q_0} \int_{\R^d} \int_{\left\{ g' < \frac{g\wedge \kappa}{2} \right\} \cap \left\{v'-v \in \Xi(v) \cap T_R\right\}} \big(g\wedge \kappa\big)^{q_0}(v) K_f(v, v') \dd v' \dd v\\
	 &\geq c_{q_0}\lambda_0  \int_{\R^d} \int_{\left\{ g' < g/2 \right\} \cap \left\{v'-v \in \Xi(v) \cap T_R\right\}} \big(g\wedge \kappa\big)^{q_0}(v) \langle v \rangle^{\gamma + 2s + 1}\abs{v-v'}^{-d -2s} \dd v' \dd v\\
	  &\geq c\int_{\R^d}  \big(g\wedge \kappa\big)^{q_0}(v)  R^{-d-2s}\langle v \rangle^{\gamma + 2s + 1}\abs{\left\{ g' < \frac{g\wedge \kappa}{2} \right\} \cap \left\{v + \Xi(v) \cap T_R\right\} }\dd v
\end{align*}
where  $c = 3^{-d - 2s}c_{q_0} \lambda_0$. 
Moreover, for any $v \in \R^d$, we use either \eqref{eq:cone-meas-bis} or  \eqref{eq:cone-meas} and Chebyshev's inequality  to estimate the Lebesgue norm of the sub-level set as follows,
\begin{align*}
	&\abs{\left\{ g' <  \frac{g\wedge \kappa}{2} \right\} \cap \left\{v + \Xi(v) \cap T_R\right\} } \\
	&\qquad= \abs{\left\{v + \Xi(v) \cap T_R\right\} } - \abs{\left\{ g' >  \frac{g\wedge \kappa}{2} \right\} \cap \left\{v + \Xi(v) \cap T_R\right\} }\\
	&\qquad\geq c_0 R^{d} \langle v\rangle^{-1} - \frac{2^{p_0(q_0+k_0)}}{\langle v \rangle^{k_0p_0} \big(g\wedge \kappa\big)^{q_0p_0}(v)} \int_{\R^d}  (g' \wedge \kappa)^{q_0p_0} \langle v'\rangle^{k_0p_0}\dd v'.
\end{align*}
The last line uses that $k_0 \ge 0$ and $\langle v \rangle \le 2 \langle v' \rangle$. 
Then,  we choose $R$ such that 
\begin{equation}\label{eq:R-q0}
	R^d = c_1 \norm{g^{q_0}}_{L^{p_0}_{k_0}}^{p_0} \langle v \rangle^{-k_0 p_0 +1} \big(g\wedge \kappa\big)^{-q_0p_0}(v), 
\end{equation}
with $c_1 = \frac{2^{p_0 (q_0+k_0)+1}}{c_0}$ and get
\begin{align*}
  \abs{\left\{ g' <  \frac{g\wedge \kappa}{2}  \right\} \cap \left\{v + \Xi(v) \cap T_{R}\right\}} & \geq c_0 R^{d} \langle v\rangle^{-1} - 
  \frac{c_0}{2}R^d \langle v \rangle^{-1}  = \frac{c_0}{2} R^{d} \langle v\rangle^{-1}.
\end{align*}
Therefore, with such a choice of $R$, 
\begin{align*}
   \iint_{\R^{2d}} d_{\phizk} (g,g') K_f(v, v') \dv \dv' 
  &\geq c \frac{c_0}2 \int_{\R^d} \big(g\wedge \kappa\big)^{q_0}(v) R^{-2s} \langle v\rangle^{\gamma +2s} \dd v\\
  &= c \frac{c_0}2 c_1^{-\frac{2s}d} \norm{g^{q_0}}_{L^{p_0}_{k_0}}^{-\frac{2sp_0}{d}} \int_{\R^d} \langle v\rangle^{\gamma +2s + \frac{2s(k_0 p_0 -1)}{d}} \big(g\wedge \kappa\big)^{q_0\left(1 + \frac{2sp_0}{d}\right)}(v)\dd v,
    \intertext{and remarking that $p_0$ and $k_0$ are such that $p_0 = 1 + \frac{2sp_0}{d}$ and $k_0 p_0 = \gamma +2s + \frac{2s(k_0 p_0 -1)}{d}$,}
  &\geq c \frac{c_0}2 c_1^{-\frac{2s}d}  \norm{g^{q_0}}_{L^{p_0}_{k_0}(\R^d)}^{1-p_0}\norm{\big(g\wedge \kappa\big)^{q_0}}_{L^{p_0}_{k_0}(\R^d)}^{p_0}.
\end{align*}
We reach the desired conclusion with $\Ccor = c \frac{c_0}2 c_1^{-\frac{2s}d}$.
\end{proof}

\begin{proposition}[Coercivity estimate for $k_0<0$] \label{p:cor-gamma-neg}
Assume $k_0<0$. 
Then there exists $\Ccor > 0$, depending on $s, d, M_0, E_0$, such that
for all $f,g \colon \R^{d} \to [0,+\infty)$,
\begin{equation}
  \label{eq:i-q0-large-q}
 \iint_{\R^{2d}} d_{\phizk} (g,g') K_f(v, v') \dv \dv' 
 \ge \Ccor \norm{\big(g\wedge \kappa\big)^{q_0}}_{L^{p_0}_{k_0}(\R^d)}- \Ccor \int_{\R^d} (g \wedge \kappa)^{q_0} \langle v \rangle^\gamma \dd v.
\end{equation}
 In particular,
\begin{equation}
  \label{eq:i-q0}
 \iint_{\R^{2d}} d_{\phizk} (g,g') K_f(v, v') \dv \dv' 
 \ge \Ccor \norm{\big(g\wedge \kappa\big)^{q_0}}_{L^{p_0}_{k_0}(\R^d)}- \Ccor\norm{\big(g\wedge \kappa\big)^{q_0-1}}_{L^1_{\gamma -d - 1}(\R^d)}.
\end{equation}
\end{proposition}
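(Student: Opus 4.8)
The plan is to follow the scheme of the proof of Proposition~\ref{p:coercivity-1}: isolate the contribution of the non-degeneracy cone $\Xi(v)$ of $K_f$ to the double integral, restrict to the sub-level set $\{g'<\tfrac12(g\wedge\kappa)(v)\}$ where $d_{\phizk}$ is bounded below by $c_{q_0}(g\wedge\kappa)^{q_0}(v)$ (via \eqref{eq:d-phi-k-special} and the convexity of $s\mapsto d_{\phizk}(\cdot,s)$), and balance the cone measure against a Chebyshev tail. The only genuinely new point is that, since $k_0<0$, the weight $\langle v'\rangle^{k_0p_0}$ is now a \emph{negative} power, so one must keep $\langle v'\rangle$ comparable to $\langle v\rangle$ on the $v'$-domain. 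Set $N:=\norm{(g\wedge\kappa)^{q_0}}_{L^{p_0}_{k_0}(\R^d)}$ (assume $N>0$, otherwise both sides vanish), and for a constant $c_1>0$ to be fixed put
\[
  R_1(v)^d := c_1\,N^{p_0}\,\langle v\rangle^{1-k_0p_0}\,(g\wedge\kappa)^{-q_0p_0}(v),\qquad R(v):=\min\big(R_1(v),\langle v\rangle\big).
\]
In contrast with Proposition~\ref{p:coercivity-1} I would \emph{always} use the ball $B_{R(v)/2}(v)$ (never the annulus): since $R(v)\le\langle v\rangle$, a direct computation gives $c\langle v\rangle\le\langle v'\rangle\le 2\langle v\rangle$ for all $v'\in B_{R(v)/2}(v)$, so inserting or removing the weight $\langle v'\rangle^{k_0p_0}$ only costs a fixed factor, whatever the sign of $k_0$. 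Using $d_{\phizk}\ge0$, \eqref{eq:coercivity} and $|v-v'|\le R(v)/2$, this yields, writing $\mathcal M(v)$ for the measure of $\{v':v'-v\in\Xi(v)\cap B_{R(v)/2}(v),\ g'(v')<\tfrac12(g\wedge\kappa)(v)\}$,
\begin{equation*}
  \iint_{\R^{2d}} d_{\phizk}(g,g')\,K_f(v,v')\,\dd v\,\dd v'
  \ \gtrsim\ \int_{\R^d}(g\wedge\kappa)^{q_0}(v)\,\langle v\rangle^{\gamma+2s+1}\,R(v)^{-d-2s}\,\mathcal M(v)\,\dd v .
\end{equation*}

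Next I would bound $\mathcal M(v)$ from below. By \eqref{eq:cone-meas} the full cone piece has measure $\ge c_0'\langle v\rangle^{-1}R(v)^d$ with $c_0'=c_0/(d2^d)$, while Chebyshev's inequality together with $\langle v'\rangle\sim\langle v\rangle$ on $B_{R/2}(v)$ gives, for the complementary set,
\[
  \Big|\{v':v'-v\in\Xi(v)\cap B_{R/2}(v),\ g'(v')\ge\tfrac12(g\wedge\kappa)(v)\}\Big|
  \ \le\ \frac{C_\ast\,N^{p_0}}{\langle v\rangle^{k_0p_0}\,(g\wedge\kappa)^{q_0p_0}(v)},\qquad C_\ast=2^{p_0(q_0+|k_0|)} .
\]
On $\{R_1\le\langle v\rangle\}$ one has $R=R_1$, and by the very definition of $R_1$ the right-hand side equals $(C_\ast/(c_0'c_1))\,c_0'\langle v\rangle^{-1}R_1^d$; choosing $c_1=2C_\ast/c_0'$ makes it $\le\tfrac12 c_0'\langle v\rangle^{-1}R_1^d$, hence $\mathcal M(v)\ge\tfrac12 c_0'\langle v\rangle^{-1}R_1(v)^d$ there. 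Plugging this back, computing $R_1^{-2s}=c_1^{-2s/d}N^{-2sp_0/d}\langle v\rangle^{-(1-k_0p_0)2s/d}(g\wedge\kappa)^{2sq_0p_0/d}(v)$, and using the same identities as in Proposition~\ref{p:coercivity-1} (namely $1+\tfrac{2sp_0}{d}=p_0$, $-\tfrac{2sp_0}{d}=1-p_0$ and $\gamma+2s-(1-k_0p_0)\tfrac{2s}{d}=k_0p_0$), I obtain
\[
  \iint_{\R^{2d}} d_{\phizk}(g,g')\,K_f(v,v')\,\dd v\,\dd v'
  \ \ge\ C\,c_1^{-2s/d}\,N^{1-p_0}\int_{\{R_1\le\langle v\rangle\}}(g\wedge\kappa)^{q_0p_0}(v)\,\langle v\rangle^{k_0p_0}\,\dd v .
\]

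It then remains to recover $N^{p_0}=\int_{\R^d}(g\wedge\kappa)^{q_0p_0}\langle v\rangle^{k_0p_0}$ up to an admissible error. Writing $\int_{\{R_1\le\langle v\rangle\}}=N^{p_0}-\int_{\{R_1>\langle v\rangle\}}$, I estimate the missing piece: on $\{R_1>\langle v\rangle\}$ the definition of $R_1$ reads $(g\wedge\kappa)^{q_0p_0}(v)<c_1N^{p_0}\langle v\rangle^{1-k_0p_0-d}$, and splitting the exponent $q_0p_0$ as $q_0p_0\cdot\tfrac{2s}{d}+q_0p_0\cdot\tfrac1{p_0}$ (using $\tfrac{2s}{d}+\tfrac1{p_0}=1$), estimating the first factor by this inequality, and using $[(g\wedge\kappa)^{q_0p_0}]^{1/p_0}=(g\wedge\kappa)^{q_0}$, $\tfrac{2sp_0}{d}=p_0-1$ and the identity $(1-k_0p_0-d)\tfrac{2s}{d}+k_0p_0=\gamma$ (equivalently $\tfrac{2s}{d}-2s+k_0=\gamma$), one gets on that set
\[
  (g\wedge\kappa)^{q_0p_0}(v)\,\langle v\rangle^{k_0p_0}\ <\ c_1^{2s/d}\,N^{p_0-1}\,(g\wedge\kappa)^{q_0}(v)\,\langle v\rangle^{\gamma},
\]
so that $N^{1-p_0}\int_{\{R_1>\langle v\rangle\}}(g\wedge\kappa)^{q_0p_0}\langle v\rangle^{k_0p_0}\le c_1^{2s/d}\int_{\R^d}(g\wedge\kappa)^{q_0}\langle v\rangle^{\gamma}$. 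Combining the last three displays yields \eqref{eq:i-q0-large-q} with a suitable $\Ccor$ (as usual, the coefficient of the coercive term and that of the error term are both denoted $\Ccor$). Finally, \eqref{eq:i-q0} follows by applying Lemma~\ref{lem:interpolation-q0-3} to $g\wedge\kappa$ with $k_3=\gamma$ — which produces exactly $m_3=\gamma-d-1$ — and absorbing the resulting $\eps\norm{(g\wedge\kappa)^{q_0}}_{L^{p_0}_{k_0}}$ into the coercive term.

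The main obstacle is the weight handling flagged at the outset: the naive argument of Proposition~\ref{p:coercivity-1} fails for $k_0<0$ because in the annulus regime $|v|\le R$ one only controls $\langle v'\rangle$ from \emph{below} by $\langle v\rangle$, and inserting the negative-power weight then loses a factor $(\langle v'\rangle/\langle v\rangle)^{|k_0p_0|}$. The remedy is the truncation $R=\min(R_1,\langle v\rangle)$ together with the computational observation — the heart of the proof — that the region $\{R_1>\langle v\rangle\}$ on which coercivity is lost contributes to $N^{p_0}$ precisely the quantity $\int(g\wedge\kappa)^{q_0}\langle v\rangle^{\gamma}$ after the power-splitting with exponent $\tfrac{2s}{d}=1-\tfrac1{p_0}$, i.e. exactly the error term permitted by the statement.
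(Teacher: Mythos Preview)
Your proof is correct and follows essentially the same route as the paper, which for \eqref{eq:i-q0-large-q} simply defers to \cite[Lemma~3.1]{gis} and for \eqref{eq:i-q0} applies Lemma~\ref{lem:interpolation-q0-3} with $k_3=\gamma$ (yielding exactly $m_3=\gamma-d-1$), as you do. Your filled-in argument for \eqref{eq:i-q0-large-q} --- truncating $R=\min(R_1,\langle v\rangle)$ so that $\langle v'\rangle\sim\langle v\rangle$ on $B_{R/2}(v)$, and then showing via the power-splitting $q_0p_0=q_0p_0\cdot\tfrac{2s}{d}+q_0$ that the region $\{R_1>\langle v\rangle\}$ contributes exactly the admissible error $\int(g\wedge\kappa)^{q_0}\langle v\rangle^{\gamma}$ --- is the natural adaptation of the proof of Proposition~\ref{p:coercivity-1} to $k_0<0$, and the algebraic identities you record ($\tfrac{2sp_0}{d}=p_0-1$ and $(1-k_0p_0-d)\tfrac{2s}{d}+k_0p_0=\gamma$) check out.
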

\begin{proof}
The estimate~\eqref{eq:i-q0-large-q} follows the lines of  the proof of Proposition \ref{p:coercivity-1} and is proved in \cite[Lemma~3.1]{gis}. 

We can obtain \eqref{eq:i-q0} by applying Lemma \ref{lem:interpolation-q0-3} to $g\wedge \kappa$ and $\varphi = 1$ and $k_3=\gamma$ and get for all $\varepsilon \in (0, 1)$,
\[\int_{\R^d} \big(g\wedge \kappa\big)^{q_0}\langle v \rangle^{\gamma} \dd v	\leq \varepsilon\norm{\big(g\wedge \kappa\big)^{q_0}}_{L^{p_0}_{k_0}(\R^d)} 	+ C(\varepsilon,E_0) \norm{\big(g\wedge \kappa\big)^{q_0-1}}_{L^1_{m_3}}\]
with $m_3 = \gamma -d -1$.
\end{proof}

We now prove another coercivity estimate for $\gamma<0$.
\begin{proposition}[Second coercivity estimate] \label{p:second-coercivity}
Assume that $\gamma \in (-2s, - \frac{2s d}{d+2s})$. Let $g = (f-A)_+$ for $A = A(v) = \ma \langle v \rangle^{-q}$ for some $q \in (0,\qnsl)$. 
There exists positive constants $\Ccorbis$ and $\ma_0$, depending on $s, d, \gamma$ and hydrodynamical bounds from \eqref{e:hydro},  such that for all $\kappa \ge 1$
and $\ma \ge \ma_0$,
\begin{equation}
  \label{eq:second-coercivity}
  \iint_{\R^{2d}} d_{\phizk} (g,g') K_f(v, v') \dv \dv'   \ge \Ccorbis \kappa^{\frac{2s}d q_0} N^{-\frac{2s}d} \int_{\R^d} (g(v)-\kappa)_+ \langle v \rangle^{\gamma} \dv
\end{equation}
with $N = \int_{\R^d} (g \wedge \kappa)^{q_0-1} \langle v \rangle^{-(d-1)}  \dv$. 
\end{proposition}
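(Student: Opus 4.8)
The plan is to follow the strategy of Proposition~\ref{p:coercivity-1} (and of \cite[Lemma~3.1]{gis}), the one genuinely new point being that we must extract from $d_{\phizk}$ the \emph{uncapped} excess $(g-\kappa)_+$ rather than the truncated quantity $\big(g\wedge\kappa\big)^{q_0}$. The algebraic fact that makes this possible is that, since $\phizk$ is affine on $[\kappa,+\infty)$ with slope $q_0\kappa^{q_0-1}$, one has for all $\sigma,\rho\ge 0$
\[
	d_{\phizk}(\sigma,\rho)=d_{\phizk}\big(\sigma,\rho\wedge\kappa\big)+\big(q_0\kappa^{q_0-1}-\dphizk(\sigma)\big)(\rho-\kappa)_+ ,
\]
so that, for $\sigma\le\kappa/2$, using $d_{\phizk}(\sigma,\rho\wedge\kappa)\ge 0$ (convexity of $\phizk$) and $\dphizk(\sigma)=q_0\sigma^{q_0-1}\le q_0(\kappa/2)^{q_0-1}$,
\[
	d_{\phizk}(\sigma,\rho)\ge \bar c_{q_0}\,\kappa^{q_0-1}(\rho-\kappa)_+,\qquad \bar c_{q_0}:=q_0\big(1-2^{-(q_0-1)}\big)>0 .
\]
Thus, on the set where the first velocity argument lies below $\kappa/2$, the Bregman term controls $\kappa^{q_0-1}$ times the excess of the \emph{second} argument over $\kappa$.

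First I would fix $v'\in\R^d$ and, exactly as in Proposition~\ref{p:coercivity-1}, use the truncation set $T_R=T_R(v)$ to restrict the $v$-integral to the cone slice
\[
	S_R(v'):=\Big\{\,v\in\R^d:\ g(v)\le\tfrac\kappa2,\ v'-v\in\Xi(v)\cap T_R\,\Big\},
\]
where the radius $R=R(v')>0$ is to be chosen and $T_R$ is built so that $|v-v'|\lesssim R$ and $\langle v\rangle\sim\langle v'\rangle$ on it. Combining the coercivity bound \eqref{eq:coercivity} with the pointwise inequality above gives
\[
	\iint_{\R^{2d}}d_{\phizk}(g,g')\,K_f\,\dv'\dv\;\gtrsim\;\kappa^{q_0-1}\int_{\R^d}(g(v')-\kappa)_+\,\langle v'\rangle^{\gamma+2s+1}\,R^{-d-2s}\,|S_R(v')|\,\dv' .
\]
The measure $|S_R(v')|$ is then bounded from below in the usual way: by the cone estimate \eqref{eq:cone-meas} (read with the two arguments of $K_f$ interchanged, which is legitimate because the non-degeneracy cone varies slowly in its base point) the full slice $\{v:v'-v\in\Xi(v)\cap T_R\}$ has measure $\gtrsim\langle v'\rangle^{-1}R^d$, whereas Chebyshev's inequality together with the weight transfer on $T_R$ yields
\[
	\big|\{g(v)>\tfrac\kappa2\}\cap\{v'-v\in\Xi(v)\cap T_R\}\big|\;\le\;\big(\tfrac\kappa2\big)^{-(q_0-1)}\!\!\int(g\wedge\kappa)^{q_0-1}\langle v\rangle^{-(d-1)}\dv\;\lesssim\;\kappa^{-(q_0-1)}\langle v'\rangle^{d-1}N .
\]
Choosing $R=R(v')$ so that the second quantity is at most half of $\langle v'\rangle^{-1}R^d$, i.e. $R^d\sim\kappa^{-(q_0-1)}\langle v'\rangle^{d}N$, leaves $|S_R(v')|\gtrsim\langle v'\rangle^{-1}R^d$, hence
\[
	\iint_{\R^{2d}}d_{\phizk}(g,g')\,K_f\,\dv'\dv\;\gtrsim\;\kappa^{q_0-1}\int_{\R^d}(g(v')-\kappa)_+\,\langle v'\rangle^{\gamma+2s}\,R^{-2s}\,\dv' .
\]
Inserting $R^{-2s}\sim\kappa^{\frac{2s}{d}(q_0-1)}\langle v'\rangle^{-2s}N^{-\frac{2s}{d}}$ and using $q_0=1+\frac{2s}{d}$ (so that $\kappa^{q_0-1}\kappa^{\frac{2s}{d}(q_0-1)}=\kappa^{(q_0-1)q_0}=\kappa^{\frac{2s}{d}q_0}$ and $\langle v'\rangle^{\gamma+2s}\langle v'\rangle^{-2s}=\langle v'\rangle^{\gamma}$) produces exactly
\[
	\iint_{\R^{2d}}d_{\phizk}(g,g')\,K_f\,\dv'\dv\;\ge\;\Ccorbis\,\kappa^{\frac{2s}{d}q_0}\,N^{-\frac{2s}{d}}\int_{\R^d}(g(v)-\kappa)_+\langle v\rangle^{\gamma}\,\dv ,
\]
with $\Ccorbis$ depending only on $s,d,\gamma$ and the constants in \eqref{e:hydro}; the threshold $\kappa\ge\kappa_0$ is precisely what guarantees that the radius $R$ selected above keeps the pair $(v,v')$ in the regime of $T_R$ where the weight transfer $\langle v\rangle\sim\langle v'\rangle$ and the cone measure estimate are licit.

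I expect the main obstacle to be exactly this geometric point: in \eqref{eq:coercivity} the cone $\Xi(v)$ and the weight $\langle v\rangle^{\gamma+2s+1}$ are anchored at the first argument $v$, whereas the excess $(g-\kappa)_+$ and the target weight $\langle v\rangle^{\gamma}$ must end up anchored at the second argument. Making the slice $S_R(v')$ and the weight transfer rigorous therefore requires a careful description of $\{v:v'-v\in\Xi(v)\}$ — a ``reversed'' form of \eqref{eq:cone-meas} — together with the case split $|v|\ge R$ versus $|v|\le R$ in the definition of $T_R$, precisely as in the proof of Proposition~\ref{p:coercivity-1}; modulo that, everything reduces to bookkeeping of the exponents of $\kappa$ and $N$.
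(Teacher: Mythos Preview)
Your strategy and the exponent bookkeeping of $\kappa$, $N$ and $\langle v\rangle$ match the paper's exactly. The obstacle you flag at the end is real and is precisely what the paper resolves by invoking \cite[Lemma~2.6]{gis}: rather than analysing the awkward preimage $\{v:v'-v\in\Xi(v)\}$, that lemma constructs for each fixed $v'$ the set
\[
	S(v')=\big\{\,v\in\R^d:\ K_f(v,v')\ge\mu_0\,\langle v'\rangle^{\gamma+2s+1}\,|v-v'|^{-(d+2s)}\,\big\}
\]
and proves directly that $|S(v')\cap B_R(v')|\ge\mu_0 R^d\langle v'\rangle^{-1}$ whenever $R\le\eps_0\langle v'\rangle$. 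The weight $\langle v'\rangle^{\gamma+2s+1}$ is already anchored at the second argument, so no transfer is needed and your argument runs verbatim with $S(v')\cap B_R(v')$ in place of your slice $S_R(v')$. The paper then uses $N\le\kappa^{q_0-2}M_0$ to guarantee that the selected radius (which coincides with yours up to a constant) satisfies $R\le\eps_0\langle v'\rangle$ once $\kappa\ge\kappa_0$, which is exactly the role of the threshold.
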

\begin{proof}
  We  adapt the proof of \cite[Lemma~3.2]{gis}. As we shall see it, the only difference lies in the choice of $R$.

\noindent \textsc{Use of the non-degeneracy set.}
  We start with reducing the domain of integration to velocities $(v,v')$ such that $g' > \kappa > 2g$ and then use the lower bound \eqref{e:c2p} on the  distance $d_{\phizk}$,   
  \[
    \iint_{\R^{2d}} d_{\phizk} (g,g') K_f(v, v') \dv \dv'
     \ge C_{2,q_0} \int_{\{g'> \kappa \}} \varphi_{0,\kappa} (g') \left\{\int_{\{ g < \kappa/2\}} K_f(v, v') \dv \right\} \dv'.
\]
  We use next the non-degeneracy set for fixed $v'$,
  \[
  	S(v') = \left\{v \in \R^d : K(v, v') \geq \mu_0 \langle v'\rangle^{\gamma + 2s+1}\abs{v-v'}^{-(d+2s)}\right\}. 
  \]
  We know that \cite[Lemma~2.6]{gis} proves the existence of $\mu_0,\eps_0 > 0$, depending only on the hydrodynamical quantities and dimension,
  such that for all $R \in (0, \eps_0 \langle v' \rangle)$,
  \[
  	 \abs{S(v') \cap B_R(v')}\geq \mu_0 R^d \langle v' \rangle^{-1}.
  \]
Using this set, we can continue the estimate from below as follows,
  \begin{align}
\nonumber    \iint_{\R^{2d}} d_{\phizk} (g,g') K_f(v, v') \dv \dv'
\nonumber    & \ge C_{2,q_0} \int_{\{g'> \kappa \}} \varphi_{0,\kappa} (g') \left\{\int_{\{ g < \kappa/2\}  \cap S(v') \cap B_R(v')} K_f(v, v') \dv \right\} \dv' \\
\nonumber    & \ge C_{2,q_0} \mu_0 \int_{\{g'> \kappa \}} \varphi_{0,\kappa} (g') \left\{\int_{\{ g < \kappa/2\}  \cap S(v') \cap B_R(v')} \langle v' \rangle^{\gamma+2s+1} R^{-d-2s} \dv \right\} \dv' \\
\label{e:sce-1}    & \ge C_{2,q_0} \mu_0 \int_{\{g'> \kappa \}} \varphi_{0,\kappa} (g') \langle v' \rangle^{\gamma+2s} \left| \{ g < \kappa/2\}  \cap S(v') \cap B_R(v') \right| \frac{\langle v'\rangle}{R^d}
      R^{-2s}  \dv'.
  \end{align}
  
  In our case, we choose $R = R(v')$ such that 
  \[ \frac{R^d}{\langle v' \rangle} = C_0 \frac{N}{\kappa^{q_0-1} \langle v' \rangle^{-(d-1)}}
  \]
for some constant $C_0$ to be chosen later. We have to check that such an $R$ is smaller than $\eps_0 \langle v' \rangle$. 
In order to do so, we first estimate $N$ from above.

\noindent \textsc{Bound on $N$.}  
We continue the proof with bounding $N$: since $g = (f-A)_+$ and $(g \wedge \kappa) = 0$ for $f \leq A = \ma \langle v \rangle^{-q}$, we reduce the integration to $\{f > A = \ma \langle v \rangle^{-q}\}$, but on this set we know $ \langle v \rangle^{-(d-1)} \leq \left(\frac{f}{\ma}\right)^{\frac{d-1}{q}}$. 
  
  If $\frac{d-1}{q} \geq 1$, then $-(d-1) \leq -q$ so $\langle v \rangle^{-(d-1)}  \leq \langle v \rangle^{-q} \leq \frac{f}{\ma}$. Thus
   \begin{align*}
  	N &= \int_{\R^d} (g \wedge \kappa)^{q_0-1} \langle v \rangle^{-(d-1)}  \dv			\leq\frac{\kappa^{q_0-1}}{\ma} \int_{f \geq A} f \dv \leq \frac{\kappa^{q_0-1}}{\ma} M_0.
  \end{align*}
  
  If $\frac{d-1}{q} < 1$, pick $\alpha = 1 - \frac{d-1}{q}$, and notice $0 < \alpha < q_0 -1$ for any $-2s < \gamma < -\frac{2sd}{d+2s}$ and $q \leq \qnsl$.
  Indeed, $\alpha < q_0-1$ is equivalent to $1-\frac{d-1}q < \frac{2s}d$ and since $q \le \qnsl$, we only have to check that $1-\frac{d-1}\qnsl < \frac{2s}d$, or equivalently
  $\qnsl < \frac{d(d-1)}{d-2s}$. The latter condition can be written
  \[ 6sd < 4s^2 + 2sd^2 + 2sd(\gamma +2s) - \gamma d^2\]
  where we recall that $\gamma <0$ and $\gamma+2s>0$. For $d \ge 3$ and $s>0$, we simply have $6 sd \le 2s d^2$. For $d=2$, the assumption of the proposition implies that $-\gamma > \frac{2s}{1+s}$.
  In particular $-\gamma d^2 = -4\gamma > \frac{8s}{1+s} \ge 4s - 4s^2 =6 sd - 4s^2 -2sd^2$.

  Thus
 \begin{align*}
  	N &= \int_{\R^d} (g \wedge \kappa)^{q_0-1} \langle v \rangle^{-(d-1)}  \dv= \int_{\{ f \geq A \}} (g \wedge \kappa)^{q_0-1-\alpha}(g \wedge \kappa)^{\alpha} \langle v \rangle^{-(d-1)}  \dv\\
          &\leq\frac{ \kappa^{q_0-1-\alpha}}{\ma^{\frac{d-1}{q}}} \int_{\{ f \geq A \}} f^{\alpha + \frac{d-1}{q}} \dv  \leq \frac{ \kappa^{q_0-1-\alpha}}{\ma^{\frac{d-1}{q}}} M_0
            \le \frac{ \kappa^{q_0-1}}{\ma^{\frac{d-1}{q}}} M_0 \qquad \text{ for }  \kappa \ge 1.
 \end{align*}
   Thus
  \[
  	N \leq \frac{\kappa^{\frac{2s}{d}}}{\ma_0^{\min\{\frac{d-1}{q}, 1\}}} M_0 .
  \]

\noindent \textsc{Smallness of $R$.}
We now choose $\ma_0$ (as a function of $C_0$, to be chosen later)  such that 
  \[
  	 \frac{C_0 M_0}{\eps_0^d } \leq  \ma_0^{\min\{\frac{d-1}{q}, 1\}}. 
  \] 
Then for each $\kappa \geq 1$, we have $R \le \eps_0 \langle v' \rangle$. Indeed, the upper bound on $N$ that we obtained above yields,
  \[
    R^d  = C_0 \frac{N}{\kappa^{\frac{2s}{d}}}\langle v'\rangle^d \leq \frac{C_0 M_0}{\ma_0^{\min\{\frac{d-1}{q}, 1\}}} \langle v'\rangle^d \leq \eps_0^d \langle v'\rangle^d. \]

\noindent \textsc{Lower bound on the super-level set of $g'$.}  
Using this choice of $R$ allows us to estimate from below the measure $\left| \{ g < \kappa/2\}  \cap S(v') \cap B_R(v') \right|$. Indeed, we first write,
\begin{align*}
    |\{ g \wedge \kappa > \kappa/2 \} \cap S(v') \cap B_R (v') | & \le \frac{2^{q_0-1}}{\kappa^{q_0-1}} \int_{S(v') \cap B_R (v')} (g(v) \wedge \kappa)^{q_0-1} \frac{\langle v \rangle^{d-1}}{\langle v \rangle^{d-1}} \dv \\
                                                                 & \le 2^{q_0+d-2} \frac{N}{\kappa^{\frac{2s}d}} \langle v' \rangle^{d-1} \\
    & = \frac{2^{q_0+d-2}}{C_0} \frac{R^d}{\langle v' \rangle}. 
  \end{align*}
  We used that for $v \in B_R (v')$ and $R$ such that $R \le \eps_0 \langle v' \rangle$, we have $\langle v \rangle \le 2 \langle v' \rangle$ (since $\eps_0 \le 1$). 
  This implies that 
  \begin{align}
    \nonumber
    |\{ g < \kappa/2 \} \cap S(v') \cap B_R (v') |  &\ge \mu_0 \frac{R^d}{\langle v' \rangle} - \frac{2^{q_0+d-2}}{C_0} \frac{R^d}{\langle v' \rangle}\\
    \label{e:sce-2}    & \ge \frac{\mu_0}2 \frac{R^d}{\langle v' \rangle}
  \end{align}
  if
  \[ C_0  = 2^{q_0+d-1} \mu_0^{-1}.\]

\noindent \textsc{Conclusion.}
Keeping in mind that $\varphi_{0,\kappa} (g') \ge q_0 \kappa^{q_0-1} (g'-\kappa)_+$ in $\{g'> \kappa\}$, we combine \eqref{e:sce-1} with \eqref{e:sce-2} and get
\begin{align*}
  \iint_{\R^{2d}} d_{\phizk} (g,g') K_f(v, v') \dv \dv'  & \ge C_{2,q_0} \frac{\mu_0^2}2 \int_{\{g' > \kappa\}} \varphi_{0,\kappa} (g') \langle v' \rangle^{\gamma+2s} R^{-2s} \dv' \\
  &\ge C_{2,q_0} \frac{\mu_0^2}2 q_0 \kappa^{q_0-1}  C_0^{-\frac{2s}d} N^{-\frac{2s}d } \kappa^{(q_0-1) \frac{2s}d} \int_{\R^d} (g'-\kappa)_+ \langle v' \rangle^{\gamma}  \dv'.
\end{align*}
In view of the definition of $q_0$, we obtain the announced lower bound for some constant $\Ccorbis$ equal to  $C_{2,q_0} \frac{\mu_0^2}2 q_0 C_0^{-\frac{2s}d}$. 
\end{proof}

\subsection{The good extra term}

We exhibit a third coercivity term. It controls the weighted $L^{q_0-1}$-norm of $(f-A)_+$.
This lemma is similar to \cite[Lemma~5.3]{gis}, but more general since $A$ depends on $v$.
In contrast with the previous coercivity estimates, it is nonlinear in nature, it cannot
be stated for two general functions $f$ and $g$.
\begin{proposition}[The good extra term]\label{p:coercivity-get}
  Let  $q \in [0,d+1]$ and $A (t,v) = \ma (t) \langle v \rangle^{-q}$. Then there exists a constant $\Cget > 0$ depending on $m_0, M_0, E_0, H_0, d$, such that if $\ma$ satisfies 
\begin{equation}\label{e:cond-a}
  \ma(t) \geq \aget \quad \text{ with } \quad \aget := 2^{d+1} (M_0 + E_0) c_0^{-1}
\end{equation}
with $c_0$ from \eqref{eq:cone-meas}, then for any $f$ satisfying \eqref{e:hydro}, there holds,
\begin{equation}\label{eq:ii-q0}
    \iint_{\R^{2d}} \dphizk(f-A) \left( A' - f'\right)_+ K_f(v, v') \dv \dv'
  \geq \Cget \ma^{1+\frac{2s}{d}} \int_{\R^d}  \Big((f-A)_+ \wedge \kappa\Big)^{q_0-1}  \langle v \rangle^{l_q}   \dv.
\end{equation}
\end{proposition}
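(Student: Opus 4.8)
Since $\dphizk(f-A)=q_0\big((f-A)_+\wedge\kappa\big)^{q_0-1}$ and the integrand in \eqref{eq:ii-q0} is non-negative, Fubini reduces the left-hand side to
\[
q_0\int_{\R^d}\big((f-A)_+\wedge\kappa\big)^{q_0-1}(v)\,I(v)\dv,\qquad
I(v):=\int_{\R^d}\big(A(t,v')-f(v')\big)_+\,K_f(v,v')\dv'.
\]
Hence it suffices to prove the pointwise lower bound $I(v)\ge c_\ast\,\ma^{1+\frac{2s}{d}}\langle v\rangle^{l_q}$ for all $v\in\R^d$, with $c_\ast$ depending only on $d,\gamma,s$ and the hydrodynamic constants; then \eqref{eq:ii-q0} follows by integrating this against $q_0\big((f-A)_+\wedge\kappa\big)^{q_0-1}$ and taking $\Cget=q_0 c_\ast$. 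Note that, since $A(t,v')=\ma(t)\langle v'\rangle^{-q}$, this bound involves no information on $f$ at the point $v$.

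The bound on $I(v)$ comes from the cone of non-degeneracy. For a radius $R=R(v)>0$ to be chosen set $\mathcal C_R(v):=\{v':v'-v\in\Xi(v),\ 2R\le|v'-v|\le 3R\}$. On $\mathcal C_R(v)$ one has $K_f(v,v')\ge\lambda_0\langle v\rangle^{\gamma+2s+1}(3R)^{-d-2s}$ by \eqref{eq:coercivity}, and since $\Xi(v)$ is a cone, scaling \eqref{eq:cone-meas-bis} gives $|\mathcal C_R(v)|\ge c_0 R^d\langle v\rangle^{-1}$. Restricting the defining integral of $I(v)$ to $\{v'\in\mathcal C_R(v):f(v')<\tfrac12 A(t,v')\}$, where $\big(A(t,v')-f(v')\big)_+\ge\tfrac12\min_{|v'-v|\le 3R}\ma\langle v'\rangle^{-q}=:\tfrac12 m_R(v)$, yields
\[
I(v)\ \ge\ \tfrac12\,m_R(v)\,\lambda_0\langle v\rangle^{\gamma+2s+1}(3R)^{-d-2s}\,\Big|\big\{v'\in\mathcal C_R(v):f(v')<\tfrac12 A(t,v')\big\}\Big|.
\]
By Chebyshev's inequality the complementary subset of $\mathcal C_R(v)$ has measure at most $2\,m_R(v)^{-1}\int_{B_{3R}(v)}f(v')\dv'$, so it remains to choose $R$ so that this is at most $\tfrac12|\mathcal C_R(v)|$, leaving the sub-level set with measure $\ge\tfrac12 c_0 R^d\langle v\rangle^{-1}$.

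The choice of $R$ splits into two regimes. For $|v|\le1$ one bounds $\int_{B_{3R}(v)}f\le M_0$ and, provided $R\le1$, $m_R(v)\gtrsim_d\ma$; this dictates $R^d\simeq M_0/\ma$ and closes once $\ma\gtrsim M_0 c_0^{-1}$, giving $I(v)\gtrsim\ma^{1+\frac{2s}{d}}\simeq\ma^{1+\frac{2s}{d}}\langle v\rangle^{l_q}$. For $|v|\ge1$ one instead chooses $R$ with $R\le|v|/6$, so that $B_{3R}(v)\subset\{|v'|\ge|v|/2\}$ and hence $\int_{B_{3R}(v)}f\le 4(M_0+E_0)\langle v\rangle^{-2}$ by \eqref{e:hydro}, while $\langle v'\rangle\le2\langle v\rangle$ on $B_{3R}(v)$ gives $m_R(v)\gtrsim\ma\langle v\rangle^{-q}$; this dictates $R^d\simeq(M_0+E_0)\langle v\rangle^{q-1}\ma^{-1}$. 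The constraints $R\le|v|/6$ and ``$R^d$ large enough'' are compatible uniformly in $|v|\ge1$ \emph{precisely because} $q\le d+1$ forces $q-1-d\le0$, so that $|v|^{q-1-d}\le1$; the hypothesis $\ma\ge\aget$ then absorbs the remaining dimensional constants. Inserting $R^d\simeq(M_0+E_0)\langle v\rangle^{q-1}\ma^{-1}$,
\[
I(v)\ \gtrsim\ \ma\langle v\rangle^{-q}\cdot\langle v\rangle^{\gamma+2s}\cdot R^{-2s}\ \simeq\ \ma^{1+\frac{2s}{d}}\,\langle v\rangle^{\,\gamma+2s-q+\frac{2s}{d}(1-q)}\ =\ \ma^{1+\frac{2s}{d}}\,\langle v\rangle^{l_q},
\]
which is the required bound.

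The main obstacle is the interplay between the $v'$-dependence of the barrier $A$ and the non-local structure of the coercive term: unlike \cite[Lemma~5.3]{gis}, where $A$ is constant, one must pick $R=R(v)$ so that $A(t,v')$ stays comparable to $A(t,v)$ on $\mathcal C_R(v)$ (that is, $\langle v'\rangle\lesssim\langle v\rangle$) \emph{and} so that the Chebyshev estimate — which now loses a factor $\sim\langle v\rangle^{q}$ through $m_R(v)^{-1}$ — still keeps at least half of $\mathcal C_R(v)$ in the sub-level set of $f$. Reconciling these two requirements uniformly in $v$, using the $\langle v\rangle^{2}$ energy weight at infinity to absorb the extra $\langle v\rangle^{q}$ while keeping $B_{3R}(v)$ away from the origin, is exactly what constrains the admissible exponents and is where $q\le d+1$ is needed at this stage; the remainder is the arithmetic of exponents, which produces $l_q$.
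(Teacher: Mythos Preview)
Your proposal is correct and follows essentially the same approach as the paper: restrict the $v'$-integral to the cone of non-degeneracy near $v$, use Chebyshev with the energy bound to show that the sub-level set $\{f'<A'/2\}$ occupies at least half of this region, and choose the radius $R(v)$ so that $R^d\simeq (M_0+E_0)\langle v\rangle^{q-1}\ma^{-1}$; the constraint $q\le d+1$ is used in both arguments precisely to ensure this $R$ stays below $\langle v\rangle$. The only cosmetic differences are that the paper works in the full ball $\Xi(v)\cap B_R$ rather than your annulus $\Xi(v)\cap(B_{3R}\setminus B_{2R})$, and handles all $v$ at once via the ``good set'' $G=\{\langle v\rangle\ge 2R(v)\}$ (shown to equal $\R^d$) instead of splitting into $|v|\le1$ and $|v|\ge1$.
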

\begin{proof}
  Let $R>0$ (to be chosen). We denote the  ``good'' set of velocities
  by $G := \left\{v : \langle v\rangle \geq 2 R\right\}$.
  For $v' \in B_R(v)$ and $v \in G$, we have $\langle v' \rangle \le 2 \langle v \rangle$ and $\langle v \rangle \le 2 \langle v' \rangle$.

  We also recall that $\dphizk (r) = q_0\big(r_+ \wedge \kappa\big)^{q_0-1}$.  
  
In particular, $A(v') \ge  2^{-q} A (v)$ for such $v$'s and $v'$'s. 
   We now use the non-degeneracy cone $\Xi (v)$, see in particular \eqref{eq:coercivity}, in order to write,
\begin{align*}
	 \iint_{\R^{2d}} & \dphizk(f-A)  \left( A' - f'\right)_+ K_f(v, v') \dd v \dd v' \\
	 &\geq \frac{q_0}{2} \int_G \int_{ \left\{ f' < \frac{A'}{2}\right\} \cap \left\{ v + \Xi(v) \cap B_R\right\} } \big((f-A)_+\wedge \kappa \big)^{q_0-1}(v) A'  K_f(v, v') \dd v' \dd v\\
	 &\geq \frac{q_0\lambda_0}{2^{q+1}} \int_G R^{-d-2s} \langle v \rangle^{\gamma + 2s+1} \big((f-A)_+\wedge \kappa\big)^{q_0-1} A \abs{\left\{ f' < \frac{A'}{2}\right\} \cap \left\{ v + \Xi(v) \cap B_R\right\} }  \dd v.
\end{align*}

We then use \eqref{eq:cone-meas} and Chebyshev's inequality to bound the sublevel set from below for $v \in G$ as follows,
\begin{align*}
	\abs{\left\{ f' < \frac{A'}{2}\right\} \cap \left\{ v + \Xi(v) \cap B_R\right\} } &= \abs{\left\{ v + \Xi(v) \cap B_R\right\} }  -\abs{\left\{ f' \ge \frac{A'}{2}\right\} \cap \left\{ v + \Xi(v) \cap B_R\right\} }  \\
                                                                                          &\geq c_0 R^{d} \langle v \rangle^{-1} -  \int_{v + \Xi(v) \cap B_{R}} \frac{2}{A'} f' \dd v'\\                                                                                          &\geq c_0 R^{d} \langle v \rangle^{-1} -  \frac{2}{\ma} \int_{v + \Xi(v) \cap B_{R}} \langle v' \rangle^{q-2} f(v') \langle v' \rangle^{2} \dd v'\\
  \intertext{we use again that $\langle v' \rangle \le 2 \langle v \rangle$ if $q \ge 2$ and $\langle v \rangle \le 2 \langle v' \rangle$ if $q \le 2$ to get,}
	&\geq  c_0 R^{d} \langle v \rangle^{-1} -  \frac{2^{1+|q-2|}}{A(v) \langle v \rangle^{2}} \int_{\R^d} f(v') \langle v'\rangle^{2}  \dd v'\\
	&\geq  c_0 R^{d} \langle v \rangle^{-1} -  \frac{2^{d+1}}{A(v) \langle v \rangle^{2}} (M_0 + E_0).
\end{align*}

We  choose next $R=R(v)$ such that
\[
	R^d  =  \frac{2^{d+2} (M_0 + E_0)}{c_0 A(v) \langle v \rangle}.
\]
Then
\[
  \abs{\left\{ f' < \frac{A'}{2}\right\} \cap \left\{ v + \Xi(v) \cap B_{R}\right\} }\geq  \frac{c_0}2 R^{d} \langle v \rangle^{-1} .
\]
With such an estimate of the sublevel set in hand, we can finish to justify the estimate from below of the good extra term,
\begin{align*}
	\iint_{\R^{2d}} \dphizk(f-A) \left(A' - f'\right)_+ & K_f(v, v') \dd v' \dd v  \\
	&\geq \frac{q_0\lambda_0c_0}{2^{q+2}} \int_G R^{-2s} \langle v \rangle^{\gamma + 2s} \big((f-A)_+\wedge \kappa\big)^{q_0-1}(v)  A(v)  \dd v\\
	&= \Cget \int_G \langle v \rangle^{\gamma + 2s + \frac{2s}{d}}  \big((f-A)_+\wedge \kappa\big)^{q_0-1}(v)  A^{1+\frac{2s}{d}}(v)  \dd v
\end{align*}
with $\Cget = \frac{q_0\lambda_0c_0}{2^{q+2}} \left(2^{d+2} c_0^{-1} (M_0+E_0) \right)^{-\frac{2s}d}$.

Finally, since $A(v) = \ma  \langle v \rangle^{-q}$, we have
\[
	G = \left\{ v: \langle v \rangle \geq 2 R\right\} = \left\{ v: \langle v \rangle^{d+ 1-q} \geq \frac{2^{d+2}(M_0 + E_0)}{c_0 \ma} \right\}.
\]
If $q \leq d+1$ and if  $\ma$ is such that  \( \ma \geq  2^{d+2} (M_0 + E_0) c_0^{-1}\) then $G = \R^d$.
\end{proof}

\section{Error term due to the velocity dependence of the barrier}

This section is devoted to the estimate of the first error term appearing in the right hand side of the Truncated Convex Inequalities,
see Lemma~\ref{l:weak-sols-q0}. We recall that it has the following form,
\[
\mathcal{E}_1 :=  \iint_{\R^{2d}}  \big((f-A)_+\wedge \kappa\big)^{q_0-1}(v) \left(A' -A\right) K_f(v, v') \dd v' \dd v.
\]
In the next paragraph, we use ideas from \cite{MR4033752}.
We recall that 
\[ A (t,v) = \ma (t) \langle v \rangle^{-q}.\]

\begin{lemma}[Error term in the velocity variable]\label{l:etvv}
 We consider a function $f$ satisfying \eqref{e:hydro}.
Let $q \in [0, d+1]$ and  $A(v) := \ma \langle v \rangle^{-q}$ for some $\ma >0$. There exists $\Cun$ depending on $d, m_0, M_0, E_0, H_0$ such that
\begin{equation}\label{eq:iii-q0}
\begin{aligned}
  \iint_{\R^{2d}}   \big((f-A)_+\wedge \kappa\big)^{q_0-1}(v) & \big(A(v') -A(v)\big) K_f(v, v') \dd v' \dd v \\
  & \leq \Cun \ma \int_{\R^d} \big((f-A)_+\wedge \kappa\big)^{q_0-1}(v) \langle v \rangle^{l_q} \dd v,
\end{aligned}
\end{equation}
with $l_q$ given by \eqref{itm:lq}.
\end{lemma}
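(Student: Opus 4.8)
The plan is to split the $v'$-integral into the region $\abs{v'-v} \le \langle v \rangle /2$ (close collisions) and its complement (far collisions), and estimate the two contributions by different means. In the close region, $\langle v' \rangle \approx \langle v \rangle$, so one expects $\abs{A(v')-A(v)}$ to be controlled by $\abs{\nabla A}$ and the second derivatives of $A$; since $A(v) = \ma \langle v \rangle^{-q}$ one has $\abs{\nabla A(v)} \lesssim \ma \langle v \rangle^{-q-1}$ and $\abs{D^2 A(v)} \lesssim \ma \langle v \rangle^{-q-2}$. Using the symmetry property \eqref{eq:symmetry} of $K_f$, the first-order term $\nabla A(v) \cdot (v'-v)$ integrates to zero over the symmetric close region, so only the second-order Taylor remainder survives, contributing $\lesssim \ma \langle v \rangle^{-q-2} \int_{\abs{v'-v} \le \langle v \rangle/2} \abs{v'-v}^2 K_f(v,v') \dd v'$. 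By the upper bound \eqref{eq:upperbound} with $r = \langle v \rangle/2$, this is bounded by $\lesssim \ma \langle v \rangle^{-q-2} \langle v \rangle^{\gamma+2s} \langle v \rangle^{2-2s} = \ma \langle v \rangle^{\gamma-q}$, which is consistent with the claimed weight $l_q = (\gamma+2s) + \frac{2s}{d} - q(1 + \frac{2s}{d})$ provided $\langle v \rangle^{\gamma - q} \lesssim \langle v \rangle^{l_q}$; one checks $l_q - (\gamma - q) = 2s + \frac{2s}{d}(1-q) \ge 0$ exactly when $q \le d+1$, which is our hypothesis. This identifies the role of the restriction $q \in [0,d+1]$.

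For the far region $\abs{v'-v} > \langle v \rangle/2$, I would simply drop the negative part and bound $A(v') - A(v) \le A(v') \le \ma \langle v' \rangle^{-q} \le \ma$, so the contribution is at most $\ma \int_{\abs{v'-v} > \langle v \rangle/2} K_f(v,v') \dd v'$. To control the latter, I would dyadically decompose the far region into annuli $2^k \langle v \rangle/2 \le \abs{v'-v} < 2^{k+1}\langle v \rangle/2$ for $k \ge 0$ and apply \eqref{eq:upperbound} on each: $\int_{B_r(v)} K_f(v,v') \dd v' \le r^{-2} \int_{B_r(v)} K_f(v,v')\abs{v'-v}^2 \dd v' \le \Lambda_0 \langle v \rangle^{\gamma+2s} r^{-2s}$ with $r = 2^{k+1}\langle v \rangle/2$; summing the geometric series in $k$ gives $\int_{\abs{v'-v} > \langle v \rangle/2} K_f(v,v') \dd v' \lesssim \langle v \rangle^{\gamma+2s} \langle v \rangle^{-2s} = \langle v \rangle^{\gamma}$. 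Hence the far contribution is $\lesssim \ma \langle v \rangle^{\gamma}$, again dominated by $\ma \langle v \rangle^{l_q}$ under $q \le d+1$. In both estimates I would have pulled the factor $\big((f-A)_+ \wedge \kappa\big)^{q_0-1}(v)$ out of the $v'$-integral since it does not depend on $v'$, leaving precisely the claimed form after integrating in $v$.

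The main obstacle I anticipate is making the Taylor-expansion argument in the close region fully rigorous: one must justify that the linear term genuinely cancels using the symmetry $K_f(v,v+w) = K_f(v,v-w)$ over the \emph{symmetric} ball $\{\abs{w} \le \langle v \rangle/2\}$ (this requires writing the integrand as $A(v+w) - A(v) - \nabla A(v)\cdot w$ and noting the odd part integrates to zero), and that the second-derivative remainder is controlled uniformly, i.e.\ $\sup_{\abs{w} \le \langle v \rangle/2} \abs{D^2 A(v+w)} \lesssim \ma \langle v \rangle^{-q-2}$, which holds because $\langle v + w \rangle \approx \langle v \rangle$ on that range. A secondary bookkeeping point is to verify the exponent arithmetic $l_q \ge \gamma - q$ and $l_q \ge \gamma$ for all $q \in [0,d+1]$, which both reduce to $2s + \frac{2s}{d}(1-q) \ge 0$, i.e.\ $q \le d+1$; this is where the hypothesis is sharp. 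Everything else is a routine application of \eqref{eq:upperbound} and \eqref{eq:symmetry}.
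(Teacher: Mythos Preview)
Your close-region (Taylor plus symmetry) argument is correct and matches the paper's treatment of the singular part: the second-order remainder combined with \eqref{eq:upperbound} indeed yields $\ma \langle v \rangle^{\gamma-q}$, and $\gamma-q \le l_q$ is equivalent to $q\le d+1$.

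The gap is in the far region. Your bound $A(v')-A(v)\le A(v')\le \ma$ followed by \eqref{eq:upperbound} gives only $\ma\langle v\rangle^{\gamma}$, and the inequality $\gamma\le l_q$ does \emph{not} hold throughout $[0,d+1]$: one has
\[
l_q-\gamma = 2s+\tfrac{2s}{d}-q\bigl(1+\tfrac{2s}{d}\bigr),
\]
which is negative as soon as $q>\tfrac{2s(d+1)}{d+2s}$ (and equals $-(d+1)$ at $q=d+1$). Your final bookkeeping conflates $l_q-\gamma$ with $l_q-(\gamma-q)$; they differ by~$q$. So the far-region contribution, as you estimate it, is not absorbed by the claimed weight.

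The paper closes this gap by opening up the Carleman representation of $K_f$: writing $v'=v+z$ with $z\perp(v-v_*')$ and integrating over $v_*'$ first. In the far region $|z|>\tfrac12\langle v\rangle$ one then splits according to $|v_*'|\lessgtr\tfrac18|v|$. When $|v_*'|<\tfrac18|v|$, the orthogonality $z\cdot(v-v_*')=0$ forces $|v+z|>|v|$, so $A(v+z)-A(v)\le 0$ and this piece has a favourable sign. When $|v_*'|\ge\tfrac18|v|$, one trades $\langle v_*'\rangle^{-2}$ against the energy bound $\int f\langle\cdot\rangle^2\le E_0$, gaining two extra powers of $\langle v\rangle^{-1}$; together with the $\langle v+z\rangle^{-q}$ decay integrated over the hyperplane $z\in(v-v_*')^\perp$, this yields $\ma\langle v\rangle^{\gamma-2-\min(q,d-1)}$, which \emph{is} $\le l_q$ for $q\le d+1$. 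Your argument using only \eqref{eq:upperbound} cannot see either the sign cancellation or the energy gain, because both rely on the hyperplane structure of the kernel.
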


Recalling that the kernel $K_f$ is given by,
\[
  K_f(v, v+ z) = \frac{1}{\abs{z}^{d+2s}} \int_{v_*' \in v+ z^\perp} f(v_*') \abs{v - v_*'}^{\gamma +2s +1} \tilde b(\cos \theta) \dd v_*',
\]
we aim at estimating 
\[
\mathcal E_1 =  \int_{\R^d} \big((f-A)_+ \wedge \kappa \big)^{q_0-1}(v)  \mathcal I(v) \dd v 
\]
with
\begin{align*}
	\mathcal I(v) &:= \int_{\R^d} \left(A(v') -A(v)\right) K_f(v, v') \dd v' \\
	&= \int_{\R^d}   \frac{\left(A(v+z) -A(v)\right)}{\abs{z}^{d+2s}} \left\{ \int_{v_*' \in v+ z^\perp} f(v_*') \abs{v - v_*'}^{\gamma +2s +1} \tilde b(\cos \theta) \dd v_*' \right\} \dd z\\
	&= \int_{\R^d}   f(v_*') \abs{v - v_*'}^{\gamma +2s} \left\{ \int_{ z \in (v-v_*')^\perp} \frac{\left(A(v+z) -A(v)\right)}{\abs{z}^{d+2s-1}} \tilde b(\cos \theta) \dd z \right\} \dd v_*',
\end{align*}

where we changed the order of integration in the last equality.

We  distinguish the singular part from the non-singular one,
\[ \mathcal{I} (v) = \mathcal{I}_1 (v) + \mathcal{I}_2 (v) \]
with
\begin{equation}\label{eq:split-s-ns}
\begin{cases}
	\mathcal I_1(v) &= \int_{\R^d}   f(v_*') \abs{v - v_*'}^{\gamma +2s} \left\{ \int_{ z \in (v-v_*')^\perp} \chi_{\abs{z} > \frac{1}{2} \langle v \rangle} \frac{\left(A(v+z) -A(v)\right)}{\abs{z}^{d+2s-1}} \tilde b(\cos \theta) \dd z \right\} \dd v_*'  \\
\mathcal I_{2}(v)	 &=  \int_{\R^d}   f(v_*') \abs{v - v_*'}^{\gamma +2s} \left\{ \int_{ z \in (v-v_*')^\perp} \chi_{\abs{z} < \frac{1}{2} \langle v \rangle} \frac{\left(A(v+z) -A(v)\right)}{\abs{z}^{d+2s-1}} \tilde b(\cos \theta) \dd z \right\} \dd v_*'.
\end{cases} 
\end{equation}
We now estimate each part separately. 

\begin{lemma}[Estimate of the non-singular part]\label{lem:ns}
  Recall $A (t,v) = \ma (t) \langle v \rangle^{-q}$.
  \begin{equation}\label{eq:I1-bis}
  \mathcal I_1(v) \leq
  \begin{cases} C E_0 \ma  \langle v\rangle^{\gamma -2 - \min(q, d-1)}, \qquad &\textrm{if } q\neq d-1,\\
    C E_0 \ma  \langle v\rangle^{\gamma  -d },\qquad &\textrm{if } q= d-1.
  \end{cases}
\end{equation}
In particular, if $q \leq d+1$, then
  \begin{equation}\label{eq:I1}
	\mathcal I_1(v) \leq  C\ma  \langle v\rangle^{l_q},
\end{equation}
where $l_q$ is given by \eqref{itm:lq}.
\end{lemma}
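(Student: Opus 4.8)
The plan is to perform the inner ($z$-)integration over the hyperplane $(v-v_*')^\perp$ first and only then integrate against $f$. So I write $\mathcal I_1(v)=\int_{\R^d}f(v_*')\,|v-v_*'|^{\gamma+2s}\,J(v,v_*')\,\dd v_*'$, where $J(v,v_*')$ is the inner integral carrying the cutoff $\chi_{|z|>\frac12\langle v\rangle}$, and I decompose $v=v^\parallel+v^\perp$ with $v^\parallel$ the orthogonal projection of $v$ onto $H:=(v-v_*')^\perp$ and $v^\perp\perp H$. Since $v-v_*'\perp H$, the vector $v^\parallel$ is equally the orthogonal projection of $v_*'$ onto $H$, whence the geometric inequality $|v^\parallel|\le|v_*'|$; this is exactly what will let us use only the energy of $f$.

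The bound on $J$ rests on two facts. First, $A(w)=\ma\langle w\rangle^{-q}$ is radially decreasing, so $A(v+z)-A(v)\le0$ as soon as $|v+z|\ge|v|$, in particular whenever $|z|\ge2\langle v\rangle$ (since $|v+z|\ge|z|-|v|$); for an upper bound that far part is harmless and is discarded, so the positive part of the increment is confined to $|z|\sim\langle v\rangle$. Second, in the variable $y=v^\parallel+z$ the positivity set $\{|v+z|<|v|\}$ becomes the disk $\{y\in H:\ |y|<|v^\parallel|\}$, on which $\langle v+z\rangle=(1+|y|^2+|v^\perp|^2)^{1/2}\ge\langle v^\perp\rangle$. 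Bounding $A(v+z)-A(v)\le A(v+z)\le\ma\langle v^\perp\rangle^{-q}$ and integrating over that disk gives $J(v,v_*')\lesssim\ma\,\langle v\rangle^{-(d+2s-1)}\langle v^\perp\rangle^{-q}|v^\parallel|^{\,d-1}$; one can sharpen the $\langle v^\perp\rangle$-power to $\langle v^\perp\rangle^{-\min(q,d-1)}$ (with a logarithm exactly at $q=d-1$, this being the threshold where the $(d-1)$-dimensional integral of $\langle v+z\rangle^{-q}$ over $H$ passes from convergent to divergent), and sharpen further by exploiting that, for fixed $v_*'$, the angular weight $\tilde b\big(\tfrac{|v-v_*'|^2-|z|^2}{|v-v_*'|^2+|z|^2}\big)$ is radial in $z$, so the odd (first–order) Taylor term of $A$ cancels over symmetric shells.

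Plugging this into the $v_*'$-integral, I split $\{|v_*'|\le\tfrac12|v|\}$ and its complement. On the first set $|v-v_*'|\sim\langle v\rangle$, $\langle v^\perp\rangle\gtrsim\langle v\rangle$, and $|v^\parallel|^{d-1}\le|v_*'|^{d-1}\lesssim\langle v\rangle^{(d-3)_+}\langle v_*'\rangle^{2}$ (using $|v_*'|\le\tfrac12|v|$), so only $\int f\langle v_*'\rangle^{2}\le E_0+M_0$ enters; a short count of the resulting powers of $\langle v\rangle$ gives the claimed exponent $\gamma-2-\min(q,d-1)$, the logarithmic loss at $q=d-1$ being absorbed into the weaker exponent $\gamma-d$. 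On the complement $\langle v\rangle\lesssim\langle v_*'\rangle$, so one writes $|v-v_*'|^{\gamma+2s}\lesssim\langle v_*'\rangle^{\gamma+2s}=\langle v_*'\rangle^{\gamma+2s-2}\langle v_*'\rangle^{2}\lesssim\langle v\rangle^{\gamma+2s-2}\langle v_*'\rangle^{2}$ (a Chebyshev estimate against the energy, using $\gamma+2s<2$), and again only $\int f\langle v_*'\rangle^{2}$ is used; this produces at least as much decay. This establishes \eqref{eq:I1-bis}.

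Finally, \eqref{eq:I1} follows since $\gamma-2-\min(q,d-1)\le l_q$ and $\gamma-d\le l_{d-1}$ for every $0\le q\le d+1$ — both are elementary comparisons of affine-in-$q$ quantities, checked at the endpoints $q=d-1$ and $q=d+1$ and using $\gamma+2s>0$. The step I expect to be the main obstacle is the exponent bookkeeping, both in the pointwise bound for $J$ and in its passage through the $v_*'$-integral: one must extract enough powers of $\langle v\rangle$ while only ever pairing $f$ with the weight $\langle v_*'\rangle^{2}$ (no higher velocity moment is available a priori), which is precisely what the identity $|v^\parallel|\le|v_*'|$, the confinement $|z|\sim\langle v\rangle$, the $\langle v^\perp\rangle$-gain, and the Chebyshev estimate are for; and the critical decay exponent $q=d-1$, where the hyperplane integral is merely logarithmically divergent, has to be treated on its own.
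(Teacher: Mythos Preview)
Your approach and the paper's are essentially the same, reorganized. The paper splits first at $|v_*'|\lessgtr\tfrac18|v|$ and observes that on the small part one always has $|v+z|^2=|v|^2+|z|^2+2v_*'\cdot z>|v|^2$ (since $z\perp(v-v_*')$ gives $v\cdot z=v_*'\cdot z$, and $|z|>\tfrac12\langle v\rangle>4|v_*'|$), so $A(v+z)-A(v)\le0$ and that piece is dropped outright; only the large-$|v_*'|$ part remains, where the Chebyshev argument against the energy yields $\gamma-2-\min(q,d-1)$ directly. Your disk analysis already contains this sign observation: the positivity set $\{|y|<|v^\parallel|\}$ meets $\{|z|>\tfrac12\langle v\rangle\}$ only if $|v^\parallel|>\tfrac14\langle v\rangle$, hence $|v_*'|\ge|v^\parallel|\gtrsim\langle v\rangle$. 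But you do not invoke this when treating your ``first set'' $\{|v_*'|\le\tfrac12|v|\}$, and the count as written --- via $|v_*'|^{d-1}\lesssim\langle v\rangle^{(d-3)_+}\langle v_*'\rangle^2$ --- loses one power of $\langle v\rangle$ when $d=2$, producing $\gamma-1-q$ rather than the stated $\gamma-2-\min(q,1)$ in \eqref{eq:I1-bis}. The fix is immediate: the positivity constraint forces $|v_*'|\gtrsim\langle v\rangle$, so the first-set contribution merges into the complement and the claim follows. (Your weaker exponent still suffices for \eqref{eq:I1}, which is what is actually used downstream.) Two side remarks: your ``sharpening'' of $\langle v^\perp\rangle^{-q}$ to $\langle v^\perp\rangle^{-\min(q,d-1)}$ reads as though it improves the decay, but for $q>d-1$ it goes the other way --- what you need on the complement is the uniform-in-$v_*'$ bound $J\lesssim\ma\langle v\rangle^{-2s-\min(q,d-1)}$, and that is precisely what the paper derives there; and the first-order Taylor cancellation you mention belongs to the singular part $\mathcal I_2$ and plays no role here.
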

\begin{proof}
For the non-singular part we further distinguish two cases: $\abs{v_*'} < \frac{1}{8} \abs{v}$ and $\frac{1}{8}\abs{v} < \abs{v_*'}$,
\[\mathcal{I}_1(v) = \mathcal{I}_{1,1} (v) + \mathcal{I}_{1,2} (v) \]
with
\[\begin{cases}
	\mathcal I_{1,1} (v) = &\int_{\abs{v_*'} < \frac{1}{8}\abs{v}}   f(v_*') \abs{v - v_*'}^{\gamma +2s} \int_{ z \in (v-v_*')^\perp} \chi_{\abs{z} > \frac{1}{2} \langle v \rangle} \frac{\left(A(v+z) -A(v)\right)}{\abs{z}^{d+2s-1}} \tilde b(\cos \theta) \dd z\dd v_*' \\[1ex]
\mathcal I_{1, 2}(v) =	& \int_{\frac{1}{8}\abs{v} < \abs{v_*'}}   f(v_*') \abs{v - v_*'}^{\gamma +2s} \int_{ z \in (v-v_*')^\perp} \chi_{\abs{z} > \frac{1}{2} \langle v \rangle} \frac{\left(A(v+z) -A(v)\right)}{\abs{z}^{d+2s-1}} \tilde b(\cos \theta) \dd z\dd v_*'.
\end{cases}\]
The idea for $\mathcal I_{1,1}$ is to show that $\abs{v + z} > \abs{v}$ by exploiting the orthogonality $v + z \perp v_*'$ and the smallness $\abs{v_*'} < \frac{1}{8} \abs{v}$. In particular, $\mathcal I_{1, 1}$ has a good sign. For the second term $\mathcal I_{1, 2}$, we exploit the $2$-moment of $f$. 
\medskip

We start with $\mathcal I_{1, 1}$. Since and $z \perp (v-v'_*)$ and $\abs{v_*'} < \frac{1}8 \abs{v}$ and $\abs{z} > \frac{1}{2}\langle v \rangle > \frac{1}{2} \abs{v}$ , we find 
\begin{equation}\label{eq:I11}
\begin{aligned}
	\abs{v+z}^2 &= \abs{v}^2 + \abs{z}^2 + 2v \cdot z = \abs{v}^2 + \abs{z}^2 + 2 v_*' \cdot z\\
	&\geq  \abs{v}^2 + \abs{z}\left(\abs{z} - 2 \abs{v_*'}\right) >  \abs{v}^2 + \abs{z}\left(\abs{z} - \frac{1}{4}\abs{v}\right) \\
	&>  \abs{v}^2 + \frac{1}{4}\abs{z}\abs{v} > \left(1 + \frac{1}{8}\right)  \abs{v}^2.
\end{aligned}
\end{equation}
Since $q \geq 0$,
\(
	A(v+z) - A(v) = \ma \left[\langle v +z \rangle^{-q} - \langle v  \rangle^{-q}\right] \leq  0,
\)
we conclude that $\mathcal I_{1, 1}(v) < 0$.
\medskip

For $\mathcal I_{1, 2}$ we find
\begin{align*}
	\mathcal I_{1, 2}(v) &=\int_{ \frac{1}{8}\abs{v} < \abs{v_*'}}   f(v_*') \abs{v - v_*'}^{\gamma +2s} \int_{ z \in (v-v_*')^\perp} \chi_{\abs{z} > \frac{1}{2} \langle v \rangle} \frac{\left(A(v+z) -A(v)\right)}{\abs{z}^{d+2s-1}} \tilde b(\cos \theta) \dd z\dd v_*'\\
\intertext{we use that $\tilde b (\cos \theta)\le \tilde b_+$ and $A(v) \ge 0$,}
                             &\leq \tilde b_+ \ma \int_{ \frac{1}{8}\abs{v} <\abs{v_*'}}   f(v_*') \abs{v - v_*'}^{\gamma +2s} \int_{ z \in (v-v_*')^\perp} \chi_{\abs{z} > \frac{1}{2} \langle v \rangle}  \frac{\langle v+z\rangle^{-q}}{\abs{z}^{d+2s-1}}  \dd z\dd v_*'\\
	&\leq\tilde b_+ \ma \int_{ \frac{1}{8}\abs{v} <\abs{v_*'}}   f(v_*') \frac{(9\abs{v_*'})^{\gamma +2s}\langle v_*'\rangle^{2}}{\langle v_*'\rangle^{2}} \int_{ z \in (v-v_*')^\perp} \chi_{\abs{z} > \frac{1}{2} \langle v \rangle}  \frac{\langle v+z\rangle^{-q}}{\abs{z}^{d+2s-1}}  \dd z\dd v_*'\\
	&\leq \bigg(9^{\gamma +2s}8^{2 - \gamma - 2s}\tilde b_+\bigg) \ma \langle v \rangle^{\gamma +2s - 2}\int_{ \frac{1}{8}\abs{v} <\abs{v_*'}}   f(v_*') \langle v_*'\rangle^{2} \int_{ z \in (v-v_*')^\perp} \chi_{\abs{z} > \frac{1}{2} \langle v \rangle}  \frac{\langle v+z\rangle^{-q}}{\abs{z}^{d+2s-1}}  \dd z\dd v_*'
\end{align*}
since  $2 \geq \gamma + 2s$.

Finally, we compute,
\begin{align*}
	&\int_{ z \in (v-v_*')^\perp} \chi_{\abs{z} > \frac{1}{2} \langle v \rangle}  \frac{\langle v+z\rangle^{-q}}{\abs{z}^{d+2s-1}}  \dd z \\
	&\quad= \int_{ z \in (v-v_*')^\perp} \chi_{\abs{z} > \frac{1}{2} \langle v \rangle} \chi_{\abs{v+z} > \abs{v}}  \frac{\langle v+z\rangle^{-q}}{\abs{z}^{d+2s-1}}  \dd z + \int_{ z \in (v-v_*')^\perp} \chi_{\abs{z} > \frac{1}{2} \langle v \rangle} \chi_{\abs{v+z} < \abs{v}}   \frac{\langle v+z\rangle^{-q}}{\abs{z}^{d+2s-1}}\dd z\\
  \intertext{we let $\omega_{d-2}$ denote the volume of the unit sphere of dimension $d-2$, }
	&\quad\leq \omega_{d-2} \langle v\rangle^{-q} \int_{\frac{1}{2}\langle v\rangle}^\infty r^{-d +1 - 2s +d - 2} \dd r  +  (\langle v \rangle/2)^{-d - 2s + 1} \int_{z \in (v-v'_*)^\perp} \chi_{|v+z|< |v|}\langle v+z\rangle^{-q} \dd z\\
 	&\quad \leq C \langle v \rangle^{-q-2s} + C \langle v \rangle^{-(d-1)-2s} \int_0^{|v|} \frac{r^{d-2}}{(1+r^2)^{q/2}} \dd r 
\end{align*}
for some positive constant $C$ only depending on $d$ and $s$ only.

We now distinguish cases.
If $q > d-1$, then  $\frac{r^{d-2}}{(1+r^2)^{q/2}} \in L^1(\R)$ and 
\[ \int_{ z \in (v-v_*')^\perp} \chi_{\abs{z} > \frac{1}{2} \langle v \rangle}  \frac{\langle v+z\rangle^{-q}}{\abs{z}^{d+2s-1}}  \dd z \le \Cqu \langle v \rangle^{ -(d-1)-2s}\]
with
  \[ \Cqu = C + C\int_0^{+\infty} \frac{r^{d-2}}{(1+r^2)^{q/2}} \dd r  \le C + C(1 + 2^{-q/2} (q- (d-1))^{-1})\]
(split the domain integration between $r \le 1$ and $r \ge 1$).

If  $q < d-1$, then we simply write,
  \[ \int_0^{|v|} \frac{r^{d-2}}{(1+r^2)^{q/2}} \dd r \le \int_0^{|v|} r^{d-2-q} \dd r = \frac{|v|^{d-1-q}}{d-1-q} \le \frac{\langle v \rangle^{d-1-q}}{ d-1-q}. \]
  This implies that
  \[ \int_{ z \in (v-v_*')^\perp} \chi_{\abs{z} > \frac{1}{2} \langle v \rangle}  \frac{\langle v+z\rangle^{-q}}{\abs{z}^{d+2s-1}}  \dd z \le C \langle v \rangle^{ -q-2s}.\]

If $q =d-1$, then
  \[ \int_0^{|v|} \frac{r^{d-2}}{(1+r^2)^{(d-1)/2}} \dd r \le \int_0^{|v|} r^{-1} \dd r = \ln |v| .\]
  This implies in this case that
  \[ \int_{ z \in (v-v_*')^\perp} \chi_{\abs{z} > \frac{1}{2} \langle v \rangle}  \frac{\langle v+z\rangle^{-q}}{\abs{z}^{d+2s-1}}  \dd z \le C \langle v \rangle^{ -(d-1)-2s}  \ln \langle v \rangle.\]

We thus proved that,
\begin{equation}\label{eq:I12}
\begin{aligned}
	\mathcal I_{1, 2}(v) \leq \begin{cases} C E_0 \ma  \langle v\rangle^{\gamma -2 - \min(q, d-1)}, \qquad &\textrm{if } q\neq d-1,\\
	C E_0 \ma  \langle v\rangle^{\gamma  -d},\qquad &\textrm{if } q= d-1.
	\end{cases}
\end{aligned}
\end{equation}	
Using $\mathcal I_{1, 1} < 0$ and \eqref{eq:I12} we obtain \eqref{eq:I1-bis} and \eqref{eq:I1}. 

We can check that
\[
	\gamma - 2 - \min(q, d-1) \leq (\gamma + 2s) + \frac{2s}{d}  - q\frac{d+2s}{d}= l_q,
\]
for $q \neq d-1$, and 
\[
	\gamma  - d \leq (\gamma + 2s) + \frac{2s}{d}  - (d-1)\frac{d+2s}{d} = l_{d-1}.
\]
The proof is now complete.
\end{proof}

\begin{lemma}[Estimate of the singular part]\label{lem:s}
  Recall that $A (t,v) = \ma (t) \langle v \rangle^{-q}$ and $\mathcal{I}_2$ is given by \eqref{eq:split-s-ns}.
  \begin{equation}\label{eq:I2}
    \mathcal{I}_2(v) \le \Cqd \fraka  \langle v \rangle^{\gamma-q}
  \end{equation}
with $\Cqd = \omega_{d-2} \frac{q(q+4)}2 2^{\frac32 (q+2)}$.
  
In particular, for $q \leq d+1$ there holds $ \gamma -q \leq l_q$ where $l_q$ is given in \eqref{itm:lq}. 
\end{lemma}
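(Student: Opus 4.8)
The plan is to estimate the singular part $\mathcal{I}_2(v)$ by exploiting the antisymmetry structure of the inner integral over $z \in (v-v_*')^\perp$ with $|z| < \frac12 \langle v \rangle$, in order to gain a second-order (rather than first-order) cancellation. Recall that
\[
\mathcal I_{2}(v) = \int_{\R^d} f(v_*') \abs{v - v_*'}^{\gamma +2s} \left\{ \int_{ z \in (v-v_*')^\perp} \chi_{\abs{z} < \frac{1}{2} \langle v \rangle} \frac{\left(A(v+z) -A(v)\right)}{\abs{z}^{d+2s-1}} \tilde b(\cos \theta) \dd z \right\} \dd v_*'.
\]
The key observation is that the singular region is a symmetric set in $z$: if $z \in (v-v_*')^\perp$ then so is $-z$, and $|z| = |-z|$. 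So I would first symmetrize the inner integral, replacing $A(v+z) - A(v)$ by its even part $\frac12[A(v+z) + A(v-z)] - A(v)$, which is the second-order increment. The factor $\tilde b(\cos \theta)$ depends only on $|z|^2$ and $|v - v_*'|^2$, hence is even in $z$ on the hyperplane, so symmetrization does not disturb it.

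Next I would Taylor-expand: the second-order increment $\frac12[A(v+z) + A(v-z)] - A(v)$ is controlled by $|z|^2 \sup_{|w| \le |z|} \|D^2_v A(v+w)\|$. Since $A(v) = \ma \langle v \rangle^{-q}$, one computes $\|D^2 A(v+w)\| \le C \ma q(q+4) \langle v + w \rangle^{-q-2}$ (the constant $q(q+4)$ appearing in $\Cqd$ is a clean bound on the second derivative of $\langle \cdot \rangle^{-q}$). On the singular region $|z| < \frac12 \langle v \rangle$, we have $\langle v + w \rangle \ge \frac12 \langle v \rangle$ for $|w| \le |z|$, giving the factor $2^{q+2}$ (part of the $2^{\frac32(q+2)}$ in the target constant — the remaining $2^{(q+2)/2}$ presumably comes from the $|v - v_*'|^{\gamma+2s}$ versus $\langle v \rangle$ comparison). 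Therefore the inner integral is bounded by
\[
C \ma q(q+4) \langle v \rangle^{-q-2} \int_{z \in (v-v_*')^\perp,\, |z| < \frac12\langle v\rangle} \frac{|z|^2}{|z|^{d+2s-1}} \dd z \le C \ma q(q+4) \langle v \rangle^{-q-2} \langle v \rangle^{2 - 2s} = C \ma q(q+4) \langle v \rangle^{-q-2s},
\]
using that $\int_{|z|<\rho, z \in \text{hyperplane}} |z|^{2 - (d+2s-1)} \dd z = \omega_{d-2} \int_0^\rho r^{1-2s} \dd r = \frac{\omega_{d-2}}{2-2s}\rho^{2-2s}$ (this is where $\omega_{d-2}$ enters).

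Plugging this back into the $v_*'$-integral, I would estimate $|v - v_*'|^{\gamma+2s} f(v_*')$ by splitting as in the previous lemma (or more simply): since $\gamma + 2s > 0$ is small and positive, $|v-v_*'|^{\gamma+2s} \le C(\langle v \rangle^{\gamma+2s} + \langle v_*' \rangle^{\gamma+2s})$, and then $\int f(v_*')\langle v_*'\rangle^{\gamma+2s} \dd v_*' \le C(M_0 + E_0)$ since $\gamma + 2s \le 2$. Collecting everything yields $\mathcal{I}_2(v) \le C \ma \langle v \rangle^{\gamma + 2s} \langle v \rangle^{-q - 2s} = C\ma \langle v \rangle^{\gamma - q}$, with the constant organized as $\Cqd = \omega_{d-2} \frac{q(q+4)}{2} 2^{\frac32(q+2)}$. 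Finally, the comparison $\gamma - q \le l_q$ for $q \le d+1$ follows since $l_q = (\gamma + 2s) + \frac{2s}{d} - q(1 + \frac{2s}{d})$, so $l_q - (\gamma - q) = 2s + \frac{2s}{d} - q\frac{2s}{d} = 2s(1 + \frac{1-q}{d}) \ge 2s(1 + \frac{1-(d+1)}{d}) = 2s(1 - 1) = 0$, i.e. $l_q \ge \gamma - q$ precisely when $q \le d+1$.

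\textbf{Main obstacle.} The only delicate point is making the symmetrization rigorous: one must verify that the first-order term $\int_{(v-v_*')^\perp, |z|<\frac12\langle v\rangle} \frac{\nabla A(v) \cdot z}{|z|^{d+2s-1}}\tilde b \dd z$ vanishes (odd integrand on a symmetric domain, since $\tilde b$ is even in $z$), and that the principal-value / integrability issues near $z = 0$ are harmless — here $\gamma + 2s < 1$ guarantees $1 - 2s > -1$ is not the issue; rather, $|z|^{2-(d+2s-1)} = |z|^{3 - d - 2s}$ on a $(d-1)$-dimensional hyperplane integrates near $0$ iff $3 - 2s - d > -(d-1)$, i.e. $2 > 2s$, which always holds. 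So in fact everything is absolutely convergent after the second-order bound and no genuine principal value is needed; the symmetrization is purely algebraic. I expect the bookkeeping of constants to reproduce $\Cqd$ exactly to be the most tedious — but not conceptually hard — part.
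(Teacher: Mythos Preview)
Your approach is essentially the same as the paper's: use the evenness of the domain $\{z \in (v-v_*')^\perp : |z| < \tfrac12\langle v\rangle\}$ and of $\tilde b(\cos\theta)$ to kill the first-order term $\nabla A(v)\cdot z$, bound the second-order remainder via $\|D^2_v A\| \le q(q+4)\,\ma\,\langle\cdot\rangle^{-q-2}$, use $\langle v+\theta z\rangle \gtrsim \langle v\rangle$ on the singular region, then integrate in $z$ over the hyperplane and in $v_*'$ using $\gamma+2s \in (0,1)$ and the mass/energy bounds. The paper invokes the kernel symmetry \eqref{eq:symmetry} rather than spelling out the symmetrization of the inner integral, but this is the same mechanism; the only discrepancy is constant bookkeeping (the paper uses $\langle v+\theta z\rangle \ge \tfrac{1}{2\sqrt{2}}\langle v\rangle$, which yields the factor $2^{\frac32(q+2)}$ in $\Cqd$, whereas your sharper $\tfrac12\langle v\rangle$ would give $2^{q+2}$---so the ``missing'' $2^{(q+2)/2}$ you speculate about comes from this, not from the $|v-v_*'|^{\gamma+2s}$ comparison).
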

\begin{proof}
For the singular part,  we do a Taylor expansion: there exists $\theta \in (0, 1)$ such that
\[
A(v +z) - A(v) - \nabla_v A(v + \theta z) \cdot z = \frac12 \text{D}_v^2 A(v+ \theta z) z \cdot z
\]
for some $\theta \in (0,1)$. Moreover, $A  (w) = \fraka \mathcal{A}(|w|)$ with $\mathcal{A} (r) = (1+r^2)^{-q/2}$. In particular,
\[\frac1{\ma} D^2_v A (w) = \mathcal{A}'' (|w|) \frac{w}{|w|} \otimes \frac{w}{|w|} + \frac{\mathcal{A}'(|w|)}{|w|} \left( I - \frac{w}{|w|} \otimes \frac{w}{|w|} \right) \]
where $I$ denotes the identity matrix. As far as derivatives of $\mathcal{A}$ are concerned, we have
\[ \frac{|\mathcal{A}'(r)|}r \le q (1+r^2)^{-\frac{q+2}2} \quad \text{ and } \quad |\mathcal{A}'' (r)| \le q(q+3) (1+r^2)^{-\frac{q+2}2}.\]
This implies that for all $w,\xi \in \R^d$, we have
\[ D^2_v A (w) \xi \cdot \xi \le q(q+4) \ma \langle w \rangle^{-q-2} |\xi|^2.\]
In particular,
\[
|A(v +z) - A(v) - \nabla_v A(v + \theta z) \cdot z | \le  \frac{q(q+4)}2 \ma \langle v+ \theta z \rangle^{-q-2} |z|^2.
\]
We further note for $|z| \leq \frac12 \langle v \rangle$,
\begin{equation}
  \label{e:vplusz}
  \langle v + \theta z \rangle \geq \frac{1}{2\sqrt{2}} \langle v\rangle.
\end{equation}
Indeed,
\[ \langle v + \theta z \rangle \ge \frac1{\sqrt{2}} (1+|v+\theta z|) \ge \frac1{\sqrt{2}} (1+|v| -| z|) \ge \frac1{\sqrt{2}} (1+|v| -\frac12 \langle v \rangle) \ge  \frac{1}{2\sqrt{2}} \langle v\rangle. \]
We thus have,
\[
|A(v +z) - A(v) - \nabla_v A(v + \theta z) \cdot z | \le  \Cqd  \ma \langle v \rangle^{-q-2} |z|^2
\]
with $\Cqd = \frac{q(q+4)}2 2^{\frac32 (q+2)}$. Due to the symmetry \eqref{eq:symmetry} and the fact that $\gamma + 2s \ge 0$, we thus obtain
\begin{equation}\label{eq:inner-int-s}
\begin{aligned}
	\mathcal I_2(v) &=\int_{\R^d}   f(v_*') \abs{v - v_*'}^{\gamma +2s} \int_{ z \in (v-v_*')^\perp} \chi_{\abs{z} < \frac{1}{2} \langle v \rangle} \frac{\left(A(v+z) -A(v)\right)}{\abs{z}^{d+2s-1}} \tilde b(\cos \theta) \dd z\dd v_*'\\
	&\leq \Cqd \tilde b_+ \fraka\langle v \rangle^{-q - 2} \int_{\R^d} f(v_*') \abs{v-v_*'}^{\gamma + 2s} \int_{z \in (v-v_*')^\perp} \chi_{ \abs{z} < \frac{1}{2}\langle v\rangle} \abs{z}^{2-d -2s +1} \dd z \dd v_*'\\
	&\leq \omega_{d-2}\Cqd \tilde b_+   \fraka \langle v \rangle^{-q - 2}  \left\{\int_{\R^d} f(v_*') \left(\abs{v}^{\gamma + 2s} +\abs{v_*'}^{\gamma + 2s}\right) \dd v_*' \right\} \left\{\int_0^{\frac{1}{2}\langle v \rangle} r^{3 - d - 2s +d - 2} \dd r \right\}\\
        &\leq \omega_{d-2} \Cqd \fraka\langle v \rangle^{-q - 2} \langle v \rangle^{\gamma +2s} \langle v \rangle^{-2s + 2}.
\end{aligned}
\end{equation}
This implies \eqref{eq:I2}. 
\end{proof}

\begin{proof}[Proof of Lemma~\ref{l:etvv}]
Therefore, if we now denote by $\mathcal{E}_1$ the left hand side of \eqref{eq:iii-q0}, then \eqref{eq:I1} and \eqref{eq:I2} imply
\begin{align*}
	\mathcal E_1 &=  \int_{\R^d} \big((f-A)_+\wedge \kappa\big)^{q_0-1}(v) \mathcal I(v) \dd v = \int_{\R^d} \big((f-A)_+\wedge \kappa\big)^{q_0-1}(v) \left[\mathcal I_1(v) + \mathcal I_2(v)\right] \dd v\\
	&\leq C \fraka\int_{\R^d} \big((f-A)_+\wedge \kappa\big)^{q_0-1}(v)\langle v \rangle^{l_q} \dd v.
\end{align*}
This concludes the proof.
\end{proof}

\section{Remaining error terms for hard potentials}

In the case of hard potentials, that is to say in the case where $\gamma \ge 0$, we can treat error terms appearing in the right hand side of \eqref{eq:truncated-convex} all together. 
More precisely, we can estimate,
\[
  \int \Phi(f)  \left(f \ast \abs{\cdot}^\gamma\right)  \dd v 
  =   \int  \left(\left(q_0 - 1\right)\Big((f-A)_+\wedge \kappa\Big)^{q_0}(v) +q_0 A\Big((f-A)_+\wedge \kappa\Big)^{q_0 - 1}\right)(v) \left(f \ast \abs{\cdot}^\gamma\right)  \dd v
\]
with  $\Phi$ from \eqref{eq:Phi}. Note that the case $\gamma \ge 0$ is significantly simpler than $\gamma < 0$.

\subsection{Generation of decay}

We start with an estimate that is used to prove the generation of moments in the case of hard potentials. 
\begin{lemma}[Error terms for hard potentials]\label{lem:gamma-pos}
  Let $f: \R^d \to [0, \infty)$ have finite mass $M_0$ and energy $E_0$ and let $q \in [0,  d + 1]$.
  There exists a constant $\Chard >0$ depending on $M_0, E_0, d$ such that for any $\varepsilon \in (0, 1)$ there holds
\begin{multline}\label{eq:iv-q0}
	 \int_{\R^d} \Phi(f)   \left(f \ast \abs{\cdot}^\gamma\right) \dd v \\
	  \leq \varepsilon\norm{ \Big((f-A)_+\wedge \kappa\Big)^{q_0}}_{L^{p_0}_{k_0}(\R^d)}  + \Chard \left(\fraka + C(\varepsilon) \right)\norm{ \Big((f-A)_+\wedge \kappa\Big)^{q_0-1}}_{L^1_{\gamma - q}(\R^d)}.
\end{multline}
\end{lemma}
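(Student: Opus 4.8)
The plan is to reduce the error term $\int_{\R^d}\Phi(f)\,(f\ast|\cdot|^\gamma)\dd v$ to the two weighted norms in the statement, the key first step being to control the convolution factor. Since $\gamma\in[0,1]$ the map $t\mapsto t^\gamma$ is subadditive on $[0,+\infty)$, so $|v-v_*|^\gamma\le(\langle v\rangle+\langle v_*\rangle)^\gamma\le\langle v\rangle^\gamma+\langle v_*\rangle^\gamma$; together with $\langle v_*\rangle^\gamma\le\langle v_*\rangle^2$ (as $\gamma\le 2$), with $1\le\langle v\rangle^\gamma$, and with the mass and energy bounds in \eqref{e:hydro}, this yields the pointwise estimate $(f\ast|\cdot|^\gamma)(v)\le\Chard\langle v\rangle^\gamma$ with $\Chard$ depending only on $M_0,E_0$. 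Recalling from \eqref{eq:Phi} that $\Phi(f)=(q_0-1)\big((f-A)_+\wedge\kappa\big)^{q_0}+q_0A\big((f-A)_+\wedge\kappa\big)^{q_0-1}\ge 0$ and that $A=\fraka\langle v\rangle^{-q}$, I would multiply by this bound to obtain
\[
\int_{\R^d}\Phi(f)\,(f\ast|\cdot|^\gamma)\dd v\le\Chard(q_0-1)\int_{\R^d}\big((f-A)_+\wedge\kappa\big)^{q_0}\langle v\rangle^\gamma\dd v+\Chard q_0\,\fraka\int_{\R^d}\big((f-A)_+\wedge\kappa\big)^{q_0-1}\langle v\rangle^{\gamma-q}\dd v.
\]

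The second integral is already $\norm{\big((f-A)_+\wedge\kappa\big)^{q_0-1}}_{L^1_{\gamma-q}(\R^d)}$, so only the first needs work. To it I would apply the weighted $L^{q_0}$ interpolation estimate, Lemma~\ref{lem:interpolation-q0-3}, to the function $(f-A)_+\wedge\kappa$ (which has a finite $2$-moment because $(f-A)_+\le f$) with $k_3=\gamma$; this bounds it, for every $\eps'\in(0,1)$, by $\eps'\norm{\big((f-A)_+\wedge\kappa\big)^{q_0}}_{L^{p_0}_{k_0}}$ plus $C(\eps',E_0)\int_{\R^d}\big((f-A)_+\wedge\kappa\big)^{q_0-1}\langle v\rangle^{m_3}\dd v$ with $m_3=(1+\tfrac d{2s})\gamma-\tfrac d{2s}k_0-2$. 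Substituting $k_0=(\gamma+2s)-\tfrac{2s}d$ makes $m_3$ collapse to $\gamma-d-1$, and since $q\le d+1$ by hypothesis we get $m_3\le\gamma-q$; as $\langle v\rangle\ge 1$, the last integral is then dominated by $\norm{\big((f-A)_+\wedge\kappa\big)^{q_0-1}}_{L^1_{\gamma-q}}$.

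Choosing $\eps'$ proportional to $\eps$, with proportionality constant $\big(\Chard(q_0-1)\big)^{-1}$, converts the first contribution into the desired $\eps\norm{\big((f-A)_+\wedge\kappa\big)^{q_0}}_{L^{p_0}_{k_0}}$ term, and collecting the two remaining weighted-$L^1$ pieces gives $\Chard(\fraka+C(\eps))\norm{\big((f-A)_+\wedge\kappa\big)^{q_0-1}}_{L^1_{\gamma-q}}$ after absorbing the fixed factors $q_0,q_0-1$ (which depend only on $s,d$) into a relabelled $\Chard$ and into $C(\eps)$. I do not expect a genuine obstacle here: everything is elementary once the hard-potential convolution bound $f\ast|\cdot|^\gamma\lesssim\langle v\rangle^\gamma$ is in hand. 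The only point demanding care is the exponent bookkeeping, namely verifying that the weight $m_3$ produced by Lemma~\ref{lem:interpolation-q0-3} satisfies $m_3\le\gamma-q$; this is exactly where $q\le d+1$ enters, and it is also what forces a different treatment for large $q$.
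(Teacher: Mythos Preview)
Your proof is correct and follows essentially the same route as the paper: bound $f\ast|\cdot|^\gamma\lesssim\langle v\rangle^\gamma$ using the mass and energy of $f$, split $\Phi(f)$ into its two pieces, and then apply Lemma~\ref{lem:interpolation-q0-3} with $k_3=\gamma$ to the $\big((f-A)_+\wedge\kappa\big)^{q_0}$ integral, using $m_3=\gamma-(d+1)\le\gamma-q$. The only cosmetic difference is the elementary inequality you invoke for the convolution bound (subadditivity of $t\mapsto t^\gamma$ versus the paper's $(a+b)^\gamma\le C_\gamma(\langle a\rangle^\gamma+b^2)$), which is inconsequential.
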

\begin{proof}
We first remark that
\[
  \int_{\R^d} f(w) \abs{v-w}^\gamma \dd w \le C_\gamma \int_{\R^d} f(w) (\langle v \rangle^\gamma + |w|^2) \dd w  
\]
for some constant $C_\gamma$ only depending on $\gamma$. We used that for $\gamma \in [0,1]$, $(a+b)^\gamma \le C_\gamma ((a^2+1)^{\frac{\gamma}2} + b^2)$. Let $C_{\gamma,0}$ denote $C_\gamma ( M_0 + E_0)$. 
We have,
\begin{align*}
  \nonumber	 \int_{\R^d}  \Phi(f) \left(f \ast \abs{\cdot}^\gamma\right) \dd v
  &\leq C_{\gamma,0} \int_{\R^d} \Phi(f)\langle v \rangle^\gamma \dd v\\
 &\leq q_0 C_{\gamma,0}  \int_{\R^d} \left( \fraka   \big((f-A)_+\wedge \kappa\big)^{q_0 - 1} \langle v \rangle^{\gamma-q} +  \big((f-A)_+\wedge \kappa\big)^{q_0 } \langle v \rangle^{\gamma} \right)  \dd v.
\end{align*}
Using the interpolation from Lemma \ref{lem:interpolation-q0-3} with $k_3 = \gamma$, there holds for 
\(
	m_3 = \gamma - (d +1)  \le \gamma -q,
\)
(recall $q \leq d + 1$) and any $\varepsilon \in (0,1)$
\[
  \int_{\R^d}   \big((f-A)_+\wedge \kappa\big)^{q_0} \langle v \rangle^{\gamma} \dd v \leq \varepsilon \norm{ \big((f-A)_+\wedge \kappa\big)^{q_0}}_{L^{p_0}_{k_0}} + C(\varepsilon, E_0) \left( \int_{\R^d} \big((f-A)_+\wedge \kappa\big)^{q_0 -1}(v)  \langle v \rangle^{\gamma -q}  \dd v \right).
\]
This yields the claim \eqref{eq:iv-q0}. 
\end{proof}

\section{Main error terms  for soft potentials}

We next explain how to estimate the following term in the case of soft potentials, \textit{i.e.} when $\gamma<0$, 
\begin{multline*} \iint_{\R^d \times \Omega} \Phi\big(f\big) \left(f \ast \abs{\cdot}^\gamma\right) \dd v \dd x  \\= 
\iint_{\R^d \times \Omega}  \left(\left(q_0 - 1\right)\Big((f-A)_+\wedge \kappa\Big)^{q_0}(v) +q_0 A\Big((f-A)_+\wedge \kappa\Big)^{q_0 - 1}\right)(v) \left(f \ast \abs{\cdot}^\gamma\right)  \dd v \dx. 
\end{multline*}
We will use the following auxiliary results. 

\subsection{Auxiliaries}

\begin{lemma}[Convolution product]\label{lem:gamma-neg-aux0} 
Let $\gamma <0$ and $f : \R^d \to [0, \infty)$ have finite mass $M_0$. Let $A(t, v) = \fraka \langle v \rangle^{-q}$ for some $q \geq 0$ and suppose, for all $t \geq 0$, $\fraka(t) \geq 2^d\frac{M_0}{\Cqt}$ with $\Cqt =\frac{\omega_{d-1} (2\sqrt{2})^q }{(d+\gamma)}$. Then there exists $\Cconv > 0$ depending on $d, \gamma, q$ and $M_0$, such that
\[
	(f\wedge A) \ast |\cdot|^\gamma \leq \Cconv \fraka^{-\frac{\gamma}{d}} \langle v\rangle^{\gamma \min\left(1, \frac{q}{d}\right)}.
\]
\end{lemma}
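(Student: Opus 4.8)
The plan is to estimate the convolution pointwise in $v$ by splitting the integration variable $w$ into a \emph{near} region $\{|v-w|<\rho\}$ and a \emph{far} region $\{|v-w|>\rho\}$, for a radius $\rho=\rho(v)\le\tfrac12\langle v\rangle$ to be optimized. On the near region I would drop the truncation by $f$ and keep only $(f\wedge A)(w)\le A(w)=\fraka\langle w\rangle^{-q}$; since $|v-w|\le\rho\le\tfrac12\langle v\rangle$, the elementary bound already used for \eqref{e:vplusz} gives $\langle w\rangle\ge\tfrac{1}{2\sqrt2}\langle v\rangle$, whence $A(w)\le\fraka(2\sqrt2)^q\langle v\rangle^{-q}$, and integrating the locally integrable (recall $\gamma+d>0$) singularity $|v-w|^\gamma$ over the ball of radius $\rho$ shows the near contribution is at most $\fraka\,\Cqt\,\langle v\rangle^{-q}\rho^{\gamma+d}$ — the constant $\Cqt=\omega_{d-1}(2\sqrt2)^q/(d+\gamma)$ appearing precisely as $\omega_{d-1}(2\sqrt2)^q\int_0^\rho r^{\gamma+d-1}\dd r$. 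On the far region I would instead drop the truncation by $A$, keep $(f\wedge A)(w)\le f(w)$, bound $|v-w|^\gamma\le\rho^\gamma$, and invoke $\int f\le M_0$, so the far contribution is at most $M_0\rho^\gamma$.

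I would then balance the two contributions with the choice $\rho:=\min\bigl(\rho_*,\tfrac12\langle v\rangle\bigr)$, where $\rho_*:=\bigl(M_0\langle v\rangle^{q}/(\fraka\Cqt)\bigr)^{1/d}$ is the balancing radius. If $\rho=\rho_*$, both contributions equal $M_0^{1+\gamma/d}(\fraka\Cqt)^{-\gamma/d}\langle v\rangle^{q\gamma/d}$; since $\langle v\rangle\ge1$ and $q\gamma/d\le\gamma\min(1,q/d)$, with equality when $q\le d$, this is at most $M_0^{1+\gamma/d}\Cqt^{-\gamma/d}\,\fraka^{-\gamma/d}\langle v\rangle^{\gamma\min(1,q/d)}$. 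If instead $\rho=\tfrac12\langle v\rangle<\rho_*$, then necessarily $\langle v\rangle^{q-d}>\fraka\Cqt/(2^dM_0)\ge1$ — here is where the hypothesis $\fraka\ge 2^dM_0/\Cqt$ enters — so $q>d$ and $\min(1,q/d)=1$; the near contribution is bounded by its value at $\rho_*$ by monotonicity in $\rho$ (as $\gamma+d>0$), followed by $\langle v\rangle^{q\gamma/d}\le\langle v\rangle^{\gamma}$, while the far contribution $M_02^{-\gamma}\langle v\rangle^\gamma$ is absorbed using $\fraka^{-\gamma/d}\ge(2^dM_0/\Cqt)^{-\gamma/d}=2^{-\gamma}M_0^{-\gamma/d}\Cqt^{\gamma/d}$. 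Summing the two contributions gives the asserted inequality, with $\Cconv$ depending only on $d,\gamma,q,M_0$ through $M_0^{1+\gamma/d}$ and $\Cqt^{-\gamma/d}$.

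I do not expect a serious difficulty; the delicate points are the bookkeeping of the two regimes for $\rho$ and the observation that the standing hypothesis $\fraka\ge 2^dM_0/\Cqt$ is exactly what is needed both to legitimize the constraint $\rho\le\tfrac12\langle v\rangle$ (in particular it is automatic when $q\le d$, so that the unconstrained balancing radius is admissible) and to absorb the far term in the regime $q>d$ with $\langle v\rangle$ large. The exponent $\gamma\min(1,q/d)$ in the conclusion is dictated by the passage from $\langle v\rangle^{q\gamma/d}$, which the balancing naturally produces, to a quantity that remains an upper bound when $q>d$.
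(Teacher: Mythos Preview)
Your proof is correct and follows essentially the same near/far decomposition and balancing as the paper. The only cosmetic difference is that the paper builds the exponent $\min(1,q/d)$ directly into the choice of radius, taking $R=(M_0/(\Cqt\fraka))^{1/d}\langle v\rangle^{\min(1,q/d)}$, which automatically satisfies $R\le\tfrac12\langle v\rangle$ under the hypothesis and thereby avoids your explicit case split $\rho=\min(\rho_*,\tfrac12\langle v\rangle)$.
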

\begin{proof}
For $R>0$ such that $R< \langle v \rangle /2$, we write
\begin{align*}
  \int_{\R^d} (f \wedge A)(v-w) \abs{w}^\gamma \dd w & = \int_{B_R} (f \wedge A)(v-w) \abs{w}^\gamma \dd w + \int_{\R^d\setminus B_R} (f \wedge A)(v-w) \abs{w}^\gamma \dd w\\
                                                     &\leq \fraka  \int_{B_R} \langle v -w \rangle^{-q} |w|^\gamma \dd w + R^\gamma M_0 \\
  \intertext{and for $R < \langle v \rangle /2$ and $w \in B_R$, we have $\langle v -w \rangle \ge \frac1{2\sqrt{2}} \langle v \rangle$ (argue as in \eqref{e:vplusz}),}
                                                     & \le \fraka \; \omega_{d-1} (2\sqrt{2})^q \langle v \rangle^{-q} \frac{R^{d+\gamma}}{d+\gamma} + R^\gamma M_0  \le \Cqt a \langle v \rangle^{-q} R^{d+\gamma} + M_0 R^\gamma,
\end{align*}
where
\begin{equation}
  \label{e:cqt}
  \Cqt = \frac{\omega_{d-1} (2\sqrt{2})^q}{(d+\gamma)}.
\end{equation}
Choosing $R = \big(\frac{M_0}{\Cqt \fraka}\big)^{\frac{1}{d}} \langle v\rangle^{\min(1, \frac{q}{d})} \le \frac12 \langle v \rangle$ (since $\fraka \geq 2^d\frac{M_0}{\Cqt}$)  yields
\begin{align*}
  \int_{\R^d} (f \wedge A)(v-w) \abs{w}^\gamma \dd w &\leq (\Cqt)^{-\frac{\gamma}{d}}M_0^{1+\frac{\gamma}{d}} \fraka^{-\frac{\gamma}{d}} \langle v \rangle^{-q +(d+\gamma) \min(1, \frac{q}{d})} + (\Cqt)^{-\frac{\gamma}{d}}M_0^{1+\frac{\gamma}{d}}   \fraka^{-\frac{\gamma}{d}}  \langle v \rangle^{\gamma \min(1, \frac{q}{d})}\\
  &\leq  2(\Cqt)^{-\frac{\gamma}{d}}M_0^{1+\frac{\gamma}{d}}  \fraka^{-\frac{\gamma}{d}}\langle v \rangle^{\gamma \min(1, \frac{q}{d})}. \qedhere
\end{align*}
\end{proof}

We next collect auxiliary computations in the following lemma.
\begin{lemma}[Auxiliary technical result]\label{lem:aux-algebra}
Let $\gamma \in (-2s,0)$ and
\[
	1 \leq q \leq 2 + \frac{d}{2s} k_0,  \qquad l_1 = \min(2, q).
\]
If we pick 
\begin{equation}\label{eq:choice-theta-beta}
	\theta = \frac{d+2s}{d-2s}, \qquad \frac{1}{\beta} = \frac{d^2 + 4s^2}{(d+2s)^2}, \qquad \frac{1}{\beta'} = \frac{4sd}{(d+2s)^2},
\end{equation}
then 
\begin{equation}\label{eq:algebra-eq}
	\frac{1}{\beta} + \frac{1}{\beta'} = 1, \qquad\frac1{\theta}+\frac1{\theta'} = 1, \qquad \gamma_2 \theta = 1, \qquad \gamma_3\theta' =1,
\end{equation}
where 
\begin{equation}\label{eq:gammas}
	\gamma_1 := \frac{8s^2(d+\gamma)}{d(4sd +\gamma(d+2s))}, \qquad \gamma_2 := \frac{1}{\beta}\frac{(d-2s)(d+2s)}{d^2 + 4s^2}, \qquad \gamma_3 := \frac{1}{\beta}\frac{4s(d+2s)}{d^2 + 4s^2}.
\end{equation}
Moreover, 
\begin{equation}\label{eq:algebra-ineq}
	\gamma_1 \theta' < 1 + \frac{2s}{d},
\end{equation}
and there holds
\begin{equation}\label{eq:weight-condition}
	m_1 \leq \frac{2}{\beta'} + \frac{m_3}{\beta},
\end{equation}
and
\begin{equation}\label{eq:weight-condition-IV}
	m_4 \leq \frac{2}{\beta'} + \frac{m_3}{\beta},
\end{equation}
with 
\begin{align*}
	m_1 &=  - \frac{4sd^2}{(d+2s)(4sd + \gamma(d+2s))} l_1 + \frac{\gamma d}{4sd + \gamma(d+2s)} k_0,  \qquad m_3= \frac{4sd}{d^2 +4s^2} l_q + \frac{d(d-2s)}{d^2 + 4s^2}k_0,\\
	m_4 &=  - 2 \frac{4sd}{(d+2s)(\gamma + 2s)}  - \frac{(2s-\gamma) d}{(d+2s)(\gamma +2s)} k_0.
\end{align*}
\end{lemma}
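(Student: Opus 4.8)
The statement is a list of elementary identities and inequalities among the exponents, so the plan is simply to verify each one by substituting the definitions; I would organize the verifications into three blocks, and the only care needed is with signs.

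\textbf{The algebraic identities \eqref{eq:algebra-eq}.} First I would expand $(d+2s)^2 = d^2 + 4sd + 4s^2$, which gives at once $\frac1\beta + \frac1{\beta'} = \frac{(d^2+4s^2)+4sd}{(d+2s)^2} = 1$. From $\theta = \frac{d+2s}{d-2s}$ one reads off $\frac1\theta = \frac{d-2s}{d+2s}$, hence $\frac1{\theta'} = \frac{4s}{d+2s}$, i.e. $\theta' = \frac{d+2s}{4s}$; note $d-2s>0$ since $d \ge 2$ and $s<1$, so all these quantities are positive. Inserting $\frac1\beta = \frac{d^2+4s^2}{(d+2s)^2}$ into the definitions \eqref{eq:gammas} of $\gamma_2,\gamma_3$ produces the simplifications $\gamma_2 = \frac{d-2s}{d+2s} = \frac1\theta$ and $\gamma_3 = \frac{4s}{d+2s} = \frac1{\theta'}$, so that $\gamma_2\theta = \gamma_3\theta' = 1$.

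\textbf{The strict inequality \eqref{eq:algebra-ineq}.} Since $\gamma \in (-2s,0)$ and $d-2s>0$, I would first record $4sd + \gamma(d+2s) > 4sd - 2s(d+2s) = 2s(d-2s) > 0$, so the denominator of $\gamma_1$ is positive. Using $\theta' = \frac{d+2s}{4s}$ one gets $\gamma_1\theta' = \frac{2s(d+\gamma)(d+2s)}{d\,(4sd+\gamma(d+2s))}$, and since $1+\frac{2s}d = \frac{d+2s}d$, the bound \eqref{eq:algebra-ineq} is equivalent to $2s(d+\gamma) < 4sd + \gamma(d+2s)$, i.e. to $0 < d(\gamma+2s)$, which holds because $\gamma > -2s$.

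\textbf{The weight inequalities \eqref{eq:weight-condition} and \eqref{eq:weight-condition-IV}.} Using $\frac1\beta = \frac{d^2+4s^2}{(d+2s)^2}$ and $\frac1{\beta'} = \frac{4sd}{(d+2s)^2}$, the common right-hand side rewrites as $\frac2{\beta'} + \frac{m_3}\beta = \frac{8sd + 4sd\,l_q + d(d-2s)k_0}{(d+2s)^2}$; recalling $l_q = (\gamma+2s)+\frac{2s}d - q\frac{d+2s}d$, the hypothesis $q \le 2 + \frac{d}{2s}k_0$ yields a lower bound on $l_q$ and hence, since $l_q$ enters with a positive coefficient, a lower bound on this right-hand side. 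For \eqref{eq:weight-condition} I would put $m_1$ over the common denominator $(d+2s)(4sd+\gamma(d+2s))$ and use $l_1 = \min(2,q) \ge 1$ (valid since $q \ge 1$) to bound from above the term carrying $-l_1$; clearing the positive denominators then leaves a linear inequality in $q$ and $k_0$ to be checked directly. For \eqref{eq:weight-condition-IV}, $m_4$ is independent of $q$, so I would clear the denominator $(d+2s)(\gamma+2s)$ (positive since $\gamma>-2s$), insert the lower bound on $l_q$ coming from $q \le 2+\frac d{2s}k_0$, and again verify a linear inequality in $k_0$.

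\textbf{Main difficulty.} There is no conceptual obstacle here; the only delicate point is the sign bookkeeping in the last block, since $\gamma$ and $k_0 = \gamma+2s-\frac{2s}d$ may both be negative. One must consistently use $\gamma+2s>0$ and $d-2s>0$ when clearing denominators, and invoke the full range $1 \le q \le 2+\frac d{2s}k_0$ — both endpoints — rather than merely $q \ge 1$.
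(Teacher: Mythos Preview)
Your treatment of the identities \eqref{eq:algebra-eq} and of the strict inequality \eqref{eq:algebra-ineq} is correct and coincides with the paper's proof (which simply says the former are ``a straightforward computation'' and reduces the latter to $\gamma+2s>0$).

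There is, however, a genuine gap in your plan for \eqref{eq:weight-condition}. After clearing the (positive) denominators the inequality becomes, as in the paper,
\[
	-d^2\,l_1 \ \le\ (2-q)\bigl(4sd+\gamma(d+2s)\bigr)+d^2 k_0 .
\]
You propose to bound the left-hand side using only $l_1\ge1$. But when $q>2$ one actually has $l_1=2$, and replacing $l_1$ by $1$ weakens the left-hand side by exactly $d^2$; this lost margin cannot be recovered from the right-hand side in general. Concretely, take $d=10$, $s=0.9$, $\gamma=-0.01$ (so $\gamma\in(-2s,0)$ and $k_0=\gamma+2s-\tfrac{2s}{d}\approx 1.61$) and $q=10\le 2+\tfrac{d}{2s}k_0$. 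With the true value $l_1=2$ the displayed inequality reads $-200\le -126$, which holds; with your bound $l_1\ge1$ it becomes $-100\le -126$, which is false. So the step ``use $l_1\ge1$'' does not close the argument. The paper handles \eqref{eq:weight-condition} by a case split on $l_1$: when $q\le 2$ it keeps $l_1=q$ (and then distinguishes the sign of $k_0$), and when $q\ge2$ it uses $l_1=2$ together with $q\le 2+\tfrac{d}{2s}k_0$, reducing the check to $k_0\le 2$. Your concluding remark that both endpoints of the range $1\le q\le 2+\tfrac d{2s}k_0$ are needed is on target; what is missing is precisely that the \emph{exact} relation $l_1=\min(2,q)$, not merely $l_1\ge1$, must be used.

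Your outline for \eqref{eq:weight-condition-IV} is the same as the paper's and should go through without difficulty.
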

\begin{proof}
The relations in \eqref{eq:algebra-eq} are verified by a straightforward computation. We then rewrite \eqref{eq:algebra-ineq} as
\begin{equation*}
	\frac{2s(d+2s)(d+\gamma)}{d(4sd +\gamma(d+2s))} < 1 + \frac{2s}{d},
\end{equation*}
which holds for $\gamma + 2s > 0$.

It remains to check that \eqref{eq:weight-condition} and \eqref{eq:weight-condition-IV} are satisfied.
We rewrite \eqref{eq:weight-condition} as 
\begin{equation*}
	- \frac{4sd^2}{4sd + \gamma(d+2s)} l_1 + \frac{\gamma d(d+2s)}{4sd + \gamma(d+2s)} k_0 \leq 2 \frac{4sd}{(d+2s)}  + \frac{4sd}{(d+2s)} l_q + \frac{d(d-2s)}{(d+2s)} k_0.
\end{equation*}
Then using that $l_q = k_0 + 2\frac{2s}{d}  - q \frac{d+2s}{d}$ we get
\begin{equation*}
	- \frac{4sd^2}{(4sd + \gamma(d+2s))} l_1 + \frac{\gamma d(d+2s)}{4sd + \gamma(d+2s)} k_0 \leq 8s  - 4s q + dk_0,
\end{equation*}
or equivalently,
\begin{equation*}
	- d^2 l_1 \leq (2 -  q)(4sd + \gamma(d+2s)) + d^2 k_0.
\end{equation*}

If  $2 \geq q$ and $k_0 \geq 0$  then \eqref{eq:weight-condition} is satisfied since $l_1 \ge 0$.

If $2 \leq q$ and $k_0 \geq 0$ then $l_1 =2$, and since $q \leq 2 + \frac{d}{2s}k_0$, \eqref{eq:weight-condition} is satisfied if 
\begin{equation*}
	- 2 d^2  \leq -\frac{d}{2s} k_0 (4sd + \gamma(d+2s)) + d^2 k_0 = -d^2 k_0 - \frac{d}{2s}\gamma(d+2s) k_0, 
\end{equation*}
which holds if
\begin{equation*}
	\left(\gamma + 2s - \frac{2s}{d}\right) = k_0 \leq 2,
\end{equation*}
which is true since $\gamma +2s \leq 2$.

If $k_0 < 0$ then $q  \leq 2 + \frac{d}{2s} k_0< 2$ and in particular $l_1 =q$, so that \eqref{eq:weight-condition} is satisfied if
\begin{equation*}
	0 \leq (2 -  q)(4sd + \gamma(d+2s)) + d^2 k_0 + d^2 q.
      \end{equation*}
We remark that $4sd + \gamma (d+2s) = (\gamma+2s)d + 2s (\gamma +d) \ge 0$. Since $2-q \ge  \frac{d}{2s} (-k_0) \ge 0$ and $q \ge 1$, we have to check that 
\begin{equation*}
	\left(\frac{\gamma}{d} + \frac{\gamma+2s}{2s}\right)k_0 \leq   1,
\end{equation*}
is satisfied.
If $\frac{\gamma}{d} + \frac{\gamma+2s}{2s} \geq 0$ then \eqref{eq:weight-condition} is satisfied.  If $\frac{\gamma}{d} + \frac{\gamma+2s}{2s} \leq 0$, then we see that \eqref{eq:weight-condition} holds as soon as 
\[
	\gamma k_0 =  \gamma \left(\gamma + 2s -\frac{2s}{d}\right) \leq d ,
\]
which is satisfied. Indeed,  $0 < -\gamma < d$ and $s <1$ ensures that $-\gamma \frac{2s}d \le 2$. We conclude by recalling that $d \ge 2$. 
Thus in any case we see that \eqref{eq:weight-condition} is satisfied for $1 \leq q \leq 2 + \frac{d}{2s} k_0$.
\medskip

Finally, we rewrite \eqref{eq:weight-condition-IV} as
\begin{eqnarray*}
  -2 \frac{4sd}{(d+2s)(\gamma + 2s)}  - \frac{(2s-\gamma) d}{(d+2s)(\gamma +2s)} k_0 \leq 2 \frac{4sd}{(d+2s)^2}  + \frac{d^2 + 4s^2}{(d+2s)^2}\left( \frac{4sd}{d^2 +4s^2} l_q + \frac{d(d-2s)}{d^2+4s^2} k_0\right),\\
\Leftrightarrow	-2 \frac{4sd}{(\gamma + 2s)}  - \frac{(2s-\gamma) d}{(\gamma +2s)} k_0 \leq 2 \frac{4sd}{(d+2s)}  + \frac{1}{(d+2s)}\left( 4sd \left[k_0+ 2 \frac{2s}{d}  - q\frac{d+2s}{d}\right]+ d(d-2s)k_0\right), \\
\Leftrightarrow	 4s q  \leq 8s +2 \frac{4sd}{(\gamma + 2s)}  + dk_0+ \frac{(2s-\gamma) d}{(\gamma +2s)} k_0.
\end{eqnarray*}
We thus see, that for $q \leq 2 + \frac{d}{2s} k_0$, \eqref{eq:weight-condition-IV} is satisfied if
\begin{equation*}
	\gamma  k_0 = \gamma\left(\gamma + 2s - \frac{2s}{d}\right) \leq 4s.
\end{equation*}	
This is satisfied  since $-\gamma \leq d$.
\end{proof}

\subsection{Splitting}

We split the error term involving $\Phi (f)$ by writing $f =  (f \wedge A) + (f-A)_+ \wedge \kappa + (f-A - \kappa)_+$,
\[
  \int_{\R^d} \Phi\big(f\big) \left(f \ast \abs{\cdot}^\gamma\right) \dd v    = \mathcal{E}_2 + \Etrois  + \Equatre+\mathcal{E}_5
\]
with
\begin{equation}\label{eq:iv-split}
\begin{cases}
  \mathcal{E}_2 &=q_0 c_b\int  A\Big((f-A)_+\wedge \kappa\Big)^{q_0 - 1} \big((f\wedge A) \ast \abs{\cdot}^\gamma\big) \dd v\\
  \Equatre &=q_0  c_b \int  A\Big((f-A)_+\wedge \kappa\Big)^{q_0 - 1} \big((f-A)_+\wedge \kappa\ast \abs{\cdot}^\gamma\big) \dd v\\
  \Etrois &=(q_0-1)c_b \int \Big((f-A)_+\wedge \kappa\Big)^{q_0}\left(f \ast \abs{\cdot}^\gamma\right) \dd v\\
  \mathcal{E}_5&=q_0  c_b \int  A\Big((f-A)_+\wedge \kappa\Big)^{q_0 - 1} \big((f-A - \kappa)_+\ast \abs{\cdot}^\gamma\big) \dd v.
\end{cases}
\end{equation}

\subsection{The term $\mathcal{E}_2$}
\begin{lemma}[Estimate of $\mathcal{E}_2$]\label{lem:iv.i}
  Let $\gamma \in (-d, 0]$ and $\gamma + 2s > 0$ and $q \in [0, d+1]$. Then
\begin{equation}\label{eq:iv.i-gamma-neg}
	\mathcal{E}_2 \leq  \frac12 \Cget \fraka^{1 + \frac{2s}{d}} \norm{\big((f-A)_+\wedge \kappa\big)^{q_0-1}}_{L^1_{l_q}},
\end{equation}
as soon as $\fraka$ satisfies for all $t \geq0$
\begin{equation}
  \label{e:cond-a-bis}
\fraka(t) \ge \underline{a}^{(2)} = \max \left(\frac{2^dM_0 }{C_{q, 3}}, \left(\frac{2}{\Cget}\right)^{\frac{d}{\gamma+2s}}, a_{get} \right)
\end{equation}
(with  $a_{get}, \Cget$ from Proposition~\ref{p:coercivity-get}).
\end{lemma}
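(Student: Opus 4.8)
The plan is to bound the convolution factor $(f\wedge A)\ast|\cdot|^\gamma$ by the pointwise estimate of Lemma~\ref{lem:gamma-neg-aux0}, substitute the explicit barrier $A(v)=\fraka\langle v\rangle^{-q}$, and then check that the resulting velocity weight does not exceed $l_q$, so that the whole quantity can be absorbed into the right-hand side of \eqref{eq:iv.i-gamma-neg}.

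First I would apply Lemma~\ref{lem:gamma-neg-aux0}, which is legitimate because condition~\eqref{e:cond-a-bis} in particular forces $\fraka(t)\ge 2^d M_0/\Cqt$; this gives $(f\wedge A)\ast|\cdot|^\gamma \le \Cconv\,\fraka^{-\gamma/d}\,\langle v\rangle^{\gamma\min(1,q/d)}$. Inserting this together with $A(v)=\fraka\langle v\rangle^{-q}$ into the definition \eqref{eq:iv-split} of $\mathcal{E}_2$ yields
\[
\mathcal{E}_2 \le q_0\,c_b\,\Cconv\,\fraka^{1-\gamma/d}\int_{\R^d}\big((f-A)_+\wedge\kappa\big)^{q_0-1}(v)\,\langle v\rangle^{-q+\gamma\min(1,q/d)}\dd v .
\]

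Next I would compare the weight exponent $e(q):=-q+\gamma\min(1,q/d)$ with $l_q=(\gamma+2s)+\tfrac{2s}d-q(1+\tfrac{2s}d)$. If $q\le d$, then $e(q)=-q\tfrac{d-\gamma}d$ and $l_q-e(q)=(\gamma+2s)(1-\tfrac qd)+\tfrac{2s}d\ge 0$ since $\gamma+2s>0$; if $d\le q\le d+1$, then $e(q)=\gamma-q$ and $l_q-e(q)=2s+\tfrac{2s}d(1-q)\ge 0$ since $q\le d+1$. In either case $e(q)\le l_q$, so, $\langle v\rangle\ge 1$ allowing us to replace the weight by $\langle v\rangle^{l_q}$, we obtain
\[
\mathcal{E}_2 \le q_0\,c_b\,\Cconv\,\fraka^{1-\gamma/d}\,\norm{\big((f-A)_+\wedge\kappa\big)^{q_0-1}}_{L^1_{l_q}} .
\]

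Finally, writing $1-\gamma/d=(1+\tfrac{2s}d)-\tfrac{\gamma+2s}d$, the prefactor equals $\fraka^{1+2s/d}\,\fraka^{-(\gamma+2s)/d}$; since $\gamma+2s>0$, the extra factor $\fraka^{-(\gamma+2s)/d}$ does not exceed $\tfrac{\Cget}{2\,q_0\,c_b\,\Cconv}$ as soon as $\fraka$ lies above a fixed threshold of the form $(2\,q_0\,c_b\,\Cconv/\Cget)^{d/(\gamma+2s)}$, which is subsumed by~\eqref{e:cond-a-bis}. This produces \eqref{eq:iv.i-gamma-neg}. The one genuinely delicate point is the case split on the weight exponent: $\gamma\le 0$ (equivalently, $\gamma+2s>0$ together with the cone weight gain) is what closes the case $q\le d$, while the bound $q\le d+1$ closes the case $q\ge d$; and it is precisely the moderately soft condition $\gamma+2s>0$ that allows the surplus power of $\fraka$ to be absorbed.
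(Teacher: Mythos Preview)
Your argument is correct and follows essentially the same route as the paper: apply Lemma~\ref{lem:gamma-neg-aux0} to bound the convolution, substitute $A=\fraka\langle v\rangle^{-q}$, verify $\gamma\min(1,q/d)-q\le l_q$, and use $\gamma+2s>0$ to absorb the surplus power of~$\fraka$. Your explicit case split on $q\le d$ versus $d\le q\le d+1$ spells out what the paper compresses into a single line; the only minor discrepancy is that your threshold $(2q_0c_b\Cconv/\Cget)^{d/(\gamma+2s)}$ carries the constants $q_0c_b\Cconv$ explicitly, whereas the paper's stated $\underline{a}^{(2)}$ in~\eqref{e:cond-a-bis} records only $(2/\Cget)^{d/(\gamma+2s)}$.
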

\begin{proof}
By Lemma \ref{lem:gamma-neg-aux0} (see the assumption on $\ma$) there holds
\begin{equation*}
  \mathcal{E}_2 \leq \Cconv q_0 c_b \fraka^{-\frac{\gamma}{d}} \int_{\R^d}   A\Big((f-A)_+\wedge \kappa\Big)^{q_0 - 1} \langle v \rangle^{\gamma \min(1, \frac{q}{d})} \dd v.
\end{equation*}
It suffices to check that $1 - \frac{\gamma}{d} < 1 +\frac{2s}{d}$, which is true since $\gamma+2s > 0$, and that $\gamma \min(1, \frac{q}{d}) -q \leq l_q$, which again is true since $q \leq d + 1$. Thus \begin{equation*}
	\fraka^{1-\frac{\gamma}{d}} \int \Big((f-A)_+\wedge \kappa\Big)^{q_0 - 1} \langle v \rangle^{\gamma\min(1, \frac{q}{d})-q} \dd v \leq \frac12 \Cget \fraka^{1 + \frac{2s}{d}} \norm{\Big((f-A)_+\wedge \kappa\Big)^{q_0-1}}_{L^1_{l_q}}
\end{equation*}
where $\Cget$ is the constant from Proposition~\ref{p:coercivity-get}. We used the assumption on $\ma$ from the statement.
\end{proof}

\subsection{The term $\Equatre$}
	
We consider next $\Equatre$ defined in \eqref{eq:iv-split}.
\begin{lemma}[Estimate of $\Equatre$]\label{lem:iv.ii}
Let $\gamma <0$  and $q \in [0, d+1 +\frac{d}{2s} \gamma]$. Then  there exists a constant $\underline{a}^{(3)}>0$, only depending on the parameters $d,\gamma,s, 2 \wedge q$ and the constants $c_b$, $a_{get}$, $\Ccor$ and $\Cget$, such that
\begin{equation}\label{eq:iv.ii-gamma-neg}
  \Equatre \leq \frac12 \Ccor  \norm{\Big((f-A)_+\wedge \kappa\Big)^{q_0}}_{L^{p_0}_{k_0}} + \frac14 \Cget \fraka^{1 + \frac{2s}{d}} \norm{\Big((f-A)_+\wedge \kappa\Big)^{q_0-1}}_{L^1_{l_q}}
\end{equation}
(with $\Ccor$, $a_{get}$ and $\Cget$ from Propositions~\ref{p:cor-gamma-neg} and \ref{p:coercivity-get}) as soon as $a  \ge \underline{a}^{(3)}$.
\end{lemma}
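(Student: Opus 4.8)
The plan is to write $g := (f-A)_+ \wedge \kappa$ and recall from \eqref{eq:iv-split} that $\Equatre = q_0 c_b \int_{\R^d} A\, g^{q_0-1}\,(g \ast |\cdot|^\gamma)\,\dd v$ with $A = \fraka\langle v\rangle^{-q}$ (for definiteness take $q \ge 1$; the remaining values are covered by the same argument with the straightforward adaptations of Lemma~\ref{lem:aux-algebra}). The first step is a pointwise estimate of the convolution $g \ast |\cdot|^\gamma$ in the spirit of Lemma~\ref{lem:gamma-neg-aux0}, but now keeping track of the $L^{p_0}_{k_0}$-norm of $g^{q_0}$ rather than using a pointwise bound on $g$. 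With $R(v) := \tfrac12\langle v\rangle$, split $(g\ast|\cdot|^\gamma)(v)$ into the integrals over $\{|w| \ge R(v)\}$ and $\{|w| < R(v)\}$. On the former, bounding $|w|^\gamma \le R(v)^\gamma$ and using the mass bound $\int g \le \int f \le M_0$ yields a contribution $\lesssim M_0 \langle v\rangle^\gamma$. On the latter, H\"older's inequality with exponent $\theta := p_0 q_0 = \tfrac{d+2s}{d-2s}$ and conjugate $\theta' = \tfrac{d+2s}{4s}$ is legitimate because $\gamma\theta' > -d$, that is $-\gamma < \tfrac{4sd}{d+2s}$, which follows from $-\gamma < 2s < \tfrac{4sd}{d+2s}$ (the last inequality uses $d \ge 2 > 2s$); since $|w| < R(v)$ forces $\langle v-w\rangle \approx \langle v\rangle$ (argue as in \eqref{e:vplusz}) we may transfer the weight and obtain a contribution $\lesssim \langle v\rangle^{-k_0/q_0}\|g^{q_0}\|_{L^{p_0}_{k_0}(\R^d)}^{1/q_0}\langle v\rangle^{\gamma + d/\theta'}$.

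Inserting this into $\Equatre$, the far part is bounded by $C c_b M_0\, \fraka \int_{\R^d} \langle v\rangle^{\gamma - q} g^{q_0-1}\,\dd v$; since $\gamma - q \le l_q$ whenever $q \le d+1$ (one checks $l_q - (\gamma-q) = 2s\big(1 + \tfrac{1-q}{d}\big) \ge 0$), this is $\le C c_b M_0\, \fraka\, \|g^{q_0-1}\|_{L^1_{l_q}}$, which is absorbed into $\tfrac18\Cget\fraka^{1+\frac{2s}d}\|g^{q_0-1}\|_{L^1_{l_q}}$ as soon as $\fraka^{2s/d} \ge 8 C c_b M_0/\Cget$ and $\fraka \ge a_{get}$ (so that $\Cget$ of Proposition~\ref{p:coercivity-get} is available). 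The near part is bounded by $C c_b\, \fraka\, \|g^{q_0}\|_{L^{p_0}_{k_0}}^{1/q_0} \int_{\R^d} \langle v\rangle^{m} g^{q_0-1}\,\dd v$ with the explicit exponent $m = -q + \tfrac{2s(\gamma+d+1)}{d+2s}$, using $-k_0/q_0 + \gamma + d/\theta' = \tfrac{2s(\gamma+d+1)}{d+2s}$.

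It remains to absorb the near part: I would H\"older in $v$ with the conjugate exponents $(\beta,\beta')$ of Lemma~\ref{lem:aux-algebra}, then use the weighted interpolation inequalities (Lemmas~\ref{l:holder-weight}, \ref{lem:interpolation-q0-0}--\ref{lem:interpolation-q0-1}) to re-express $\int\langle v\rangle^{m} g^{q_0-1}\dd v$ as a product of a power of $\|g^{q_0}\|_{L^{p_0}_{k_0}}$, a power of the $2$-moment $E_0$, and $\|g^{q_0-1}\|_{L^1_{l_q}}$, and finish with Young's inequality. Lemma~\ref{lem:aux-algebra} is precisely the bookkeeping that makes this close: the identities $\gamma_2\theta = \gamma_3\theta' = 1$ and $\tfrac1\beta + \tfrac1{\beta'} = \tfrac1\theta + \tfrac1{\theta'} = 1$ fix the exponents, the strict inequality $\gamma_1\theta' < 1 + \tfrac{2s}d$ ensures the total power of $\|g^{q_0}\|_{L^{p_0}_{k_0}}$ is $<1$ so that Young's inequality may leave $\eps\|g^{q_0}\|_{L^{p_0}_{k_0}}$ on the favourable side, and the weight inequality $m_1 \le \tfrac2{\beta'} + \tfrac{m_3}\beta$ — which is exactly what forces $q \le 2 + \tfrac d{2s}k_0 = d+1+\tfrac d{2s}\gamma$ — guarantees the residual velocity weight on $g^{q_0-1}$ does not exceed $l_q$. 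The outcome is $\Equatre \le \eps \|g^{q_0}\|_{L^{p_0}_{k_0}} + C(\eps)\,\fraka\,\|g^{q_0-1}\|_{L^1_{l_q}}$ for all $\eps \in (0,1)$, with $C(\eps)$ depending only on $d,\gamma,s, 2\wedge q, c_b$; choosing $\eps = \tfrac12\Ccor$ and then $\fraka$ large enough that $C(\tfrac12\Ccor) \le \tfrac18\Cget\fraka^{2s/d}$ (this, together with the two conditions from the far part, defines $\underline{a}^{(3)}$) yields \eqref{eq:iv.ii-gamma-neg}. The main obstacle is exactly this exponent-and-weight bookkeeping: one must arrange simultaneously that the power of the coercive norm stays subcritical and that the leftover velocity weight on $g^{q_0-1}$ is at most $l_q$, and these two requirements are what pin down the admissible range of $q$; once Lemma~\ref{lem:aux-algebra} is in hand, the analytic inputs (mass bound, H\"older, the weighted interpolation lemmas, Young) are routine.
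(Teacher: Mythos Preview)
Your far-part treatment (split at $R(v)=\tfrac12\langle v\rangle$, bound $|w|^\gamma\le R(v)^\gamma$, use the mass bound, then $\gamma-q\le l_q$) is fine. The gap is in the near part. After your pointwise H\"older on the convolution you arrive at
\[
\fraka\,\|g^{q_0}\|_{L^{p_0}_{k_0}}^{1/q_0}\int_{\R^d}\langle v\rangle^{m}\,g^{q_0-1}\,\dd v,
\qquad m=-q+\tfrac{2s(\gamma+d+1)}{d+2s},
\]
and you then invoke Lemma~\ref{lem:aux-algebra} to interpolate the integral. But that lemma is calibrated to the weight $m_1$ sitting on $\|g\|_{L^1}$ (first power of $g$) in the paper's route, not to a weight on $\|g^{q_0-1}\|_{L^1}$; your $m$ is a different object. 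More fundamentally, the interpolation chain you cite (Lemmas~\ref{lem:interpolation-q0-0}--\ref{lem:interpolation-q0-2}) relies on the energy anchor $\|g\|_{L^1_2}\le E_0$. Since $q_0-1=2s/d<1$, the analogue $\|g^{q_0-1}\|_{L^1_2}$ is \emph{not} controlled by $E_0$, and there is no available inequality that reduces $\|g^{q_0-1}\|_{L^1_m}$ to $\|g^{q_0-1}\|_{L^1_{l_q}}$ when $m>l_q$ --- which does occur near the top of the range (one checks $m-l_q=\tfrac{2s(\gamma+d+1)}{d+2s}>0$ at $q=d+1+\tfrac{d}{2s}\gamma$). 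A direct Young on $\fraka X^{1/q_0}Y_m$ is no better: the complementary exponent is $q_0'=\tfrac{d+2s}{2s}>1+\tfrac{2s}{d}$, so the power of $\fraka$ overshoots.

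The paper avoids this by \emph{not} bounding the convolution pointwise. It applies weak Young's inequality to the full trilinear integral $\int A\,g^{q_0-1}(g\ast|\cdot|^\gamma)\,\dd v$, after inserting the weight $\langle v\rangle^{l_1}$ with $l_1=\min(q,2)$ and redistributing it among three factors: $(f\wedge A)\langle\cdot\rangle^{l_1}$, $g^{q_0-1}\langle\cdot\rangle^{-\frac{2sl_1}{d+2s}}$, and $g\langle\cdot\rangle^{-\frac{dl_1}{d+2s}}$. The first factor lies in $L^1\cap L^\infty$ (the $L^\infty$ bound from $A\langle v\rangle^{l_1}\le\fraka$, the $L^1$ bound from $(f\wedge A)\langle v\rangle^{l_1}\le f\langle v\rangle^2$ and the energy), and the third factor lives in an $L^{q_3}$ space with $q_3>1$, producing a $\|g\|_{L^1_{m_1}}$ after the first interpolation (Lemma~\ref{lem:interpolation-q0-1}). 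It is precisely this first-power quantity $\|g\|_{L^1_{m_1}}$ that can be fed through the $E_0$-step and then Lemma~\ref{lem:interpolation-q0-2} to reach $\|g^{q_0-1}\|_{L^1_{l_q}}$. By pulling $A$ out as $\fraka\langle v\rangle^{-q}$ and estimating the convolution before integrating, you have thrown away exactly the $L^1$-side of the $(f\wedge A)$ factor that makes the bookkeeping in Lemma~\ref{lem:aux-algebra} close.
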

To prove this lemma, we show that we can split $\Equatre$ as follows.
\begin{lemma}[Splitting $\Equatre$]\label{lem:split-plus-wedge}
We have  \(\Equatre \le \Equatre^{(i)} + \Equatre^{(ii)} \) with
\begin{align*}
 & \Equatre^{(i)}  =   C_3^{(i)} \norm{(f \wedge A)\langle \cdot \rangle^{l_1} }_{L^{p_1}} \norm{ \big((f-A)_+\wedge \kappa\big)^{q_0-1}\langle \cdot \rangle^{-\frac{2sl_1}{d+2s}}}_{L^{p_2}} \; \norm{\big((f-A)_+\wedge \kappa\big)\langle \cdot\rangle^{-\frac{d l_1}{d+2s}} }_{L^{q_3}}  , \\
 &\Equatre^{(ii)} = q_0 c_b M_0 \fraka \norm{\big((f-A)_+\wedge \kappa\big)^{q_0-1}}_{L^1_{-q}}
\end{align*}
  where $l_1 = \min (q,2)$,
  and $p_1, p_2, p_3, q_3 \in [1,+\infty]$ satisfy
\begin{equation}\label{eq:holder-exp}
	\frac{1}{p_1} + \frac{1}{p_2} + \frac{1}{p_3} = 1, \qquad 1 + \frac{1}{p_3} = \frac{1}{q_3} +\frac{-\gamma}{d}.
      \end{equation}
      The constant $C_3^{(i)}$ is given by \(2^{\frac{d l_1}{2(d+2s)}}q_0 c_b \norm{| \cdot |^\gamma}_{L^{\frac{-\gamma}d,\infty}}\).
\end{lemma}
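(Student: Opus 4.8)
The plan is to derive the splitting from two elementary pointwise reductions followed by one application of H\"older's inequality and the weak-type Young (Hardy--Littlewood--Sobolev) inequality.

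\emph{First reduction: replacing $A$ by $f\wedge A$.} Wherever $\big((f-A)_+\wedge\kappa\big)^{q_0-1}(v)\neq0$ one has $f(v)>A(v)$, hence $(f\wedge A)(v)=A(v)$; therefore $A(v)\big((f-A)_+\wedge\kappa\big)^{q_0-1}(v)=(f\wedge A)(v)\big((f-A)_+\wedge\kappa\big)^{q_0-1}(v)$ holds pointwise. Writing $g:=(f-A)_+\wedge\kappa$, this rewrites $\Equatre=q_0 c_b\int_{\R^d}(f\wedge A)\,g^{q_0-1}\,\big(g\ast|\cdot|^\gamma\big)\dd v$.

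\emph{Second reduction: splitting the kernel.} Since $\gamma<0$ we have $|w|^\gamma\le|w|^\gamma\chi_{\{|w|\le1\}}+1$, so convolving with $g$ and using $\norm{g}_{L^1}\le\norm{f}_{L^1}\le M_0$ gives the pointwise bound $g\ast|\cdot|^\gamma\le g\ast\big(|\cdot|^\gamma\chi_{\{|\cdot|\le1\}}\big)+M_0$. Substituting this and bounding $(f\wedge A)\le A=\fraka\langle\cdot\rangle^{-q}$ in the term carrying the constant $M_0$ yields $\Equatre\le\Equatre^{(i)}+\Equatre^{(ii)}$, with $\Equatre^{(ii)}=q_0 c_b M_0\fraka\norm{g^{q_0-1}}_{L^1_{-q}}$ exactly and $\Equatre^{(i)}=q_0 c_b\int_{\R^d}(f\wedge A)\,g^{q_0-1}\,\big(g\ast(|\cdot|^\gamma\chi_{\{|\cdot|\le1\}})\big)\dd v$.

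\emph{Estimating $\Equatre^{(i)}$.} On the near region $\{|w|\le1\}$ the weights $\langle v\rangle$ and $\langle v-w\rangle$ are comparable, with the comparability constant $\le\sqrt2$ after, if necessary, shrinking the cutoff radius. I will use this by distributing the identity $1=\langle v\rangle^{l_1}\cdot\langle v\rangle^{-\frac{2sl_1}{d+2s}}\cdot\langle v\rangle^{-\frac{dl_1}{d+2s}}$ (the three exponents sum to zero) over the three factors of $\Equatre^{(i)}$: attach $\langle v\rangle^{l_1}$ to $(f\wedge A)(v)$, attach $\langle v\rangle^{-\frac{2sl_1}{d+2s}}$ to $g^{q_0-1}(v)$, and, in the convolution factor, use $\langle v\rangle^{-\frac{dl_1}{d+2s}}\le2^{\frac{dl_1}{2(d+2s)}}\langle v-w\rangle^{-\frac{dl_1}{d+2s}}$ on $\{|w|\le1\}$ so that the weight $\langle v-w\rangle^{-\frac{dl_1}{d+2s}}$ is absorbed into $g(v-w)$. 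This gives
\[
\Equatre^{(i)}\le2^{\frac{dl_1}{2(d+2s)}}q_0 c_b\int_{\R^d}\big((f\wedge A)\langle\cdot\rangle^{l_1}\big)(v)\,\big(g^{q_0-1}\langle\cdot\rangle^{-\frac{2sl_1}{d+2s}}\big)(v)\,\Big(\big(g\langle\cdot\rangle^{-\frac{dl_1}{d+2s}}\big)\ast(|\cdot|^\gamma\chi_{\{|\cdot|\le1\}})\Big)(v)\dd v.
\]
Applying H\"older in $v$ with the exponents $(p_1,p_2,p_3)$ (admissible since $\frac1{p_1}+\frac1{p_2}+\frac1{p_3}=1$) to the three bracketed factors, and then the weak-type Young inequality $\norm{h\ast(|\cdot|^\gamma\chi_{\{|\cdot|\le1\}})}_{L^{p_3}}\le\norm{|\cdot|^\gamma}_{L^{\frac{-\gamma}{d},\infty}}\norm{h}_{L^{q_3}}$ — which holds precisely when $1+\frac1{p_3}=\frac1{q_3}+\frac{-\gamma}{d}$, the second relation in \eqref{eq:holder-exp} — with $h=g\langle\cdot\rangle^{-\frac{dl_1}{d+2s}}$, gives exactly the claimed form of $\Equatre^{(i)}$ with constant $C_3^{(i)}=2^{\frac{dl_1}{2(d+2s)}}q_0 c_b\norm{|\cdot|^\gamma}_{L^{\frac{-\gamma}{d},\infty}}$.

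\emph{Main obstacle.} The two reductions and the two inequalities are routine; the only genuinely delicate point is the weight bookkeeping in the last step, i.e. arranging the split of $\langle\cdot\rangle^{l_1}$ so that the exponents telescope to zero, so that the three resulting weighted quantities are exactly those later confronted with the algebraic constraints of Lemma~\ref{lem:aux-algebra}, and so that the transfer $\langle v\rangle^{-\frac{dl_1}{d+2s}}\lesssim\langle v-w\rangle^{-\frac{dl_1}{d+2s}}$ across the near region costs no more than the stated constant.
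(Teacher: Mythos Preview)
Your proof is correct and follows essentially the same route as the paper: the replacement of $A$ by $f\wedge A$ on the support of $g^{q_0-1}$, the near/far splitting of the convolution kernel, the distribution of the weight $\langle v\rangle^{l_1}$ among the three factors with the transfer $\langle v\rangle^{-\frac{dl_1}{d+2s}}\lesssim\langle v-w\rangle^{-\frac{dl_1}{d+2s}}$ on $\{|w|\le1\}$, and the final H\"older plus weak Young step are all exactly what the paper does. The only cosmetic difference is that you use the pointwise bound $|w|^\gamma\le|w|^\gamma\chi_{\{|w|\le1\}}+1$ whereas the paper splits the integral exactly as $\int_{B_1}+\int_{\R^d\setminus B_1}$; both yield the same $\Equatre^{(ii)}$.
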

\begin{proof}
We use that for $w \in B_1$ and $v \in \R^d$, we have,
\[
	\langle v-w\rangle^2 \leq 1 + \abs{v}^2 + \abs{w}^2 \leq 2 + \abs{v}^2 \leq 2 \langle v \rangle^2.
\]
We now distinguish two domains of integration: $|w| \le 1$ and $|w|\ge 1$. We use the fact that $\gamma<0$ to get $|w|^\gamma \le 1$ for $|w|\ge 1$ and we write for $l_1 = \min(q, 2)$,
\begin{align*}
\Equatre  &= q_0 c_b\int_{\R^d} (f \wedge A) \big((f-A)_+\wedge \kappa\big)^{q_0 -1} \left(\int_{B_1}  \big((f-A)_+\wedge \kappa\big)(v-w) \abs{ w}^{\gamma} \dd w \right) \dd v\\
  &\quad + q_0c_b\int_{\R^d} (f \wedge A)  \big((f-A)_+\wedge \kappa\big)^{q_0 -1} \left(\int_{\R^d \setminus B_1} \big((f-A)_+\wedge \kappa\big)(v-w) \abs{ w}^{\gamma} \dd w \right) \dd v\\
  \intertext{we use $\gamma < 0$ and $|w|\ge 1$ so that we can bound the inner integrand by the mass due to \eqref{e:hydro}} 
  &\leq q_0c_b \int_{\R^d} (f \wedge A)\langle v \rangle^{l_1} \langle v \rangle^{-l_1}  \big((f-A)_+\wedge \kappa\big)^{q_0 -1} \left(\int_{B_1}  \big((f-A)_+\wedge \kappa\big)(v-w) \abs{ w}^{\gamma} \dd w \right) \dd v\\
  &\quad + q_0 c_bM_0 \fraka \int_{\R^d} \big((f-A)_+\wedge \kappa\big)^{q_0 -1} \langle v \rangle^{-q} \dv\\
  \intertext{we use $\langle v-w\rangle^2  \leq 2 \langle v \rangle^2$}
          &\leq 2^{\frac{d l_1}{2(d+2s)}}q_0 c_b \int_{\R^d} (f \wedge A)\langle v \rangle^{l_1}   \big((f-A)_+\wedge \kappa\big)^{q_0 -1}  \langle v \rangle^{-\frac{2sl_1}{d+2s}}\\
  &\qquad \qquad \qquad \times \left(\int_{B_1}\langle v-w \rangle^{-\frac{d l_1}{d+2s}}  \big((f-A)_+\wedge \kappa\big)(v-w) \abs{ w}^{\gamma} \dd w \right) \dd v\\
  &\quad + q_0 c_bM_0 \fraka \int_{\R^d}  \big((f-A)_+\wedge \kappa\big)^{q_0 -1} \langle v \rangle^{-q} \dv\\
\intertext{we use weak Young's inequality,}
          & \leq 2^{\frac{d l_1}{2(d+2s)}}q_0 c_b \norm{| \cdot |^\gamma}_{L^{\frac{-\gamma}d,\infty}}
            \norm{(f \wedge A)\langle \cdot \rangle^{l_1} }_{L^{p_1}} \norm{  \big((f-A)_+\wedge \kappa\big)^{q_0-1}\langle \cdot \rangle^{-\frac{2sl_1}{d+2s}}}_{L^{p_2}} \\
  & \qquad \times \norm{ \big((f-A)_+\wedge \kappa\big)\langle \cdot\rangle^{-\frac{d l_1}{d+2s}} }_{L^{q_3}} \\
  &\quad +  q_0 c_bM_0 \fraka \int_{\R^d}  \big((f-A)_+\wedge \kappa\big)^{q_0 -1} \langle v \rangle^{-q} \dv. \qedhere
\end{align*}
\end{proof}
Then we estimate each piece separately.
\begin{lemma}[Estimate of $\mathcal{E}^{(ii)}_3$]\label{lem:plus-wedge-b}
  \begin{equation}\label{eq:E22+a}
	 \Equatre^{(ii)}  \leq \frac18 \Cget \fraka^{1+\frac{2s}{d}} \norm{ \big((f-A)_+\wedge \kappa\big)^{q_0-1}}_{L^1_{l_q}}
       \end{equation}
       as soon as $\fraka \ge \underline{a}^{(3,ii)}:=\left( \frac{8 q_0 c_bM_0}{\Cget}\right)^{\frac{d}{2s}}$. 
\end{lemma}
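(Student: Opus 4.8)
The plan is to treat $\Equatre^{(ii)}$ by a pure weight comparison followed by an absorption of the linear power of $\fraka$ into the superlinear power that appears on the right-hand side of \eqref{eq:E22+a}.

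First I would check that, in the range of $q$ under consideration, the weight $\langle v\rangle^{-q}$ occurring in the expression for $\Equatre^{(ii)}$ from Lemma~\ref{lem:split-plus-wedge} is pointwise dominated by $\langle v\rangle^{l_q}$. Recalling $l_q$ from \eqref{itm:lq}, the inequality $-q\le l_q$ is equivalent, after cancelling $-q$ on both sides, to $q\,\tfrac{2s}{d}\le(\gamma+2s)+\tfrac{2s}{d}$, that is to $q\le d+1+\tfrac{d}{2s}\gamma$; this is precisely the range assumed in Lemma~\ref{lem:iv.ii}, whose hypotheses the present statement inherits. Since $\langle v\rangle\ge 1$, this yields $\langle v\rangle^{-q}\le\langle v\rangle^{l_q}$ for every $v\in\R^d$, hence
\[
\norm{\big((f-A)_+\wedge\kappa\big)^{q_0-1}}_{L^1_{-q}}\le\norm{\big((f-A)_+\wedge\kappa\big)^{q_0-1}}_{L^1_{l_q}},
\]
and therefore, plugging this into the definition of $\Equatre^{(ii)}$, $\Equatre^{(ii)}\le q_0 c_b M_0\,\fraka\,\norm{\big((f-A)_+\wedge\kappa\big)^{q_0-1}}_{L^1_{l_q}}$.

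It then remains to observe that the gain in the power of $\fraka$ is purely algebraic: if $\fraka\ge\underline a^{(3,ii)}=\big(8 q_0 c_b M_0/\Cget\big)^{d/(2s)}$, then $\fraka^{2s/d}\ge 8 q_0 c_b M_0/\Cget$, so that $q_0 c_b M_0\,\fraka\le\tfrac18\,\Cget\,\fraka^{1+2s/d}$. Combining this with the previous bound gives \eqref{eq:E22+a}. There is no genuine obstacle here; the only point that requires care is verifying that the weight comparison $-q\le l_q$ is \emph{exactly} equivalent to the assumed range $q\le d+1+\tfrac{d}{2s}\gamma$, so that the estimate is not lossy and no extra smallness or largeness condition beyond the one of Lemma~\ref{lem:iv.ii} (and $\fraka\ge\underline a^{(3,ii)}$) is needed.
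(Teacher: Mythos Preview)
Your proof is correct and follows the same approach as the paper: verify the weight inequality $-q\le l_q$ under the assumed range of $q$ (the paper writes this range equivalently as $q\le 2+\tfrac{d}{2s}k_0$), then absorb the linear factor $\fraka$ into $\fraka^{1+2s/d}$ via the threshold $\underline a^{(3,ii)}$. The paper's proof is simply a terser version of what you wrote.
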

\begin{proof}
We note that \(-q \leq l_q \) for $q \leq 2 + \frac{d}{2s} k_0$, and $1 < 1 + \frac{2s}{d}$, so that the conclusion holds true.
\end{proof}
\begin{lemma}[Estimate of $\Equatre^{(i)}$]\label{lem:plus-wedge-a}
  There exists $\underline{a}^{(3,i)}$, that only depends on $d,\gamma,s,E_0,l_1$, $c_b$, $\Ccor$ and $\Cget$
  (see Propositions~\ref{p:coercivity-1} and \ref{p:coercivity-get}), such that
  \[
	\Equatre^{(i)}\leq \frac12 \Ccor \norm{\big((f-A)_+\wedge \kappa\big)^{q_0}}_{L^{p_0}_{k_0}}  + \frac18 \Cget \fraka^{1+\frac{2s}{d}}  \norm{\big((f-A)_+\wedge \kappa\big)^{q_0-1}}_{L^1_{l_q}}
      \]
      as soon as $\fraka \ge   \underline{a}^{(3,i)}$. 
\end{lemma}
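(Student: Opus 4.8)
The plan is to estimate the three factors of $\Equatre^{(i)}$ separately and then recombine them by a tuned Young inequality, writing $g:=(f-A)_+\wedge\kappa$ throughout. The first (weighted Lebesgue) norm will produce a power of $\fraka$, while the middle and last factors will be interpolated between the coercive quantity $\norm{g^{q_0}}_{L^{p_0}_{k_0}}$ and the good-extra integrand $\int_{\R^d}g^{q_0-1}\langle v\rangle^{l_q}\dd v$, and the final Young step will distribute the product onto these two. The exponents $p_1,p_2,p_3,q_3$ in \eqref{eq:holder-exp}, the conjugate pair used in Young, and the weights that appear on the way are exactly the quantities $\theta,\theta',\beta,\beta',\gamma_1,\gamma_2,\gamma_3$ and $m_1,m_3,m_4$ of Lemma~\ref{lem:aux-algebra}: the identities \eqref{eq:algebra-eq} force the power of $\norm{g^{q_0}}_{L^{p_0}_{k_0}}$ and that of the good-extra integrand to equal $1$ after Young, the inequalities \eqref{eq:weight-condition}--\eqref{eq:weight-condition-IV} keep every weight below $l_q$, and the strict inequality \eqref{eq:algebra-ineq} keeps the resulting power of $\fraka$ strictly below $1+\frac{2s}d$.

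For the first factor: $f\wedge A\le\min(f,A)$ and $l_1=\min(q,2)\le 2$ give $(f\wedge A)^{p_1}\langle v\rangle^{l_1p_1}\le\fraka^{p_1-1}f(v)\langle v\rangle^{l_1p_1-q(p_1-1)}$ with $l_1p_1-q(p_1-1)\le 2$ for every $p_1\ge1$, so the $2$-moment bound \eqref{e:hydro} yields $\norm{(f\wedge A)\langle\cdot\rangle^{l_1}}_{L^{p_1}}\le(M_0+E_0)^{1/p_1}\fraka^{1-1/p_1}\le C\fraka^{\gamma_1}$ once $\fraka\ge M_0+E_0+1$ and $p_1$ is taken as in Lemma~\ref{lem:aux-algebra} (when $\gamma_1\ge1$ one simply uses $p_1=\infty$, i.e.\ $f\wedge A\le A$, and $\fraka\le\fraka^{\gamma_1}$).

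For the last factor I would apply the weighted interpolation of Lemma~\ref{lem:interpolation-q0-1} to $g$ with Lebesgue exponent $q_3$ and weight $-\frac{dl_1}{d+2s}$, and for the middle factor I would write $g^{q_0-1}=(g^{q_0})^{(q_0-1)/q_0}$ and interpolate $g^{q_0}\in L^{p_0}_{k_0}$ against $g^{q_0-1}\in L^1$ with weights (Lemma~\ref{l:holder-weight}); each factor then becomes a power of $\norm{g^{q_0}}_{L^{p_0}_{k_0}}$ times a power of an $L^1$-integral of $g$ or $g^{q_0-1}$. Using $g\le g^{q_0-1}+g^{q_0}$ (valid since $q_0-1\in(0,1)$) together with Lemma~\ref{lem:interpolation-q0-3} applied to $g$, the $L^1$-integrals of $g$ are turned into $L^1$-integrals of $g^{q_0-1}$ plus a controlled $\varepsilon\norm{g^{q_0}}_{L^{p_0}_{k_0}}$; since $\langle v\rangle\ge1$ and the weights produced (the $m_1,m_3,m_4$ of Lemma~\ref{lem:aux-algebra}) are $\le l_q$ by \eqref{eq:weight-condition}--\eqref{eq:weight-condition-IV}, all these integrals are $\le\int_{\R^d}g^{q_0-1}\langle v\rangle^{l_q}\dd v$. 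Collecting,
\[
\Equatre^{(i)}\le C\,\fraka^{\gamma_1}\norm{g^{q_0}}_{L^{p_0}_{k_0}}^{\gamma_2}\Big(\int_{\R^d}g^{q_0-1}\langle v\rangle^{l_q}\dd v\Big)^{\gamma_3}+(\text{terms carrying }\varepsilon\text{ and extra powers of }\norm{g^{q_0}}_{L^{p_0}_{k_0}}),
\]
with $C=C(d,\gamma,s,E_0,l_1,c_b)$. Now apply Young with the pair $(\theta,\theta')$ of \eqref{eq:choice-theta-beta} and a free parameter $\delta>0$: since $\gamma_2\theta=\gamma_3\theta'=1$, the product splits into $\frac{\delta^\theta}{\theta}\norm{g^{q_0}}_{L^{p_0}_{k_0}}+\frac{\delta^{-\theta'}}{\theta'}\int_{\R^d}g^{q_0-1}\langle v\rangle^{l_q}\dd v$; choosing $\delta$ so that $C\fraka^{\gamma_1}\delta^{\theta}/\theta=\frac12\Ccor$ produces the coercive term with the right constant and leaves the coefficient $C'\fraka^{\gamma_1\theta'}$ in front of the good-extra term, which by \eqref{eq:algebra-ineq} satisfies $\gamma_1\theta'<1+\frac{2s}d$ and is therefore $\le\frac18\Cget\fraka^{1+\frac{2s}d}$ for $\fraka\ge\underline{a}^{(3,i)}$; the $\varepsilon$-terms are absorbed similarly, after a further Young inequality and a small choice of $\varepsilon$ relative to $\Ccor$ and $\Cget$.

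The main difficulty is not any single estimate but the simultaneous compatibility of all the constraints: \eqref{eq:holder-exp}, the exactness of the power $1$ on $\norm{g^{q_0}}_{L^{p_0}_{k_0}}$ (so that it can be absorbed by the coercivity estimate), the \emph{strictness} of $\gamma_1\theta'<1+\frac{2s}d$ (so that enlarging $\fraka$ shrinks, rather than enlarges, the relevant coefficient), and the fact that every weight generated along the way stays $\le l_q$. These are precisely the algebraic facts assembled in Lemma~\ref{lem:aux-algebra}; the restriction $q\le 2+\frac d{2s}k_0$ (equivalently $q\le d+1+\frac d{2s}\gamma$) is exactly what makes \eqref{eq:weight-condition}, \eqref{eq:weight-condition-IV} hold, and $\gamma+2s>0$ is exactly what makes \eqref{eq:algebra-ineq} strict.
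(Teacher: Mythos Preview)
Your overall plan (interpolate towards $\norm{g^{q_0}}_{L^{p_0}_{k_0}}$ and $\norm{g^{q_0-1}}_{L^1_{l_q}}$, then use Young with the pair $(\theta,\theta')$ of Lemma~\ref{lem:aux-algebra}) is the right one, but the concrete execution is confused about where the exponent $\gamma_1$ comes from, and the separate treatment of the middle and last factors does not produce the algebra of Lemma~\ref{lem:aux-algebra}.

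First, the claim $\norm{(f\wedge A)\langle\cdot\rangle^{l_1}}_{L^{p_1}}\le C\fraka^{\gamma_1}$ ``once $p_1$ is taken as in Lemma~\ref{lem:aux-algebra}'' cannot be right: that lemma never fixes $p_1$, and with the paper's choice one gets the power $1-\tfrac1{p_1}=\tfrac{2s(d+\gamma)}{d^2}$, which equals $\gamma_1$ only when $\gamma=0$. The exponent $\gamma_1$ does \emph{not} come from the first factor alone; it emerges only after the first interpolation and the first Young step. Second, interpolating the middle factor $\norm{g^{q_0-1}\langle\cdot\rangle^{-2sl_1/(d+2s)}}_{L^{p_2}}$ and the last factor $\norm{g\langle\cdot\rangle^{-dl_1/(d+2s)}}_{L^{q_3}}$ separately, followed by the crude bound $g\le g^{q_0-1}+g^{q_0}$, will not deliver exactly the powers $\gamma_2,\gamma_3$ with $\gamma_2\theta=\gamma_3\theta'=1$; you would be left with a product of four nontrivial powers rather than two, and the single Young step you propose cannot close it. (Also, the weight $m_4$ you invoke belongs to the estimate of $\Etrois^{(i)}$, not $\Equatre^{(i)}$.)

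The missing idea is the paper's very first move: choose $p_3=\infty$ and $p_2=q_3/(q_0-1)$ in \eqref{eq:holder-exp}. This collapses the middle and last factors into a \emph{single} integral $\big(\int g^{d/(d+\gamma)}\langle v\rangle^{-d^2 l_1/((d+2s)(d+\gamma))}\dd v\big)^{(d+\gamma)(d+2s)/d^2}$, and forces $1-\tfrac1{p_1}=\tfrac{2s(d+\gamma)}{d^2}$. From there the argument is two rounds of interpolation-then-Young: first Lemma~\ref{lem:interpolation-q0-1} with $p_1=\tfrac{d+2s}{d+\gamma}$ followed by Young (this is where $\gamma_1$ appears), then the H\"older step with $(\beta,\beta')$ together with Lemma~\ref{lem:interpolation-q0-2} (choice $m_2=l_q$, hence the specific $m_3$) followed by a second Young with $(\theta,\theta')$, which lands exactly on exponents $1$ thanks to \eqref{eq:algebra-eq}. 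Your final paragraph about the role of \eqref{eq:algebra-eq}--\eqref{eq:weight-condition} and $\gamma+2s>0$ is correct; what is missing is the merging step and the two-stage structure.
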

\begin{proof}
For $\Equatre^{(i)}$,  we first notice that,
\begin{equation*}
	\norm{(f \wedge A) \langle \cdot \rangle^{l_1} }_{L^{p_1}} \leq \norm{(f \wedge A)\langle \cdot \rangle^{l_1}}_{L^1}^{\frac{1}{p_1}} \norm{(f\wedge A)\langle \cdot \rangle^{l_1} }_{L^\infty}^{1- \frac{1}{p_1}} \leq  E_0^{\frac{1}{p_1}} \fraka^{1- \frac{1}{p_1}}. 
\end{equation*}

Next we choose $p_3 = \infty$ and $p_2 = \frac{q_3}{q_0-1} \ge 1$ in \eqref{eq:holder-exp}, which implies
\[
	\frac{1}{q_3} = \frac{d+\gamma}{d}, \qquad \frac{1}{p_2} = \frac{2s(d+\gamma)}{d^2}, \qquad 1-\frac{1}{p_1} = \frac{2s(d+\gamma)}{d^2}.
\]
Thus 
\begin{equation*}
\begin{aligned}
	\Equatre^{(i)}&= C_3^{(i)} \norm{(f \wedge A)\langle \cdot \rangle^{l_1} }_{L^{p_1}} \norm{  \big((f-A)_+\wedge \kappa\big)^{q_0-1}\langle \cdot \rangle^{-\frac{2sl_1}{d+2s}}}_{L^{p_2}} \norm{  \big((f-A)_+\wedge \kappa\big)\langle \cdot\rangle^{-\frac{d l_1}{d+2s}}}_{L^{q_3}} \\
	&\leq C_3^{(i)} E_0^{\frac{1}{p_1}} \fraka^{\frac{2s(d+\gamma)}{d^2}} \left(\int  \big((f-A)_+\wedge \kappa\big)^{\frac{d}{d+\gamma}} \langle v\rangle^{-\frac{d^2 l_1}{(d+2s)(d+\gamma)}} \dd v\right)^{\frac{(d+\gamma)(d+2s)}{d^2}}.
\end{aligned}
\end{equation*}

\noindent \textit{Step 1: First interpolation.}
Then we interpolate with Lemma \ref{lem:interpolation-q0-1} for $k_1 = -\frac{d^2 l_1}{(d+2s)(d+\gamma)}$
\begin{equation*}
	\int_{\R^d} \big((f-A)_+\wedge \kappa\big)^{\frac{d}{d+\gamma}} \langle v \rangle^{k_1}\dd v \leq \norm{\big((f-A)_+\wedge \kappa\big)^{q_0}}_{L^{p_0}_{k_0}}^{\alpha_1}\norm{(f-A)_+\wedge \kappa}_{L^1_{m_1}}^{\alpha_2},
\end{equation*}
where
\begin{align*}
	&\alpha_1 = \frac{-\gamma d}{4s(d+\gamma)}, \qquad \alpha_2 = \frac{4sd + \gamma(d+2s)}{4s(d+\gamma)}, \\
	&m_1 = - \frac{4sd^2}{(d+2s)(4sd + \gamma(d+2s))} l_1 + \frac{\gamma d}{4sd + \gamma(d+2s)} k_0.
\end{align*}
We can check that $\frac{-\gamma(d+2s)}{4s d} < 1$ 
since $-\gamma \in [0,2s]$ and $d \geq 2$. Together with Young's inequality, this implies that there exists $C = C(\Ccor, c_b,E_0,l_1,\gamma,d,s)$ such that, 
\begin{align*}
	\Equatre^{(i)}  
	&\leq C_3^{(i)} E_0^{\frac{1}{p_1}}  \fraka^{\frac{2s(d+\gamma)}{d^2}} \norm{\big((f-A)_+\wedge \kappa\big)^{q_0}}_{L^{p_0}_{k_0}}^{\frac{-\gamma(d+2s)}{4s d}}\norm{(f-A)_+\wedge \kappa}_{L^1_{m_1}}^{\frac{(d+2s)(4sd + \gamma(d+2s))}{4sd^2}} \\
	&\leq \frac14 \Ccor \norm{\big((f-A)_+\wedge \kappa\big)^{q_0}}_{L^{p_0}_{k_0}} + C \fraka^{\gamma_1} \norm{(f-A)_+\wedge \kappa}_{L^1_{m_1}}^{\frac{d+2s}{d}}
\end{align*}
where $\gamma_1$ is given in Lemma~\ref{lem:aux-algebra}, see \eqref{eq:gammas}.
\medskip

\noindent \textit{Step 2: Second interpolation.}
Now we pick $\beta, \beta' \geq 1$ as in \eqref{eq:choice-theta-beta}, so that \eqref{eq:weight-condition} implies together with Hölder's inequality
\begin{equation*}
\begin{aligned}
	\norm{(f-A)_+\wedge \kappa}_{L^1_{m_1}}&\leq \norm{(f-A)_+\wedge \kappa}_{L^1_{\frac{2}{\beta_1'} + \frac{m_3}{\beta_1}}}\leq  \norm{(f-A)_+\wedge \kappa}_{L^1_{2}}^{\frac{1}{\beta_1'}} \norm{(f-A)_+\wedge \kappa}_{L^1_{m_3}}^{\frac{1}{\beta_1}}\\
	&\leq E_0^{\frac{1}{\beta_1'} }\norm{(f-A)_+\wedge \kappa}_{L^1_{m_3}}^{\frac{1}{\beta_1}}.
\end{aligned}
\end{equation*}	
To bound the right hand side of the previous inequality, we interpolate again, now with Lemma \ref{lem:interpolation-q0-2} and $k_2 = m_3$,
\begin{equation*}
	\norm{(f-A)_+\wedge \kappa}_{L^1_{m_3}} \leq \norm{\big((f-A)_+\wedge \kappa\big)^{q_0}}_{L^{p_0}_{k_0}}^{\alpha_3} \norm{\big((f-A)_+\wedge \kappa\big)^{q_0-1}}_{L^1_{m_2}}^{\alpha_4},
\end{equation*}	
where 
\begin{equation*}
	\alpha_3 = \frac{d(d-2s)}{d^2 + 4s^2}, \qquad \alpha_4 = \frac{4sd}{d^2 +4s^2}, \qquad m_2 = \frac{4s^2 + d^2}{4sd}\left[m_3 - \frac{d(d-2s)}{d^2 + 4s^2}k_0\right].
\end{equation*}	
Note that the choice of $m_3$ in Lemma \ref{lem:aux-algebra} is such that $m_2 = l_q$, that is $m_3 = \frac{4sd}{d^2 +4s^2} l_q + \frac{d(d-2s)}{d^2 + 4s^2}k_0$.
In particular, we find
\begin{align*}
	\norm{(f-A)_+\wedge \kappa}_{L^1_{m_1}}^{\frac{d+2s}{d}} &\leq E_0^{\frac{1}{\beta'} \frac{d+2s}{d}}\norm{(f-A)_+\wedge \kappa}_{L^1_{m_3}}^{\frac{1}{\beta}\frac{d+2s}{d}}\\
	&\leq E_0^{\frac{1}{\beta'} \frac{d+2s}{d}}\norm{\big((f-A)_+\wedge \kappa\big)^{q_0}}_{L^{p_0}_{k_0}}^{\gamma_2} \norm{\big((f-A)_+\wedge \kappa\big)^{q_0-1}}_{L^1_{l_q}}^{\gamma_3}
\end{align*}
where $\gamma_2$ and $\gamma_3$ also come from Lemma~\ref{lem:aux-algebra}, see \eqref{eq:gammas}.
\medskip

\noindent \textit{Step 3: Young's inequality.}
We combine Step 1 and Step 2 with Young's inequality, so that for $\theta \geq 1$ and any $\varepsilon \in (0, 1)$
\begin{align*}
	\Equatre^{(i)} 
	&\leq \frac14 \Ccor \norm{\big((f-A)_+\wedge \kappa\big)^{q_0}}_{L^{p_0}_{k_0}} + C \fraka^{\gamma_1} \norm{\big((f-A)_+\wedge \kappa\big)^{q_0}}_{L^{p_0}_{k_0}}^{\gamma_2} \norm{\big((f-A)_+\wedge \kappa\big)^{q_0-1}}_{L^1_{l_q}}^{\gamma_3}\\
	&\leq \frac14 \Ccor \norm{\big((f-A)_+\wedge \kappa\big)^{q_0}}_{L^{p_0}_{k_0}} +  \frac14 \Ccor  \norm{\big((f-A)_+\wedge \kappa \big)^{q_0}}_{L^{p_0}_{k_0}} + \bar C \fraka^{\gamma_1 \theta'}\norm{\big((f-A)_+\wedge \kappa\big)^{q_0-1}}_{L^1_{l_q}}^{\gamma_3 \theta'}
\end{align*}
with $\bar C = \bar C(\Ccor, c_b,E_0,l_1,\gamma,d,s)$ and  $\theta'= \frac{\theta}{1-\theta}$, see Lemma~\ref{lem:aux-algebra} again. We used that $\gamma_2 \theta =1$, see \eqref{eq:algebra-eq}. To complete the proof of Lemma \ref{lem:plus-wedge-a}, we  use  \eqref{eq:algebra-ineq} from Lemma \ref{lem:aux-algebra}. 
\end{proof}
\begin{proof}[Proof of Lemma \ref{lem:iv.ii}]
Combine Lemmas \ref{lem:split-plus-wedge}, \ref{lem:plus-wedge-b} and \ref{lem:plus-wedge-a}.
\end{proof}

\subsection{The term $\Etrois$}
The estimate of $\Etrois$ defined in \eqref{eq:iv-split} follows with similar methods.
\begin{lemma}[Estimate of $\Etrois$]\label{lem:iv.iii}
  Let $f: [0, \infty) \to \R$ have finite mass $M_0$ and finite $2$-moment $E_0$, and let $q \in [0,\frac{d}{2s} k_0]$.
  
Then  there exists a constant $\underline{a}^{(4)}>0$, only depending on the parameters $d,\gamma,s, \min (2, q)$ and the constants $c_b$, $\Ccor$ and $\Cget$ (see Propositions~\ref{p:coercivity-1} and \ref{p:coercivity-get}), such that 
\begin{equation}\label{eq:iv.ii.b}
\Etrois \leq   \frac14 \Ccor  \norm{\Big((f-A)_+\wedge \kappa\Big)^{q_0}}_{L^{p_0}_{k_0}} + \frac18 \Cget \fraka^{1 + \frac{2s}{d}} \norm{\Big((f-A)_+\wedge \kappa\Big)^{q_0-1}}_{L^1_{l_q}}
\end{equation}
(with $\Ccor$ and $\Cget$ from Propositions~\ref{p:cor-gamma-neg} and \ref{p:coercivity-get}) as soon as $a \ge \underline{a}^{(4)}$. 
\end{lemma}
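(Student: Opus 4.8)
The plan is to estimate
\[
\Etrois = (q_0-1)c_b \int_{\R^d} \big((f-A)_+\wedge \kappa\big)^{q_0}\left(f \ast \abs{\cdot}^\gamma\right) \dd v
\]
by mimicking the strategy used for $\Equatre$ in Lemma~\ref{lem:iv.ii}: split the convolution kernel $\abs{\cdot}^\gamma$ according to whether $\abs{w}\le 1$ or $\abs{w}\ge 1$. On $\{\abs{w}\ge 1\}$ the kernel is bounded by $1$ (since $\gamma<0$), so the corresponding piece is controlled by $M_0 \int \big((f-A)_+\wedge \kappa\big)^{q_0}\dd v$, which after one application of Lemma~\ref{lem:interpolation-q0-3} (with a suitable weight exponent $k_3$ and $\iota=1$) is absorbed into $\tfrac18\Ccor\norm{\big((f-A)_+\wedge \kappa\big)^{q_0}}_{L^{p_0}_{k_0}}$ plus a remainder of the form $C\norm{\big((f-A)_+\wedge \kappa\big)^{q_0-1}}_{L^1_{m_3}}$ with $m_3\le l_q$, which is then absorbed into $\tfrac1{16}\Cget\fraka^{1+\frac{2s}d}\norm{\big((f-A)_+\wedge \kappa\big)^{q_0-1}}_{L^1_{l_q}}$ provided $\fraka$ is large enough. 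On the singular part $\{\abs{w}\le 1\}$ I would use $\langle v-w\rangle\le\sqrt 2\langle v\rangle$, pull out a weight $\langle v\rangle^{l_1}$ with $l_1=\min(q,2)$ split between the two copies of $(f-A)_+\wedge\kappa$ (exactly as in the proof of Lemma~\ref{lem:split-plus-wedge} but now with the extra factor $\big((f-A)_+\wedge\kappa\big)^{q_0-1}$ in place of $(f\wedge A)$), and apply weak Young's inequality with the triple of exponents $p_1,p_2,q_3$ satisfying \eqref{eq:holder-exp}.

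The resulting product then has the schematic form
\[
\norm{\big((f-A)_+\wedge\kappa\big)^{q_0-1}\langle\cdot\rangle^{l_1-\frac{2sl_1}{d+2s}}}_{L^{p_1}}\;\norm{\big((f-A)_+\wedge\kappa\big)^{q_0-1}\langle\cdot\rangle^{-\frac{2sl_1}{d+2s}}}_{L^{p_2}}\;\norm{\big((f-A)_+\wedge\kappa\big)\langle\cdot\rangle^{-\frac{dl_1}{d+2s}}}_{L^{q_3}}.
\]
Choosing $p_3=\infty$ as in Lemma~\ref{lem:plus-wedge-a}, one factor becomes $\norm{\big((f-A)_+\wedge\kappa\big)^{q_0-1}}_{L^{p_1}_{\cdot}}$ and I would convert this into the $L^1$-moment and the $L^{p_0}_{k_0}$-norm of $\big((f-A)_+\wedge\kappa\big)^{q_0}$ via Lemma~\ref{lem:interpolation-q0-1} (with $p_1$ chosen so that this is legitimate, i.e.\ $p_1\le p_0q_0$), and similarly for the $L^{q_3}$-factor, whose exponent $\frac{d}{d+\gamma}$ lies in $[1,p_0q_0]$ again by Lemma~\ref{lem:interpolation-q0-1}. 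Exactly as in Lemma~\ref{lem:plus-wedge-a}, the key arithmetic identities and inequalities from Lemma~\ref{lem:aux-algebra} — namely $\gamma_2\theta=1$, $\gamma_3\theta'=1$, $\gamma_1\theta'<1+\frac{2s}d$, and the weight condition \eqref{eq:weight-condition} — guarantee that after a final application of Young's inequality the $L^{p_0}_{k_0}$-norm appears with exponent $1$ and a coefficient that can be made $\le\tfrac18\Ccor$, while the remaining $L^1_{l_q}$-norm of $\big((f-A)_+\wedge\kappa\big)^{q_0-1}$ carries a power $\fraka^{\gamma_1\theta'}$ with $\gamma_1\theta'<1+\frac{2s}d$, so it is absorbed into $\tfrac1{16}\Cget\fraka^{1+\frac{2s}d}\norm{\cdots}_{L^1_{l_q}}$ once $\fraka\ge\underline a^{(4)}$ for a threshold $\underline a^{(4)}$ depending only on $d,\gamma,s,\min(2,q),c_b,\Ccor,\Cget$.

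The main obstacle I anticipate is bookkeeping of weights: because here the ``bad'' factor is $\big((f-A)_+\wedge\kappa\big)^{q_0}$ rather than $A\big((f-A)_+\wedge\kappa\big)^{q_0-1}$, there is no extra power of $\fraka$ available from the barrier, so the absorption into the good-extra-term must come entirely from the moment exponents, which is precisely why the hypothesis $q\in[0,\frac d{2s}k_0]$ is imposed (it is stronger than the $q\le 2+\frac d{2s}k_0$ used for $\Equatre$ and ensures the relevant weight inequalities still close). I would double-check that Lemma~\ref{lem:aux-algebra} applies verbatim with $l_1=\min(2,q)$ in this range, and that the combinatorics of the three Hölder exponents matches \eqref{eq:holder-exp}; if a weight came out too large, the fix would be to first peel off $\langle v\rangle^{\gamma}$ via the crude bound $f\ast\abs{\cdot}^\gamma\le C\langle v\rangle^{\gamma}+\text{(lower order)}$ from Lemma~\ref{lem:gamma-neg-aux0} applied to $f\wedge A$ together with the $\{\abs{w}\ge1\}$ bound for the rest, trading the nonlinearity for a fixed negative weight that Lemma~\ref{lem:interpolation-q0-3} then handles directly. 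With those identifications in place the proof is a routine concatenation of the interpolation lemmas exactly as in Lemmas~\ref{lem:plus-wedge-b} and~\ref{lem:plus-wedge-a}.
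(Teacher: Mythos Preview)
Your treatment of the non-singular piece $\{\abs{w}\ge 1\}$ is correct and matches the paper's Lemma~\ref{lem:Phi-b}. The gap is in the singular piece. In $\Etrois$ the convolution is with the \emph{full} $f$, not with $(f-A)_+\wedge\kappa$, so the three-factor product you write down,
\[
\norm{\big((f-A)_+\wedge\kappa\big)^{q_0-1}\langle\cdot\rangle^{\cdots}}_{L^{p_1}}\,
\norm{\big((f-A)_+\wedge\kappa\big)^{q_0-1}\langle\cdot\rangle^{\cdots}}_{L^{p_2}}\,
\norm{\big((f-A)_+\wedge\kappa\big)\langle\cdot\rangle^{\cdots}}_{L^{q_3}},
\]
does not arise from weak Young applied to $\Etrois^{(i)}$: the third factor should carry $f$, not $(f-A)_+\wedge\kappa$. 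Replacing $f$ by $(f-A)_+\wedge\kappa$ there is not justified (and splitting $f=(f\wedge A)+(f-A)_+$ would reintroduce both an $\mathcal E_2$-type term and an unbounded tail $(f-A-\kappa)_+$, which you do not address). Relatedly, your claim that the remainder carries a factor $\fraka^{\gamma_1\theta'}$ cannot be right: $\Etrois$ contains no $A$, hence no power of $\fraka$ can appear anywhere in the estimate.

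The paper's route for $\Etrois^{(i)}$ is simpler and exploits exactly the structural difference you overlooked: keep the full $f$ in the convolution, move the weight $\langle v\rangle^{2}$ onto it (using $\langle v\rangle\le 2\langle v-w\rangle$ for $\abs w\le 1$), and choose $q_5=1$ so that $\norm{f\langle\cdot\rangle^{2}}_{L^{1}}\le E_0$ disappears as a fixed constant. What remains is a single factor $\norm{\big((f-A)_+\wedge\kappa\big)^{q_0}\langle\cdot\rangle^{-2}}_{L^{p_4}}$ with $p_4=\frac{d}{d+\gamma}$, which is then interpolated via Lemmas~\ref{lem:interpolation-q0-1} and~\ref{lem:interpolation-q0-2}; the relevant weight check is \eqref{eq:weight-condition-IV} in Lemma~\ref{lem:aux-algebra} (not \eqref{eq:weight-condition}, which pertains to $\Equatre$). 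After Young's inequality one obtains $\tfrac18\Ccor\norm{\cdots}_{L^{p_0}_{k_0}}+\bar C\norm{\big((f-A)_+\wedge\kappa\big)^{q_0-1}}_{L^1_{l_q}}$ with $\bar C$ independent of $\fraka$, and the second term is absorbed by taking $\fraka$ large.
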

To prove this lemma, we split $\Etrois$ as follows.
\begin{lemma}[Splitting $\Etrois$]\label{lem:split-Phi}
Let $p_4, p_5, q_5 \in [1,+\infty]$ satisfy
\begin{equation}\label{eq:holder-exp-Phi}
	\frac{1}{p_4} + \frac{1}{p_5} = 1, \qquad 1 +\frac{1}{p_5} = \frac{1}{q_5} + \frac{-\gamma}{d}.
\end{equation}
We have  \(\Etrois \le  \Etrois^{(i)} +  \Etrois^{(ii)}\) with
\begin{align*}
 \Etrois^{(i)} & =  C_4^{(i)}    \norm{\big((f-A)_+\wedge \kappa\big)^{q_0}\langle \cdot \rangle^{-2} }_{L^{p_4}} \norm{ (f-A)_+\langle \cdot \rangle^{2}}_{L^{q_5}}  , \\
 \Etrois^{(ii)} & = M_0(q_0-1)c_b  \norm{(f-A)_+\wedge \kappa}_{L^{q_0}}^{q_0},
\end{align*}
with $C_4^{(i)} = 4 (q_0-1)c_b\norm{| \cdot |^\gamma}_{L^{\frac{-\gamma}d,\infty}}$.
\end{lemma}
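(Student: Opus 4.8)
The plan is to mimic the splitting already used for $\Equatre$ in Lemma~\ref{lem:split-plus-wedge}, writing $f = (f\wedge A) + (f-A)_+\wedge\kappa + (f-A-\kappa)_+$ inside the convolution $f\ast|\cdot|^\gamma$ appearing in $\Etrois = (q_0-1)c_b\int ((f-A)_+\wedge\kappa)^{q_0}(f\ast|\cdot|^\gamma)\dd v$, but here only the two ``lower'' pieces will matter directly: the piece $(f-A)_+\wedge\kappa$ produces the singular contribution $\Etrois^{(i)}$, and the two ``capped'' pieces $(f\wedge A)$ and $(f-A-\kappa)_+$ will be handled via a crude $L^\infty$ bound on $|w|^\gamma$ for $|w|\ge 1$. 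Precisely, first I would split the $w$-integral into $|w|\le 1$ and $|w|\ge 1$. On $\{|w|\ge 1\}$ one has $|w|^\gamma \le 1$ since $\gamma<0$, so that part of $f\ast|\cdot|^\gamma$ is bounded by $\int f = $ (mass) $\le M_0$; together with $((f-A)_+\wedge\kappa)^{q_0} = ((f-A)_+\wedge\kappa)^{q_0-1}(f-A)_+\wedge\kappa \le ((f-A)_+\wedge\kappa)\cdot((f-A)_+\wedge\kappa)^{q_0-1}$, this yields a term bounded by $M_0(q_0-1)c_b\|(f-A)_+\wedge\kappa\|_{L^{q_0}}^{q_0}$, which is exactly $\Etrois^{(ii)}$. (Alternatively one keeps the full $f$ on that region and bounds by the mass; the point is the same.)

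For the singular region $\{|w|\le 1\}$ I would again use the elementary bound $\langle v-w\rangle \le \sqrt{2}\langle v\rangle$ for $|w|\le 1$ to redistribute weights: multiply and divide by $\langle v\rangle^{2}$ inside the convolution, so that one factor $((f-A)_+\wedge\kappa)(v)\langle v\rangle^{-2}$ is paired with the power $((f-A)_+\wedge\kappa)^{q_0-1}(v)$ and the remaining $\langle v\rangle^{2}$ is absorbed against $\langle v-w\rangle^{-2}\le \langle v-w\rangle^{-2}$ on the convolution side, turning the convolution into $(((f-A)_+\langle\cdot\rangle^{2})\mathbf 1_{|\cdot|\le 1})\ast|\cdot|^\gamma$ evaluated with the weight $\langle\cdot\rangle^{2}$. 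Here one should use $(f-A)_+$ rather than $(f-A)_+\wedge\kappa$ in the $q_5$-factor, which is harmless since $(f-A)_+\wedge\kappa \le (f-A)_+$. Then apply the weak Young inequality (Young's convolution inequality with $|\cdot|^\gamma\in L^{d/(-\gamma),\infty}$, as in Lemma~\ref{lem:split-plus-wedge}) with exponents $p_4,q_5$ chosen to satisfy \eqref{eq:holder-exp-Phi}, using that $((f-A)_+\wedge\kappa)^{q_0}\langle\cdot\rangle^{-2}\in L^{p_4}$ via $((f-A)_+\wedge\kappa)^{q_0} = ((f-A)_+\wedge\kappa)^{q_0-1}\cdot((f-A)_+\wedge\kappa)$ — but in fact it is cleaner to keep the product structure as stated, i.e.\ bound by $C_4^{(i)}\|((f-A)_+\wedge\kappa)^{q_0}\langle\cdot\rangle^{-2}\|_{L^{p_4}}\|(f-A)_+\langle\cdot\rangle^{2}\|_{L^{q_5}}$ with $C_4^{(i)} = 4(q_0-1)c_b\|\,|\cdot|^\gamma\,\|_{L^{-\gamma/d,\infty}}$, where the constant $4$ collects the $\sqrt{2}$-factors raised to the relevant powers.

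The main point requiring care is bookkeeping of the weights so that the $\langle v\rangle^{-2}$ on the left factor and $\langle v\rangle^{2}$ on the right factor are exactly balanced against the $\langle v-w\rangle$-comparison on $\{|w|\le 1\}$; once that is set up, the two-function Hölder/weak-Young split is routine, and the resulting exponents are precisely those in \eqref{eq:holder-exp-Phi}. No $\kappa$-largeness hypothesis is needed for the splitting itself — it is only in the subsequent estimate of $\Etrois^{(i)}$ and $\Etrois^{(ii)}$ (via interpolation Lemmas~\ref{lem:interpolation-q0-1}, \ref{lem:interpolation-q0-2} and the coercivity constants $\Ccor,\Cget$) that smallness of $\fraka^{-1}$ and the range $q\le\frac{d}{2s}k_0$ will enter, yielding Lemma~\ref{lem:iv.iii}. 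I do not expect a genuine obstacle here; the argument is a direct transcription of the $\Equatre$ splitting with the roles of $(f\wedge A)$ and $((f-A)_+\wedge\kappa)^{q_0-1}$ interchanged.
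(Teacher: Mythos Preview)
Your approach is essentially the paper's: split the $w$-integral into $|w|\le 1$ and $|w|\ge 1$, use $|w|^\gamma\le 1$ and the mass bound on the outer part to get $\Etrois^{(ii)}$, and on the inner part insert the weight $\langle v\rangle^{\pm 2}$ and apply weak Young. The initial detour of decomposing $f=(f\wedge A)+(f-A)_+\wedge\kappa+(f-A-\kappa)_+$ inside the convolution is unnecessary here---the paper keeps $f$ intact throughout---but you already note this alternative.

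There is one concrete slip: the inequality you invoke, $\langle v-w\rangle\le\sqrt{2}\langle v\rangle$ for $|w|\le 1$, points the wrong way for what you need. To move the factor $\langle v\rangle^{2}$ onto the convolved function you need $\langle v\rangle\lesssim\langle v-w\rangle$, namely $\langle v\rangle\le 2\langle v-w\rangle$ (from $|v|\le|v-w|+1$). This is what produces the constant $4=2^2$ in $C_4^{(i)}$. With that correction, and noting that the paper's proof in fact lands on $\|f\langle\cdot\rangle^{2}\|_{L^{q_5}}$ rather than $\|(f-A)_+\langle\cdot\rangle^{2}\|_{L^{q_5}}$ (which is what is actually used downstream with $q_5=1$ and the energy bound $E_0$), your argument goes through verbatim.
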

\begin{proof}
We use for $w \in B_1$ and $v \in \R^d$ 
\begin{equation*}
	\abs{v} \leq \abs{v-w} + \abs{w} \leq \abs{v-w} + 1,
\end{equation*}
which implies $\langle v \rangle \leq 2 \langle v-w \rangle$,
so that distinguishing two domains of integration, $|w| \le 1$ and $|w|\ge 1$, we find
\begin{align*}
	\Etrois 
	&= (q_0-1)c_b \int_{\R^d}  \big((f-A)_+\wedge \kappa\big)^{q_0} \langle v \rangle^{-2} \langle v \rangle^{2} \left( \int_{B_1} f(v-w)  \abs{w}^\gamma \dd w \right) \dd v \\
	&\quad + (q_0-1)c_b \int_{\R^d} \big((f-A)_+\wedge \kappa\big)^{q_0}\left( \int_{\R^d \setminus B_1}f(v-w)  \abs{w}^\gamma \dd w \right) \dd v \\
	&\leq 4 (q_0-1)c_b \int_{\R^d} \big((f-A)_+\wedge \kappa\big)^{q_0} \langle v \rangle^{-2}  \left( \int_{B_1} \langle v-w \rangle^{2}f(v-w)  \abs{w}^\gamma \dd w \right) \dd v \\
	&\quad + M_0 (q_0-1)c_b \int_{\R^d}\big((f-A)_+\wedge \kappa\big)^{q_0}\dd v \\
	&\leq 4 (q_0-1)c_b \norm{| \cdot |^\gamma}_{L^{\frac{-\gamma}d,\infty}}  \norm{\big((f-A)_+\wedge \kappa\big)^{q_0}\langle \cdot \rangle^{-2} }_{L^{p_4}} \norm{ f\langle \cdot \rangle^{2}}_{L^{q_5}} \\
  & \quad + M_0(q_0-1)c_b  \norm{(f-A)_+\wedge \kappa}_{L^{q_0}}^{q_0}. \qedhere
\end{align*}
\end{proof}
Again we estimate each piece separately.
\begin{lemma}[Estimate of $\Etrois^{(ii)}$]\label{lem:Phi-b}
There holds
\[
  \Etrois^{(ii)}  \leq \frac18 \Ccor \norm{\big((f-A)_+\wedge \kappa\big)^{q_0}}_{L^{p_0}_{k_0}} + \frac1{16}\Cget \fraka^{1+\frac{2s}{d}}\norm{\big((f-A)_+\wedge \kappa\big)^{q_0-1}}_{L^1_{l_q}}
\]
(with $\Ccor$ and $\Cget$ from Propositions~\ref{p:coercivity-1} and \ref{p:coercivity-get}) as soon as $\fraka \ge \underline{a}^{(4,ii)}$ with $\underline{a}^{(4,ii)}>0$ only depending on $\Ccor,c_b,q_0,\Cget,d,$ $\gamma,$ $s,M_0$. 
\end{lemma}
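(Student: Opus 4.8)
The plan is to estimate $\Etrois^{(ii)} = M_0(q_0-1)c_b \norm{(f-A)_+\wedge \kappa}_{L^{q_0}}^{q_0}$ by recognizing that $\norm{(f-A)_+\wedge \kappa}_{L^{q_0}}^{q_0} = \int_{\R^d} \big((f-A)_+\wedge\kappa\big)^{q_0} \dv = \int_{\R^d} \big((f-A)_+\wedge\kappa\big)^{q_0} \langle v\rangle^0 \dv$, i.e.\ it is a weighted $L^{q_0}$ quantity with weight exponent $k_3 = 0$. Since $\gamma < 0$ and $q \le \frac{d}{2s}k_0$ (in particular $k_0 > 0$ here, so $q\ge 0$ is compatible), this puts us in exactly the situation handled by the interpolation Lemma~\ref{lem:interpolation-q0-3}.

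First I would apply Lemma~\ref{lem:interpolation-q0-3} to $g = (f-A)_+\wedge\kappa$ with $k_3 = 0$ and $\iota = 1$: for any $\varepsilon\in(0,1)$,
\[
\int_{\R^d} \big((f-A)_+\wedge\kappa\big)^{q_0} \dv \le \varepsilon \norm{\big((f-A)_+\wedge\kappa\big)^{q_0}}_{L^{p_0}_{k_0}} + C(\varepsilon, E_0) \norm{\big((f-A)_+\wedge\kappa\big)^{q_0-1}}_{L^1_{m_3}}
\]
with $m_3 = -\frac{d}{2s}k_0 - 2$. Then I would multiply by $M_0(q_0-1)c_b$ and choose $\varepsilon$ so that $M_0(q_0-1)c_b\,\varepsilon \le \frac18 \Ccor$, which fixes $\varepsilon$ in terms of $\Ccor, c_b, q_0, M_0$ only (so $C(\varepsilon,E_0)$ becomes a constant depending on the allowed parameters). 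It remains to absorb the second term into $\frac1{16}\Cget\,\fraka^{1+\frac{2s}{d}}\norm{\big((f-A)_+\wedge\kappa\big)^{q_0-1}}_{L^1_{l_q}}$.

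The absorption step is the only real point to check. It requires comparing the weight exponents: we need $m_3 = -\frac{d}{2s}k_0 - 2 \le l_q$, where $l_q = k_0 + \frac{2s}{d} - q\frac{d+2s}{d}$. Equivalently, rearranging, this becomes $q \le \frac{d}{d+2s}\big(k_0 + \frac{2s}{d} + \frac{d}{2s}k_0 + 2\big) = \frac{d}{d+2s}\big((1+\frac{d}{2s})k_0 + \frac{2s}{d} + 2\big)$. Since $q \le \frac{d}{2s}k_0$ is assumed and $\frac{d}{2s}k_0 \le \frac{d}{d+2s}(1+\frac{d}{2s})k_0 \le$ the right-hand side above (the extra positive terms only help, and $\frac{d}{2s} = \frac{d}{d+2s}(1+\frac{d}{2s})$), the weight condition holds; on the support $\langle v\rangle \ge 1$ a smaller exponent gives a smaller weight, so $\norm{\cdot}_{L^1_{m_3}} \le \norm{\cdot}_{L^1_{l_q}}$. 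Finally, since $\fraka^{1+\frac{2s}{d}} \ge 1$ once $\fraka \ge 1$, choosing $\underline{a}^{(4,ii)} \ge \max(1, (16 M_0 (q_0-1)c_b C(\varepsilon,E_0)/\Cget)^{d/(d+2s)})$ ensures $M_0(q_0-1)c_b C(\varepsilon,E_0) \le \frac1{16}\Cget\,\fraka^{1+\frac{2s}{d}}$, which yields the claimed bound. I expect no genuine obstacle here; the only care needed is bookkeeping the dependence of $\underline{a}^{(4,ii)}$ and verifying $m_3 \le l_q$ under the hypothesis $q \le \frac{d}{2s}k_0$ (which in particular forces $k_0 > 0$, consistent with $\gamma$ not too negative).
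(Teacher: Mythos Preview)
Your approach is essentially identical to the paper's: apply Lemma~\ref{lem:interpolation-q0-3} with $k_3=0$, choose $\varepsilon$ to make the first term $\le \tfrac18\Ccor\|\cdot\|_{L^{p_0}_{k_0}}$, then verify $m_3 \le l_q$ and absorb the remainder by taking $\fraka$ large. One small slip: the correct formula is $l_q = k_0 + \tfrac{4s}{d} - q\tfrac{d+2s}{d}$ (you dropped a $\tfrac{2s}{d}$), which in fact gives exactly $m_3 \le l_q \iff q \le 2 + \tfrac{d}{2s}k_0$, matching the paper; your version only makes the check more restrictive, so the argument still goes through under the standing hypothesis $q \le \tfrac{d}{2s}k_0$.
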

\begin{proof}
We use Lemma \ref{lem:interpolation-q0-3} with $k_3 = 0$ to get for some constant $C = C(\Ccor, q_0, M_0,c_b,d,\gamma,s)$,
\[
	\Etrois^{(ii)}  \leq \frac18 \Ccor \norm{\big((f-A)_+\wedge \kappa\big)^{q_0}}_{L^{p_0}_{k_0}} + C \norm{\big((f-A)_+\wedge \kappa\big)^{q_0-1}}_{L^1_{m_3}},
\]
with $m_3 = -\frac{d}{2s} k_0 - 2$. For $q \leq 2 + \frac{d}{2s} k_0$, the exponent $m_3$ satisfies 
\(
	m_3 \leq l_q .
\)
\end{proof}
\begin{lemma}[Estimate of $\Etrois^{(i)}$]\label{lem:Phi-a}
There holds
\[
	\Etrois^{(i)} \leq \frac18 \Ccor \norm{\big((f-A)_+\wedge \kappa\big)^{q_0}}_{L^{p_0}_{k_0}}  + \frac1{16} \Cget \fraka^{1+\frac{2s}{d}}  \norm{\big((f-A)_+\wedge \kappa\big)^{q_0-1}}_{L^1_{l_q}}
\]
as soon as $\fraka \ge \underline{a}^{(4,i)}$ with $\underline{a}^{(4,i)}>0$ only depending on $\Ccor,c_b,\Cget, d,\gamma,s,M_0$ (with $\Ccor$ and $\Cget$ from Propositions~\ref{p:coercivity-1} and \ref{p:coercivity-get}). 
\end{lemma}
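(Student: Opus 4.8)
The plan is to start from the decomposition provided by Lemma~\ref{lem:split-Phi}, namely
\[
\Etrois^{(i)} = C_4^{(i)} \norm{\big((f-A)_+\wedge \kappa\big)^{q_0}\langle \cdot \rangle^{-2}}_{L^{p_4}} \norm{(f-A)_+\langle \cdot\rangle^{2}}_{L^{q_5}},
\]
and to choose the Hölder exponents $p_4,p_5,q_5$ satisfying \eqref{eq:holder-exp-Phi} in a way that makes each factor controllable by the interpolation machinery of Section~\ref{s:prelim}. Since $(f-A)_+ \le f$, the first step is to bound $(f-A)_+$ in the second factor by $(f-A)_+\wedge\kappa$ plus $(f-A-\kappa)_+$; more efficiently, one absorbs the $\langle \cdot\rangle^{2}$ weight using the finite $2$-moment $E_0$ in a low Lebesgue exponent and keeps the remaining integrability in terms of $(f-A)_+\wedge\kappa$, exactly as was done for $\Equatre^{(i)}$ in the proof of Lemma~\ref{lem:plus-wedge-a}. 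Concretely, I would take $p_5$ finite (so $1/q_5 = 1 + 1/p_5 + \gamma/d$ forces a specific value), split $\norm{(f-A)_+\langle\cdot\rangle^{2}}_{L^{q_5}}$ via Hölder between an $L^1_2$ norm (bounded by $E_0$) and an $L^{q_0}$-type norm of $(f-A)_+\wedge\kappa$, and likewise write $\norm{\big((f-A)_+\wedge\kappa\big)^{q_0}\langle\cdot\rangle^{-2}}_{L^{p_4}}$ as an $L^{q_0/q_0}=L^1$ weighted norm raised to a power.

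The second and central step is the interpolation: apply Lemma~\ref{lem:interpolation-q0-1} (or Lemma~\ref{lem:interpolation-q0-2}) to express the resulting weighted $L^{p_1}$-norms of $(f-A)_+\wedge\kappa$ as a product $\norm{\big((f-A)_+\wedge\kappa\big)^{q_0}}_{L^{p_0}_{k_0}}^{\alpha}\,\norm{\big((f-A)_+\wedge\kappa\big)^{q_0-1}}_{L^1_{m}}^{1-\alpha}$ for appropriate exponents, then reduce the weight $m$ to $l_q$ by a further weighted-$L^1$ interpolation (Lemma~\ref{lem:interpolation-q0-2}) together with the $2$-moment bound, invoking the hypothesis $q \le \tfrac{d}{2s}k_0$ to guarantee that the intermediate weight indeed satisfies $m \le l_q$ (this is the analogue of the weight inequalities \eqref{eq:weight-condition}, \eqref{eq:weight-condition-IV} checked in Lemma~\ref{lem:aux-algebra}). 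Then Young's inequality splits the product into $\tfrac18\Ccor\norm{\big((f-A)_+\wedge\kappa\big)^{q_0}}_{L^{p_0}_{k_0}}$ plus a term of the form $C\,\fraka^{\delta}\norm{\big((f-A)_+\wedge\kappa\big)^{q_0-1}}_{L^1_{l_q}}^{\rho}$; since $(f-A)_+\wedge\kappa$ appears with total homogeneity degree $q_0-1$ on the error side, $\rho=1$ and the power $\delta$ of $\fraka$ coming out must be checked to be strictly less than $1+\tfrac{2s}{d}$ (the analogue of \eqref{eq:algebra-ineq}), which uses $\gamma+2s>0$. Finally one chooses $\fraka \ge \underline{a}^{(4,i)}$ large enough that $C\,\fraka^{\delta} \le \tfrac1{16}\Cget\,\fraka^{1+\frac{2s}{d}}$, which is possible precisely because $\delta < 1+\tfrac{2s}{d}$, and the claimed bound follows.

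The main obstacle I anticipate is bookkeeping of the exponents: one must pick $p_4,p_5,q_5$ in \eqref{eq:holder-exp-Phi} and the subsequent interpolation parameters so that (a) all Lebesgue exponents stay in the admissible ranges $[1,p_0q_0]$ demanded by Lemmas~\ref{lem:interpolation-q0-1}--\ref{lem:interpolation-q0-3}, (b) the weight produced after interpolation is $\le l_q$ under the standing assumption $q\le \tfrac{d}{2s}k_0$, and (c) the final power of $\fraka$ is $<1+\tfrac{2s}{d}$. These are three simultaneous algebraic constraints; as in Lemma~\ref{lem:aux-algebra}, verifying them reduces to elementary inequalities in $d,s,\gamma$ that hold because $0<\gamma+2s<1$ and $d\ge 2$, but the computation is delicate and must be done carefully. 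Everything else is a routine application of weak Young's inequality and the interpolation lemmas already established.
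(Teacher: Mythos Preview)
Your overall strategy---start from Lemma~\ref{lem:split-Phi}, interpolate via Lemmas~\ref{lem:interpolation-q0-1} and \ref{lem:interpolation-q0-2}, invoke the weight condition~\eqref{eq:weight-condition-IV} from Lemma~\ref{lem:aux-algebra}, then close with Young's inequality---is the same as the paper's, and it works. Two points of comparison are worth making, though.

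First, the paper makes a cleaner choice of exponents than you propose: it takes $q_5=1$ in~\eqref{eq:holder-exp-Phi}. Then the second factor becomes simply $\norm{(f-A)_+\langle\cdot\rangle^{2}}_{L^{1}}\le\norm{f\langle\cdot\rangle^{2}}_{L^{1}}\le E_0$, and there is no need to ``split via H\"older between an $L^1_2$ norm and an $L^{q_0}$-type norm'' as you suggest. This leaves a single factor $\norm{((f-A)_+\wedge\kappa)^{q_0}\langle\cdot\rangle^{-2}}_{L^{p_4}}$ with $p_4=d/(d+\gamma)$ to interpolate, and the two-step interpolation (Lemma~\ref{lem:interpolation-q0-1} then Lemma~\ref{lem:interpolation-q0-2}, separated by the weighted H\"older step using~\eqref{eq:weight-condition-IV}) runs exactly as in the treatment of $\Equatre^{(i)}$.

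Second, your remark about ``the power $\delta$ of $\fraka$ coming out'' and checking $\delta<1+\tfrac{2s}{d}$ is a mis-transcription from the $\Equatre^{(i)}$ argument. Unlike $\Equatre$, the term $\Etrois$ contains no explicit factor of $A$, so no power of $\fraka$ is ever produced: after the two Young inequalities one ends up with $\tfrac18\Ccor\norm{\cdot}_{L^{p_0}_{k_0}}+\bar C\norm{\cdot}_{L^1_{l_q}}$ where $\bar C$ is independent of $\fraka$. The final step is then trivial: pick $\underline{a}^{(4,i)}$ so that $\tfrac{1}{16}\Cget\,\underline{a}^{1+2s/d}\ge\bar C$. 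In your language this is the case $\delta=0$, which of course satisfies $\delta<1+\tfrac{2s}{d}$, but it is worth noting that no analogue of~\eqref{eq:algebra-ineq} is actually needed here.
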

\begin{proof}
We choose $q_5 = 1$ in \eqref{eq:holder-exp-Phi}. Then 
\begin{equation*}
	\frac{1}{p_5} = \frac{-\gamma}{d} \in (0, 1), \qquad \frac{1}{p_4} = 1 + \frac{\gamma}{d} = \frac{d +\gamma}{d} \in (0, 1),
\end{equation*}
and thus
\begin{align*}
	\Etrois^{(i)}& \le 4 (q_0-1)c_b \norm{\big((f-A)_+\wedge \kappa\big)^{q_0}\langle \cdot \rangle^{-2} }_{L^{p_4}} \norm{ f\langle \cdot \rangle^{2}}_{L^{q_5}}\\
	& \le 4 (q_0-1)c_b   E_0 \left(\int \big((f-A)_+\wedge \kappa\big)^{\frac{d+2s}{d+\gamma}} \langle v \rangle^{-2 \frac{ d}{d+\gamma}} \dd v\right)^{\frac{d+\gamma}{d}}.
\end{align*}

\noindent  \textit{Step 1: First interpolation.}
We then interpolate thanks to Lemma \ref{lem:interpolation-q0-1} with $p_1 = \frac{d+2s}{d+\gamma} \in [1,p_0 q_0]$,
\begin{align*}
	\Bigg(\int \big((f-A)_+\wedge \kappa\big)^{\frac{d+2s}{d + \gamma}}&\langle v \rangle^{-2\frac{ d}{d+\gamma}} \dd v\Bigg)^{\frac{d+\gamma}{d}} \\
	&\leq  \left(\int  \big((f-A)_+\wedge \kappa\big)^{q_0p_0} \langle v \rangle^{k_0p_0} \dd v \right)^{\frac{\alpha_5}{p_0}}\left( \int  \big((f-A)_+\wedge \kappa\big) \langle v \rangle^{m_4} \dd v \right)^{\alpha_6},
\end{align*}
where $\alpha_5, \alpha_6, m_4$ are given by
\begin{equation*}
	\alpha_5 = \frac{2s - \gamma}{4s}, \qquad \alpha_6 = \frac{(d+2s)(\gamma + 2s)}{4sd}, \qquad m_4 = - 2\frac{4sd}{(d+2s)(\gamma + 2s)}  - \frac{(2s-\gamma) d}{(d+2s)(\gamma +2s)} k_0.
\end{equation*}      
We remark that $\alpha_5 \in (0,1)$. Then Young's inequality yields for any $\varepsilon_1 \in (0,1)$
\[	 \Etrois^{(i)} \leq \frac1{16}\Ccor \norm{ \big((f-A)_+\wedge \kappa\big)^{q_0}}_{L^{p_0}_{k_0}} + C(\Ccor, M_0, E_0, c_b)\norm{(f-A)_+\wedge \kappa}_{L^1_{m_4}}^{\frac{d+2s}{d}}.
\]

\noindent \textit{Step 2: Second interpolation.}
We pick $\beta, \beta' \geq 1$ as in \eqref{eq:choice-theta-beta}. In particular, \eqref{eq:weight-condition-IV} allows us to use Hölder's inequality in order to get,
\[
	\norm{(f-A)_+\wedge \kappa}_{L^1_{m_4}}\leq  \norm{(f-A)_+\wedge \kappa}_{L^1_{2}}^{\frac{1}{ \beta'}}\norm{(f-A)_+\wedge \kappa}_{L^1_{m_3}}^{\frac{1}{\beta}} \leq E_0^{\frac{1}{ \beta'}} \norm{(f-A)_+\wedge \kappa}_{L^1_{m_3}}^{\frac{1}{\beta}}.
\]
If we now apply the interpolation Lemma \ref{lem:interpolation-q0-2} for $k_2 = m_3$, we find
\[
	\int_{\R^d}\big((f-A)_+\wedge \kappa\big) \langle v \rangle^{m_3} \dd v \leq \left(\int_{\R^d} \big((f-A)_+\wedge \kappa\big)^{q_0p_0} \langle v \rangle^{k_0p_0} \dd v \right)^{\frac{\alpha_3}{p_0}}\left( \int_{\R^d}   \big((f-A)_+\wedge \kappa\big)^{q_0-1}\langle v \rangle^{m_2} \dd v \right)^{\alpha_4},
\]
where 
\[
	\alpha_3 = \frac{d(d - 2s)}{4s^s + d^2}, \qquad \alpha_4 = \frac{4sd}{4s^2 + d^2},\qquad m_2 = \frac{4s^2 + d^2}{4sd} \left[m_3- \frac{d(d-2s)}{4s^2 + d^2} k_0\right].
\]
Again the choice of $m_3$ in Lemma \ref{lem:aux-algebra} is such that $m_2 = l_q$.
In particular, there holds
\[
	\norm{(f-A)_+\wedge \kappa}_{L^1_{m_4}}^{\frac{d+2s}{d}} \leq E_0^{\frac{1}{ \beta'}\frac{d+2s}{d}} \norm{\big((f-A)_+\wedge \kappa\big)^{q_0}}_{L^{p_0}_{k_0}}^{\gamma_2} \norm{(f-A)_+\wedge \kappa}_{L^1_{l_q}}^{\gamma_3}
\]
with $\gamma_2, \gamma_3$ given by \eqref{eq:gammas}.

\medskip

\noindent \textit{Step 3: Young's inequality.}
We combine Step 1 and Step 2 with Young's inequality, so that for $\theta \geq 1$,
\begin{equation*}
\begin{aligned}
	\Etrois^{(i)}  &\leq \frac1{16} \Ccor \norm{\big((f-A)_+\wedge \kappa\big)^{q_0}}_{L^{p_0}_{k_0}} + C(\Ccor) \norm{\big((f-A)_+\wedge \kappa\big)^{q_0}}_{L^{p_0}_{k_0}}^{\gamma_2} \norm{\big((f-A)_+\wedge \kappa\big)^{q_0-1}}_{L^1_{l_q}}^{\gamma_3}\\
	&\leq \frac1{16} \Ccor  \norm{\big((f-A)_+\wedge \kappa\big)^{q_0}}_{L^{p_0}_{k_0}} +  \frac1{16} \Ccor  \norm{(f-A)_+^{q_0}}_{L^{p_0}_{k_0}}^{\gamma_2 \theta} + \bar C(\Ccor) \norm{\big((f-A)_+\wedge \kappa\big)^{q_0-1}}_{L^1_{l_q}}^{\gamma_3 \theta'}.
\end{aligned}
\end{equation*}
If we pick $\theta$ as in \eqref{eq:choice-theta-beta}, then  Lemma \ref{lem:aux-algebra} implies \eqref{eq:algebra-eq} and we conclude by choosing $\underline{a}^{(4,i)}=\underline{a}$ such that
$\frac1{16} \Cget \underline{a}^{1+\frac{2s}d} \ge \bar C(\Ccor)$.
\end{proof}
\begin{proof}[Proof of Lemma \ref{lem:iv.iii}]
The result follows as a consequence of Lemmas \ref{lem:split-Phi}, \ref{lem:Phi-b} and \ref{lem:Phi-a}.
\end{proof}

\subsection{The error term due to truncation for soft potentials}

Finally it remains to estimate $\mathcal  E_5$ in \eqref{eq:iv-split}.
\begin{lemma}[Estimate of  $\mathcal E_5$]\label{lem:iv.iv}
Let $\gamma \in (- 2s,0)$ and  $q \le \qnsl$ where $\qnsl$ is given in \eqref{itm:qnsl}. There exists $\underline{a}^{(5)}>0$, only depending on $d,\gamma,s$ and hydrodynamical bounds from \eqref{e:hydro} such that, if $\ma \ge \underline{a}^{(5)}$, then
\begin{align*}
  \mathcal{E}_5\leq &\left( \frac{1}{16} \Cget \, \fraka^{\frac{d+2s}{d}}  + \tilde C_5 \mathfrak{a} \right) \int_{\R^d}\Big((f-A)_+\wedge \kappa\Big)^{q_0-1} \langle v \rangle^{l_q}\dd v\\
  & +\frac{1}{2} \Ccorbis \kappa^{\frac{2s}{d} q_0 }{N}^{-\frac{2s}{d}} \int_{\R^d} (f-A-\kappa)_+ \langle v \rangle^\gamma \dv +  C_5 \Bigg(\int_{\R^d}(f-A-\kappa)_+^{p_0} \langle v \rangle^{k_0p_0}  \dd v \Bigg)^{\frac{1}{p_0}},
\end{align*}
with $$N = \int_{\R^d} ((f-A)_+ \wedge \kappa)^{q_0-1} \langle v \rangle^{-(d-1)}  \dv.$$
The constant $\tilde C_5$ depends only on $q_0$, $c_b$, and $M_0$. 
If $\gamma \in (- \frac{2s}{d+2s}d,0)$, the positive constant $C_5$ only depends on $d,\gamma,s,c_b, E_0, M_0, \Cget$. If $\gamma \in (-2s,- \frac{2s}{d+2s}d)$, $C_5$ additionally depends on $\Ccorbis$. We refer to Proposition~\ref{p:coercivity-get} for $\Cget$ and to Proposition ~\ref{p:second-coercivity} for $\Ccorbis$.
\end{lemma}
\begin{proof}
To estimate $\mathcal{E}_5$, we first recall its definition,
\[
	\mathcal{E}_5= q_0  c_b \int_{\R^d}  A(v)\Big((f-A)_+\wedge \kappa\Big)^{q_0 - 1} \big((f-A - \kappa)_+\ast \abs{\cdot}^\gamma\big) \dd v.
\]

\noindent \textit{Step 1: Splitting.}
We split the integration domain as before and we find for a constant $m$ to be chosen later, 
\begin{align*}
	\mathcal{E}_5&= q_0  c_b \int_{\R^d}A(v)\Big((f-A)_+\wedge \kappa\Big)^{q_0 - 1} \langle v\rangle^{m}\langle v\rangle^{-m} \int_{B_1} (f-A - \kappa)_+(v-w) \abs{w}^\gamma \dd w \dd v\\
	& +q_0  c_b \int_{\R^d} A(v)\Big((f-A)_+\wedge \kappa\Big)^{q_0 - 1}  \int_{\R^d \setminus B_1} (f-A - \kappa)_+(v-w) \abs{w}^\gamma \dd w \dd v\\
	&\leq 2^{\frac{m}{2}} q_0  c_b \fraka \int_{\R^d} \langle v\rangle^{-q-m}\Big((f-A)_+\wedge \kappa\Big)^{q_0 - 1} \int_{B_1}  \langle v-w\rangle^{m}(f-A - \kappa)_+(v-w) \abs{w}^\gamma \dd w \dd v\\
	& +q_0  c_b M_0 \fraka \int_{\R^d} \Big((f-A)_+\wedge \kappa\Big)^{q_0 - 1}  \langle v\rangle^{-q} \dd v\\
	&=: \mathcal{E}_5^{(i)} + \mathcal{E}_5^{(ii)}.
\end{align*}
Note that in case that $m \geq 0$, we used $\langle v \rangle \leq \langle v-w\rangle$ for $w \in B_1$ and $v \in \R^d$. In case that $m < 0$, we used $\langle v-w\rangle \leq \sqrt{2} \langle v \rangle$ for $w \in B_1, v \in \R^d$.

The term $\mathcal{E}_5^{(ii)}$ is treated by checking that $-q \leq l_q$ holds true for any $q \leq d+1 + \frac{d\gamma}{2s}$.
\begin{equation} \label{e:e5ii}
\mathcal{E}_5^{(ii)} \le q_0  c_b M_0 \fraka \int_{\R^d} \Big((f-A)_+\wedge \kappa\Big)^{q_0 - 1}  \langle v\rangle^{l_q} \dd v.
\end{equation}
\medskip

\noindent \textit{Step 2: estimate of $\mathcal{E}_5^{(i)}$.}
We use again H\"older's and Young's inequalities to write,
\begin{align*}
  \mathcal{E}_5^{(i)} & \leq  2^{\frac{m}{2}} q_0  c_b \fraka \norm{\Big((f-A)_+\wedge \kappa\Big)^{q_0 - 1}\langle \cdot\rangle^{-(q+m)}}_{L^{\frac{d}{d+\gamma}}} \norm{ (f-A - \kappa)_+ }_{L^1_{m}} \\
  & \leq 2^{\frac{m}{2}} q_0  c_b   \fraka \kappa^{-\gamma \frac{2s}{d^2}} \left( \int_{\R^d} ((f-A)_+ \wedge \kappa)^{q_0-1} \langle v \rangle^{-(q+m)\frac{d}{d+\gamma}} \dv \right)^{\frac{d+\gamma}d} \norm{ (f-A - \kappa)_+ }_{L^1_{m}}.
\end{align*}
We used
\begin{align*}
	\left((f-A)\wedge \kappa\right)^{(q_0-1)\frac{d}{d+\gamma}} &=\left((f-A)\wedge \kappa\right)^{(q_0-1)}\left((f-A)\wedge \kappa\right)^{(q_0-1)\frac{-\gamma}{d+\gamma}}\\
	&\leq\left((f-A)\wedge \kappa\right)^{(q_0-1)} \kappa^{(q_0-1)\frac{-\gamma}{d+\gamma}}.
\end{align*}
It is now convenient to introduce some notation. We consider for any $\ell \in \R$,
\[ N_\ell = \int_{\R^d} ((f-A)_+ \wedge \kappa)^{q_0-1} \langle v \rangle^{\ell} \dv \quad \text{ and } \quad M_\ell = \int_{\R^d} (f-A - \kappa)_+  \langle v \rangle^\ell \dv.\]
We also let $k_m$ denote $-(q+m)\frac{d}{d+\gamma}$ and we write the previous inequality under the following compact form,
\begin{equation}\label{eq:start-E5}
  \mathcal{E}_5^{(i)}  \leq 2^{\frac{m}{2}} q_0  c_b   \fraka \kappa^{-\gamma \frac{2s}{d^2}} N_{k_m}^{\frac{d+\gamma}d} M_m.
\end{equation}
We distinguish two cases: $\frac{-2sd}{d+2s} \leq \gamma < 0$ and $-2s < \gamma < \frac{-2sd}{d+2s}$.

\medskip

\noindent \textit{Step 3: estimate of $\mathcal{E}_5^{(i)}$ for not too negative $\gamma$'s.}

We start by assuming $\frac{-2sd}{d+2s}\leq \gamma < 0$. We use Young's inequality, so that
\[
	 \mathcal{E}_5^{(i)}\leq \frac{1}{16} \Cget \fraka^{\frac{d}{d+\gamma}}  N_{k_m} +\left(\frac{1}{16} \Cget\right)^{\frac{d+\gamma}{\gamma}}\left(2^{\frac{m}{2}} q_0  c_b\right)^{-\frac{d}{\gamma}}\kappa^{\frac{2s}{d}}M_m^{\frac{d}{-\gamma}}.
\]
We pick $m = k_0 = \gamma + 2s - \frac{2s}{d}$.
We then observe that for any $\gamma \geq - \frac{2sd}{d+2s}$, there holds $\frac{d}{d+\gamma} \leq 1 + \frac{2s}{d}$, and moreover $k_m \leq l_q$, provided that $q \leq \qnsl =d+1 + \frac{\gamma d}{2s}$, $\gamma \geq - \frac{2sd}{d+2s}$ and $d \geq 2$, see Lemma \ref{lem:tedious-2}. This shows 
\[
	 \frac{1}{16} \Cget \fraka^{\frac{d}{d+\gamma}}   N_{k_m}   \leq \frac{1}{16} \Cget \fraka^{1+\frac{2s}{d}}\int_{\R^d}\Big((f-A)_+\wedge \kappa\Big)^{q_0-1} \langle v \rangle^{l_q}\dd v.
\]
Moreover, we use Hölder's  and Chebyshev's inequalities to bound
\begin{equation*}
\begin{aligned}
	\kappa^{\frac{2s}{d}}M_{k_0}^{\frac{d}{-\gamma}} &= \kappa^{\frac{2s}{d}}M_{k_0}^{\frac{d}{-\gamma}-1}M_{k_0}\leq \kappa^{\frac{2s}{d}}M_{k_0}^{\frac{d+\gamma}{-\gamma}}\Bigg(\int_{\R^d}  (f-A-\kappa)_+^{p_0}\langle v \rangle^{k_0p_0} \dd v \Bigg)^{\frac{1}{p_0}}\abs{\{f-A >\kappa\}}^{\frac{2s}{d}}\\
	&\leq \kappa^{\frac{2s}{d}}\kappa^{-\frac{2s}{d}}E_0^{\frac{d+\gamma}{-\gamma}}\Bigg(\int_{\R^d} (f-A-\kappa)_+^{p_0}\langle v \rangle^{k_0p_0} \dd v \Bigg)^{\frac{1}{p_0}}\Bigg(\int_{\R^d} (f-A)_+ \dd v\Bigg)^{\frac{2s}{d}}\\
	&= E_0^{\frac{d+\gamma}{-\gamma}}M_0^{\frac{2s}{d}}\Bigg(\int_{\R^d}  (f-A-\kappa)_+^{p_0}\langle v \rangle^{k_0p_0} \dd v \Bigg)^{\frac{1}{p_0}}.
\end{aligned}
\end{equation*}
We used that $k_0 < 2$ to bound $M_{k_0}$ from above by $E_0$.

Gathering the estimates, we get,
\begin{equation}\label{eq:E-a-plus-large-gamma}
\begin{aligned}
  \mathcal{E}_5^{(i)}  \leq& \frac{1}{16} \Cget \fraka^{1+\frac{2s}{d}}\int_{\R^d}\Big((f-A)_+\wedge \kappa\Big)^{q_0-1} \langle v \rangle^{l_q}\dd v \\
	&+\left(\frac{1}{16} \Cget\right)^{\frac{d+\gamma}{\gamma}}\left(2^{\frac{m}{2}} q_0  c_b\right)^{-\frac{d}{\gamma}}E_0^{\frac{d+\gamma}{-\gamma}}M_0^{\frac{2s}{d}}\Bigg(\int_{\R^d} (f-A-\kappa)_+^{p_0} \langle v \rangle^{k_0p_0} \dd v \Bigg)^{\frac{1}{p_0}}.
\end{aligned}
\end{equation}
Combining \eqref{e:e5ii} with \eqref{eq:E-a-plus-large-gamma} yields the announced estimate and concludes the proof of the lemma in case that $\gamma \in \big[-\frac{2sd}{d+2s}, 0\big)$.

\noindent \textit{Step 4: estimate of $\mathcal{E}_5^{(i)}$ for very negative $\gamma$'s.}
We now assume $-2s < \gamma < \frac{-2sd}{d+2s}$. It remains to bound $\mathcal{E}_5^{(i)}$ for this range of $\gamma$. We start from \eqref{eq:start-E5} and we recall $k_m = -(q+m)\frac{d}{d+\gamma}$ for some $m \in \R$ to be determined, 
\[	  \mathcal{E}_5^{(i)}  \leq 2^{\frac{m}{2}} q_0  c_b \fraka  {N}_{k_m}^{\frac{d+\gamma}{d}}M_m= 2^{\frac{m}{2}} q_0  c_b  \kappa^{\frac{-\gamma 2s}{d^2}} \fraka {N}_{k_m}^{\frac{d}{d+2s}}{N}_{k_m}^{\frac{d+\gamma}{d}-\frac{d}{d+2s}}M_m.
\]
Note that 
\[
	\frac{d+\gamma}{d}-\frac{d}{d+2s} < 0
\]
for $\gamma < - \frac{2sd}{d+2s}$.

We use Young's inequality
\[
	  \mathcal{E}_5^{(i)}\leq \frac{1}{16} \Cget \fraka^{\frac{d+2s}{d}} {N}_{k_m} + C_5^{(i)} \kappa^{-\frac{\gamma(d+2s)}{d^2}}{N}_{k_m}^{\left(\frac{d+\gamma}{d}-\frac{d}{d+2s}\right)\frac{d+2s}{2s}}M_m^{\frac{d+2s}{2s}},
\]
where $C_5^{(i)} = \left(\frac{1}{16} \Cget\right)^{-\frac{d}{2s}}\left(2^{\frac{m}{2}} q_0  c_b\right)^{\frac{d+2s}{2s}}$. We further bound for any $\alpha_0 \in [0, 1)$
\[
	M_m  \leq E_0^{\alpha_0}M_{m_{\alpha_0}}^{1-\alpha_0} \quad \text{ with } \quad m_{\alpha_0} :={\frac{m-2 \alpha_0}{1-\alpha_0}}
      \]
thanks to Lemma \ref{lem:interpolation-q0-0}. We found,
\begin{equation}\label{eq:aux-E5i}
	  \mathcal{E}_5^{(i)}\leq \frac{1}{16} \Cget \fraka^{\frac{d+2s}{d}}N_{k_m} + C_5^{(i)} E_0^{\alpha_0\frac{d+2s}{2s}} \kappa^{-\frac{\gamma(d+2s)}{d^2}}{N}_{k_m}^{\left(\frac{d+\gamma}{d}-\frac{d}{d+2s}\right)\frac{d+2s}{2s}}M_{m_{\alpha_0}}^{(1-\alpha_0)\frac{d+2s}{2s}}.
\end{equation}

Then, in order to be able later on to absorb the first term in the good extra term (Proposition~\ref{p:coercivity-get}), we need
\[
	k_m:= -(q+m)\frac{d}{d+\gamma} \leq l_q = k_0 + 2 \frac{2s}{d}  - q \frac{d+2s}{d}.
\]
For the second term, we will use later on  the second coercivity estimate from Proposition \ref{p:second-coercivity}. This is the reason why we require 
\[
	k_m:= -(q+m)\frac{d}{d+\gamma} \geq -(d-1),
\]
and 
\[
	m_{\alpha_0}:= \frac{m-2 \alpha_0}{1-\alpha_0} \leq k_0 +\frac{2s}{d} k_m.
\]
In order to satisfy the first two constraints, $q$ has to satisfy,
\[
	-(d-1) \leq l_q,
\]
which holds true for $q \le \qnsl$.

We pick $m$ such that
\[
	k_m= -(q+m)\frac{d}{d+\gamma} = -(d-1),
\]
and, in order to be able later on to we use Proposition \ref{p:second-coercivity},  we pick $k_m = -(d-1)$ for which we have
\[
	N :=  \int_{\R^d} \Big((f-A)_+\wedge \kappa\Big)^{q_0-1}\langle v \rangle^{-(d-1)}\dd v = {N}_{-(d-1)}.
\]
Thus we consider 
\[ \bar \theta = -\frac{\left(\frac{d+\gamma}{d}-\frac{d}{d+2s}\right)\frac{d+2s}{2s}}{\frac{2s}d} = \frac{-\gamma(d+2s) -2sd}{4s^2} \] 
and we find
\begin{equation*}
\begin{aligned}
	\kappa^{-\frac{\gamma(d+2s)}{d^2}}{N}_{k_m}^{\left(\frac{d+\gamma}{d}-\frac{d}{d+2s}\right)\frac{d+2s}{2s}}M_{m_{\alpha_0}}^{\frac{d+2s}{2s}(1-\alpha_0)}&= \kappa^{\frac{2s}{d} q_0\bar \theta }{N}^{-\frac{2s}{d}\bar \theta} M_{m_{\alpha_0}}^{\bar \theta}{\kappa}^{-\frac{\gamma(d+2s)}{d^2} -\frac{2s}{d} q_0 \bar\theta} M_{m_{\alpha_0}}^{\frac{(d+2s)(1-\alpha_0)}{2s}-\bar\theta}\\
	&= \kappa^{\frac{2s}{d} q_0\bar\theta }{N}^{-\frac{2s}{d}\bar \theta} M_{m_{\alpha_0}}^{\bar \theta} \kappa^{\frac{(\gamma+2s)(d+2s)}{2sd}}M_{m_{\alpha_0}}^{\frac{(d+2s)(\gamma+2s)}{4s^2}+\frac{d-\alpha_0(d+2s)}{2s}}.
\end{aligned}
\end{equation*}
We plug this into \eqref{eq:aux-E5i} and use Young's inequality, to get
\begin{equation}\label{eq:aux-E5i-2}
\begin{aligned}
	 \mathcal{E}_5^{(i)} &\leq  \frac{1}{16} \Cget \fraka^{\frac{d+2s}{d}} {N}_{k_m} +\frac{1}{8} C_{cor'} \kappa^{\frac{2s}{d} q_0 }{N}^{-\frac{2s}{d}} M_{m_{\alpha_0}} + C_5\kappa^{\frac{2s}{d}}M_{m_{\alpha_0}}^{1+\frac{2sd}{(\gamma+2s)(d+2s)} - \alpha_0 \frac{2s}{\gamma+2s}}.
\end{aligned}
\end{equation}
where $C_5 := \left(\frac{1}{8}C_{cor'}\right)^{\frac{\gamma(d+2s) + 2sd}{(\gamma+2s)(d+2s)}}  \left(C_5^{(i)} E_0^{\alpha_0\frac{d+2s}{2s}}\right)^{\frac{4s^2}{(\gamma+2s)(d+2s)}}$.
We used that
\[ \frac1{1-\theta} = \frac{4s^2}{(\gamma+2s)(d+2s)}.\]
Note that the estimate \eqref{eq:aux-E5i-2} holds for any $0 \leq \alpha_0 < 1$, but we want to pick $\alpha_0$, such that 
\begin{equation}\label{eq:alpha0}
	1+\frac{2sd}{(\gamma+2s)(d+2s)} - \alpha_0 \frac{2s}{\gamma+2s} = 1,
\end{equation}
but at the same time we need to make sure that 
\begin{equation}\label{eq:constraint-alpha0}
	m_{\alpha_0} = \frac{m-2 \alpha_0}{1-\alpha_0} \leq k_0 +\frac{2s}{d} k_m = \gamma,
\end{equation}
where we recall that we picked $m$ and $k_m$ as
\[
	m = -q +\frac{(d-1)(d+\gamma)}{d}, \qquad k_m = -(d-1),
\]
We verify \eqref{eq:constraint-alpha0} in Lemma \ref{l:tedious} below.

Finally, we conclude the proof just as in the case for larger $\gamma$ above: we use that $\frac{m-2 \alpha_0}{1-\alpha_0} \leq k_0$ and find with Chebyshev's inequality
\begin{equation*}
\begin{aligned}
	\kappa^{\frac{2s}{d}}M_{m_{\alpha_0}}   &\leq \kappa^{\frac{2s}{d}}\Bigg(\int_{\R^d}  (f-A-\kappa)_+^{p_0}\langle v \rangle^{k_0p_0} \dd v \Bigg)^{\frac{1}{p_0}}\abs{\{f-A >\kappa\}}^{\frac{2s}{d}}\\
	&\leq \kappa^{\frac{2s}{d}}\kappa^{-\frac{2s}{d}}\Bigg(\int_{\R^d}(f-A-\kappa)_+^{p_0} \langle v \rangle^{k_0p_0}\dd v \Bigg)^{\frac{1}{p_0}}\Bigg(\int_{\R^d} (f-A)_+ \dd v\Bigg)^{\frac{2s}{d}}\\
	&\leq M_0^{\frac{2s}{d}}\Bigg(\int_{\R^d}(f-A-\kappa)_+^{p_0} \langle v \rangle^{k_0p_0}  \dd v \Bigg)^{\frac{1}{p_0}}.
\end{aligned}
\end{equation*}
We plug this into \eqref{eq:aux-E5i-2} and use \eqref{eq:alpha0} to find
\begin{equation}\label{eq:E5-concl-small-gamma}
\begin{aligned}
	 \mathcal{E}_5^{(i)} &\leq\frac{1}{16} \Cget \fraka^{\frac{d+2s}{d}} {N} +\frac{1}{8} C_{cor'} \kappa^{\frac{2s}{d} q_0 }{N}^{-\frac{2s}{d}} M_{\gamma} + C_5\kappa^{\frac{2s}{d}}M_{m_{\alpha_0}}\\
	 &\leq\frac{1}{16} \Cget \fraka^{\frac{d+2s}{d}} {N} +\frac{1}{8} C_{cor'} \kappa^{\frac{2s}{d} q_0 }{N}^{-\frac{2s}{d}} M_{\gamma} + C_5M_0^{\frac{2s}{d}}\Bigg(\int_{\R^d}(f-A-\kappa)_+^{p_0} \langle v \rangle^{k_0p_0}  \dd v \Bigg)^{\frac{1}{p_0}}.
\end{aligned}
\end{equation}
Estimate \eqref{eq:E5-concl-small-gamma} concludes the proof of the statement for $\gamma\in \big(-2s, -\frac{2sd}{d+2s}\big)$.
\end{proof}
\begin{lemma} \label{lem:tedious-2}
 Let $d \geq 2$,  $s \in (0, 1)$,  $\gamma < 0$ and  $q \leq d+1 +\frac{d\gamma}{2s}$.
  Then 
  \begin{equation}\label{eq:constraint-tbc}
  -(q+k_0) \frac{d}{d+\gamma} \leq l_q = k_0 + 2 \frac{2s}{d}  - q \frac{d+2s}{d}.
  \end{equation} 
\end{lemma}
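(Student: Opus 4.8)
The plan is to clear denominators and reduce the claim to the nonnegativity of an affine function of $q$. Since $\gamma>-d$ we have $d+\gamma>0$, so the inequality $-(q+k_0)\tfrac{d}{d+\gamma}\le l_q$ is equivalent to
\[
h(q):=(d+\gamma)\,l_q + d\,(q+k_0)\ \ge\ 0 .
\]
Here $h$ is affine in $q$, and since $l_q=k_0+2\tfrac{2s}{d}-q\,\tfrac{d+2s}{d}$ its slope is
\[
h'(q)= d-(d+\gamma)\,\frac{d+2s}{d}= -\,\frac{\gamma(d+2s)+2sd}{d}.
\]
The quantity $\gamma(d+2s)+2sd$ is $\ge 0$ precisely when $\gamma\ge -\tfrac{2sd}{d+2s}$, which is exactly the range in which the upper bound $d+1+\tfrac{d\gamma}{2s}$ appearing in the hypothesis coincides with $\qnsl$ (the regime under which this lemma is invoked, see the proof of Lemma~\ref{lem:iv.iv}). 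In that range $h$ is non-increasing, so it suffices to verify $h(q)\ge 0$ at the largest admissible value $q_{\max}:=d+1+\tfrac{d\gamma}{2s}$.

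The next step is an explicit evaluation at $q_{\max}$. Using $k_0=\gamma+2s-\tfrac{2s}{d}$ and $q_{\max}=d+1+\tfrac{d\gamma}{2s}$, a short computation collapses $h(q_{\max})$ to $d k_0 - q_{\max}\gamma$, and hence
\[
2s\,h(q_{\max}) = -d\gamma^2 - 2s\gamma + 4s^2(d-1) = (\gamma+2s)\bigl(2s(d-1)-d\gamma\bigr).
\]
Since $\gamma+2s>0$ (moderately soft potentials), $\gamma<0$ and $d\ge 2$, both factors on the right are positive, so $h(q_{\max})>0$; combined with the monotonicity of $h$ this yields $h(q)\ge h(q_{\max})>0$ for every $q\le q_{\max}$, which is the claim.

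The only genuinely delicate point is the arithmetic in the collapse of $h(q_{\max})$ (and the factorisation above), and — if one wants the full stated range $\gamma<0$ — the complementary case $\gamma<-\tfrac{2sd}{d+2s}$, in which $h$ is increasing. There the value at $q_{\max}$ no longer controls $h$ from below; instead one uses the companion lower bound $q\ge d-1-2s\tfrac{d-2}{d+2s}$ (the one imposed in Lemma~\ref{lem:iv.iv} for very negative $\gamma$) and checks $h\ge 0$ at that endpoint by the same type of factorisation. I expect the main obstacle to be merely keeping the bookkeeping clean in these endpoint evaluations; the conceptual content is just the remark that an affine function is controlled by its value at the relevant endpoint once the sign of its slope is pinned down.
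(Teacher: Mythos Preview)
Your approach matches the paper's: both clear denominators, rewrite the claim as an affine inequality in $q$, and reduce to the endpoint $q_{\max}=d+1+\tfrac{d\gamma}{2s}=\tfrac{d}{2s}k_0+2$; your factorisation $2s\,h(q_{\max})=(\gamma+2s)\bigl(2s(d-1)-d\gamma\bigr)$ is exactly the identity the paper reaches after its chain of simplifications. You are in fact more careful than the paper on one point: the paper substitutes $q\le q_{\max}$ into $(\gamma(d+2s)+2sd)\,q\le\cdots$ without remarking that this step needs the coefficient of $q$ to be nonnegative, i.e.\ $\gamma\ge-\tfrac{2sd}{d+2s}$. You flag this explicitly and correctly note that this is precisely the regime (Step~2 of Lemma~\ref{lem:iv.iv}) in which the lemma is invoked; your observation that the complementary case would require a lower bound on $q$ is also right --- as stated, with only $q\le q_{\max}$ and $\gamma<0$, the inequality can fail for very negative $\gamma$ and small $q$.
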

\begin{proof}
We rewrite \eqref{eq:constraint-tbc} as
\[
	\left(\frac{d+2s}{d} - \frac{d}{d+\gamma} \right) q \leq \left(1 + \frac{d}{d+\gamma}\right)k_0 + 2 \frac{2s}{d}.
\]
or also
\[
	\left(\gamma(d+2s) + 2sd\right) q \leq d\left(2d + \gamma \right)k_0 + 4s(d+\gamma) .
\]
If we now use $q \leq \frac{d}{2s} k_0 + 2$, we find
\[
	\left(\gamma(d+2s) + 2sd\right) \left(\frac{d}{2s} k_0 + 2\right) \leq d\left(2d + \gamma \right)k_0 + 4s(d+\gamma).
\]
or equivalently, 
\[
	\left(\gamma \frac{d^2}{2s} + d^2\right)k_0  \leq 2d^2 k_0 - 2\gamma d,
\]
which is satisfied if
\[
	0\leq \left(d-\gamma \frac{d}{2s}\right) k_0 - 2 \gamma .
\]
Now we use that  $k_0 = \gamma +2s - \frac{2s}{d}$. We find
\[
	0\leq \left(d-\gamma \frac{d}{2s}\right)\left(\gamma + 2s - \frac{2s}{d}\right) - 2\gamma = d\left( \frac{2s-\gamma}{2s}\right)\left(\gamma + 2s\right)  - (2s+ \gamma).
\]
We divide by $\gamma + 2s > 0$ to find
\[
	0 \leq \frac{d}{2s}(2s - \gamma) - 1,
\]	
which is true for any $d \geq 1$ and $\gamma < 0$. 
\end{proof}

\begin{lemma} \label{l:tedious}
 Let $d \geq 2$,    $s \in (0, 1)$,  $-2s < \gamma < -\frac{2sd}{d+2s}$ and $q \geq d-1-2s\frac{d-2}{d+2s}$.
  There exists $\alpha_0 \in (0, 1)$ such that \eqref{eq:alpha0} and\eqref{eq:constraint-alpha0} hold true.
\end{lemma}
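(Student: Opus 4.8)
The plan is to notice that condition \eqref{eq:alpha0} is a \emph{linear} equation in the single unknown $\alpha_0$ and therefore determines it explicitly, so that the whole statement collapses to a single scalar inequality coming from \eqref{eq:constraint-alpha0}. Concretely, subtracting $1$ from both sides of \eqref{eq:alpha0} and dividing by $\tfrac{2s}{\gamma+2s}>0$ (using the standing assumption $\gamma+2s>0$) yields
\[
  \alpha_0=\frac{d}{d+2s},
\]
which lies in $(0,1)$ for every $d\ge 2$ and $s\in(0,1)$. Thus $\alpha_0$ is uniquely determined and admissible, and it remains only to verify \eqref{eq:constraint-alpha0} for this value.

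Next I would substitute $\alpha_0=\tfrac{d}{d+2s}$, so that $1-\alpha_0=\tfrac{2s}{d+2s}>0$, into $m_{\alpha_0}=\tfrac{m-2\alpha_0}{1-\alpha_0}$, recalling from the proof of Lemma~\ref{lem:iv.iv} that $m=-q+\tfrac{(d-1)(d+\gamma)}{d}$ and that $k_0+\tfrac{2s}{d}k_m=\gamma$ when $k_m=-(d-1)$ (a one-line check using $k_0=\gamma+2s-\tfrac{2s}{d}$). Since $1-\alpha_0>0$, the inequality $m_{\alpha_0}\le\gamma$ is equivalent to $m\le\gamma(1-\alpha_0)+2\alpha_0=\gamma+\tfrac{d(2-\gamma)}{d+2s}=\tfrac{2s\gamma+2d}{d+2s}$, that is, to
\[
  q\ \ge\ q_\sharp:=\frac{(d-1)(d+\gamma)}{d}-\frac{2s\gamma+2d}{d+2s}.
\]

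Finally I would show that the hypothesis $q\ge d-1-2s\tfrac{d-2}{d+2s}$ suffices, by proving $q_\sharp< d-1-2s\tfrac{d-2}{d+2s}$. Rewriting
\[
  q_\sharp=(d-1)-\frac{2d}{d+2s}+\gamma\left(\frac{d-1}{d}-\frac{2s}{d+2s}\right),
\]
one computes $\tfrac{d-1}{d}-\tfrac{2s}{d+2s}=\tfrac{d^2-d-2s}{d(d+2s)}$, which is positive for $d\ge 2$ and $s<1$ because $d^2-d=d(d-1)\ge 2>2s$; since $\gamma<0$ the last term is strictly negative, whence $q_\sharp<(d-1)-\tfrac{2d}{d+2s}$. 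Moreover $(d-1)-\tfrac{2d}{d+2s}\le (d-1)-\tfrac{2s(d-2)}{d+2s}$ since $2d\ge 2s(d-2)$ (indeed $s(d-2)\le d-2<d$ for $d\ge 2$, using $s<1$). Combining, $q\ge d-1-2s\tfrac{d-2}{d+2s}>q_\sharp$, which is exactly \eqref{eq:constraint-alpha0} for $\alpha_0=\tfrac{d}{d+2s}$. There is no genuine obstacle in this argument; the only place requiring care is the sign analysis in this last step — the positivity of $d^2-d-2s$ and the comparison $2d\ge 2s(d-2)$ — which is precisely where the hypotheses $d\ge 2$ and $s<1$ are used.
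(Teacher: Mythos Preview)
Your proof is correct and follows essentially the same approach as the paper: both solve \eqref{eq:alpha0} to obtain $\alpha_0=\tfrac{d}{d+2s}$ and then verify \eqref{eq:constraint-alpha0} by reducing it to an inequality in $q$ that follows from the lower bound $q\ge d-1-2s\tfrac{d-2}{d+2s}$ together with $\gamma<0$, $d\ge 2$, $s<1$. Your algebraic organization (isolating $q_\sharp$ and exploiting $\gamma<0$ to drop the $\gamma$-term) is slightly cleaner than the paper's, which instead substitutes the $q$-bound directly and rearranges into the form $(-\gamma)(d+2s-d^2)-4sd-(2-2s)d^2\le 0$, but the two arguments are equivalent.
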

\begin{proof}
In view of \eqref{eq:alpha0}, we pick
\[
	\alpha_0 = \frac{d}{d+2s} \in (0, 1).
\]
  We rewrite the constraint \eqref{eq:constraint-alpha0} as
\[
	\frac{d^2 - d - \gamma -d q}{d(2-\gamma)} \leq  \alpha_0
\]
and check that \eqref{eq:constraint-alpha0} is satisfied.
It is equivalent to,
\[ (d+2s) (d^2 - \gamma) - d (d+2s)(q+1) \le d^2 (2 - \gamma).\]
We now remark that for $q \geq d-1-2s\frac{d-2}{d+2s}$, we have
\[ (d+2s) (q+1) \ge d^2 + 4s.\]
We are thus left with checking that
\[ (d+2s) (d^2 - \gamma) - d (d^2 +4s) \le d^2 (2 - \gamma).\]
We rearrange terms and get,
\[ (-\gamma) (d+2s -d^2) - 4s d - (2-2s) d^2 \le 0. \]
We conclude by remarking that for $d \ge 2$ and $s \in (0,1)$, we have $d+2s -d^2 \le 0$. 
\end{proof}

\subsection{Final Estimate}

Combining Lemmas~\ref{lem:iv.i}, \ref{lem:iv.ii}, \ref{lem:iv.iii} and \ref{lem:iv.iv} implies the following statement.
\begin{lemma}[Error terms for soft potentials]\label{lem:iv}
  Let $\gamma <0$. Let $q \in [0, \qnsl]$ where $\qnsl$ is given by \eqref{itm:qnsl}.
  There exists a positive constant $\underline{a}$ only depending on $d,\gamma,s$, $M_0$, $E_0$, $c_b$, $\Cget$, $\Ccor$, $\Ccorbis$  (see Propositions~\ref{p:coercivity-1}, \ref{p:second-coercivity} and \ref{p:coercivity-get}) and $\Cqt$ (see formula~\eqref{e:cqt})  such that
\begin{multline}\label{eq:iv-gamma-neg-q0}
  \iint_{\R^d \times \Omega} \Phi\big(f\big)  \left(f \ast \abs{\cdot}^\gamma\right) \dd v \dd x \\
  \leq  \frac34 \Ccor  \int_{ \Omega} \norm{\big((f-A)_+\wedge \kappa\big)^{q_0}}_{L^{p_0}_{k_0}} \dd x+C_5 \fraka \int_{\Omega} \norm{ (f-A-\kappa)_+}_{L^{p_0}_{k_0}} \dx \\
  + \left( \frac{15}{16} \Cget \fraka^{1 + \frac{2s}{d}}  + \tilde C_5 \mathfrak{a}\right) \int_{ \Omega}\norm{\big((f-A)_+\wedge \kappa\big)^{q_0-1}}_{L^1_{l_q}} \dd x\\
  +\frac{1}{2} \Ccorbis \kappa^{\frac{2s}{d} q_0 }\int_{\Omega}\left[\norm{\big((f-A)_+\wedge \kappa\big)^{q_0-1}}_{L^1_{-(d-1)}}^{-\frac{2s}{d}} \int_{\R^d}  (f-A-\kappa)_+ \langle v \rangle^{\gamma} \dd v \right]\dd x, 
\end{multline}
as soon as $\fraka  \ge \underline{a}$. 
\end{lemma}
\begin{remark}
The constants $\Ccor$, $\Ccorbis$, $\Cget$ and $C_5, \tilde C_5$   appear in Propositions~\ref{p:cor-gamma-neg}, \ref{p:second-coercivity}, \ref{p:coercivity-get} and  Lemma~\ref{lem:iv.iv} respectively.
\end{remark}
\begin{proof}
Estimate~\eqref{eq:iv-gamma-neg-q0} is a consequence of the splitting of the error term as described in \eqref{eq:iv-split} and Lemmas~\ref{lem:iv.i}, \ref{lem:iv.ii}, \ref{lem:iv.iii} and \ref{lem:iv.iv}.
\end{proof}

\section{Proof of the decay estimate}

\subsection{Monotonicity for generation}

The next lemma is obtained by combining Lemma~\ref{l:weak-sols-q0} with (coercivity and error) estimates from previous sections. 
\begin{lemma}[Monotonicity]\label{l:decroissance}
Let $f$ be a suitable weak sub-solution of the Boltzmann equation in the sense of Definition~\ref{defi:suitable} with either in-flow, bounce-back, specular / diffuse / Maxwell reflection boundary condition.  Let
\[
	A(t,v) := \fraka(t) \langle v \rangle^{-\qnsl} \quad \text{ with } \quad \fraka(t):= \aast\left(1+ t^{-\frac{d}{2s}}\right) 
\]
with $\qnsl$ from \eqref{itm:qnsl} and  for some constant $\aast > 0$ large enough depending on $d,\gamma,s, c_b, M_0, E_0$ and $\Cb$. Then 
\begin{equation}\label{e:monotone}
	\frac{\dd}{\dd t} \iint_{\R^d \times \Omega} \phizk(f-A)\dv \dx \leq \mathcal{I}_\kappa(t) \quad \text{ in } \mathcal{D}'((0,T)),
\end{equation}
with $\mathcal{I}_\kappa(t)$ such that for any $T>0$, we have
\[
	\int_0^{T}  \mathcal{I}_\kappa(t) \dd t \xrightarrow[\kappa \to \infty]{}0.
\]
\end{lemma}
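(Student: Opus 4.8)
The plan is to start from the Truncated Convex Inequality of Lemma~\ref{l:weak-sols-q0}, namely
\begin{align*}
\frac{\dd}{\dd t}&\iint_{\Omega \times \R^d}\phizk(f-A)\dv \dx + \iiint d_{\phizk} K_f + \iiiint \dphizk(f-A)(A'-f')_+ K_f \\
&\le c_b \iint \Phi(f)(f \ast |\cdot|^\gamma) + \iiint \dphizk(f-A)(A'-A) K_f - \iint \dphizk(f-A)\partial_t A - \iint_{\partial\Omega\times\R^d}\phizk(f-A)(v\cdot n),
\end{align*}
and to show that, with $A(t,v)=\fraka(t)\langle v\rangle^{-\qnsl}$ and $\fraka(t)=\aast(1+t^{-d/(2s)})$ for $\aast$ large, every term on the right-hand side except a residual $\mathcal I_\kappa(t)$ (which vanishes as $\kappa\to\infty$) is dominated by the coercive terms on the left. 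First I would discard the coercive term $\iiint d_{\phizk} K_f \ge 0$ except when needed, and absorb: the error term $\iiint \dphizk(f-A)(A'-A)K_f = \mathcal E_1$ is controlled by Lemma~\ref{l:etvv} (for $q=\qnsl\le d+1$; note $\qnsl\le d+1$ under the stated ranges), giving $\mathcal E_1 \le \Cun \fraka \int ((f-A)_+\wedge\kappa)^{q_0-1}\langle v\rangle^{l_q}$. The term $-\iint \dphizk(f-A)\partial_t A$ equals $-\int q_0 ((f-A)_+\wedge\kappa)^{q_0-1}\dot{\fraka}(t)\langle v\rangle^{-\qnsl}\dx\dv$; since $\dot\fraka \le 0$ this term is $\le 0$ and can simply be dropped (this is precisely why the time weight $t^{-d/(2s)}$ is chosen decreasing). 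The boundary term $-\iint_{\partial\Omega\times\R^d}\phizk(f-A)(v\cdot n)$ must be handled per boundary condition: for in-flow and diffuse/Maxwell one uses $\fb \le \Cb(1+t^{-\betaq})\langle v\rangle^{-q}$ together with $A(t,v)\ge \Cb(1+t^{-d/(2s)})\langle v\rangle^{-\qnsl}$ (choose $\aast\ge\Cb$, $\betaqast=d/(2s)$) so that $\phizk(\fb-A)\le 0$ on $\Gamma_-$; for bounce-back and specular reflection the incoming and outgoing contributions cancel by the change of variables $v\mapsto -v$ resp. $v\mapsto \mathcal R_x v$, which preserves $\langle v\rangle$ hence $A$, and $v\cdot n$ changes sign. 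In all cases the boundary term is $\le 0$.

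Next I would treat the collision error $c_b\iint \Phi(f)(f\ast|\cdot|^\gamma)$. Splitting by sign of $\gamma$: if $\gamma\ge 0$, Lemma~\ref{lem:gamma-pos} gives
\[
\int \Phi(f)(f\ast|\cdot|^\gamma) \le \varepsilon \|((f-A)_+\wedge\kappa)^{q_0}\|_{L^{p_0}_{k_0}} + \Chard(\fraka + C(\varepsilon))\|((f-A)_+\wedge\kappa)^{q_0-1}\|_{L^1_{\gamma-\qnsl}},
\]
and $\gamma-\qnsl \le l_{\qnsl}$; if $\gamma<0$, Lemma~\ref{lem:iv} gives the bound $\tfrac34\Ccor\|\cdots\|_{L^{p_0}_{k_0}} + \tfrac{15}{16}\Cget\fraka^{1+2s/d}\|\cdots\|_{L^1_{l_q}}$ plus the two genuinely $\kappa$-dependent residual terms $C_5\int_\Omega\|(f-A-\kappa)_+\|_{L^{p_0}_{k_0}}\dx$ and $\tfrac12\Ccorbis\kappa^{2s q_0/d}\int_\Omega N^{-2s/d}\int(f-A-\kappa)_+\langle v\rangle^\gamma \dx$ (valid provided $\fraka\ge\underline a$, which holds for $\aast$ large). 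I would then invoke the coercivity estimates: the first one (Propositions~\ref{p:coercivity-1}/\ref{p:cor-gamma-neg}) controls $\Ccor\|((f-A)_+\wedge\kappa)^{q_0}\|_{L^{p_0}_{k_0}}$ (minus, in the $k_0<0$ case, an interpolation remainder absorbed into the good extra term), and the good extra term (Proposition~\ref{p:coercivity-get}) controls $\Cget\fraka^{1+2s/d}\int((f-A)_+\wedge\kappa)^{q_0-1}\langle v\rangle^{l_q}$. Choosing $\aast$ large enough that the $\fraka$-prefactors beat $\Cun$, $\Chard$ etc., and $\varepsilon$ small, all the coercivity is absorbed and only a non-negative multiple of the two $\kappa$-residuals survives; in the $\gamma<0$ case one additionally uses the second coercivity estimate (Proposition~\ref{p:second-coercivity}) to absorb the $\kappa^{2s q_0/d} N^{-2s/d}\int(f-A-\kappa)_+\langle v\rangle^\gamma$ term into the left-hand-side coercive term (which was discarded above and must be retained here).

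Thus I would set
\[
\mathcal I_\kappa(t) := C_5 \int_\Omega \|(f-A-\kappa)_+(t,x,\cdot)\|_{L^{p_0}_{k_0}} \dx
\]
(in the $\gamma\ge 0$ case the analogous residual coming from $(f-A)_+ = ((f-A)_+\wedge\kappa) + (f-A-\kappa)_+$ in the $L^{p_0}_{k_0}$-term, treated the same way), and conclude \eqref{e:monotone}. It remains to show $\int_0^T \mathcal I_\kappa(t)\dt \to 0$ as $\kappa\to\infty$: by Remark~\ref{rmk:integrability} we have $f\in L^{q_0}_{k_0^\ast}((0,T)\times\Omega\times\R^d)$, and since $p_0 q_0 = q_0$ with the right weight (more precisely $(f-A-\kappa)_+^{p_0}\langle v\rangle^{k_0 p_0}$ is dominated by $f^{q_0}\langle v\rangle^{\text{something}}$ pointwise on $\{f>\kappa\}$, noting $p_0 q_0 = q_0 + 2s/d\cdot\text{stuff}$... ) one checks that the integrand is bounded by a fixed $L^1$ function and supported on the shrinking set $\{f-A>\kappa\}$, whose measure goes to zero by Chebyshev; dominated convergence then gives the claim. \textbf{The main obstacle} I anticipate is the bookkeeping of the absorption: verifying that \emph{a single} choice of $\aast$ (depending only on the listed parameters and $\Cb$, not on $\kappa$) simultaneously makes all the $\fraka$-weighted error terms absorbable by $\Cget\fraka^{1+2s/d}(\cdots)$ — in particular that the powers of $\fraka$ appearing in Lemma~\ref{lem:iv} (which come out as $\fraka^{\gamma_1\theta'}$ etc. in the proofs of Lemmas~\ref{lem:plus-wedge-a} and \ref{lem:Phi-a}) are all strictly less than $1+2s/d$, and that the various lower bounds $\underline a^{(2)},\underline a^{(3)},\underline a^{(4)}$ are met — and, in the $\gamma<0$ case, checking the exponent identity that lets Proposition~\ref{p:second-coercivity} exactly absorb the $N^{-2s/d}$-residual; the identification of $\mathcal I_\kappa$ and its $L^1_t$-smallness is then comparatively routine given Remark~\ref{rmk:integrability}.
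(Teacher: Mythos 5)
Your proposal follows the paper's overall strategy, but it contains a sign error in the treatment of the $\partial_t A$ term, and this error removes precisely the mechanism that dictates the time rate $d/(2s)$ in the barrier.

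You write that $-\iint_{\R^d\times\Omega}\dphizk(f-A)\,\partial_t A\,\dv\dx\le 0$ because $\dot\fraka\le 0$, and drop it. This is backwards: $\partial_t A=\dot\fraka(t)\langle v\rangle^{-q}\le 0$ and $\dphizk\ge 0$, so $-\dphizk(f-A)\partial_t A\ge 0$. The term is a \emph{positive} error term on the right-hand side and cannot be discarded. In the paper's proof it shows up as
\[
-\dot\fraka(t)\int_{\Omega}\norm{\big((f-A)_+\wedge\kappa\big)^{q_0-1}}_{L^1_{-q}(\R^d)}\dd x,
\]
and it is absorbed, together with the other error terms, by the good extra term through the differential condition
\[
-\bar c\,\fraka^{1+\frac{2s}{d}}(t)+1+\fraka(t)-\dot\fraka(t)\le 0 .
\]
This is exactly what forces the time weight $\fraka(t)=\aast(1+t^{-\beta})$ to have $\beta=d/(2s)$: since $-\dot\fraka\sim \aast\beta\,t^{-\beta-1}$ and $\fraka^{1+2s/d}\sim \aast^{1+2s/d}t^{-\beta(1+2s/d)}$ as $t\to 0^+$, one needs $\beta+1\le\beta(1+2s/d)$, i.e.\ $\beta\ge d/(2s)$. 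A slower-decaying barrier would fail here. So the remark in your proof that the time weight is ``chosen decreasing'' so that this term can be dropped misidentifies its role; the decreasing-in-$t$ structure is needed for the shift argument in the next lemma, while the \emph{rate} $d/(2s)$ is fixed by the absorption of $-\dot\fraka$.

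Aside from this, your outline of the boundary term, the estimate of $\mathcal E_1$ via Lemma~\ref{l:etvv}, the case split on the sign of $\gamma$, the identification of $\mathcal I_\kappa(t)=C_5\int_\Omega\norm{(f-A-\kappa)_+}_{L^{p_0}_{k_0}}\dd x$ and its $L^1_t$-vanishing, and the various lower bounds on $\aast$ you flag all match the paper. Note also that in the hard-potential case the paper obtains $\mathcal I_\kappa\equiv 0$; the extra hard-potential residual you mention parenthetically is not needed.
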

\begin{remark}
  The constant $C_b$ comes from the assumption on the boundary data $f_b$ in
  the statement of the main result, see Theorem~\ref{thm:decroissance}.
  The constant $c_b$ comes from the cancellation lemma, see \eqref{eq:cancellation}.
\end{remark}
\begin{proof}
We write $q$ instead of $\qnsl$ for clarity.   We assume that $\aast$ satisfies 
\begin{equation}\label{eq:condition-a-1}
\aast \ge \aget \quad \text{ and } \quad \aast \ge \Cb \quad \text{(and \quad $\aast \ge \underline{a}$ \quad if $\gamma<0$)}
\end{equation}
with $\aget$ given in \eqref{e:cond-a} and $\underline{a}$ comes from Lemma~\ref{lem:iv}.
The first condition ensures that we can use Proposition~\ref{p:coercivity-get} about the good extra term  while the second one ensures that boundary terms will not appear in the Truncated Convex Inequalities, see  Lemma~\ref{l:weak-sols-q0}. 

We distinguish the cases of hard and soft potentials. \bigskip

\paragraph{\sc The case of hard potentials.}
In this case,  we apply Lemma~\ref{l:weak-sols-q0} and
 collect estimates \eqref{eq:i-q0} with $g = (f-A)_+$, \eqref{eq:ii-q0}, \eqref{eq:iii-q0}, \eqref{eq:iv-q0} to get that for any $\varepsilon \in (0, 1)$,
\begin{align*}
	\frac{\dd}{\dd t} &\iint_{\R^d \times \Omega} \phizk(f-A)\dv \dx \\
	 &\leq - \Ccor \int_{\Omega} \norm{\big((f-A)_+\wedge\kappa\big)^{q_0}}_{L^{p_0}_{k_0}(\R^d)} \dd x+  \Ccor \int_\Omega \norm{\big((f-A)_+\wedge\kappa\big)^{q_0-1}}_{L^1_{\gamma-d-1}(\R^d)} \dd x \\
                          &\quad- \Cget \fraka^{1+\frac{2s}{d}} (t) \int_\Omega \norm{ \big((f-A)_+\wedge\kappa\big)^{q_0-1}}_{L^1_{l_q}(\R^d)} \dd x  \\
  & \quad +(C_1+\Chard) \fraka(t)\int_\Omega \norm{ \big((f-A)_+\wedge\kappa\big)^{q_0-1}}_{L^1_{l_q}(\R^d)} \dd x \\
	&\quad+ \varepsilon\int_\Omega \norm{ \big((f-A)_+\wedge\kappa\big)^{q_0}}_{L^p_{k_0}(\R^d)} \dd x+ \Chard C(\varepsilon) \int_\Omega \norm{ \big((f-A)_+\wedge\kappa\big)^{q_0-1}}_{L^1_{l_q}(\R^d)}\dd x\\
	&\quad -  q_0 \dot \fraka(t)\int_\Omega \norm{\big((f-A)_+\wedge\kappa\big)^{q_0-1}}_{L^1_{-q}(\R^d)} \dd x\\	
\intertext{we then pick $\eps = \Ccor$ and get,}
	&\leq  \bar C \left(-\bar c \fraka^{1+\frac{2s}{d}}(t) + 1 +\fraka(t) - \dot \fraka(t) \right)\int_\Omega \norm{ \big((f-A)_+\wedge\kappa\big)^{q_0-1}}_{L^1_{l_q}(\R^d)} \dd x
\end{align*}
for some positive constants $\bar C$ and $\bar c$ depending on $\Ccor$, $\Cget$, $\Chard$, $C(\Ccor)$, $C_1$, $d$ and $s$.       
The last inequality follows from the fact that $\gamma-d-1 \le l_q$ and $- q\leq l_q$ (recall that $\dot \ma<0$ and $l_q$ is given in \eqref{itm:lq}) for $q \leq d + 1$ and $\gamma \geq 0$.  Finally, we choose $\aast$ large enough (depending
on $\bar c$, $d$ and $s$) such that
\begin{equation}\label{eq:condition-a-2}
	- \bar c \, \fraka^{1+\frac{2s}{d}}(t) + 1 +\fraka(t) - \dot \fraka(t) \leq 0.
\end{equation}
In particular, this yields,
\(
	 \frac{\dd}{\dd t} \iint_{\R^d \times \Omega} \phizk\big((f-A)_+\big)\dv \dx  \leq 0
\)
and concludes the proof  for hard potentials ($\gamma \geq 0$).
\bigskip

\paragraph{\sc The case of soft potentials.}
In this case, we apply Lemma~\ref{l:weak-sols-q0} and collect estimates \eqref{eq:i-q0} with $g = (f-A)_+$, \eqref{eq:second-coercivity}, \eqref{eq:ii-q0}, \eqref{eq:iii-q0}, \eqref{eq:iv-gamma-neg-q0} and use Lemma~\ref{l:weak-sols-q0} to deduce,
\begin{align*}
  &\frac{\dd}{\dd t} \iint_{\R^d \times \Omega}  \phizk(f-A)\dv \dx \\
	&\leq - \Ccor \int_{\Omega}\norm{\big((f-A)_+\wedge\kappa\big)^{q_0}}_{L^{p_0}_{k_0}(\R^d)} \dd x+  \Ccor \int_{\Omega} \norm{\big((f-A)_+\wedge\kappa\big)^{q_0-1}}_{L^1_{l_q}(\R^d)} \dd x \\
	&\quad- \Cget \fraka^{1+\frac{2s}{d}}(t) \int_{\Omega}\norm{ \big((f-A)_+\wedge\kappa\big)^{q_0-1}}_{L^1_{l_q}(\R^d)} \dd x  +C_1 \fraka(t)\int_{\Omega}\norm{ \big((f-A)_+\wedge\kappa\big)^{q_0-1}}_{L^1_{l_q}(\R^d)} \dd x \\
	&\quad+ \frac34 \Ccor \int_\Omega \norm{ \big((f-A)_+\wedge\kappa\big)^{q_0}}_{L^{p_0}_{k_0}(\R^d)} \dd x+ \left( \frac{15}{16} \Cget \fraka^{1+\frac{2s}{d}} +\tilde C_5 \fraka \right) \int_\Omega \norm{ \big((f-A)_+\wedge\kappa\big)^{q_0-1}}_{L^1_{l_q}(\R^d)}\dd x\\
	&\quad+ C_5(\fraka)\int_{\Omega} \norm{(f-A-\kappa)_+}_{L^{p_0}_{k_0}(\R^d)}\dd x  -\dot \fraka(t)\int_{\Omega}\norm{\big((f-A)_+\wedge\kappa\big)^{q_0-1}}_{L^1_{-q}(\R^d)} \dd x\\
	&\quad	-\frac{1}{2} \Ccorbis \kappa^{\frac{2s}{d} q_0 }\int_{\Omega}\left[\norm{\big((f-A)_+\wedge \kappa\big)^{q_0-1}}_{L^1_{-(d-1)}}^{-\frac{2s}{d}} \int_{\R^d}  (f-A-\kappa)_+ \langle v \rangle^{\gamma} \dd v \right]\dd x, 
\end{align*}
Using that $-q  \leq l_q$ for $q \leq d+ 1 + \frac{\gamma d}{2s}$ (and in particular for $q = \qnsl$), we get,
\begin{align*}
  \frac{\dd}{\dd t} \iint_{\R^d \times \Omega}  \phizk(f-A)\dv \dx  
                    \leq & \bar C \left(- \bar c \fraka^{1+\frac{2s}{d}}(t) +\fraka(t) - \dot \fraka(t) \right)\int_{\Omega}\norm{ \big((f-A)_+\wedge\kappa\big)^{q_0-1}}_{L^1_{l_q}(\R^d)} \dd x\\
	 & +C_5(\fraka)\int_{\Omega} \norm{(f-A-\kappa)_+}_{L^{p_0}_{k_0}(\R^d)}\dd x.
       \end{align*}

Finally, we choose $\aast$ such that both \eqref{eq:condition-a-1} and \eqref{eq:condition-a-2} (for a new $\bar c$)  hold true and we get
\[
	\frac{\dd}{\dd t} \iint_{\R^d \times \Omega}  \phizk\big((f-A)_+\big)\dv \dd x  \leq \mathcal{I}_\kappa(t),
\]
with
\[
\mathcal{I}_\kappa(t) = C_5(\fraka)\int_{\Omega} \norm{(f-A-\kappa)_+}_{L^{p_0}_{k_0}(\R^d)}\dd x.
\]
Finally we notice that $f \in L^1([0, T] \times \Omega; L^{p_0}_{k_0}(\R^d))$ ensures that,
\[
	\int_0^T \mathcal{I}_\kappa(t) \dd t \to 0 \qquad \textrm{as } \kappa \to \infty.
\]
This concludes the proof in the case of soft potentials.
\end{proof}

\subsection{Proof of decay generation}
\label{sub:lebesgue-shift}

\begin{lemma}[Monotonicity implies decrease] \label{l:mono-decrease}
Let $f$ be a suitable weak sub-solution of the Boltzmann equation with either in-flow, bounce-back, specular / diffuse / Maxwell reflection boundary condition.  Let
\[
	A(t,v) := \fraka(t) \langle v \rangle^{-q} 
\]
for some smooth bounded function $\fraka \colon (0,T) \to (0,+\infty)$ and for some $q \geq 0$. 
Assume that 
\begin{equation}\label{e:monotone-gen}
	\frac{\dd}{\dd t} \iint_{\R^d \times \Omega} \phizk(f-A)\dv \dx \leq \mathcal{I}_\kappa(t) \quad \text{ in } \mathcal{D}'((0,T)),
\end{equation}
for some $\mathcal{I}_\kappa(t)$ such that for any $T>0$, we have
\[
	\int_0^{T}  \mathcal{I}_\kappa(t) \dd t \xrightarrow[\kappa \to \infty]{}0.
      \]
Then $f (t,x,v) \le A(t,v)$ almost everywhere in $(0,T) \times \Omega \times \R^d$.       
\end{lemma}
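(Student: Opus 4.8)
The plan is to integrate the distributional inequality \eqref{e:monotone-gen} in time, let $\kappa\to+\infty$, and use that the barrier $A(t,\cdot)$ dominates $f$ in the initial layer $t\to 0^+$. Set $\Psi_\kappa(t):=\iint_{\R^d\times\Omega}\phizk(f-A)(t,x,v)\dv\dx$. From \eqref{eq:varphi-k} one has $0\le\phizk(r)\le q_0\kappa^{q_0-1}r_+$ for $r\ge 0$, so the mass bound in \eqref{e:hydro} gives $\Psi_\kappa\in L^\infty(0,T)\subset L^1_{\mathrm{loc}}(0,T)$; also $\mathcal I_\kappa\in L^1(0,T)$ and, in the cases where the lemma is applied, $\mathcal I_\kappa\ge 0$ (see Lemma~\ref{l:decroissance} and Lemma~\ref{lem:iv-large-q}, with moreover $\mathcal I_\kappa\equiv 0$ for hard potentials). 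By \eqref{e:monotone-gen}, the function $t\mapsto\Psi_\kappa(t)-\int_0^t\mathcal I_\kappa(\tau)\dd\tau$ has non-positive distributional derivative, hence coincides a.e.\ with a non-increasing function, so that for a.e.\ $0<s<t<T$,
\[
  \Psi_\kappa(t)\le\Psi_\kappa(s)+\int_0^{T}\mathcal I_\kappa(\tau)\dd\tau .
\]

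First I would control the initial layer. The barriers to which the lemma is applied satisfy $\fraka(t)\to+\infty$ as $t\to 0^+$, hence $A(t,v)\to+\infty$ for every $v$. Fix $\rho>0$ and split the velocity integral at $|v|=\rho$: the energy bound in \eqref{e:hydro} yields $\int_{\{|v|>\rho\}}f(t,x,v)\dv\le E_0\rho^{-2}$ uniformly in $(t,x)$, while on $\{|v|\le\rho\}$ we have $(f-A)_+\neq 0$ only where $f(t,x,v)>\fraka(t)\langle\rho\rangle^{-q}$, and the mass, energy and entropy bounds in \eqref{e:hydro} provide a uniform modulus of integrability, $\int_{\{f(t,x,\cdot)>\eta\}}f\dv\le C_\ast/\log\eta$ for $\eta\ge e$, with $C_\ast=C_\ast(M_0,E_0,H_0,d)$. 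Therefore
\[
  \iint_{\R^d\times\Omega}(f-A)_+(t)\dv\dx
  \le|\Omega|\Big(E_0\rho^{-2}+\frac{C_\ast}{\log\!\big(\fraka(t)\langle\rho\rangle^{-q}\big)}\Big),
\]
so letting $t\to 0^+$ and then $\rho\to+\infty$ shows that the essential limit of $\iint(f-A)_+(t)\dv\dx$ as $t\to 0^+$ is $0$, whence $\Psi_\kappa(s)\to 0$ essentially as $s\to 0^+$ for each fixed $\kappa$ (using again $\phizk(r)\le q_0\kappa^{q_0-1}r_+$). Taking the essential $\liminf$ over $s\to 0^+$ in the displayed inequality gives $\Psi_\kappa(t)\le\int_0^T\mathcal I_\kappa$ for a.e.\ $t\in(0,T)$.

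Finally I would let $\kappa\to+\infty$: the map $\kappa\mapsto\phizk(r)$ is non-decreasing with $\phizk(r)\uparrow r_+^{q_0}$, so monotone convergence gives $\Psi_\kappa(t)\uparrow\iint_{\R^d\times\Omega}(f(t,x,v)-A(t,v))_+^{q_0}\dv\dx$, while the right-hand side $\int_0^T\mathcal I_\kappa\to 0$. Hence $\iint(f(t)-A(t))_+^{q_0}\dv\dx=0$ for a.e.\ $t$, i.e.\ $f\le A$ a.e.\ in $(0,T)\times\Omega\times\R^d$. The hard part is the initial-layer estimate: a suitable weak subsolution is only assumed to be $L^1$ in all variables, with no control of $f(t,\cdot)$ uniform as $t\to 0^+$, so one must lean on the fact that \eqref{e:hydro} holds \emph{uniformly} in $(t,x)$ to see that $(f-A)_+(t)$ tends to $0$ in $L^1$ as the barrier blows up; the remaining steps — passing from the distributional to the pointwise-in-time inequality, and exchanging the limits $\kappa\to\infty$ and $s\to 0^+$ — are routine once $\mathcal I_\kappa\ge 0$ and $\int_0^T\mathcal I_\kappa\to 0$ are in hand.
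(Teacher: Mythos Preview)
Your proof is correct but takes a genuinely different route from the paper's. The paper does not control the initial layer by invoking the hydrodynamic bounds \eqref{e:hydro}; instead it uses a \emph{time-shift} of the barrier (following \cite{ouyang-silvestre}). After deducing from \eqref{e:monotone-gen} that $m_\infty(t)=\iint(f-A)_+^{q_0}\dv\dx$ is non-increasing, the paper considers the shifted functional $m_{\infty,t_1}(t):=\iint\big(f(t)-A(t-t_1,\cdot)\big)_+^{q_0}\dv\dx$ and observes that the same analysis (re-deriving \eqref{e:monotone-gen} for the shifted barrier, which is valid in every application since the coercivity and error estimates of Sections~\ref{s:coercivity}--\ref{sub:propagation} depend only on the size of $\fraka$, not on its time origin) makes $m_{\infty,t_1}$ non-increasing on $(t_1,T)$. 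At a \emph{fixed} Lebesgue point $t_2$, one then has $m_{\infty,t_1}(t_2)\to 0$ as $t_1\to t_2^-$ simply because $A(t_2-t_1,\cdot)\to+\infty$ pointwise while $f(t_2,\cdot)$ is a fixed integrable function; no uniform-in-time control of $f$ near $t=0$ is needed. Your approach trades this time-shift for a direct equi-integrability estimate at small times, which is arguably more elementary but leans on the mass, energy and entropy bounds in \eqref{e:hydro}---not part of the lemma's literal hypotheses, though assumed throughout the paper. Both routes in fact go slightly beyond the bare statement of the lemma: yours uses \eqref{e:hydro}, while the paper's uses that the monotonicity hypothesis \eqref{e:monotone-gen} is stable under time translation of $\fraka$; both extra ingredients are available in every application of the lemma.
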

\begin{proof}
  For all $t \in (0,T)$, we define
\begin{equation}\label{eq:m-def}
	m_\kappa(t) := \iint_{\R^d \times \Omega} \phizk (f-A)(t,x,v) \dv \dx.
\end{equation}
Since $\phizk(f-A) \in L^1((0, T) \times \R^d \times \R^d)$, there holds $m_\kappa \in L^1((0, T))$.
\medskip

\paragraph{\sc Step 1. Lebesgue points.}
We now take two Lebesgue points $t_1, t_2 \in (0, T)$ such that $t_1 < t_2$, and we consider for any $\varepsilon > 0$ the following cutoff $\eta_\varepsilon\in C^\infty_c(\R)$ in time given by
\begin{equation*}
\begin{aligned}
	\eta_\varepsilon(t) = \begin{cases} \frac{1}{\varepsilon} (t_2 - t), \qquad &\text{ if } t \in [t_2 - \varepsilon, t_2), \\
	1, \qquad &\text{ if } t \in (t_1 +\varepsilon, t_2- \varepsilon), \\
	\frac{1}{\varepsilon} (t - t_1), \qquad &\text{ if } t \in (t_1, t_1+ \varepsilon], \\
	0, \qquad &\text{ else}.
	\end{cases}
\end{aligned}
\end{equation*}
Then we test \eqref{e:monotone} with $\eta_\varepsilon$ and integrate over $(0, T)$, so that
\[
  \int_{0}^T m_\kappa'(t) \eta_\varepsilon(t) \dd t \le \int_0^T \mathcal{I}_\kappa (t) \eta_\eps (t)\dt.
\]
The left hand side yields
\begin{equation*}
	\int_{0}^T m_\kappa'(t) \eta_\varepsilon(t) \dd t = - \int_{0}^T m(t) \eta_\varepsilon'(t) \dd t =\frac{1}{\varepsilon} \int_{t_2 - \varepsilon}^{t_2} m(t) \dd t - \frac{1}{\varepsilon}\int_{t_1}^{t_1+ \varepsilon} m(t) \dd t,
\end{equation*}
so that we deduce after taking $\varepsilon \to 0$, we deduce that for almost every $t_1, t_2 \in (0,T)$ with $t_1 <t_2$,
\begin{equation}\label{e:decroissance}
m_\kappa(t_2) - m_\kappa(t_1) \leq \int_{t_1}^{t_2} \mathcal{I}_\kappa(t) \dd t.
\end{equation}
Note that the right hand side converges to $0$ as $\kappa \to \infty$. It also converges to $0$ as $t_2 \to t_1$ or as $t_1 \to t_2$, so that $m_\kappa(t)$ is (coincides a.e. with a) c\`adl\`ag (function)
and the values of $m_\kappa(t)$ are well-defined for all $t \in (0, T)$, see \cite[Corollary~4.9.1]{ouyang-silvestre}. 
We deduce from \eqref{e:decroissance} that $m_\infty$ is non-increasing in $(0,T)$. 

\medskip

\paragraph{\sc Step 2. Shifting the barrier in time.}
To finish the proof of Theorem \ref{thm:decroissance}, it is sufficient to show that $m_\infty(t) = 0$ for a.e. $t \in (0,T)$. Indeed this in turn implies $f(t, x, v) \leq \fraka(t) \langle v \rangle^{-q}$ for almost every $(t, x, v) \in [0, T] \times \R^d \times \R^d$.

Even if $m_\infty$ is monotonically decreasing, the difficulty is that $m_\infty(t)$ is not defined at $t=0$. 
The following argument is taken from \cite[Proof of Theorem 1.1]{ouyang-silvestre}. We define a shifted version of the function $m$ defined in \eqref{eq:m-def},
\[
	m_{\kappa,t_1}(t) :=  \iint_{\R^d \times \Omega}\phizk\big(f(t, x, v) - A(t-t_1, v) \big) \dv \dx.
\]
After updating $\fraka$,  we know from Step~1 that $m_{\infty, t_1}(t)$ is monotonically decreasing for $t \in (t_1, T)$. Since $\fraka(t) \to \infty$ as $t \to 0$, we note
\[
	\lim_{t_1 \to t_2} m_{\infty,t_1}(t_2) = 0. 
\]
Thus for any $\varepsilon \in (0, 1)$ there exists $t_1 \in (0, t_2)$ such that $m_{\infty,t_1}(t_2) < \varepsilon$. 

Moreover, there holds for a.e. $t \in (t_2, T)$
\begin{equation*}
	m_{\infty,t_2}(t) \leq m_{\infty,t_1}(t) \leq m_{\infty,t_1}(t_2)  < \varepsilon.
\end{equation*}
As $\varepsilon > 0$ was arbitrary, we deduce $m_{\infty,t_2}(t) = 0$ for a.e. $t \in (t_2, T)$. This implies that for a.e. $t_2 \in (0,T)$ and $t \in (t_2, T)$, we have  $(x,v) \in \Omega \times \R^d$,
$f(t,x,v) \le A (t-t_2,v) $. We can now fix $t_0 >0$ and consider $t_2 < t_0<t$ and outside the sets of null measure.
By letting $t_2 \to 0$, this implies $f(t,x,v) \le A (t,v)$ for $t > t_0$. Since $t_0$ is arbitrary, the proof is complete.  
\end{proof}

\begin{proof}[Proof of Theorem~\ref{thm:decroissance}]
It is enough to deal with the case $q = \qnsl$. In this case, the conclusion of the theorem is a consequence of Lemmas~\ref{l:decroissance} and \ref{l:mono-decrease}.
\end{proof}


\section{Counterexample to arbitrary polynomial decay for soft potentials}

This section is devoted to the proof of Theorem \ref{thm:no-gen}. We show that in the case of soft potentials, we cannot expect to generate decay at a rate higher than $d+2$.

\begin{proof}[Proof of Theorem~\ref{thm:no-gen}]
We consider a non-negative classical solution  $f$ to \eqref{eq:boltzmann} in $[0, T] \times \Omega \times \R^d$ for some $T > 0$, with initial datum $\fin$ satisfying \eqref{eq:f0-lb} and with boundary data $f_b$ satisfying \eqref{eq:fb-lb}. 

We are going to construct a barrier from below for $f$. We claim that we can find a $\beta > 0$ and $a_1 > 0$ for which
\[
\forall (t, x, v)\in [0, T] \times \R^d \times \R^d, \qquad	f(t, x, v) \geq a_1 e^{-\beta t} \langle v \rangle^{- \hat q}.
\]
In order to prove that such a lower bound holds, we first establish the following lemma. 
\begin{lemma}[Construction of the barrier] \label{l:barrier}
  There exists a constant $\beta = \beta(d,\gamma,s,E_0,M_0,\tilde b_+)>0$ such that the function $\Psi$ defined by
  \[
	\Psi(t, x, v) := e^{-\beta t}\langle v \rangle^{-\hat{q}}.
      \]
      satisfies
\[
	\left(\partial_t + v \cdot \nabla_x\right) \Psi < 	\int_{\R^d} \left(\Psi(v') - \Psi(v)\right)K_f(v, v') \dd v'. 
\]
\end{lemma}
\begin{proof}
We first remark that for all $(t, x, v) \in [0, T] \times \R^d \times \R^d$,
\[
	\left(\partial_t + v \cdot \nabla_x\right) \Psi = -\beta \Psi.
\]
We are thus left with showing the following upper bound, 
\[
\int_{\R^d} \left(\Psi(v) - \Psi(v')\right)K_f(v, v') \dd v' \le \beta \Psi. 
\]
We compute
\begin{align*}
	\int_{\R^d}& \left(\Psi(v) - \Psi(v')\right)K_f(v, v') \dd v' \\
	&= \int_{\R^d}   \frac{\left(\Psi(v) - \Psi(v')\right)}{\abs{z}^{d+2s}} \left\{ \int_{v_*' \in v+ z^\perp} f(v_*') \abs{v - v_*'}^{\gamma +2s +1} \tilde b(\cos \theta) \dd v_*' \right\} \dd z\\
	&= \int_{\R^d}   f(v_*') \abs{v - v_*'}^{\gamma +2s} \left\{ \int_{ z \in (v-v_*')^\perp} \frac{\left(\Psi(v) - \Psi(v')\right)}{\abs{z}^{d+2s-1}} \tilde b(\cos \theta) \dd z \right\} \dd v_*'.
\end{align*}

Once again, we distinguish the singular from the non-singular part,
\begin{align*}
	\mathcal I_{s} := \int_{\R^d}   f(v_*') \abs{v - v_*'}^{\gamma +2s} \left\{ \int_{ z \in (v-v_*')^\perp} \chi_{\abs{z} \leq \frac12 \langle v\rangle}\frac{\left(\Psi(v) - \Psi(v')\right)}{\abs{z}^{d+2s-1}} \tilde b(\cos \theta) \dd z \right\} \dd v_*',
\end{align*}
and
\begin{align*}
	\mathcal I_{ns} := \int_{\R^d}   f(v_*') \abs{v - v_*'}^{\gamma +2s} \left\{ \int_{ z \in (v-v_*')^\perp} \chi_{\abs{z} > \frac12 \langle v\rangle}\frac{\left(\Psi(v) - \Psi(v')\right)}{\abs{z}^{d+2s-1}} \tilde b(\cos \theta) \dd z \right\} \dd v_*'.
\end{align*}
We showed in Lemma \ref{lem:s} the following upper bound for $\mathcal{I}_s$,
\[
	\mathcal I_{s}\leq  \Cqd \langle v \rangle^{\gamma} \Psi(v).
\]
To bound $\mathcal I_{ns}$, we find
\begin{align*}
	\mathcal I_{ns} &= \int_{\R^d}   f(v_*') \abs{v - v_*'}^{\gamma +2s} \left\{ \int_{ z \in (v-v_*')^\perp} \chi_{\abs{z} > \frac12 \langle v\rangle}\frac{\left(\Psi(v) - \Psi(v')\right)}{\abs{z}^{d+2s-1}} \tilde b(\cos \theta) \dd z \right\} \dd v_*'\\
	&\leq \int_{\R^d}   f(v_*') \abs{v - v_*'}^{\gamma +2s} \left\{ \int_{ z \in (v-v_*')^\perp} \chi_{\abs{z} > \frac12 \langle v\rangle}\frac{\Psi(v)}{\abs{z}^{d+2s-1}} \tilde b(\cos \theta) \dd z \right\} \dd v_*'\\
	&\leq \tilde b_+ e^{-\beta t} \langle v \rangle^{-\hat q} \int_{\R^d}   f(v_*') \abs{v - v_*'}^{\gamma +2s} \left\{ \int_{ z \in (v-v_*')^\perp} \chi_{\abs{z} > \frac12 \langle v\rangle}\frac{1}{\abs{z}^{d+2s-1}}\dd z \right\} \dd v_*'\\
	&\leq 2\tilde b_+\omega_{d-2} e^{-\beta t} \langle v \rangle^{-\hat q} \int_{\R^d}   f(v_*') \left(\abs{v}^{\gamma +2s} +\abs{v_*'}^{\gamma +2s}\right) \left\{\int_{\frac12 \langle v\rangle}^\infty r^{-1-2s} \dd r \right\} \dd v_*'\\
	&\leq 2\tilde b_+\omega_{d-2} \frac{2^{2s}}{2s} e^{-\beta t} \langle v \rangle^{-\hat q-2s} \int_{\R^d}   f(v_*') \left(\abs{v}^{\gamma +2s} +\abs{v_*'}^{\gamma +2s}\right)  \dd v_*'\\
  \intertext{using that $\gamma + 2s \in [0,  2]$ and \eqref{e:hydro}} 
                        & \leq \beta e^{-\beta t} \langle v \rangle^{-\hat q-2s} \langle v\rangle^{\gamma+2s}\\
                        & \le  \beta  \Psi(v)
\end{align*}
for some constant $\beta = \beta(d,\gamma,s,E_0,M_0,\tilde b_+)$. We used $\gamma<0$ to get the last line. 
\end{proof}

In order to prove the lower bound, we remark that $f > a_1 \Psi$ at initial time provided that $a_1 > a_0$. Similarly, $f > a_1 \Psi$ on $(0,T) \times \partial \Omega \times \R^d$
provided that $a_1 > a_b$. Assume that $\min_{[0,T] \times \Omega \times \R^d}  (f-a_1 \Psi) <0$. Since $f$ and $\Psi$ are continuous, there exists $(t_*,x_*,v_*) \in [0,T] \times \Omega \times \R^d$ where this minimum is reached. We know that $t_* >0$ and $x_* \in \Omega$ since $f > a_1 \Psi$ initially and on $\partial\Omega$. In particular,
\[ (\partial_t + v \cdot \nabla_x) f (t_*,x_*,v_*) \le  a_1 (\partial_t + v \cdot \nabla_x) \Psi (t_*,x_*,v_*)  \]
(with equality unless $t_* = T$) and
\begin{align*}
  Q (f,f) (t_*,x_*,v_*)  & =  \int_{\R^d} (f(t_*,x_*,v') - f(t_*,x_*,v_*)) K_f (v_*,v') \dv' + c_b (f \ast |\cdot|^\gamma) f (t_*,x_*,v_*)\\
                         & \ge \int_{\R^d} (a_1 \Psi (t_*,x_*,v') - a_1 \Psi(t_*,x_*,v_*)) K_f (v_*,v') \dv' \\
  & > a_1 (\partial_t + v \cdot \nabla_x) \Psi (t_*,x_*,v_*) .
\end{align*}
Since $f$ is a solution of the Boltzmann equation, we get a contradiction. 
\end{proof}


\end{document}